\let \H \undefined % originally Hungarian umlaut
\let \L \undefined % originally capital letter L with stroke: Ł
\let \O \undefined % originally capital letter O with stroke: Ø
\let \P \undefined % originally paragraph sign
\let \S \undefined % originally section sign
\let \SS \undefined % originally textmode double S
\let \int \indefined % we use it for interior
\let \C \undefined % PU-cmd defined by hyperref
\let \G \undefined % PU-cmd defined by hyperref
\let \U \undefined % PU-cmd defined by hyperref
\let \Re \undefined % we redefine the macros later 
\let \Im \undefined
\newcommand*\cdef{\newcommand*}
\cdef \A {\mathcal{A}}
\cdef \B {\mathcal{B}}
\cdef \C {\mathcal{C}}
\cdef \D {\mathcal{D}}
\cdef \E {\mathcal{E}}
\cdef \F {\mathcal{F}}
\cdef \G {\mathcal{G}}
\cdef \H {\mathcal{H}}
\cdef \I {\mathcal{I}}
\cdef \J {\mathcal{J}}
\cdef \K {\mathcal{K}}
\cdef \L {\mathcal{L}}
\cdef \M {\mathcal{M}}
\cdef \N {\mathcal{N}}
\cdef \O {\mathcal{O}}
\cdef \P {\mathcal{P}}
\cdef \R {\mathcal{R}}
\cdef \S {\mathcal{S}}
\cdef \T {\mathcal{T}}
\cdef \U {\mathcal{U}}
\cdef \V {\mathcal{V}}
\cdef \W {\mathcal{W}}
\cdef \Y {\mathcal{Y}}
\cdef \CC {\mathbb{C}}
\cdef \II {\mathbb{I}}
\cdef \NN {\mathbb{N}}
\cdef \PP {\mathbb{P}}
\cdef \QQ {\mathbb{Q}}
\cdef \RR {\mathbb{R}}
\cdef \SS {\mathbb{S}}
\cdef \ZZ {\mathbb{Z}}
\cdef \set [1]{%
	% set builder notation
	% usage: \set{x \given \phi(x)}, \set{x_1, x_2, x_3}
	\{#1\}%
}
\cdef \tuple [1]{%
	% tuple builder notation
	% usage: \tuple{x \given \phi(x)}, \tuple{x_1, x_2, x_3}
	% variants: \langle #1\rangle, (#1)
	\langle #1\rangle
}
\cdef \card [1]{%
	% cardinality of a set
	% usage: \card{A}
	% variants: \lvert #1\rvert, |#1| -- gives incorrect spacing
	\lvert #1\rvert
}
\cdef \powset [1]{%
	% the power set
	% usage: \powset{A}
	\mathcal{P}(#1)%
}
\cdef \subsets [2]{%
	% all subsets of #2 of size #1
	% usage: \subsets{\kappa}{A}, \subsets{<\kappa}{A}
	[#2]^{#1}%
}
\cdef \disunion {%
	% disjoint/tagged union binary operator
	% usage: A \disunion B
	\sqcup
}
\cdef \DisUnion {%
	% disjoint/tagged union unary operator
	\bigsqcup
}
\cdef \continuum {%
	% the cardinality of continuum
	% usage: \continuum
	\mathfrak{c}%
}
\cdef \maps {%
	% function declaration syntax f: A -> B
	% usage: f\maps A \to B
	\colon
}
\cdef \into {%
	% an arrow variant for injective mappings or embeddings
	% usage: f\maps A \into B
	\hookrightarrow
}
\cdef \onto {%
	% an arrow variant for onto mappings
	% usage: f\maps A \onto B
	\twoheadrightarrow
}
\cdef \from {%
	% the other direction arrow
	% usage: f\maps B \from A
	\leftarrow
}
\cdef \id {%
	% the identity function
	% usage: \id, \id_X
	\operatorname{id}%
}
\cdef \dom {%
	% function or morphism domain
	% usage: \dom(f)
	\operatorname{dom}%
}
\cdef \rng {%
	% function range
	% usage: \rng(f)
	\operatorname{rng}%
}
\cdef \cod {%
	% function or morphism codomain
	% usage: \cod(f)
	\operatorname{cod}%
}
\cdef \im [1]{%
	% function image of a set
	% usage: f\im{A}
	% variants: (#1), [#1]
	[#1]%
}
\cdef \coim [1]{%
	% function dual image of a set (Y \ f[X \ A])
	% usage: f\coim{A}
	% variants: ^∀(#1), ^∀[#1]
	^\forall\im{#1}%
}
\cdef \inv {%
	% function inverse
	% usage: f\inv
	^{-1}%
}
\cdef \preim [1]{%
	% function preimage of a set
	% usage: f\preim{A}
	\inv\im{#1}%
}
\cdef \fiber [1]{%
	% function fiber of a point
	% usage: f\fiber{y}
	% variants: \preim{\set{#1}}, \inv(#1)
	\inv(#1)%
}
\cdef \restr [1]{%
	% restriction of a function
	% usage: f\restr{A}
	% variants: \mathop{\upharpoonright}#1, \mathord{\upharpoonright}_{#1}
	\mathord{\upharpoonright}_{#1}%
}
\cdef \diag {%
	% binary diagonal product of mappings
	% usage: f \diag g
	\mathbin\vartriangle
}
\cdef \Diag {%
	% unary diagonal product of mappings
	% usage: \Diag_{i \in I} f_i
	\bigtriangleup
}
\cdef \codiag {%
	% binary codiagonal sum of mappings
	% usage: f \codiag g
	\mathbin\triangledown
}
\cdef \CoDiag {%
	% codiagonal sum of mappings
	% usage: \CoDiag_{i \in I} f_i
	\bigtriangledown
}
\cdef \homeo {%
	% is homeomorphic to
	% usage: X \homeo Y
	\cong
}
\cdef \nothomeo {%
	% is not homeomorphic to
	% usage: X \nothomeo Y
	\ncong
}
\cdef \clo [2][]{%
	% topological closure using overline
	% usage: \clo{A}
	\overline{#2}%
}
\cdef \cl {%
	% topological closure operator
	% usage: \cl(A), \cl_X(A), \cl_τ(A)
	\operatorname{cl}%
}
\cdef \int {%
	% topological interior
	% usage: \int(A), \int_X(A), \int_τ(A)
	\operatorname{int}%
}
\cdef \topsum {%
	% binary topological sum operator
	% usage: X \topsum Y
	\oplus
}
\cdef \TopSum {%
	% unary topological sum operator
	% usage: \TopSum_{i ∈ I} X_i
	\sum
}
\cdef \dist {%
	% metric distance
	% usage: \dist(x, y), \dist_X(x, y), \dist(x, A)
	\operatorname{d}%
}
\cdef \diam {%
	% metric diameter of a set
	% usage: \diam(A), \diam_X(A)
	\operatorname{diam}%
}
\cdef \abs [1]{%
	% the absolute value of a real number
	% usage: \abs{x}
	% variants: \lvert #1\rvert, |#1| -- gives incorrect spacing
	\lvert #1\rvert
}
\cdef \Seq {%
	% the category of sequences and transformations
	% usage: \Seq(\K)
	% variants: \Seq{\K} -> \operatorname{Seq}(#1)
	%\operatorname{Seq}(#1)%
	\operatorname{Seq}%
}
\cdef \Seqp {%
	% the category of sequences and pre-transformations
	% usage: \Seqp(\K)
	\operatorname{Seq_p}%
}
\cdef \AllCompacta {%
    % the category of all metrizable compacta and continuous maps
    % usage: \AllCompacta, \AllCompacta{}
    \mathbf{K}%
}
\cdef \BM {%
	% the Banach–Mazur game
	\operatorname{\mathsf{BM}}%
}
\cdef \lcm {%
	% least common multiple
	% usage: \lcm(n, m)
	\operatorname{lcm}%
}
\cdef \sdeg {%
	% the extension of the degree functor to σ_0\S
	\operatorname{σ_0 deg}%
}
\cdef \Ob {%
	% objects of a category
	% usage: \Ob(\C)
	% variants: Ob, Obj
	\operatorname{Ob}%
}
\cdef \geom [1]{%
	% the geometric realization of an abstract simplicial complex or map
	% usage: \geom(S), \geom(f)
	\lvert #1\rvert
}
\cdef \crn {%
	% the crookedness number
	% – the minimal length of a domain simplicial interval with crooked surjection onto the interval of length n
	% usage: \crn(n)
	\operatorname{cr}%
}
\cdef \concat {%
	% concatenation of simplicial maps
	% usage: s \concat s'
	\frown
}
\cdef \supp {%
	% the support of a function
	% usage: \supp(f)
	\operatorname{supp}%
}
\cdef \clx {\operatorname{cl}^*}
\cdef \clcat {\operatorname{cl}^\text{\upshape cat}_\L}
\cdef \clloc {\operatorname{cl}^\text{\upshape loc}_\L}
\cdef \cllim [1][\K] {\operatorname{cl}^\text{\upshape lim}_{#1, \L}}
\cdef \clabs [1][\K] {\operatorname{cl}^\text{\upshape refl}_{#1, \L}}
\cdef \CatFont {%
	% the font used for category names below
	% variants: \mathbf, \mathrm, \mathsf
	\mathsf
}
\cdef \Set {%
	% the category of sets and functions
	\CatFont{Set}%
}
\cdef \Top {%
	% the category of topological spaces and continuous functions
	\CatFont{Top}%
}
\cdef \MetU {%
	% the category of metric spaces and uniformly continuous maps
	\CatFont{Met_u}%
}
\cdef \Met {%
	% the category of metric spaces and non-expansive maps
	\CatFont{Met}%
}
\cdef \MetUS {%
	% the category of non-empty metric spaces and uniformly continuous surjections
	\CatFont{Met_{us}}%
}
\cdef \MCpt {%
	% the category of metrizable compacta and continuous maps
	\CatFont{MCpt}%
}
\cdef \MCptS {%
	% the category of non-empty metrizable compacta and continuous surjections
	\CatFont{MCpt_s}%
}
\cdef \CMetU {%
	% the category of complete metric spaces and uniformly continuous maps
	\CatFont{CMet_u}%
}
\cdef \CMetUS {%
	% the category of non-empty complete metric spaces and uniformly continuous surjections
	\CatFont{CMet_{us}}%
}
\cdef \CMet {%
	% the category of complete metric spaces and non-expansive maps
	\CatFont{CMet}%
}
\cdef \MCont {%
	% the category of metrizable continua and continuous maps
	\CatFont{MCont}%
}
\cdef \MContS {%
	% the category of non-empty metrizable continua and continuous surjections
	\CatFont{MCont_s}%
}
\cdef \PeanoS {%
	% the category of non-empty Peano continua and continuous surjections
	\CatFont{Peano_s}%
}
\cdef \CPolS {%
	% the category of non-empty connected compact polyhedra and continuous surjections
	\CatFont{CPol_s}%
}
\cdef \Ab {%
	% the category of abelian groups and group homomorphisms
	\CatFont{Ab}%
}
\cdef \FinS {%
	% the category of non-empty finite sets (or equivalently discrete spaces) and surjections
	\CatFont{Fin_s}%
}
\cdef \Vsup {%
	% unary supremum V operator
	% usage: \Vsup_{i ∈ I} x_i, \Vsup\set{x_i: i ∈ I}
	\bigvee
}
\cdef \Vinf {%
	% unary infimum V operator
	% usage: \Vinf_{i ∈ I} x_i, \Vinf\set{x_i: i ∈ I}
	\bigwedge
}
\cdef \iso {%
	% is isomorphic to
	% usage: X \iso Y
	% variant: \cong, \simeq
	\cong
}
\cdef \Aut {%
	% automorphism group
	% usage: \Aut(X)
	\operatorname{Aut}%
}
\cdef \Age {%
	% age of an object in a pair of categories
	% usage: \Age(X)
	\operatorname{Age}%
}
\cdef \sAge {%
	% the sigma closure of the age (because of spacing)
	% usage: \sAge(X)
	\operatorname{σAge}%
}
\cdef \Re {%
	% the real part of a complex number
	% usage: \Re(z)
	\operatorname{Re}%
}
\cdef \Im {%
	% the imaginary part of a complex number
	% usage: \Im(z)
	\operatorname{Im}%
}
\cdef \DefUnicode [2]{%
	% define a Unicode character macro substitution
	% usage: \DefUnicode{α}{\alpha}
	\expandafter\cdef
		\csname u8:\detokenize{#1}\endcsname
		{#2}%
}
\cdef \UndefUnicode [1]{%
	% undefine a Unicode character macro substitution
	% usage: \DefUnicode{…}
	\expandafter\let
		\csname u8:\detokenize{#1}\endcsname
		\undefined
}
\cdef \ShowUnicode [1]{%
	% show the definition of Unicode character macro
	% usage: \ShowUnicode{α}
	\expandafter\show
		\csname u8:\detokenize{#1}\endcsname
}
\cdef \note [1]{%
	% for author notes, reference placeholders, etc.
	% with macro, can be made visible or turned into an error
	\textcolor{red}{[#1]}%
	%[#1]%
}
\cdef \link [1]{%
	% clickable link, used in references
	% usage: \link{www.something.com/paper.pdf}
	\href{#1}{\nolinkurl{[#1]}}%
}
\cdef \arxivlink [1]{%
	% clickable link to an arxiv paper
	% usage: \link{1210.6506v3}
	\href{https://arxiv.org/abs/#1}{\nolinkurl{[arXiv:#1]}}%
}
\theoremstyle{plain}
\newtheorem{theorem}{Theorem}[section]
\newtheorem{proposition}[theorem]{Proposition}
\newtheorem{corollary}[theorem]{Corollary}
\newtheorem{lemma}[theorem]{Lemma}
\newtheorem{deftheorem}[theorem]{Definition and Theorem}
\theoremstyle{definition}
\newtheorem{definition}[theorem]{Definition}
\newtheorem{observation}[theorem]{Observation}
\newtheorem{remark}[theorem]{Remark}
\newtheorem{notation}[theorem]{Notation}
\newtheorem{example}[theorem]{Example}
\newtheorem{construction}[theorem]{Construction}
\newenvironment{talign*}
	{ \csname align*\endcsname}
	{\endalign}
\newenvironment{taligned}
	{ \aligned}
	{\endaligned}
\setlist{itemsep = 0pt}
\setlist[enumerate]{leftmargin=*} % actual item label width is taken into account
\setlist[enumerate, 1]{label=\upshape (\roman*), ref=(\roman*)}
\title{Hereditarily indecomposable continua \\ as generic mathematical structures}
\author{Adam Bartoš\footnote{Research of both authors was supported by GA ČR (Czech Science Foundation) grant EXPRO 20-31529X and by the Czech Academy of Sciences (RVO 67985840).} \\
	\small \href{mailto:bartos@math.cas.cz}{\nolinkurl{bartos@math.cas.cz}}
\and Wiesław Kubiś\footnotemark[\value{footnote}] \\
	\small \href{mailto:kubis@math.cas.cz}{\nolinkurl{kubis@math.cas.cz}}
}
\date{
	{\small Institute of Mathematics, Czech Academy of Sciences, \\
	Žitná 25, 115 67 Prague, Czech Republic} \\[3ex]
	January 2026
}
\begin{document}
	
	\maketitle
	
	\begin{abstract}
		We characterize the pseudo-arc as well as $P$-adic pseudo-solenoids (for a set of primes $P$) as generic structures, arising from a natural game in which two players alternate in building an inverse sequence of surjections. The second player \emph{wins} if the limit of this sequence is homeomorphic to a concrete (fixed in advance) space, called \emph{generic} whenever the second player has a winning strategy.
		
		For this purpose, we develop a new robust approximate Fraïssé theory in the context of \emph{MU-categories}, a generalization of metric-enriched categories, suitable for working directly with continuous maps between metrizable compacta.
		Our framework extends both the classical and projective Fraïssé theories.
		
		We reprove the Fraïssé-theoretic characterization of the pseudo-arc and we realize every $P$-adic pseudo-solenoid as a Fraïssé limit of a suitable category of continuous surjections on the circle.
		Moreover, we show that, when playing the game with continuous surjections between non-degenerate Peano continua, the pseudo-arc is always generic, while
		the universal pseudo-solenoid is generic over all surjections between circle-like continua.
		This gives a complete classification of generic continua over full non-trivial subcategories of connected polyhedra with continuous surjections.
		
		\vspace{1em}
		
		\noindent
		{\bf Keywords:} Pseudo-arc, pseudo-solenoid, generic object, Fraïssé limit, MU-category.

		\noindent
		{\bf Mathematics Subject  Classification (2020):} 54F15, 18D20, 18F60.

	\end{abstract}
	
	\newpage
	\tableofcontents

\section{Introduction}

Classically, Fraïssé theory, coming back to Fraïssé~\cite{Fraisse54}, is concerned with ultrahomogeneous structures in model theory~\cite[Chapter~7]{Hodges93}.
There the homogeneity is injective, i.e. with respect to (finitely generated) substructures.
In 2006, Irwin and Solecki~\cite{IS06} introduced projective Fraïssé theory to study metrizable compact structures and their projective homogeneity.
They realized a pre-space of the pseudo-arc as a projective Fraïssé limit, and gave an approximate Fraïssé-theoretic characterization of the pseudo-arc.
Here the term “pre-space” means the Cantor space together with a special closed equivalence relation.
Since then, many other metrizable compacta were realized as quotients of pre-spaces that are projective Fraïssé limits, for example the Lelek fan by Bartošová and Kwiatkowska~\cite{BK15}, the Menger curve by Panagiotopoulos and Solecki~\cite{PS22}, the so called Fraïssé fence by Basso and Camerlo~\cite{BC21}, the generalized Ważewski dendrite $D_3$, the Mohler–Nikiel universal dendroid, and a new Kelley dendroid with a dense set of endpoints by Charatonik, Kwiatkowska, Roe, and Yang~\cite{ChKRY23}, and a new one-dimensional Kelley continuum containing the pseudo-arc as well as the universal pseudo-solenoid by Charatonik, Kwiatkowska, and Roe~\cite{ChKR25}.

On the other hand, the second author~\cite{Kubis13} introduced a framework for approximate Fraïssé theory based on metric-enriched categories and realized the pseudo-arc as a Fraïssé limit directly, i.e. without taking a quotient of a pre-space.
In the same spirit, the second author and Kwiatkowska~\cite{KK17} realized the Lelek fan and the Poulsen simplex as Fraïssé limits.
We should also mention that already Mioduszewski~\cite{Mioduszewski62} constructed the pseudo-arc as a Fraïssé limit (without using that terminology) of special countable categories of copies of the unit interval and piecewise linear maps, and proved the surjective universality among arc-like continua.
Later, Rogers~\cite{Rogers70} used Mioduszewski's formalism to special countable categories of copies of the unit circle and piecewise linear maps of limited degree, proved the amalgamation property for these categories, and constructed pseudo-solenoids (called pseudo-circles by Rogers) universal for the corresponding categories of circle-like continua.

This approximate Fraïssé theory, which is further developed in the present paper, can be viewed two ways: (1) as a setup for Fraïssé theory of metric structures, and (2) as an alternative approach to projective Fraïssé theory.
From the viewpoint~(1), the setup complements approaches based on continuous model theory, in particular the line of research starting in the thesis of Schoretsanitis~\cite{Schoretsanitis07}, standardized by Ben Yaacov~\cite{BenYaacov15}, and refined for the purpose of applications in C*-algebras by Masumoto~\cite{Masumoto20}.
The enriched categorical approach abstracts certain aspects of the model-theoretic setup (namely, approximate commutativity of diagrams and approximate morphisms), so that theory can be directly applied in a broader context, including projective setting, embedding-projection pairs, and comma categories.
In the present paper we do not work with approximate morphisms, while we provide a robust setup for approximate commutativity, suited not only for non-expansive surjections but also for uniformly continuous surjections.

From the viewpoint~(2), the key feature of the setup is that the Fraïssé limit is the space itself, bypassing the quotient construction.
Very recently, another alternative approach to projective Fraïssé theory was developed by the first author, Bice, and Vignati~\cite{BBV25} and \cite{BBVtwo}.
In this approach, we keep finite graphs as our building blocks, but we allow relations/multivalued maps as morphisms, while the standard inverse limit construction is replaced by taking the so-called spectrum of the $ω$-poset corresponding to a sequence of graphs.
We believe it is possible to combine the strengths of the different approaches to projective Fraïssé theory within a unified framework.

\bigskip

In this work we are primarily interested in hereditarily indecomposable continua, namely the pseudo-arc and the pseudo-solenoids.
At the very end of the paper~\cite{IS06}, Irwin and Solecki suggest to realize the universal pseudo-solenoid as a projective Fraïssé limit similarly to the pseudo-arc, extending the work of Rogers~\cite{Rogers70}.
This was done by Irwin in his thesis~\cite{Irwin07}, mentioning a possibility of extending his result to other pseudo-solenoids.
This is what we do in the present paper directly, without using pre-spaces.

We develop a robust approximate framework for Fraïssé theory, extending both the classical and projective Fraïssé theory, that is also suitable for working directly with continuous maps between metrizable compacta.
Then we reprove the Fraïssé-theoretic characterization of the pseudo-arc in our framework to demonstrate it, and we realize every $P$-adic pseudo-solenoid (for any set of primes $P$) as a Fraïssé limit of a suitable category of continuous surjections on the circle.
Moreover, we consider the abstract Banach–Mazur game and the notion of generic object introduced by Krawczyk and the second author~\cite{KK21} for finitely generated structures and later generalized by the second author~\cite{Kubis22} to arbitrary categories.
Since every Fraïssé limit is a generic object and a generic object is unique, this gives yet another characterization of the pseudo-arc and of the pseudo-solenoids.

Our theory culminates in the complete classification of generic continua over full non-trivial subcategories of polyhedra with continuous surjections: the pseudo-arc and the universal pseudo-solenoid (see Theorem~\ref{thm:polyhedra} below). We also show that playing the Banach--Mazur game with arbitrary surjections between a fixed class of non-degenerate Peano continua, the second player has a strategy leading to the pseudo-arc (see Theorem~\ref{thm:generic_Peano} below).

The theory of MU-categories and their Fraïssé limits can certainly be applied to more special topological categories, for example, categories of retractions between certain metric spaces (see e.g.~\cite{KK17}) or categories of homomorphic embeddings between certain topological algebras. In some situations one can work in categories enriched over metric spaces, which are special cases of MU-categories (see e.g.~\cite{Kubis13}).
Recently, applications in C*-algebras using a closely related framework were obtained by Cantier and Vilalta~\cite{CV24}.

\subsection{Summary of the structure and results}

Here we summarize the article and describe our main results.
Section~\ref{sec:continua} is a mini-survey on crookedness and the pseudo-arc.
In Section~\ref{sec:generic} and \ref{sec:fraisse} we build our general theory while demonstrating the concepts on the known case of the pseudo-arc, with several technical details postponed to Appendix~\ref{sec:appendix}.
Finally, in Section~\ref{sec:circlelike} we obtain a new application.

\medskip\noindent
\textbf{Continua and crookedness} (Section~\ref{sec:continua}).
We recall the notions of hereditarily indecomposable continua and crookedness.
We also recall the standard construction of the pseudo-arc as an inverse limit of a crooked sequence of interval maps.
\begin{itemize}
	\item We summarize known facts regarding $ε$-crooked maps and generalize them to the context of metrizable compacta (Proposition~\ref{thm:crooked_calculus} and Theorem~\ref{thm:crooked_limit}), and show how the notion of $ε$-crooked map simplifies to a well-known form for Peano continua and compact graphs (Theorem~\ref{thm:graph_crookedness}).
	\item We give a direct proof of the crookedness factorization theorem (Theorem~\ref{thm:crooked_factorization}) that every continuous surjection on the unit interval $ε$-factorizes through every sufficiently crooked continuous surjection.
		This is later used to observe that every crooked sequence is a Fraïssé sequence (in the context of continuous surjections on the unit interval), and so the uniqueness of a hereditarily indecomposable arc-like continuum (Bing's theorem) also follows from the uniqueness of a Fraïssé limit (Remark~\ref{thm:Bing_reproved}).
\end{itemize}

\medskip\noindent
\textbf{Generic objects} (Section~\ref{sec:generic}).
We recall the abstract Banach–Mazur game played in a category and the notion of a generic object.
\begin{itemize}
	\item The pseudo-arc is generic over every dominating subcategory of Peano continua and continuous surjections (Theorem~\ref{thm:generic_Peano}).
		This is proved first in the discrete setting and then in the approximate setting (Remark~\ref{rm:generic_Peano}).
	\item Our approximate setting is realized via the notion of \emph{MU-category} (Definition~\ref{def:MU-category}), a certain generalization of metric-enriched categories, suitable for our applications regarding limits of inverse sequences of metrizable compacta.
	\item We prove abstract versions of Brown's approximation theorem (Corollary~\ref{thm:Brown}) and the back and forth construction (Corollary~\ref{thm:back_and_forth}) for locally complete MU-categories.
	These results form the base for stability of generic objects under dominating subcategories in the approximate setting as well as for uniqueness results in the next section.
\end{itemize}

\medskip\noindent
\textbf{Fraïssé theory} (Section~\ref{sec:fraisse}).
We build a framework for abstract approximate Fraïssé theory in the context of MU-categories.
The usual setup consists of a pair $\K ⊆ \L$ of MU-categories of “small” and “large” objects, respectively.
\begin{itemize}
	\item Our framework generalizes abstract discrete Fraïssé theory, which in turn generalizes both the classical Fraïssé theory for first-order structures and embeddings as well as the projective Fraïssé theory for topological first-order structures and quotients developed by Irwin and Solecki (see Remarks~\ref{rm:discrete_fraisse}, \ref{rm:classical_fraisse}, and \ref{rm:projective_fraisse}).
	\item The core of the theory consists of Theorem~\ref{thm:fraisse_limit} characterizing the unique Fraïssé limit equivalently as a cofinal homogeneous object, cofinal projective object, and an $\L$-limit of a Fraïssé sequence in $\K$, and of Theorem~\ref{thm:fraisse_category} characterizing when a Fraïssé sequence exists in $\K$ (which depends only on $\K$).
	\item The characterization of the Fraïssé limit holds under several assumptions leading to the notion of \emph{free completion} (Definition~\ref{def:free_completion}) – namely, $\tuple{\K, \L}$ is a free completion if $\L$ as an MU-category essentially arose by freely and continuously adding limits of sequences to $\K$.
		We discuss in detail how to prove that a given pair is a free completion – using the construction of the \emph{$σ$-closure} (Definition~\ref{def:sigma_closure}) under the assumption of \emph{$σ$-consistency} (Definition~\ref{def:sigma_consistent}).
	\item The pseudo-arc is a Fraïssé limit in our framework.
		This reproves the characterization by Irwin and Solecki.
		The established characterizations of the pseudo-arc are gathered in Theorem~\ref{thm:pseudo-arc}.
	\item We obtain a complete classification of full categories of connected polyhedra with a Fraïssé limit / generic object / cofinal object (Theorem~\ref{thm:polyhedra}).
\end{itemize}

\medskip\noindent
\textbf{Circle-like continua and pseudo-solenoids} (Section~\ref{sec:circlelike}).
The last section contains the main application of our theory in this paper.
The category of continuous surjections on the circle does not have a Fraïssé limit, but we can restrict to the subcategory $\S_P$ of those maps whose degree uses only primes from a fixed set of primes $P$.
\begin{itemize}
	\item There is a Fraïssé limit of $\S_P$ for every set of primes $P$ (Theorem~\ref{thm:fraisse_circle-like}) and the limit is the $P$-adic pseudo-solenoid (Theorem~\ref{thm:fraisse_pseudo-solenoid}).
	\item We introduce a type functor (Construction~\ref{con:type_functor}) refining a categorization of circle-like continua and of continuous surjections.
		The type functor can be viewed as an extension of the degree map as well as a “skeletal rigidification” of the first Čech cohomology (see Remark~\ref{rm:Cech_cohomology}).
	\item Every pseudo-solenoid (not necessarily $P$-adic) is generic with respect to its type (Proposition~\ref{thm:pseudo-solenoids_generic}).
		To prove this statement we consider a modification of the Banach–Mazur game played with circle-like continua (Definition~\ref{def:modified_Banach--Mazur}).
\end{itemize}

\medskip\noindent
\textbf{More on $σ$-closure and $σ$-consistency} (Appendix~\ref{sec:appendix}).
We give a general definition of the $σ$-closure and relate it to several other definitions present in the literature.
\begin{itemize}
	\item In a $σ$-consistent situation, all alternative simplified definitions of the $σ$-closure agree with the general one, and this to some extent characterizes $σ$-consistency (Proposition~\ref{thm:AE_sigma_consistent} and Corollary~\ref{thm:B_sigma}).
	\item We give a $σ$-inconsistent example of a Fraïssé category such that the limit of a Fraïssé sequence is not homogeneous (Example~\ref{ex:non_sigma_consistent}).
\end{itemize}

\subsection{Preliminaries} \label{sec:preliminaries}

For a subset $A ⊆ X$ of a topological space, $\clo{A}$, $\int(A)$, and $∂A$ denote the closure, the interior, and the boundary, respectively.

An \emph{$∞$-metric space} is a generalization of a metric space allowing infinite distances.
This is a quite innocent generalization since $d(x, y) < ∞$ in an $∞$-metric space $X$ is an equivalence relation, and the equivalence classes are metric subspaces of $X$.
The set of all functions $f\maps X \to Y$ to a metric space $Y$ is an $∞$-metric space when endowed with the sup-metric $d(f, g) := \sup_{x ∈ X} d(f(x), g(x))$.
In any $∞$-metric space for $ε > 0$ we sometimes write $x ≈_ε y$ as a shortcut for $d(x, y) < ε$.
So for functions with the sup-metric, $f ≈_ε g$ means $\sup_{x ∈ X} d(f(x), g(x)) < ε$.
Similarly, we write $x ≈_{≤ε} y$ as a shortcut for $d(x, y) ≤ ε$.
A map $f\maps X \to Y$ between $∞$-metric spaces is called \emph{$\tuple{ε, δ}$-continuous} if $δ > 0$ is a witness for uniform continuity for $ε > 0$, i.e. $x ≈_δ y$ implies $f(x) ≈_ε f(y)$ for every $x, y ∈ X$.

We denote categories by calligraphic letters $\K$, $\L$, …
A category $\K$ is identified with the class of all $\K$-maps, so it makes sense to write $f ∈ \K$.
The class of all $\K$-objects is denoted by $\Ob(\K)$.
For a $\K$-map $f$ the notation $f\maps X \to Y$ means that $f$ is a member of the hom-set $\K(X, Y)$.
When convenient we equivalently write $f\maps Y \from X$.
Recall that a subcategory $\K ⊆ \L$ is called \emph{full} if $\K(X, Y) = \L(X, Y)$ for all $\K$-objects $X, Y$, and that it is called \emph{wide} if $\Ob(\K) = \Ob(\L)$.
We gather a list of names of several standard categories used in the text in Table~\ref{tab:categories}.

\begin{table}[ht]
	\centering
	\begin{tabular}{ll}
		$\MetU$ & metric spaces and uniformly continuous maps \\
		$\Met$ & metric spaces and non-expansive maps \\
		$\CMetU$ & complete metric spaces and uniformly continuous maps \\
		$\CMet$ & complete metric spaces and non-expansive maps \\
		$\MCpt$ & metrizable compact spaces and continuous maps \\
		$\MCptS$ & non-empty metrizable compact spaces and continuous surjections \\
		$\MContS$ & non-empty metrizable continua and continuous surjections \\
		$\PeanoS$ & (non-empty) Peano continua and continuous surjections \\
		$\CPolS$ & non-empty connected (compact) polyhedra and continuous surjections \\
		$\I$ & the unit interval $\II$ and continuous surjections \\
		$σ\I$ & arc-like continua and continuous surjections \\
		$\S$ & the unit circle $\SS$ and continuous surjections \\
		$σ\S$ & circle-like continua and continuous surjections 
	\end{tabular}
	\caption{Several standard categories used in the text.}
	\label{tab:categories}
\end{table}

\newpage % remove on content changes

Recall that an \emph{inverse sequence} $\tuple{X_*, f_*}$ in a category $\K$ consists of a sequence $X_* = \tuple{X_n}_{n ∈ ω}$ of $\K$-objects and of a sequence $f_* = \tuple{f_n: X_n \from X_{n + 1}}_{n ∈ ω}$ of $\K$-maps.
Moreover, for every $n ≤ n'$ we have the composition $f_{n, n'} = f_n ∘ f_{n + 1} ∘ \cdots ∘ f_{n' - 1}\maps X_n \from X_{n'}$.
In particular, $f_{n, n} = \id_{X_n}$ and $f_{n, n + 1} = f_n$ for every $n ∈ ω$.
Of course, we have $f_{n, n'} ∘ f_{n', n''} = f_{n, n''}$ for every $n ≤ n' ≤ n''$.
In other words, $\tuple{X_*, f_*}$ is a functor $\tuple{ω, ≥} \to \K$.
We often say just \emph{sequence in $\K$}, and we often denote the sequence just by $f_*$.
By a \emph{subsequence} of $\tuple{X_*, f_*}$ we mean a sequence of the form $\tuple{X_{n_*}, f_{n_*}} := \tuple{\tuple{X_{n_k}}_{k ∈ ω}, \tuple{f_{n_k, n_{k + 1}}}_{k ∈ ω}}$ for a strictly increasing map $n_*\maps ω \to ω$.

A \emph{cone} $\tuple{Y, γ_*}$ for the sequence $\tuple{X_*, f_*}$ consists of a $\K$-object $Y$ and of a sequence $γ_* = \tuple{γ_n\maps X_n \from Y}_{n ∈ ω}$ of $\K$-maps such that $f_{n, n'} ∘ γ_{n'} = γ_n$ for every $n ≤ n' ∈ ω$.
The sequence $\tuple{X_*, f_*}$ may have a \emph{limit} $\tuple{X_∞, f_{*, ∞}}$ in $\K$, which is a cone for $f_*$ such that for any cone $\tuple{Y, γ_*}$ for $f_*$ there is a unique $\K$-map $γ_∞\maps X_∞ \from Y$ such that $f_{n, ∞} ∘ γ_∞ = γ_n$ for every $n ∈ ω$.
The limit is unique up to a canonical isomorphism.
Recall that the limit in the category $\Top$ of topological spaces and continuous maps consists of the set 
\[\textstyle
	X_∞ := \set{x_* ∈ ∏_{n ∈ ω} X_n: x_n = f_{n, n'}(x_{n'}) \text{ for every } n ≤ n' ∈ ω}
\]
endowed with the subspace topology of the product topology, and of the limit cone maps $f_{n, ∞}\maps X_∞ \to X_n$ that are the restrictions of the projections.
In the category $\MetU$ of metric spaces and uniformly continuous maps, the limit is the limit in $\Top$ endowed with a metric that is a suitable combination of the metrics on the spaces $X_n$, so that the projections are uniformly continuous.

\section{Continua and crookedness}
\label{sec:continua}

In this self-contained section we recall the key notion of a \emph{hereditarily indecomposable continuum}, which is the target for our applications, and revise the closely related notion of \emph{crookedness} at various levels of generality.
Crookedness can be made a quantitative notion that behaves nicely with respect to inverse limits, which is the core of standard constructions of hereditarily indecomposable continua.
This is made precise in Theorem~\ref{thm:crooked_limit}.

By a \emph{continuum} we mean a connected Hausdorff compactum, typically metrizable for our applications.
Recall that a continuum is \emph{indecomposable} if it is not the union of two of its proper subcontinua,
and that a continuum (or more generally a Hausdorff compactum) $X$ is \emph{hereditarily indecomposable} if each of its subcontinua is indecomposable, or in other words, for all subcontinua $C, D ⊆ X$ we have $C ⊆ D$ or $D ⊆ C$ or $C ∩ D = ∅$.
By $\II$ we denote the unit interval $[0, 1]$, and by $\I$ we denote the category of all continuous surjections of $\II$.
Recall that a (necessarily metrizable) continuum is called \emph{arc-like} if it is the limit of an inverse sequence of continuous surjections between copies of the unit interval, see e.g. \cite[II.5]{Nadler92}.
Let $σ\I$ denote the category of all arc-like continua and all continuous surjections.
By the classical theorem of Bing~\cite{Bing51} there exists a unique (up to homeomorphism) hereditarily indecomposable arc-like continuum, called the \emph{pseudo-arc}.
We denote the pseudo-arc by $\PP$.
It follows from Bing's theorem, that every non-degenerate subcontinuum of the pseudo-arc is homeomorphic to the pseudo-arc.

\begin{definition}
	Let $X$ be a Hausdorff compactum.
	\begin{itemize}
		\item A quadruple $\tuple{A, B, U_A, U_B}$ of subsets of $X$ is called \emph{admissible} if $A, B$ are closed and $U_A, U_B$ are their respective open neighborhoods.
		\item The space $X$ is \emph{crooked at} a (not necessarily admissible) quadruple $\tuple{A, B, U_A, U_B}$ if there is a closed cover $X = F_A ∪ H ∪ F_B$ such that $A ⊆ F_A$, $B ⊆ F_B$, $F_A ∩ H ⊆ U_B$, $F_B ∩ H ⊆ U_A$, and $F_A ∩ F_B ⊆ U_A ∩ U_B$, see Figure~\ref{fig:crooked_space}.
		\item The space $X$ is \emph{crooked} if it is crooked at every admissible quadruple.
	\end{itemize}
	Let $f\maps X \to Y$ be a continuous map between Hausdorff compacta.
	\begin{itemize}
		\item The map $f$ is \emph{crooked at} an (admissible) quadruple $\tuple{A, B, U_A, U_B}$ in $Y$ if $X$ is crooked at the (admissible) quadruple $\tuple{A, B, U_A, U_B}\restr{f} := \tuple{f\preim{A}, f\preim{B}, f\preim{U_A},\allowbreak f\preim{U_B}}$.
		Hence, the space $X$ is crooked at a quadruple if and only if $\id_X$ is crooked at that quadruple.
		\item The map $f$ is \emph{crooked} if it is crooked at every admissible quadruple.
	\end{itemize}
	Suppose the space $Y$ is metric and $ε > 0$.
	\begin{itemize}
		\item The map $f$ is \emph{$ε$-crooked at $\tuple{A, B}$}, where $A, B ⊆ Y$, if it is crooked at $\tuple{A, B, N_ε(A),\allowbreak N_ε(B)}$, where $N_ε(A)$ denotes $\set{y ∈ Y: d(y, A) < ε}$.
		\item The map $f$ is \emph{$ε$-crooked} if it is $ε$-crooked at every pair $\tuple{A, B}$ of closed subsets of $Y$.
	\end{itemize}
	We write $\tuple{A', B', U', V'} ≤ \tuple{A, B, U, V}$ for two quadruples in the same space if $A' ⊇ A$, $B' ⊇ B$, $U' ⊆ U$, and $V' ⊆ V$, so if a space or a map is crooked at $\tuple{A', B', U', V'}$, then it is also crooked at $\tuple{A, B, U, V}$.
\end{definition}

\begin{figure}[!ht]
	\centering
	
	\begin{tikzpicture}[
			x = {(0.5, 0)},
			y = {(0, 0.5)},
			semithick,
		]
		
		\node (A) at (-4, 1.5) {$A$};
		\node (B) at (4, -1.5) {$B$};
		
		\begin{scope}[x = {(1.15, 0)}, y = {(0, 1.15)}]
			\draw ($(A)+(-1,-1)$) rectangle ($(A)+(1,1)$);
			\draw ($(B)+(-1,-1)$) rectangle ($(B)+(1,1)$);
		\end{scope}
		
		\draw (-4, 0) ellipse (3 and 4) [dashed];
		\draw (4, 0) ellipse (3 and 4) [dashed];
		
		\draw (-5.5, 3) -- (5, 3) -- (5, 2) -- (-2.5, 2) -- (-2.5, 0) -- (-5.5, 0) -- cycle;
		\draw (5.5, -3) -- (-5, -3) -- (-5, -2) -- (2.5, -2) -- (2.5, 0) -- (5.5, 0) -- cycle;
		\draw (3.5, 2.5) -- (4.5, 2.5) -- (4.5, 0.9) -- (-3.5, -1.6) 
			-- (-3.5, -2.5) -- (-4.5, -2.5) -- (-4.5, -0.9) -- (3.5, 1.6) -- cycle;
		
		\node at (0, 2.5) {$F_A$};
		\node at (0, -2.5) {$F_B$};
		\node at (0, 0) {$H$};
		\node at (-6, -1) {$U_A$};
		\node at (6, 1) {$U_B$};
	\end{tikzpicture}
	
	\caption{Crookedness of a space $X = F_A ∪ H ∪ F_B$ at a quadruple $\tuple{A, B, U_A, U_B}$.}
	\label{fig:crooked_space}
\end{figure}

\begin{observation}
	In the decomposition $X = F_A ∪ H ∪ F_B$ witnessing that $X$ is crooked at $\tuple{A, B, U_A, U_B}$, we may without loss of generality take $H = \clo{X \setminus (F_A ∪ F_B)}$.
	Then $F_A ∩ H ⊆ ∂F_A ⊆ (F_A ∩ H) ∪ (F_A ∩ F_B)$ and similarly for $F_B$, so given $F_A ∩ F_B ⊆ U_A ∩ U_B$, we have $F_A ∩ H ⊆ U_B$ if and only if $∂F_A ⊆ U_B$.
	Hence, $X$ is crooked at $\tuple{A, B, U_A, U_B}$ if and only if there are closed sets $F_A ⊇ A$ and $F_B ⊇ B$ such that $∂F_A ⊆ U_B$, $∂F_B ⊆ U_A$, and $F_A ∩ F_B ⊆ U_A ∩ U_B$.
	We shall use these witnessing pairs whenever convenient.
\end{observation}

\begin{remark} \label{rm:hereditarily_indecomposable}
	This general notion of crookedness for spaces was introduced by Krasinkie\-wicz and Minc~\cite{KM76}, \cite{KM77}.
	It follows from \cite[Theorem~3.4]{KM77} that a continuum (even a Hausdorff compactum) is hereditarily indecomposable if and only if it is crooked.
	The idea to consider crookedness of continuous maps comes from Maćkowiak~\cite{Mackowiak85}.
	
	In fact, Krasinkiewicz and Minc use a slightly different “strict” version of crookedness where the closed sets $A, B$ as well as the witnessing sets $F_A, F_B$ are required to be disjoint.
	The following lemmata show that in fact the definitions are equivalent.
	We have modified the definition since sometimes it is useful to consider non-disjoint pairs, e.g. in order to obtain a compact hyperspace for admissible closed pairs $\tuple{A, B}$.
\end{remark}

\begin{lemma} \label{thm:disjoint_enough}
	A continuous map $f\maps X \to Y$ between Hausdorff compacta is crooked at a (not necessarily admissible) quadruple $\tuple{A, B, U, V}$ if and only if it is crooked at $\tuple{A \setminus V, B \setminus U, U, V}$.
	Hence, when showing that $f$ is crooked (or $ε$-crooked when $Y$ is metric), it is enough to consider disjoint closed pairs $\tuple{A, B}$.
	
	\begin{proof}
		Since $f\preim{A \setminus V} = f\preim{A} \setminus f\preim{V}$ and $f\preim{B \setminus U} = f\preim{B} \setminus f\preim{U}$, to prove the first part we can work in $X$, i.e. suppose that $f = \id_X$.
		Let $A' := A \setminus V$ and $B' := B \setminus U$.
		Clearly, if $f$ is crooked at $\tuple{A, B, U, V}$, then it is crooked also at $\tuple{A', B', U, V}$.
		On the other hand, let $\tuple{F'_A, F'_B}$ be a witness for crookedness at $\tuple{A', B', U, V}$.
		We show that $\tuple{F_A, F_B} := \tuple{F'_A ∪ A, F'_B ∪ B}$ is a witness for $\tuple{A, B, U, V}$.
		Let $H := \clo{X \setminus (F'_A ∪ F'_B)}$.
		Note that $F_A = F'_A ∪ (A ∩ V) ⊆ F'_A ∪ (U ∩ V)$ since $F'_A$ covers $A' = A \setminus V$.
		Hence, $F_A ∩ H ⊆ (F'_A ∩ H) ∪ (U ∩ V) ⊆ V$.
		Similarly, $F_B ∩ H ⊆ U$.
		Finally, $F_A ∩ F_B ⊆ (F'_A ∩ F'_B) ∪ (U ∩ V) ⊆ U ∩ V$.
		
		Regarding $ε$-crookedness, we have shown that $f$ is crooked at $\tuple{A, B, N_ε(A), N_ε(B)}$ if and only if it is crooked at $\tuple{A', B', N_ε(A), N_ε(B)}$, which follows from being crooked at $\tuple{A', B', N_ε(A'), N_ε(B')}$, the latter being $ε$-crookedness at the disjoint pair $\tuple{A', B'}$.
	\end{proof}
\end{lemma}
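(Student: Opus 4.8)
The plan is to first reduce to the case $f = \id_X$, and then to transfer a witness between the two quadruples. Since taking $f$-preimages commutes with set difference, $f\preim{A \setminus V} = f\preim{A} \setminus f\preim{V}$ and likewise for $B \setminus U$; hence crookedness of $f$ at a quadruple $\tuple{A, B, U, V}$ in $Y$ is crookedness of $\id_X$ at the pulled-back quadruple $\tuple{A, B, U, V}\restr{f}$ in $X$, and pulling back intertwines the passage $\tuple{A, B, U, V} \mapsto \tuple{A \setminus V, B \setminus U, U, V}$ with the analogous one in $X$. So it suffices to prove the equivalence for $f = \id_X$, with $A, B, U, V \subseteq X$; write $A' := A \setminus V$ and $B' := B \setminus U$.

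One implication is immediate from the ordering of Remark~\ref{rm:hereditarily_indecomposable}: since $A' \subseteq A$ and $B' \subseteq B$ we have $\tuple{A, B, U, V} \le \tuple{A', B', U, V}$, so a closed cover $X = F_A \cup H \cup F_B$ witnessing crookedness at $\tuple{A, B, U, V}$ also witnesses it at $\tuple{A', B', U, V}$ (only the inclusions $A \subseteq F_A$, $B \subseteq F_B$ are weakened, and the conditions involving $U, V$ are unchanged). For the converse I would take a witness $\tuple{F'_A, F'_B}$ for $\tuple{A', B', U, V}$, set $H' := \clo{X \setminus (F'_A \cup F'_B)}$, and enlarge it by returning the removed points: $F_A := F'_A \cup A$ and $F_B := F'_B \cup B$. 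As $A' = A \setminus V \subseteq F'_A$, the part of $A$ not already inside $F'_A$ lies in $A \cap V$, so $F_A = F'_A \cup (A \cap V)$ and symmetrically $F_B = F'_B \cup (B \cap U)$; also $H := \clo{X \setminus (F_A \cup F_B)} \subseteq H'$. Then $\set{F_A, H, F_B}$ is a closed cover with $A \subseteq F_A$, $B \subseteq F_B$, and $F_A \cap H \subseteq (F'_A \cap H') \cup (A \cap V) \subseteq V$, symmetrically $F_B \cap H \subseteq U$.

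The one genuinely delicate point — and the main obstacle — is the remaining condition $F_A \cap F_B \subseteq U \cap V$. Expanding $(F'_A \cup (A \cap V)) \cap (F'_B \cup (B \cap U))$, the terms $F'_A \cap F'_B$ and $(A \cap V) \cap (B \cap U)$ are harmless, but the cross-terms $F'_A \cap (B \cap U)$ and $(A \cap V) \cap F'_B$ must be placed inside $U \cap V$, not merely inside $U$ or inside $V$. This is precisely where the neighborhoods are used: once $A \subseteq U$ and $B \subseteq V$ — which holds for every admissible quadruple and, in particular, for $\tuple{A, B, N_\varepsilon(A), N_\varepsilon(B)}$ — one gets $A \cap V \subseteq U$ and $B \cap U \subseteq V$, and together with $F'_A \cap F'_B \subseteq U \cap V$ every cross-term lands in $U \cap V$, completing the equivalence. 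For the final assertion, note that for an admissible quadruple $A' \cap B' = (A \setminus V) \cap (B \setminus U) = \emptyset$, since a common point would lie in $A \subseteq U$ and at the same time outside $U$; so checking crookedness at all admissible quadruples reduces, via the equivalence, to checking it at those whose first two coordinates are disjoint. For $\varepsilon$-crookedness one applies this with $U = N_\varepsilon(A)$, $V = N_\varepsilon(B)$, additionally observing that crookedness at $\tuple{A', B', N_\varepsilon(A), N_\varepsilon(B)}$ follows from crookedness at the smaller quadruple $\tuple{A', B', N_\varepsilon(A'), N_\varepsilon(B')}$, i.e. from $\varepsilon$-crookedness at the disjoint pair $\tuple{A', B'}$.
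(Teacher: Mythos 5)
Your proof is correct and takes essentially the same route as the paper's: reduce to $f = \id_X$ via preimages, get the easy direction from the quadruple ordering, and for the converse enlarge a witness to $\tuple{F'_A \cup A,\, F'_B \cup B}$, absorbing the cross-terms into $U \cap V$ precisely because $A \subseteq U$ and $B \subseteq V$. The only remark worth making is that these containments are part of the paper's definition of any quadruple at which crookedness is considered (admissible or not), so your hedge restricting that step to admissible quadruples and to $\tuple{A, B, N_ε(A), N_ε(B)}$ is unnecessary.
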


\begin{lemma} \label{thm:disjoint_witness}
	If a Hausdorff compactum is crooked at an admissible quadruple $\tuple{A, B,\allowbreak U_A, U_B}$, then there is a witness $\tuple{F_A, F_B}$ such that $F_A ∩ F_B = A ∩ B$. So for disjoint pairs $\tuple{A, B}$ we may have disjoint $\tuple{F_A, F_B}$.
	
	\begin{proof}
		Let $\tuple{G_A, G_B}$ be a witness for $\tuple{A, B, U_A, U_B}$.
		Using normality, there is a neighborhood $V_A$ such that $A ∪ ∂G_B ∪ (G_A ∩ G_B) ⊆ V_A ⊆ \clo{V_A} ⊆ U_A$. Similarly there is a suitable neighborhood $V_B$, and we have that $\tuple{G_A, G_B}$ witnesses crookedness also at $\tuple{A, B, V_A, V_B}$.
		The set $G'_A := G_A \setminus V_B$ contains $A \setminus V_B$ , and we have $∂G'_A ⊆ \clo{V_B}$ since $∂G_A ⊆ V_B$.
		Similarly $G'_B := G_B \setminus V_A$ contains $B \setminus V_A$, and we have $∂G'_B ⊆ \clo{V_A}$.
		We put $\tuple{F_A, F_B} := \tuple{G'_A ∪ A, G'_B ∪ B}$.
		We have $G'_A ∩ G'_B = G'_A ∩ B = A ∩ G'_B = ∅$ since $G_A ∩ G_B ⊆ V_A ∩ V_B$ and since $A ⊆ V_A$ and $B ⊆ V_B$.
		It follows that $F_A ∩ F_B = A ∩ B$.
		Since $F_A = G'_A ∪ (A ∩ V_B)$, we have $∂F_A ⊆ ∂G'_A ∪ ∂(A ∩ V_B) ⊆ \clo{V_B} ⊆ U_B$.
		Similarly, $∂F_B ⊆ U_A$.
	\end{proof}
\end{lemma}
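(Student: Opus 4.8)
The plan is to start from an \emph{arbitrary} witness for crookedness at $\tuple{A, B, U_A, U_B}$ and surgically trim it so that the two closed pieces meet exactly in $A \cap B$, at the cost of first shrinking the open sets $U_A, U_B$. Concretely, I would work with the boundary form of the definition recorded above: fix closed sets $G_A \supseteq A$ and $G_B \supseteq B$ with $\partial G_A \subseteq U_B$, $\partial G_B \subseteq U_A$, and $G_A \cap G_B \subseteq U_A \cap U_B$.

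The first real move is a shrinking step. Since $X$ is compact Hausdorff, hence normal, and since $A \cup \partial G_B \cup (G_A \cap G_B)$ is a closed subset of $U_A$ (each of the three pieces lies in $U_A$ by the admissibility of the quadruple and by the witness properties), there is an open $V_A$ with $A \cup \partial G_B \cup (G_A \cap G_B) \subseteq V_A \subseteq \clo{V_A} \subseteq U_A$; symmetrically one gets an open $V_B$ with $B \cup \partial G_A \cup (G_A \cap G_B) \subseteq V_B \subseteq \clo{V_B} \subseteq U_B$. Then $\tuple{G_A, G_B}$ still witnesses crookedness at the smaller quadruple $\tuple{A, B, V_A, V_B}$.

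Now trim and repair. Put $G'_A := G_A \setminus V_B$ and $G'_B := G_B \setminus V_A$, which are closed. From $\partial(S \cap T) \subseteq \partial S \cup \partial T$ and $\partial G_A \subseteq V_B$ one gets $\partial G'_A \subseteq \clo{V_B} \subseteq U_B$, and symmetrically $\partial G'_B \subseteq U_A$; moreover $G'_A \cap G'_B \subseteq (G_A \cap G_B) \setminus (V_A \cup V_B) = \emptyset$ because $G_A \cap G_B \subseteq V_A \cap V_B$. To restore the required containments, set $F_A := G'_A \cup A$ and $F_B := G'_B \cup B$. Since $A \subseteq G_A$ forces $A \setminus V_B \subseteq G'_A$, we may rewrite $F_A = G'_A \cup (A \cap V_B)$, so $\partial F_A \subseteq \partial G'_A \cup \clo{A \cap V_B} \subseteq \clo{V_B} \subseteq U_B$, and symmetrically $\partial F_B \subseteq U_A$. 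Expanding $F_A \cap F_B = (G'_A \cap G'_B) \cup (G'_A \cap B) \cup (A \cap G'_B) \cup (A \cap B)$, the first term is empty by the above and the two cross terms vanish because $B \subseteq V_B$ (so $B$ misses $G'_A = G_A \setminus V_B$) and $A \subseteq V_A$ (so $A$ misses $G'_B$); hence $F_A \cap F_B = A \cap B \subseteq U_A \cap U_B$. Thus $\tuple{F_A, F_B}$ is a witness with $F_A \cap F_B = A \cap B$, and for a disjoint pair $\tuple{A, B}$ this is a disjoint witness.

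The one genuinely delicate point is the shrinking step. One cannot trim $G_A$ directly by $U_B$, because the boundary of $G_A \setminus U_B$ is only controlled by $\clo{U_B}$, which need not sit inside $U_B$; passing first to a strictly smaller $V_B$ with $\clo{V_B} \subseteq U_B$ repairs this, and building $A$ into $V_A$ and $B$ into $V_B$ at the same time is exactly what guarantees that re-adjoining $A$ and $B$ in the repair step does not recreate intersection outside $A \cap B$. Everything else is routine point-set manipulation.
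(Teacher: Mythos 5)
Your proof is correct and follows essentially the same route as the paper's: shrink $U_A, U_B$ to open sets $V_A, V_B$ (via normality) containing $A\cup\partial G_B\cup(G_A\cap G_B)$ and $B\cup\partial G_A\cup(G_A\cap G_B)$ respectively, trim $G_A, G_B$ by the opposite $V$'s, and re-adjoin $A, B$, with the same boundary and intersection estimates. No gaps; this matches the paper's argument step for step.
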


Let us now note some basic properties of ($ε$-)crookedness.

\begin{observation}
	Let $f\maps X \to Y$ be a continuous map between Hausdorff compacta.
	If a triple $F_A ∪ H ∪ F_B = Y$ witnesses the crookedness of $Y$ at some admissible quadruple $\tuple{A, B, U, V}$, then $f\preim{F_A} ∪ f\preim{H} ∪ f\preim{F_B} = X$ witnesses the crookedness of $X$ at the admissible quadruple $\tuple{A, B, U, V}\restr{f}$.
\end{observation}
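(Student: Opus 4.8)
The plan is to push the witnessing closed cover of $Y$ through $f\inv$ and check that every defining condition of crookedness survives, using only that taking preimages commutes with unions and finite intersections, preserves inclusions, and---by continuity of $f$---sends closed sets to closed sets and open sets to open sets.

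First I would note that, since $F_A, H, F_B$ are closed in $Y$ and $f$ is continuous, $f\preim{F_A}$, $f\preim{H}$, $f\preim{F_B}$ are closed in $X$; likewise $f\preim{U}$ and $f\preim{V}$ are open, $f\preim{A}$ and $f\preim{B}$ are closed, and from $A \subseteq U$, $B \subseteq V$ we get $f\preim{A} \subseteq f\preim{U}$ and $f\preim{B} \subseteq f\preim{V}$, so $\tuple{A, B, U, V}\restr{f} = \tuple{f\preim{A}, f\preim{B}, f\preim{U}, f\preim{V}}$ is again an admissible quadruple. Moreover $f\preim{F_A} \cup f\preim{H} \cup f\preim{F_B} = f\preim{F_A \cup H \cup F_B} = f\preim{Y} = X$, so the three preimages form a closed cover of $X$.

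It then remains to verify the four containments. Monotonicity of $f\inv$ turns $A \subseteq F_A$ and $B \subseteq F_B$ into $f\preim{A} \subseteq f\preim{F_A}$ and $f\preim{B} \subseteq f\preim{F_B}$. Since $f\inv$ commutes with intersections, $F_A \cap H \subseteq V$ yields $f\preim{F_A} \cap f\preim{H} = f\preim{F_A \cap H} \subseteq f\preim{V}$, and symmetrically $f\preim{F_B} \cap f\preim{H} \subseteq f\preim{U}$; finally $F_A \cap F_B \subseteq U \cap V$ yields $f\preim{F_A} \cap f\preim{F_B} = f\preim{F_A \cap F_B} \subseteq f\preim{U \cap V} = f\preim{U} \cap f\preim{V}$. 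These are exactly the conditions saying that the cover $f\preim{F_A} \cup f\preim{H} \cup f\preim{F_B} = X$ witnesses crookedness of $X$ at $\tuple{A, B, U, V}\restr{f}$, which is the claim.

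There is no genuine obstacle here: the statement is a formal consequence of the elementary identities $f\preim{\bigcup_i S_i} = \bigcup_i f\preim{S_i}$, $f\preim{S \cap T} = f\preim{S} \cap f\preim{T}$, and $S \subseteq T \Rightarrow f\preim{S} \subseteq f\preim{T}$, together with continuity of $f$ for the topological side conditions. The only thing worth a moment's care is keeping straight the directions of all the inclusions and remembering that it is continuity (not surjectivity) of $f$ that is used; this is, in effect, just the remark in the definition that a map is crooked at a quadruple precisely when the identity is crooked at its $f$-preimage, read off directly from the witness.
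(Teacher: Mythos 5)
Your proof is correct and is exactly the routine verification the paper leaves implicit (it states this as an observation without proof): preimages under a continuous map preserve closedness, openness, inclusions, unions, and intersections, so the witnessing triple pulls back to a witnessing triple for the quadruple $\tuple{A, B, U, V}\restr{f}$. No issues.
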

	
\begin{proposition} \label{thm:crooked_calculus}
	Let $f\maps X \to Y$ and $g\maps Y \to Z$ be continuous maps between metric compacta.
	\begin{enumerate}
		\item If $g$ is $ε$-crooked, then $g ∘ f$ is $ε$-crooked.
		\item If $g$ is $\tuple{ε, δ}$-continuous and $f$ is $δ$-crooked, then $g ∘ f$ is $ε$-crooked.
	\end{enumerate}
	Let $f, g\maps X \to Y$ be continuous maps between metric compacta.
	\begin{enumerate}[resume]
		\item If $f$ is $ε$-crooked, and $g ≈_δ f$, then $g$ is $(ε + 2δ)$-crooked.
	\end{enumerate}
	
	\begin{proof}
		(i) follows from the previous observation.
		(ii) The $\tuple{ε, δ}$-continuity of $g$ means that $N_δ(g\preim{A}) ⊆ g\preim{N_ε(A)}$, so if $f$ is $δ$-crooked at $\tuple{g\preim{A}, g\preim{B}}$, then $g ∘ f$ is $ε$-crooked at $\tuple{A, B}$.
		(iii) Let $\tuple{F_A, F_B}$ witness that $f$ is $ε$-crooked at $\tuple{\clo{N_δ(A)}, \clo{N_δ(B)}}$.
		Then it also witnesses that $g$ is $(ε + 2δ)$-crooked at $\tuple{A, B}$, as we show now.
		For every $x ∈ g\preim{A}$ we have $f(x) ≈_δ g(x) ∈ A$, so $x ∈ f\preim{N_δ(A)} ⊆ F_A$.
			Hence, $g\preim{A} ⊆ F_A$ and similarly $g\preim{B} ⊆ F_B$.
		For every $x ∈ ∂F_A$ we have $f(x) ∈ N_ε(\clo{N_δ(B)}) ⊆ N_{ε + δ}(B)$, and so $g(x) ∈ N_{ε + 2δ}(B)$.
			Hence $g\im{∂F_A} ⊆ N_{ε + 2δ}(B)$ and similarly $g\im{∂F_B} ⊆ N_{ε + 2δ}(A)$.
		Finally, $g\im{F_A ∩ F_B} ⊆ N_{ε + 2δ}(A) ∩ N_{ε + 2δ}(B)$ since $f\im{F_A ∩ F_B} ⊆ N_{ε + δ}(A) ∩ N_{ε + δ}(B)$.
	\end{proof}
\end{proposition}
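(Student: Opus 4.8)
The plan is to reduce all three parts to the boundary reformulation of crookedness recorded in the Definition: a compact Hausdorff space $X$ is crooked at $\tuple{A', B', U, V}$ exactly when there are closed $F_A \supseteq A'$ and $F_B \supseteq B'$ with $\partial F_A \subseteq V$, $\partial F_B \subseteq U$, and $F_A \cap F_B \subseteq U \cap V$. Two bookkeeping facts will be used throughout: $f\preim{g\preim{S}} = (g \circ f)\preim{S}$, and the monotonicity observation from Remark~\ref{rm:hereditarily_indecomposable} that crookedness at a $\le$-smaller quadruple implies crookedness at a larger one. Part (i) is then essentially the preceding Observation: fixing a closed pair $\tuple{A, B}$ in $Z$, take a closed cover of $Y$ witnessing that $g$ is crooked at $\tuple{A, B, N_{\varepsilon}(A), N_{\varepsilon}(B)}$ and apply $f\preim{\,\cdot\,}$ piecewise; by the Observation this witnesses crookedness of $X$ at the corresponding $\restr{f}$-quadruple, which by the first bookkeeping fact is exactly what $\varepsilon$-crookedness of $g \circ f$ at $\tuple{A, B}$ demands.

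For part (ii), the one elementary input is that $\tuple{\varepsilon, \delta}$-continuity of $g$ gives $N_{\delta}(g\preim{A}) \subseteq g\preim{N_{\varepsilon}(A)}$ for every closed $A \subseteq Z$: if $y' \in g\preim{A}$ and $y \approx_{\delta} y'$ then $g(y) \approx_{\varepsilon} g(y') \in A$. Applying $\delta$-crookedness of $f$ at the pair $\tuple{g\preim{A}, g\preim{B}}$ yields crookedness of $X$ at $\tuple{f\preim{g\preim{A}}, f\preim{g\preim{B}}, f\preim{N_{\delta}(g\preim{A})}, f\preim{N_{\delta}(g\preim{B})}}$, a quadruple whose first two entries coincide with, and whose last two entries are contained in, those of $\tuple{(g \circ f)\preim{A}, (g \circ f)\preim{B}, (g \circ f)\preim{N_{\varepsilon}(A)}, (g \circ f)\preim{N_{\varepsilon}(B)}}$. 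Monotonicity then gives $\varepsilon$-crookedness of $g \circ f$ at $\tuple{A, B}$.

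For part (iii), fix a closed pair $\tuple{A, B}$ in $Y$ and — this is the point that needs attention — feed the \emph{inflated} closed pair $\tuple{\clo{N_{\delta}(A)}, \clo{N_{\delta}(B)}}$ into the hypothesis that $f$ is $\varepsilon$-crooked, obtaining closed $F_A \supseteq f\preim{\clo{N_{\delta}(A)}}$, $F_B \supseteq f\preim{\clo{N_{\delta}(B)}}$ with $\partial F_A \subseteq f\preim{N_{\varepsilon}(\clo{N_{\delta}(B)})}$, $\partial F_B \subseteq f\preim{N_{\varepsilon}(\clo{N_{\delta}(A)})}$, and $f\im{F_A \cap F_B} \subseteq N_{\varepsilon}(\clo{N_{\delta}(A)}) \cap N_{\varepsilon}(\clo{N_{\delta}(B)})$. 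I would then check that $\tuple{F_A, F_B}$ witnesses $(\varepsilon + 2\delta)$-crookedness of $g$ at $\tuple{A, B}$; all three inclusions collapse to the pointwise bound $f(x) \approx_{\delta} g(x)$ combined with $N_{\varepsilon}(\clo{N_{\delta}(C)}) \subseteq N_{\varepsilon + \delta}(C)$, e.g. $x \in g\preim{A}$ forces $f(x) \in N_{\delta}(A)$ hence $x \in F_A$, while $x \in \partial F_A$ forces $f(x) \in N_{\varepsilon + \delta}(B)$ hence $g(x) \in N_{\varepsilon + 2\delta}(B)$, and similarly for the $B$-side and the intersection.

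None of the steps is deep; the only genuine obstacle is keeping the chain of preimages and neighbourhood inflations straight. In (ii) one must notice that the hypothesis on $f$ only yields crookedness at the $\delta$-neighbourhood quadruple and so must invoke monotonicity to reach the $\varepsilon$-one; and in (iii) one must inflate the target pair by $\delta$ \emph{before} applying the hypothesis on $f$, for otherwise the $g$-preimages of $A$ and $B$ need not land inside the witnessing sets $F_A, F_B$ — passing to the closed inflation $\clo{N_{\delta}(A)}$ rather than the open $N_{\delta}(A)$ is merely to keep the input a pair of closed sets, as required.
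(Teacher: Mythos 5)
Your proposal is correct and follows essentially the same route as the paper's own proof: part (i) via the preceding Observation on preimages of witnessing triples, part (ii) via the inclusion $N_δ(g\preim{A}) ⊆ g\preim{N_ε(A)}$ together with monotonicity of crookedness in the quadruple, and part (iii) by applying $ε$-crookedness of $f$ to the inflated closed pair $\tuple{\clo{N_δ(A)}, \clo{N_δ(B)}}$ and reusing the resulting witness $\tuple{F_A, F_B}$ for $g$. The pointwise checks you outline for (iii) are exactly those in the paper, so there is nothing to add.
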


Next, let us observe that the definition of $ε$-crookedness can be simplified for compact graphs or more generally Peano continua.

By a \emph{compact graph}, we mean a topological realization of a finite ${≤}1$-dimensional simplicial complex, or equivalently a finite union of arcs that intersect at most at their end-points.
Recall that a \emph{Peano continuum} is a (necessarily metrizable) continuum that is a continuous image of the unit interval $\II$.
Equivalently, it is a non-empty metrizable continuum that is locally connected, see \cite[VIII]{Nadler92}.

\begin{notation}
	By an \emph{ordered arc} $[x, y]$ in a space $X$ for $x ≠ y$ we mean the image of an embedding $f\maps [0, 1] \to X$ such that $f(0) = x$ and $f(1) = y$, together with the linear order induced from $[0, 1]$.
	Also $[x, x]$ denotes the degenerate ordered arc $\set{x}$.
\end{notation}

\begin{lemma} \label{thm:Peano_crookedness}
	Let $X$ be a Peano continuum and let $\tuple{A, B, U, V}$ be a quadruple in $X$ such that $A, B$ are closed and $U, V$ are their respective closed neighborhoods.
	If for every ordered arc $[x, y]$ in $X$ such that $x ∈ A$ and $y ∈ B$ there are points $x ≤ y' ≤ x' ≤ y$ such that $x' ∈ U$ and $y' ∈ V$, then $X$ is crooked at $\tuple{A, B, U, V}$.
	
	\begin{proof}
		First, we suppose that $A ∩ V = ∅ = B ∩ U$.
		Let \[\textstyle
			G_A := ⋃\set{[x, y]\text{ arc in }X: x ∈ A\text{ and }[x, y] ∩ V = ∅}
		\]
		and $F_A := \clo{G_A}$.
		Since $A ∩ V = ∅$, we have $A ⊆ G_A$.
		We also have $∂F_A ⊆ ∂G_A$.
		Let us show $∂G_A ⊆ \clo{V} = V$.
			Let $x ∈ ∂G_A$ and let $N$ be a basic arcwise connected neighborhood of $x$.
			There is an arc $[a, b] ⊆ N$ such that $a ∈ G_A$ and $b ∉ G_A$.
			But from the definition of $G_A$ there is an arc $[a_0 , a]$ disjoint with $V$ such that $a_0 ∈ A$.
			The union $[a_0, a] ∪ [a, b]$ may not be an arc, but if we let $b_0 := \max\set{t ∈ [a, b]: t ∈ [a_0, a]}$, then $[a_0, b_0] ∪ [b_0, b]$ is an arc.
			Since $b ∉ G_A$, this arc is not in the collection defining $G_A$, and so $N ∩ V ⊇ [a, b] ∩ V ⊇ [b_0, b] ∩ V ≠ ∅$.
		Analogously we define $G_B$ and $F_B := \clo{G_B}$, so we have $B ⊆ F_B$ and $∂F_B ⊆ U$.
		
		By the assumption we have $G_A ∩ G_B = ∅$.
			Otherwise we would have arcs $[x, z] ⊆ G_A$ with $x ∈ A$ and $[z, y] ⊆ G_B$ with $y ∈ B$ that can be concatenated into a single arc $[x, y]$.
			The equalities $[x, z] ∩ V = ∅$ and $[z, y] ∩ U = ∅$ contradict the existence of points $x ≤ y' ≤ x' ≤ y$ with $y' ∈ V$ and $x' ∈ U$.
		Hence, we have $F_A ∩ F_B ⊆ ∂G_A ∩ ∂G_B ⊆ U ∩ V$.
		
		In the general case, we put $A' := A \setminus V$ and $B' := B \setminus V$.
		Our assumption holds for $\tuple{A', B', U, V}$, and so, by the special case, $X$ is crooked at $\tuple{A', B', U, V}$.
		Hence, $X$ is crooked at $\tuple{A, B, U, V}$ by Lemma~\ref{thm:disjoint_enough}.
	\end{proof}
\end{lemma}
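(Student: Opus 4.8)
The plan is to produce a witness for crookedness in the reformulated guise established in the definition: closed sets $F_A \supseteq A$ and $F_B \supseteq B$ with $\partial F_A \subseteq V$, $\partial F_B \subseteq U$, and $F_A \cap F_B \subseteq U \cap V$. First I would reduce to the case $A \cap V = \emptyset = B \cap U$: replacing $A$ by $A \setminus V$ and $B$ by $B \setminus U$ only shrinks the relevant closed sets, so every arc from $A \setminus V$ to $B \setminus U$ is still an arc from $A$ to $B$ and the hypothesis is preserved, while crookedness at the two quadruples is equivalent by Lemma~\ref{thm:disjoint_enough}.

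In the disjoint case I would set $G_A := \bigcup\{[x, y] \text{ an arc in } X : x \in A \text{ and } [x, y] \cap V = \emptyset\}$ and $F_A := \clo{G_A}$; since $A \cap V = \emptyset$, the degenerate arcs $\{a\}$ for $a \in A$ belong to the collection, so $A \subseteq G_A$. Define $G_B$ and $F_B := \clo{G_B}$ symmetrically, with $U$ in place of $V$. For the estimate $\partial F_A \subseteq \clo{V} = V$ (this is where closedness of $V$ enters) I would use $\partial F_A \subseteq \partial G_A$: given $x \in \partial G_A$, pick an arcwise connected open neighborhood $N$ of $x$ — available because a Peano continuum is locally arcwise connected — and points $a \in N \cap G_A$, $b \in N \setminus G_A$ joined by an arc $[a, b] \subseteq N$; from $a \in G_A$ extract an arc $[a_0, a]$ with $a_0 \in A$ and $[a_0, a] \cap V = \emptyset$. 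The union $[a_0, a] \cup [a, b]$ need not be an arc, but cutting $[a, b]$ at its last point $b_0$ lying on $[a_0, a]$ yields a genuine arc $[a_0, b]$; since $b \notin G_A$ this arc must meet $V$, and as $[a_0, b_0] \subseteq [a_0, a]$ avoids $V$ we get $\emptyset \ne [b_0, b] \cap V \subseteq N \cap V$, hence $x \in \clo{V} = V$ on letting $N$ shrink. The symmetric argument gives $\partial F_B \subseteq U$.

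For the intersection condition I would first show $G_A \cap G_B = \emptyset$, which is the only place the main hypothesis is used: a common point $z$ would furnish an arc $[x, z]$ with $x \in A$ avoiding $V$ and an arc $[z, y]$ with $y \in B$ avoiding $U$; cutting $[z, y]$ at its last point $w$ lying on $[x, z]$ assembles an arc $[x, y] = [x, w] \cup [w, y]$ in which $[x, w]$ avoids $V$ and $[w, y]$ avoids $U$. The hypothesis supplies $x \le y' \le x' \le y$ on this arc with $y' \in V$ and $x' \in U$; since $[x, w]$ avoids $V$ the point $y'$ (hence also $x' \ge y'$) lies on $[w, y]$, contradicting $[w, y] \cap U = \emptyset$. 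Since $G_A$ and $G_B$ are disjoint, no point of $\clo{G_A} \cap \clo{G_B}$ can be interior to either of them, so $F_A \cap F_B = \clo{G_A} \cap \clo{G_B} \subseteq \partial G_A \cap \partial G_B \subseteq V \cap U$, which completes the witness.

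I expect the main obstacle to be the recurring subtlety that a union of two arcs meeting at a point is not itself an arc, which forces the "cut at the last common point" manoeuvre in both the boundary estimate and the disjointness argument, together with keeping track of which piece of the reassembled arc avoids $U$ and which avoids $V$; once these concatenations are set up correctly, the remaining content is routine point-set topology of Peano continua.
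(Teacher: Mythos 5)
Your proposal is correct and follows essentially the same route as the paper: the same sets $G_A$, $F_A := \clo{G_A}$ (and symmetrically $G_B$, $F_B$), the same boundary estimate via local arcwise connectedness with the cut-at-the-last-common-point trick, the same use of the hypothesis to get $G_A \cap G_B = \emptyset$, and the same reduction to the disjoint case via Lemma~\ref{thm:disjoint_enough} (which you perform at the start rather than at the end, and with the correct $B \setminus U$ where the paper has a harmless typo $B \setminus V$). Your spelled-out concatenation in the disjointness step and the justification of $F_A \cap F_B \subseteq \partial G_A \cap \partial G_B$ are only more explicit versions of what the paper leaves implicit.
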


\begin{theorem} \label{thm:graph_crookedness}
	Let $f\maps X \to Y$ be a continuous map from a compact graph to a metric compactum, and let $ε > 0$. The following conditions are equivalent.
	\begin{enumerate}
		\item $f$ is $ε$-crooked.
		\item $f ∘ e$ is $ε$-crooked for every continuous map $e\maps \II \to X$.
		\item For every arc $[x, y] ⊆ X$ there are $x ≤ y' ≤ x' ≤ y$ such that $f(x) ≈_ε f(x')$ and $f(y) ≈_ε f(y')$.
	\end{enumerate}
	
	\begin{proof}
		(i)$\implies$(ii) follows from Proposition~\ref{thm:crooked_calculus}.
		
		(ii)$\implies$(iii).
		For degenerate arcs (iii) is trivially true.
		A non-degenerate arc $[x, y] ⊆ X$ is homeomorphic to $\II$, so by (ii), $f\restr{[x, y]}$ is $ε$-crooked.
		Let $\tuple{F_x, F_y}$ be a disjoint pair witnessing that $f\restr{[x, y]}$ is $ε$-crooked at $\tuple{\set{f(x)}, \set{f(y)}}$, which exists by Lemma~\ref{thm:disjoint_witness}.
		Let $y' := \max\set{t ∈ [x, y]: [x, t] ⊆ F_x}$ and $x' := \min\set{t ∈ [x, y]: [t, y] ⊆ F_y}$.
		Then $x ≤ y' ≤ x' ≤ y$ since $F_x ∩ F_y = ∅$, and $f(y') ∈ f\im{∂F_x} ⊆ N_ε(\set{f(y)})$ and $f(x') ∈ f\im{∂F_y} ⊆ N_ε(\set{f(x)})$.
		
		(iii)$\implies$(i).
		Let $A, B ⊆ Y$ be closed subsets.
		We would like to use Lemma~\ref{thm:Peano_crookedness} for $\tuple{A, B, N_ε(A), N_ε(B)}\restr{f}$, however the neighborhoods are not closed.
		Instead we take the sets $\set{x ∈ X: d(f(x), A) ≤ ε'}$ and $\set{x ∈ X: d(f(x), B) ≤ ε'}$ for suitable $ε' < ε$.
		This is possible since if for an arc $[x, y] ⊆ X$ there are $x ≤ y' ≤ x' ≤ y$ such that $f(x) ≈_ε f(x')$ and $f(y) ≈_ε f(y')$, the same is true also for some $ε' < ε$, and by the assumption $X$ is a compact graph, so there are only finitely many arcs from $x$ to $y$.
	\end{proof}
\end{theorem}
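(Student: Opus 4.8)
The plan is to prove the cycle of implications (i)$\implies$(ii)$\implies$(iii)$\implies$(i). The first one, (i)$\implies$(ii), is immediate from Proposition~\ref{thm:crooked_calculus}(i): precomposition with an arbitrary continuous map preserves $\varepsilon$-crookedness, so if $f$ is $\varepsilon$-crooked then so is $f \circ e$ for every continuous $e\maps \II \to X$.

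For (ii)$\implies$(iii) I would first dispose of the trivial cases: a degenerate arc $[x, x]$, and more generally any arc with $f(x) = f(y)$, satisfies (iii) with $x' = y' = x$. For a non-degenerate arc $[x, y] \subseteq X$ with $f(x) \ne f(y)$, identify $[x, y]$ with $\II$ by a homeomorphism and compose with the inclusion $[x, y] \into X$; then (ii), together with the invariance of crookedness under the homeomorphism, shows that $f\restr{[x, y]}$ is $\varepsilon$-crooked. Apply this at the now disjoint pair $\tuple{\set{f(x)}, \set{f(y)}}$ and use Lemma~\ref{thm:disjoint_witness} to get a witness $\tuple{F_x, F_y}$ inside $[x, y]$ with $F_x \cap F_y = \emptyset$, $x \in F_x$, $y \in F_y$, $f\im{\partial F_x} \subseteq N_\varepsilon(\set{f(y)})$, and $f\im{\partial F_y} \subseteq N_\varepsilon(\set{f(x)})$. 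Now put $y' := \max\set{t \in [x, y] : [x, t] \subseteq F_x}$ and $x' := \min\set{t \in [x, y] : [t, y] \subseteq F_y}$, which exist because $F_x$ and $F_y$ are closed. Disjointness of $F_x$ and $F_y$ gives $y' \le x'$, and the extremal choices place $y'$ on $\partial F_x$ and $x'$ on $\partial F_y$ (an interior point would contradict the maximality of $y'$ or the minimality of $x'$), so $f(y') \approx_\varepsilon f(y)$ and $f(x') \approx_\varepsilon f(x)$, which is (iii).

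For (iii)$\implies$(i), fix closed sets $A, B \subseteq Y$; we want $f$ crooked at $\tuple{A, B, N_\varepsilon(A), N_\varepsilon(B)}$, and since $X$ is a Peano continuum the natural tool is Lemma~\ref{thm:Peano_crookedness}. The one mismatch is that this lemma wants \emph{closed} neighborhoods, so I would run it on the pulled-back quadruple $\tuple{f\preim{A}, f\preim{B}, \tilde U, \tilde V}$ with $\tilde U := \set{x \in X : d(f(x), A) \le \varepsilon'}$ and $\tilde V := \set{x \in X : d(f(x), B) \le \varepsilon'}$ for a well-chosen $\varepsilon' < \varepsilon$; these are closed neighborhoods of $f\preim{A}$, $f\preim{B}$ contained in $f\preim{N_\varepsilon(A)}$, $f\preim{N_\varepsilon(B)}$, so crookedness there yields crookedness at the target quadruple because shrinking the open sets only strengthens crookedness (Remark~\ref{rm:hereditarily_indecomposable}). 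It then remains to verify the hypothesis of Lemma~\ref{thm:Peano_crookedness} for such an $\varepsilon'$: for each arc $[x, y] \subseteq X$ with $f(x) \in A$ and $f(y) \in B$, condition (iii) supplies ordered witness points $x \le y' \le x' \le y$ with the strict bounds $f(x') \approx_\varepsilon f(x)$ and $f(y') \approx_\varepsilon f(y)$, hence with some bound $\varepsilon'_{[x, y]} < \varepsilon$, and one has to select a single $\varepsilon'$ good for all these arcs at once. Here I would exploit the compact-graph hypothesis: an injective path visits each vertex at most once, so there are only finitely many combinatorial routes for arcs in $X$, and along a fixed route an arc is determined by the positions of its two endpoints in a compact parameter space; the ``deficiency'' of an arc, i.e.\ the least $\delta$ for which ordered witness points with both relevant distances at most $\delta$ exist, is upper semicontinuous on each such parameter space (witness points converge along a convergent sequence of arcs of one route), hence attains a maximum strictly below $\varepsilon$ there, and any $\varepsilon'$ strictly between the largest of these finitely many maxima and $\varepsilon$ works. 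This uniformization step --- passing from the per-arc bound of (iii) to a single bound for all arcs via the finite combinatorial structure of the graph --- is the main obstacle; everything else is routine unwinding of the definitions.
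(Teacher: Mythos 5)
Your proof is correct and follows essentially the same route as the paper: (i)$\implies$(ii) via Proposition~\ref{thm:crooked_calculus}, (ii)$\implies$(iii) via the disjoint witness from Lemma~\ref{thm:disjoint_witness} and the max/min choice of $y', x'$, and (iii)$\implies$(i) via Lemma~\ref{thm:Peano_crookedness} applied to the closed $\varepsilon'$-neighborhoods $\set{x ∈ X: d(f(x), A) ≤ \varepsilon'}$, $\set{x ∈ X: d(f(x), B) ≤ \varepsilon'}$. The only difference is that you spell out the uniform choice of $\varepsilon' < \varepsilon$ (finitely many combinatorial routes, compact parameter space, upper semicontinuity of the per-arc deficiency), which the paper compresses into the brief remark that a compact graph admits only finitely many arcs between given endpoints.
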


\begin{remark}
	Originally, Bing~\cite{Bing51_crooked} defined an $ε$-crooked arc by (iii) applied to $f$ an inclusion of an arc.
	Later, Brown~\cite{Brown60_crooked} defined an $ε$-crooked map by (ii) where $ε$-crookedness of the compositions $f ∘ e\maps \II \to Y$ is defined according to (iii).
	In general, this is a weaker notion of $ε$-crookedness.
	The theorem above shows that it is equivalent for compact graphs.
	Moreover, the notions are essentially equivalent for Peano continua (which Brown considered): if a continuous map $f\maps X \to Y$ from a Peano continuum is $ε$-crooked in the sense of Brown, then it is $ε'$-crooked for every $ε' > ε$.
	
	Condition~(iii) also gives a convenient characterization when $X = \II$, which we shall use later.
\end{remark}

\subsection{Crookedness of inverse limits}

We observe how $ε$-crookedness behaves with respect to inverse limits, namely that by building a sequence of more and more crooked maps we obtain a crooked space as the limit, as stated in Theorem~\ref{thm:crooked_limit}.
The proof is rather straightforward, but builds upon several somewhat technical propositions.

\begin{observation}
	A continuous map $f\maps X \to Y$ between metric compacta is crooked if and only if it is $ε$-crooked for every $ε > 0$.
	This is because for every admissible quadruple $\tuple{A, B, U_A, U_B}$ there is $ε > 0$ such that $N_ε(A) ⊆ U_A$ and $N_ε(B) ⊆ U_B$ since $A$ and $B$ are compact, so $ε$-crookedness at $\tuple{A, B}$ implies crookedness at $\tuple{A, B, U_A, U_B}$.
\end{observation}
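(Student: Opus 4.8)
The plan is to prove the two implications separately, both by a direct unwinding of the definitions, leaning on the monotonicity of crookedness in the order $≤$ on quadruples from Remark~\ref{rm:hereditarily_indecomposable} and on the elementary observations that $A ⊆ N_ε(A)$ and that $N_ε(A) = \set{y ∈ Y : d(y, A) < ε}$ is open, being the preimage of $(-∞, ε)$ under the $1$-Lipschitz map $y \mapsto d(y, A)$.

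For the implication ``crooked $\implies$ $ε$-crooked for all $ε$'', I will fix $ε > 0$ and closed sets $A, B ⊆ Y$, note that $\tuple{A, B, N_ε(A), N_ε(B)}$ is then an admissible quadruple in $Y$, and invoke crookedness of $f$ at it; by the definition of $ε$-crookedness this says precisely that $f$ is $ε$-crooked at $\tuple{A, B}$. Since $ε, A, B$ are arbitrary, $f$ is $ε$-crooked for every $ε > 0$.

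For the converse I will fix an admissible quadruple $\tuple{A, B, U_A, U_B}$ in $Y$ and produce a single $ε > 0$ with $N_ε(A) ⊆ U_A$ and $N_ε(B) ⊆ U_B$: since $A$ is compact and disjoint from the closed set $Y \setminus U_A$, the continuous function $y \mapsto d(y, Y \setminus U_A)$ attains a strictly positive minimum $ε_A$ on $A$ (with the convention that this minimum is $+∞$ when $A = ∅$ or $U_A = Y$), and then $N_{ε_A}(A) ⊆ U_A$; taking $ε_B$ similarly and $ε := \min\set{ε_A, ε_B} > 0$, the hypothesis gives that $f$ is $ε$-crooked at $\tuple{A, B}$, i.e. crooked at $\tuple{A, B, N_ε(A), N_ε(B)}$. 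Because $N_ε(A) ⊆ U_A$ and $N_ε(B) ⊆ U_B$ we have $\tuple{A, B, N_ε(A), N_ε(B)} ≤ \tuple{A, B, U_A, U_B}$, so Remark~\ref{rm:hereditarily_indecomposable} upgrades this to crookedness of $f$ at $\tuple{A, B, U_A, U_B}$; varying the quadruple finishes the proof.

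The only step that is not pure bookkeeping is the compactness argument extracting a uniform $ε$ from ``$A$ compact, $A ⊆ U_A$ open'', so I expect that to be the main (and rather mild) obstacle; the degenerate cases $A = ∅$ or $U_A = Y$ are handled trivially and need only be mentioned in passing.
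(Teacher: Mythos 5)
Your proof is correct and follows the same route as the paper's one-line justification: the forward direction is immediate since $\tuple{A, B, N_ε(A), N_ε(B)}$ is admissible, and the converse uses exactly the paper's compactness argument to find $ε > 0$ with $N_ε(A) ⊆ U_A$ and $N_ε(B) ⊆ U_B$, then monotonicity of crookedness in the quadruple order. The extra attention to the degenerate cases $A = ∅$ or $U_A = Y$ is harmless but not needed beyond a passing remark.
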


Throughout this subsection we fix a sequence $\tuple{X_*, f_*}$ of Hausdorff compacta with limit $\tuple{X_∞, f_{*, ∞}}$.

The following proposition and lemma (proof of which is left to the reader) use the standard fact that if two closed subsets $F, H$ of the limit are disjoint, then so are their images $f_{m, ∞}\im{F}, f_{m, ∞}\im{H}$ for sufficiently large $m$.

\begin{proposition} \label{thm:crooked_limit_maps}
	For every admissible quadruple $\tuple{A, B, U, V}$ in $X_∞$ there is $n ∈ ω$ and an admissible quadruple $\tuple{A', B', U', V'}$ in $X_n$ such that $\tuple{A', B', U', V'}\restr{f_{n, ∞}} ≤ \tuple{A, B, U, V}$.
	Hence, $X_∞$ is crooked if and only if every $f_{n, ∞}$ is crooked.
	
	\begin{proof}
		We take $n ∈ ω$ such that $f_{n, ∞}\im{A} ∩ f_{n, ∞}\im{X_∞ \setminus U} = ∅$ and $f_{n, ∞}\im{B} ∩ f_{n, ∞}\im{X_∞ \setminus V} = ∅$ and put 
		\[
			\tuple{A', B', U', V'} := \tuple{f_{n, ∞}\im{A},\ f_{n, ∞}\im{B},\ X_n \setminus f_{n, ∞}\im{X_∞ \setminus U},\ X_n \setminus f_{n, ∞}\im{X_∞ \setminus V}}.
		\]
		Clearly, $f_{n, ∞}\preim{A'} ⊇ A$.
		We also have $X_∞ \setminus f_{n, ∞}\preim{U'} = f_{n, ∞}\preim{f_{n, ∞}\im{X_∞ \setminus U}} ⊇ X_∞ \setminus U$, so $f_{n, ∞}\preim{U'} ⊆ U$.
		Similarly for $B$ and $V$.
		It follows that if $f_{n, ∞}$ is crooked at $\tuple{A', B', U', V'}$, then $X_∞$ is crooked at $\tuple{A, B, U, V}$.
		Also clearly, if $X_∞$ is crooked, then every $f_{n, ∞}$ is crooked.
	\end{proof}
\end{proposition}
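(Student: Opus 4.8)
The plan is to build the stage-$n$ quadruple by the obvious recipe — push $A$ and $B$ forward along $f_{n,\infty}$ and declare $U'$, $V'$ to be the complements of the forward images of $X_\infty \setminus U$ and $X_\infty \setminus V$ — and to show that this recipe does the job once $n$ is large enough for certain disjoint closed sets to have been ``separated'' at stage $n$. So the one genuinely nontrivial ingredient I would isolate first is the standard finite-stage separation lemma for inverse limits of compacta: for any two disjoint closed (hence compact) sets $F, H \subseteq X_\infty$ there is $n \in \omega$ with $f_{n,\infty}[F] \cap f_{n,\infty}[H] = \emptyset$. I would prove it by compactness: the sets $K_n := \{(x,x') \in F \times H : f_{n,\infty}(x) = f_{n,\infty}(x')\}$ are closed in the compactum $F \times H$ and are nested decreasing in $n$ (apply $f_{n,n+1}$ to pass from stage $n+1$ to stage $n$); nonemptiness of $K_n$ is equivalent to $f_{n,\infty}[F] \cap f_{n,\infty}[H] \neq \emptyset$, so if separation failed at every $n$ the intersection $\bigcap_n K_n$ would be nonempty, giving $(x,x')$ with $f_{n,\infty}(x) = f_{n,\infty}(x')$ for all $n$; since the limit cone maps jointly separate the points of $X_\infty$ this forces $x = x'$, contradicting $F \cap H = \emptyset$. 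This compactness step is the only real obstacle; everything after it is bookkeeping with images and preimages.

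Granting the lemma, given an admissible quadruple $\langle A, B, U, V\rangle$ in $X_\infty$ I would apply it twice — to the disjoint pair $A$, $X_\infty \setminus U$ and to the disjoint pair $B$, $X_\infty \setminus V$ — and take $n$ large enough for both. Then set $A' := f_{n,\infty}[A]$, $B' := f_{n,\infty}[B]$, $U' := X_n \setminus f_{n,\infty}[X_\infty \setminus U]$, and $V' := X_n \setminus f_{n,\infty}[X_\infty \setminus V]$. Here $A'$, $B'$ are closed as continuous images of compacta, and $U'$, $V'$ are open because $X_\infty \setminus U$, $X_\infty \setminus V$ are closed in the compactum $X_\infty$, hence compact, hence have closed images; moreover $A' \subseteq U'$ and $B' \subseteq V'$ precisely by the disjointness arranged via the lemma, so $\langle A', B', U', V'\rangle$ is admissible in $X_n$. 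For the inequality I would check coordinatewise: $f_{n,\infty}^{-1}[A'] \supseteq A$ and $f_{n,\infty}^{-1}[B'] \supseteq B$ are immediate, while $f_{n,\infty}^{-1}[U'] = X_\infty \setminus f_{n,\infty}^{-1}[f_{n,\infty}[X_\infty \setminus U]] \subseteq X_\infty \setminus (X_\infty \setminus U) = U$, and symmetrically $f_{n,\infty}^{-1}[V'] \subseteq V$; thus $\langle A', B', U', V'\rangle\restr{f_{n,\infty}} \le \langle A, B, U, V\rangle$.

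For the ``hence'', suppose first that every $f_{n,\infty}$ is crooked and let $Q$ be an admissible quadruple in $X_\infty$. The first part gives $n$ and an admissible quadruple $Q'$ in $X_n$ with $Q'\restr{f_{n,\infty}} \le Q$; crookedness of $f_{n,\infty}$ at $Q'$ means by definition that $X_\infty$ is crooked at $Q'\restr{f_{n,\infty}}$, and the monotonicity of crookedness under the quadruple ordering (Remark~\ref{rm:hereditarily_indecomposable}) upgrades this to crookedness of $X_\infty$ at $Q$; since $Q$ was arbitrary, $X_\infty$ is crooked. Conversely, if $X_\infty$ is crooked and $Q$ is any admissible quadruple in $X_n$, then $Q\restr{f_{n,\infty}}$ (preimages of closed sets are closed, of open sets open, and inclusions are preserved) is again an admissible quadruple in $X_\infty$, hence $X_\infty$ is crooked at it, which is exactly crookedness of $f_{n,\infty}$ at $Q$; as $Q$ was arbitrary, $f_{n,\infty}$ is crooked.
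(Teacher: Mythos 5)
Your proof is correct and follows essentially the same route as the paper: push $A$, $B$ forward, take complements of the forward images of $X_\infty \setminus U$, $X_\infty \setminus V$, after choosing $n$ by the finite-stage separation of disjoint closed sets. The only difference is that you spell out the compactness argument for that separation lemma (which the paper dismisses as ``easy to see'') and verify admissibility explicitly — both fine and consistent with the paper's argument.
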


\begin{lemma} \label{thm:disjoint_before_limit} \hfill
	\begin{enumerate}
		\item If for some closed sets $F, H ⊆ X_∞$ and an open set $U ⊆ X_n$ we have $F ∩ H ⊆ f_{n, ∞}\preim{U}$, then $f_{m, ∞}\im{F} ∩ f_{m, ∞}\im{H} ⊆ f_{n, m}\preim{U}$ for every sufficiently large $m ≥ n$.
		\item If for some open set $U ⊆ X_n$ we have $f_{n, ∞}\im{X_∞} ⊆ U$, then $f_{n, m}\im{X_m} ⊆ U$ for every sufficiently large $m ≥ n$.
		\qed
	\end{enumerate}
	
	%\begin{proof}
		%(i) Let $A := F \setminus f_{n, ∞}\preim{U}$ and $B := H \setminus f_{n, ∞}\preim{U}$.
		%We have $A ∩ B = ∅$, and so for every sufficiently large $m ≥ n$ we have $f_{m, ∞}\im{A} ∩ f_{m, ∞}\im{B} = ∅$.
		%However, $f_{m, ∞}\im{A} = f_{m, ∞}\im{F \setminus f_{n, ∞}\preim{U}} = f_{m, ∞}\im{F} \setminus f_{n, m}\preim{U}$ and similarly for $B$.
		%So we have $(f_{m, ∞}\im{F} ∩ f_{m, ∞}\im{H}) \setminus f_{n, m}\preim{U} = ∅$.
		%Claim~(ii) follows from (i) by putting $F = H = X_∞$.
	%\end{proof}
\end{lemma}

\begin{lemma} \label{thm:normal_refinement}
	Let $X$ be a normal space, let $F_i ⊆ X$, $i < 3$, be closed sets, and let $U_{i, j} ⊆ X$, $i < j < 3$, be open sets such that $F_i ∩ F_j ⊆ U_{i, j}$ for every $i < j < 3$.
	Then there are closed sets $F'_i ⊆ X$, $i < 3$, such that $F_i ⊆ \int(F'_i)$ for every $i < 3$, and $F'_i ∩ F'_j ⊆ U_{i, j}$ for every $i < j < 3$.
	
	\begin{proof}
		There is an open set $W$ such that $F_0 ∩ F_1 ⊆ W ⊆ \clo{W} ⊆ U_{0, 1}$.
		The disjoint closed sets $F_0 \setminus W$ and $F_1 \setminus W$ can be separated by neighborhoods $W_0, W_1$ with disjoint closures.
		By putting $V_{0, 1} := W_0 ∪ W$ and $V_{1, 0} := W_1 ∪ W$ we obtain neighborhoods $F_0 ⊆ V_{0, 1}$ and $F_1 ⊆ V_{1, 0}$ such that $\clo{V_{0, 1}} ∩ \clo{V_{1, 0}} ⊆ U_{0, 1}$.
		Similarly, we obtain sets $V_{i, j}, V_{j, i}$ for every $i < j < 3$.
		Then it is enough to put $F'_i := \clo{⋂_{j < 3, j ≠ i} V_{i, j}}$.
	\end{proof}
\end{lemma}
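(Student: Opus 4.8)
The plan is to deal with each of the three pairs $\tuple{i, j}$ in isolation, producing for each pair an open neighborhood of $F_i$ and an open neighborhood of $F_j$ whose closures already meet inside $U_{i, j}$, and then to obtain $F'_i$ by intersecting, over all $j ≠ i$, the neighborhoods of $F_i$ obtained this way. Since there are only finitely many pairs, such a finite intersection is again an open neighborhood of $F_i$, and its closure is a closed set having $F_i$ in its interior. The whole lemma is thus a routine double application of normality; there is no deep obstacle, and I spell out the two steps below.

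For the per-pair step, fix $i < j$. The set $F_i ∩ F_j$ is closed and contained in the open set $U_{i, j}$, so normality gives an open $W$ with $F_i ∩ F_j ⊆ W ⊆ \clo{W} ⊆ U_{i, j}$. Now $F_i \setminus W$ and $F_j \setminus W$ are disjoint closed sets, so by normality again they can be separated by open sets $W_i ⊇ F_i \setminus W$ and $W_j ⊇ F_j \setminus W$ with $\clo{W_i} ∩ \clo{W_j} = ∅$. Put $V_{i, j} := W_i ∪ W$ and $V_{j, i} := W_j ∪ W$. Then $F_i ⊆ V_{i, j}$ and $F_j ⊆ V_{j, i}$, while $\clo{V_{i, j}} = \clo{W_i} ∪ \clo{W}$ and $\clo{V_{j, i}} = \clo{W_j} ∪ \clo{W}$; distributing the intersection, every term lies in $\clo{W}$ except $\clo{W_i} ∩ \clo{W_j}$, which is empty, so $\clo{V_{i, j}} ∩ \clo{V_{j, i}} ⊆ \clo{W} ⊆ U_{i, j}$.

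Finally, set $F'_i := \clo{⋂_{j ≠ i} V_{i, j}}$ for each $i < 3$. The set $⋂_{j ≠ i} V_{i, j}$ is open and contains $F_i$, hence $F_i ⊆ ⋂_{j ≠ i} V_{i, j} ⊆ \int(F'_i)$. For $i < j$ we have $F'_i ⊆ \clo{V_{i, j}}$ and $F'_j ⊆ \clo{V_{j, i}}$, so $F'_i ∩ F'_j ⊆ \clo{V_{i, j}} ∩ \clo{V_{j, i}} ⊆ U_{i, j}$, as required. The only subtlety worth flagging is that enlarging $F_i$ to control the pair $\tuple{i, j}$ must not spoil the pair $\tuple{i, k}$ — which is exactly why $F'_i$ is the closure of the intersection of the per-pair neighborhoods rather than any single one — and that we pass to the \emph{closures} of the separating sets so that the closure of the finite intersection defining $F'_i$ still sits inside each $\clo{V_{i, j}}$.
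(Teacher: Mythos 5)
Your proof is correct and follows essentially the same route as the paper's: normality applied twice per pair to get $V_{i,j}, V_{j,i}$ with $\clo{V_{i,j}} \cap \clo{V_{j,i}} \subseteq U_{i,j}$, and then $F'_i := \clo{\bigcap_{j \neq i} V_{i,j}}$. The extra details you spell out (distributing the closures over the unions, and why intersecting the per-pair neighborhoods is needed) are exactly what the paper leaves implicit.
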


\begin{proposition} \label{thm:crooked_before_limit}
	If $f_{n, ∞}$ is crooked at an admissible quadruple $\tuple{A, B, U, V}$, then there is some $m ≥ n$ such that already $f_{n, m}$ is crooked at $\tuple{A, B, U, V}$.
	
	\begin{proof}
		For every $m ≤ ∞$ such that $m ≥ n$ let $A_m := f_{n, m}\preim{A} ⊆ X_m$.
		Analogously we define $B_m, U_m, V_m$.
		We have that $X_∞$ is crooked at $\tuple{A_∞, B_∞, U_∞, V_∞}$, so let $X_∞ = F_{A, ∞} ∪ H_∞ ∪ F_{B, ∞}$ be a witnessing triple.
		For every $m < ∞$ such that $m ≥ n$ we put 
		\[
			F_{A, m} := f_{m, ∞}\im{F_{A, ∞}} ∪ A_m, \qquad F_{B, m} := f_{m, ∞}\im{F_{B, ∞}} ∪ B_m, \quad\text{and}\quad H_m := f_{m, ∞}\im{H_∞}.
		\]
		Note that $A_m ⊆ F_{A, m}$ and $F_{A, m} ∩ H_m = f_{m, ∞}\im{F_{A, ∞}} ∩ f_{m, ∞}\im{H_∞}$ since $F_{A, m} ∩ f_{m, ∞}\im{X_∞} = f_{m, ∞}\im{A_∞}$.
		Similarly for $B$ instead of $A$, and also $F_{A, m} ∩ F_{B, m} = (f_{m, ∞}\im{A_∞} ∩ f_{m, ∞}\im{B_∞}) ∪ (A_m ∩ B_m)$.
		Hence, by Lemma~\ref{thm:disjoint_before_limit} there is $n_0 ≥ n$ such that for every $m ≥ n_0$ we have
		\[
			F_{A, m} ∩ H_m ⊆ V_m, \qquad F_{B, m} ∩ H_m ⊆ U_m, \quad\text{and}\quad F_{A, m} ∩ F_{B, m} ⊆ U_m ∩ V_m.
		\]
		But unless $f_{m, ∞}$ is surjective, we are missing the property $F_{A, m} ∪ H_m ∪ F_{B, m} = X_m$.
		By Lemma~\ref{thm:normal_refinement} there are closed sets $F'_A, H', F'_B ⊆ X_{n_0}$ such that $F_{A, n_0} ⊆ \int(F'_A)$, $H_{n_0} ⊆ \int(H')$, and $F_{B, n_0} ⊆ \int(F'_B)$ that satisfy the same neighborhood conditions, i.e. $F'_A ∩ H' ⊆ V_{n_0}$, $F'_B ∩ H' ⊆ U_{n_0}$, and $F'_A ∩ F'_B ⊆ U_{n_0} ∩ V_{n_0}$.
		We put
		\[
			W := \int(F'_A ∪ H' ∪ F'_B) ⊇ F_{A, n_0} ∪ H_{n_0} ∪ F_{B, n_0} = f_{n_0, ∞}\im{X_∞}.
		\]
		By Lemma~\ref{thm:disjoint_before_limit} there is $m ≥ n_0$ such that $f_{n_0, m}\im{X_m} ⊆ W$, and so $f_{n_0, m}\preim{F'_A} ∪ f_{n_0, m}\preim{H'} ∪ f_{n_0, m}\preim{F'_B} = X_m$.
		Now it is easy to see that this triple witnesses that $f_{n, m}$ is crooked at $\tuple{A, B, U, V}$.
	\end{proof}
\end{proposition}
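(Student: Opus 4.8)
The plan is to transport a witness of crookedness from the limit space $X_∞$ down to a finite stage of the sequence, the only real subtlety being that the bonding maps need not be surjective. For $n ≤ m ≤ ∞$ write $A_m := f_{n, m}\preim{A}$ and, similarly, $B_m, U_m, V_m$; then $f_{m, ∞}\preim{A_m} = A_∞$, so $f_{m, ∞}\im{A_∞} = A_m ∩ f_{m, ∞}\im{X_∞}$, and likewise for $B$. By definition the hypothesis says that $X_∞$ is crooked at the admissible quadruple $\tuple{A_∞, B_∞, U_∞, V_∞}$, so I would fix a witnessing closed cover $X_∞ = F_{A, ∞} ∪ H_∞ ∪ F_{B, ∞}$ with $A_∞ ⊆ F_{A, ∞}$, $B_∞ ⊆ F_{B, ∞}$, $F_{A, ∞} ∩ H_∞ ⊆ V_∞$, $F_{B, ∞} ∩ H_∞ ⊆ U_∞$ and $F_{A, ∞} ∩ F_{B, ∞} ⊆ U_∞ ∩ V_∞$.

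Next I would push the witness forward: for finite $m ≥ n$ put $F_{A, m} := f_{m, ∞}\im{F_{A, ∞}} ∪ A_m$, $F_{B, m} := f_{m, ∞}\im{F_{B, ∞}} ∪ B_m$ and $H_m := f_{m, ∞}\im{H_∞}$, which are closed by compactness. Adjoining $A_m$ and $B_m$ forces $A_m ⊆ F_{A, m}$ and $B_m ⊆ F_{B, m}$; and since $H_m ⊆ f_{m, ∞}\im{X_∞}$ and $F_{A, m} ∩ f_{m, ∞}\im{X_∞} = f_{m, ∞}\im{F_{A, ∞}}$ (using $A_∞ ⊆ F_{A, ∞}$), one gets $F_{A, m} ∩ H_m = f_{m, ∞}\im{F_{A, ∞}} ∩ f_{m, ∞}\im{H_∞}$ and $F_{A, m} ∩ F_{B, m} ⊆ (f_{m, ∞}\im{F_{A, ∞}} ∩ f_{m, ∞}\im{F_{B, ∞}}) ∪ (A_m ∩ B_m)$, with the symmetric identity for $B$. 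Applying Lemma~\ref{thm:disjoint_before_limit}(i) three times — to the pairs $\tuple{F_{A, ∞}, H_∞}$, $\tuple{F_{B, ∞}, H_∞}$, $\tuple{F_{A, ∞}, F_{B, ∞}}$ together with the open subsets $V$, $U$, $U ∩ V$ of $X_n$ — produces some $n_0 ≥ n$ for which $F_{A, m} ∩ H_m ⊆ V_m$, $F_{B, m} ∩ H_m ⊆ U_m$ and $F_{A, m} ∩ F_{B, m} ⊆ U_m ∩ V_m$ for every $m ≥ n_0$; the spurious term $A_m ∩ B_m$ causes no trouble because $A_m ⊆ U_m$ and $B_m ⊆ V_m$.

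The one genuinely delicate point is that $F_{A, n_0} ∪ H_{n_0} ∪ F_{B, n_0}$ covers only $f_{n_0, ∞}\im{X_∞}$, and not necessarily all of $X_{n_0}$, when $f_{n_0, ∞}$ fails to be onto. I would repair this by first thickening: Lemma~\ref{thm:normal_refinement} supplies closed sets $F'_A, H', F'_B ⊆ X_{n_0}$ with $F_{A, n_0} ⊆ \int(F'_A)$, $H_{n_0} ⊆ \int(H')$, $F_{B, n_0} ⊆ \int(F'_B)$ and with the same three neighborhood inclusions ($F'_A ∩ H' ⊆ V_{n_0}$, $F'_B ∩ H' ⊆ U_{n_0}$, $F'_A ∩ F'_B ⊆ U_{n_0} ∩ V_{n_0}$). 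Then $W := \int(F'_A ∪ H' ∪ F'_B)$ is an open set containing $f_{n_0, ∞}\im{X_∞}$, so by Lemma~\ref{thm:disjoint_before_limit}(ii) there is $m ≥ n_0$ with $f_{n_0, m}\im{X_m} ⊆ W$, whence $f_{n_0, m}\preim{F'_A} ∪ f_{n_0, m}\preim{H'} ∪ f_{n_0, m}\preim{F'_B} = X_m$. Finally I would verify that this cover of $X_m$ witnesses crookedness at $\tuple{A_m, B_m, U_m, V_m}$: the containments $A_m ⊆ f_{n_0, m}\preim{F'_A}$ and $B_m ⊆ f_{n_0, m}\preim{F'_B}$ follow from $A_{n_0} ⊆ F'_A$, $B_{n_0} ⊆ F'_B$ and $f_{n_0, m}\preim{A_{n_0}} = A_m$, while the three intersection conditions follow by pulling the corresponding inclusions among $F'_A, H', F'_B$ back along $f_{n_0, m}$ and using $f_{n_0, m}\preim{U_{n_0}} = U_m$ and $f_{n_0, m}\preim{V_{n_0}} = V_m$. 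This shows $f_{n, m}$ is crooked at $\tuple{A, B, U, V}$, as required. I expect the surjectivity gap at the last stage — handled by the thicken-then-fill combination of Lemmas~\ref{thm:normal_refinement} and~\ref{thm:disjoint_before_limit}(ii) — to be the main obstacle; everything else is routine bookkeeping with images and preimages of compact sets.
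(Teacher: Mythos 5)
Your proposal is correct and follows essentially the same route as the paper's proof: push the limit witness forward by images (augmented by $A_m$, $B_m$), use Lemma~\ref{thm:disjoint_before_limit}(i) to secure the intersection conditions at some stage $n_0$, then repair the possible non-surjectivity by thickening with Lemma~\ref{thm:normal_refinement} and applying Lemma~\ref{thm:disjoint_before_limit}(ii) before pulling back to $X_m$. Your handling of the extra term $A_m ∩ B_m$ via admissibility ($A_m ⊆ U_m$, $B_m ⊆ V_m$) is a minor bookkeeping variant of the same argument, so there is nothing to add.
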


Recall that for every metric compactum $X$, the collection $\K(X)$ of all non-empty compact subsets of $X$ endowed with the Hausdorff distance $d_H(A, B) := \inf\set{ε > 0: A ⊆ N_ε(B), B ⊆ N_ε(A)}$ forms a metric compactum, see e.g. Nadler~\cite[IV]{Nadler92}.

\begin{lemma} \label{thm:crookedness_continuity}
	Let $f\maps X \to Y$ be a continuous map between metric compacta.
	If $f$ is $ε$-crooked at $\tuple{A, B}$, there is some $δ > 0$ such that $f$ is $ε$-crooked at every $\tuple{A', B'}$ such that $d_H(A, A') < δ$ and $d_H(B, B') < δ$.
	
	\begin{proof}
		Let $\tuple{F_A, F_B}$ be a witness.
		First observe that $\tuple{F_A, F_B}$ also witnesses that $f$ is $ε'$-crooked at $\tuple{A, B}$ for some $ε' < ε$.
		We have $f\im{∂F_A} ⊆ N_ε(B)$.
		Since the left-hand-side set is compact, we have $f\im{∂F_A} ⊆ N_{ε'}(B)$ for some $ε' < ε$.
		Similarly for the other conditions.
		
		Next, by Lemma~\ref{thm:normal_refinement} there a is pair $\tuple{H_A, H_B}$ witnessing that $f$ is $ε'$-crooked at $\tuple{A, B}$ such that $F_A ⊆ \int(H_A)$ and $F_B ⊆ \int(H_B)$.
		We have $f\preim{A} ⊆ \int(H_A)$.
		Since $f$ is closed and $A$ is compact, there is $δ > 0$ such that $f\preim{N_δ(A)} ⊆ \int(H_A)$ and similarly for $B$.
		Hence, $\tuple{H_A, H_B}$ witnesses that $f$ is crooked at $\tuple{N_δ(A), N_δ(B), N_{ε'}(A), N_{ε'}(B)}$.
		
		Finally, suppose that $δ$ is small enough so that $ε' + δ ≤ ε$.
		Then for every $d_H(A', A) < δ$ and $d_H(B', B) < δ$, we have $A' ⊆ N_δ(A)$ and $N_{ε'}(A) ⊆ N_{ε'}(N_δ(A')) ⊆ N_ε(A')$, and similarly for $B$.
		Hence, $\tuple{N_δ(A), N_δ(B), N_{ε'}(A), N_{ε'}(B)}$ is a stronger condition than $\tuple{A', B',\allowbreak N_ε(A'), N_ε(B')}$, and therefore $\tuple{H_A, H_B}$ also witnesses that $f$ is $ε$-crooked at $\tuple{A', B'}$.
	\end{proof}
\end{lemma}

Together, we obtain the following.

\begin{theorem} \label{thm:crooked_limit}
	Let $\tuple{X_*, f_*}$ be a sequence of metric compacta with limit $\tuple{X_∞, f_{*, ∞}}$.
	The following conditions are equivalent:
	\begin{enumerate}
		\item $X_∞$ is hereditarily indecomposable.
		\item $X_∞$ is crooked.
		\item Every map $f_{n, ∞}$, $n ∈ ω$, is crooked.
		\item For every $n ∈ ω$, $ε > 0$, and a pair of closed sets $A, B ⊆ X_n$ there is $m ≥ n$ such that $f_{n, m}$ is $ε$-crooked at $\tuple{A, B}$.
		\item $f_*$ is a \emph{crooked sequence}, i.e. for every $n ∈ ω$ and $ε > 0$ there is $m ≥ n$ such that $f_{n, m}$ is $ε$-crooked.
	\end{enumerate}
	
	\begin{proof}
		The equivalence (i)$\iff$(ii) follows from the result by Krasinkiewicz and Minc~\cite[Theorem~3.4]{KM77}, see also Remark~\ref{rm:hereditarily_indecomposable}.
		The equivalence (ii)$\iff$(ii) holds by Proposition~\ref{thm:crooked_limit_maps}.
		The implications (v)$\implies$(iv)$\implies$(iii) are trivial.
		The implication (iii)$\implies$(iv) follows from Proposition~\ref{thm:crooked_before_limit}.
		We prove (iv)$\implies$(v).
		
		Let us fix $n ∈ ω$ and $ε > 0$.
		For a pair $\tuple{A, B}$ of closed subsets of $X_n$ there is $m_{A, B} ≥ n$ such that $f_{n, m_{A, B}}$ is $ε$-crooked at $\tuple{A, B}$.
		By Lemma~\ref{thm:crookedness_continuity} there is an open neighborhood $\U_{A, B}$ of $\tuple{A, B}$ in the hyperspace $\K(X_n) × \K(X_n)$ such that $f_{n, m_{A, B}}$ is $ε$-crooked at every $\tuple{A', B'} ∈ \U_{A, B}$.
		Since the hyperspace $\K(X_n) × \K(X_n)$ is compact, it can be covered by finitely many of these open sets $\U_{A, B}$, and so there are numbers $m_i ≥ n$ for $i ∈ I$ finite such that for every closed pair $\tuple{A', B'}$ the map $f_{n, m_i}$ is $ε$-crooked at $\tuple{A', B'}$ for some $i ∈ I$.
		Hence, it is enough to consider $m = \max\set{m_i: i ∈ I}$.
	\end{proof}
\end{theorem}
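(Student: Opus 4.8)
The plan is to establish the chain $(i) \iff (ii) \iff (iii) \iff (iv) \iff (v)$, most links of which are already packaged in this subsection. The equivalence $(i) \iff (ii)$ is the Krasinkiewicz--Minc theorem applied to the single space $X_∞$ (Remark~\ref{rm:hereditarily_indecomposable}), and $(ii) \iff (iii)$ is precisely Proposition~\ref{thm:crooked_limit_maps}. Among the last three conditions, $(v) \implies (iv)$ is immediate, since being $ε$-crooked means being $ε$-crooked at every closed pair; $(iii) \implies (iv)$ is Proposition~\ref{thm:crooked_before_limit} applied to the admissible quadruple $\tuple{A, B, N_ε(A), N_ε(B)}$, with $N_ε(A), N_ε(B)$ open neighborhoods of $A, B$; and for $(iv) \implies (iii)$ I would note that an arbitrary admissible quadruple $\tuple{A, B, U, V}$ in $X_n$ satisfies $\tuple{A, B, N_ε(A), N_ε(B)} ≤ \tuple{A, B, U, V}$ for $ε$ small enough, so $ε$-crookedness of some $f_{n, m}$ at $\tuple{A, B}$ forces crookedness of $f_{n, m}$, hence of $f_{n, ∞} = f_{n, m} ∘ f_{m, ∞}$, at $\tuple{A, B, U, V}$ (using that crookedness at a fixed quadruple passes to precompositions, the observation preceding Proposition~\ref{thm:crooked_calculus}). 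Together with the implication $(iv) \implies (v)$ below, this closes the cycle.

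The one step demanding real work is $(iv) \implies (v)$: passing from a statement that holds pair-by-pair to one uniform over all closed pairs. Here I would fix $n ∈ ω$ and $ε > 0$, and for each $\tuple{A, B} ∈ \K(X_n) × \K(X_n)$ use $(iv)$ to choose $m_{A, B} ≥ n$ with $f_{n, m_{A, B}}$ being $ε$-crooked at $\tuple{A, B}$; then Lemma~\ref{thm:crookedness_continuity} furnishes an open neighborhood $\U_{A, B}$ of $\tuple{A, B}$ in the hyperspace $\K(X_n) × \K(X_n)$ on which $f_{n, m_{A, B}}$ stays $ε$-crooked. Since $\K(X_n) × \K(X_n)$ is compact (as recalled above), finitely many of the $\U_{A_i, B_i}$, $i ∈ I$, cover it, and I claim $m := \max\set{m_{A_i, B_i} : i ∈ I}$ works: given any closed pair $\tuple{A', B'}$, pick $i$ with $\tuple{A', B'} ∈ \U_{A_i, B_i}$; then $f_{n, m_{A_i, B_i}}$ is $ε$-crooked at $\tuple{A', B'}$, and writing $f_{n, m} = f_{n, m_{A_i, B_i}} ∘ f_{m_{A_i, B_i}, m}$ and using that $ε$-crookedness at a fixed pair is inherited under precomposition, $f_{n, m}$ is $ε$-crooked at $\tuple{A', B'}$ as well. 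As $\tuple{A', B'}$ was arbitrary, $f_{n, m}$ is $ε$-crooked, which is exactly $(v)$.

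The crux, and the expected main obstacle, is this uniformization, and it rests on Lemma~\ref{thm:crookedness_continuity}: without continuity of $ε$-crookedness in the pair argument one cannot merge the indices $m_{A, B}$, and a direct diagonalization over a countable dense set of pairs is not obviously available, since $ε$-crookedness at $\tuple{A, B}$ need not be a closed condition in $\tuple{A, B}$. Granting the lemma, the remaining ingredients (compactness of the Hausdorff hyperspace, and the fact that $ε$-crookedness of $f_{n, m'}$ at a pair propagates to $f_{n, m}$ for all $m ≥ m'$ via precomposition) are routine. The reductions to disjoint closed pairs (Lemmata~\ref{thm:disjoint_enough} and~\ref{thm:disjoint_witness}) play no direct role in this argument.
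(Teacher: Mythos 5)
Your proposal is correct and follows essentially the same route as the paper: the same reductions of (i)$\iff$(ii) to Krasinkiewicz--Minc, (ii)$\iff$(iii) to Proposition~\ref{thm:crooked_limit_maps}, (iii)$\implies$(iv) to Proposition~\ref{thm:crooked_before_limit}, and the same hyperspace-compactness uniformization via Lemma~\ref{thm:crookedness_continuity} for (iv)$\implies$(v). The only difference is cosmetic: you spell out (iv)$\implies$(iii) and the precomposition step $f_{n,m} = f_{n,m_i} ∘ f_{m_i,m}$ explicitly, which the paper treats as trivial.
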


\subsection{Construction of the pseudo-arc} \label{sec:pseudo}

Let us recall a canonical construction of the pseudo-arc while (re)introducing several useful notions.
By Bing's theorem, the pseudo-arc is the unique hereditarily indecomposable arc-like continuum, i.e. the unique hereditarily indecomposable limit of a sequence in $\I$.
By Theorem~\ref{thm:crooked_limit}, the limit is hereditarily indecomposable if and only if the $\I$-sequence is crooked.
Also, by Theorem~\ref{thm:graph_crookedness}, an $\I$-map $f\maps \II \to \II$ is $ε$-crooked if for every $x ≤ y ∈ \II$ there are $x ≤ y' ≤ x' ≤ y$ such that $f(x) ≈_ε f(x')$ and $f(y) ≈_ε f(y')$.

\begin{construction} \label{thm:crooked_sequence_exists}
	We obtain a crooked sequence in $\I$ under the assumption that for every $ε > 0$ there is an $ε$-crooked $\I$-map by an inductive construction.
	
	Let $ε_0 := 1$.
	Given $ε_n$ we let $f_n$ be an $ε_n$-crooked $\I$-map, and we pick $ε_{n + 1} > 0$ such that for every $k < n + 1$ the map $f_{k, n + 1}$ is $\tuple{ε_k / 2^{n + 1 - k}, ε_n}$-continuous.
	Then for every $n ∈ ω$ and $ε > 0$ there is $n' ≥ n$ such that $ε_n / 2^{n' - n} ≤ ε$.
	Since the map $f_{n'}$ is $ε_{n'}$-crooked and the map $f_{n, n'}$ is $\tuple{ε_n / 2^{n' - n}, ε_{n'}}$-continuous, and so $\tuple{ε, ε_{n'}}$-continuous,
	by Proposition~\ref{thm:crooked_calculus} we have that $f_{n, n' + 1} = f_{n, n'} ∘ f_{n'}$ is $ε$-crooked. 
\end{construction}

To observe that there are arbitrarily crooked $\I$-maps we consider a discrete variant of crookedness.
By a \emph{graph} we mean a set $G$ of vertices endowed with a symmetric reflexive edge relation $≈$ (to be thought of as a discrete nearness relation).
We use reflexive instead of antireflexive edge relation so that homomorphisms may contract edges to points. 
Note that a graph is essentially the same thing as a ${≤}1$-dimensional abstract simplicial complex.
A \emph{graph homomorphism} or a \emph{simplicial map} is a map $s\maps G \to H$ that preserves the edge relation, i.e. if $x ≈ y$, then $s(x) ≈ s(y)$, for every $x, y ∈ G$.
For every $n ∈ ω$ let $\II_n$ denote the finite linear graph with vertices $\set{0, …, n}$ and the edge relation $i ≈ j \iff \abs{i - j} ≤ 1$.
The \emph{geometric realization} $\geom{\II_n}$ of the graph $\II_n$ is the unit interval $\II$ if $n > 0$, and the singleton $\set{0}$ otherwise.
The \emph{geometric realization} of a simplicial map $s\maps \II_m \to \II_n$ is the piecewise linear map $\geom{s}\maps \geom{\II_m} \to \geom{\II_n}$ determined by the points $i/m \mapsto s(i)/n$ for $i ∈ \II_m$.

\begin{definition}
	We call a simplicial map $s\maps \II_m \to \II_n$ \emph{crooked} if for every $i ≤ j ∈ \II_m$ there are integers $i ≤ j' ≤ i' ≤ j$ such that $s(i) ≈ s(i')$ and $s(j) ≈ s(j')$.
\end{definition}

\begin{remark}
	Lewis and Minc~\cite{LM10} considered a closely related notion of an \emph{$n$-crooked} continuous surjection $\II \to \II$.
	It is easy to see that a simplicial map $s\maps \II_m \to \II_n$ is crooked if and only if $\geom{s}$ is $n$-crooked.
	But in general, $n$-crooked maps do not have to be geometric realizations of simplicial maps.
\end{remark}

The following proposition can be viewed as a small improvement of \cite[Proposition~3.4 and 3.5]{LM10} in the special case of simplicial maps.
Note that our bounds $1/n < ε ≤ 3/2n$ form overlapping intervals, as opposed to the original bounds $2/n \nleq 1/n$.
We will later use the better bound $3/2n$ in Lemma~\ref{thm:simplicial_approximation}.

\begin{proposition} \label{thm:n_crooked}
	The geometric realization of a crooked simplicial map $s\maps \II_m \to \II_n$ is $ε$-crooked for every $ε > 1/n$.
	On the other hand, if $\geom{s}$ is $ε$-crooked for some $ε ≤ 3/2n$, then $s$ is crooked.
	
	\begin{proof}
		Let $f$ be the geometric realization of $s$.
		Suppose that $s$ is crooked, and let $x ≤ y ∈ \II$.
		If $\abs{f(x) - f(y)} ≤ 2/n < 2ε$, then for a point $z ∈ [x, y]$ such that $f(z) = (f(x) + f(y))/2$ we have $f(x) ≈_ε f(z) ≈_ε f(y)$, so it is enough to put $x' := y' := z$.
		Otherwise let $i, i' ∈ \II_m$ with $\abs{i - i'} ≤ 1$ be such that $x ∈ [i/m, i'/m]'$ and so $f(x) ∈ [s(i), s(i')]'$.
		Here, $[a, b]'$ is a shortcut for $[a, b] ∪ [b, a]$.
		There are analogous integers $j, j' ∈ \II_m$ for $y$.
		Note that since $\abs{f(x) - f(y)} > 2/n$ and $\abs{f(i/m) - f(i'/m)}, \abs{f(j/m) - f(j'/m)} ≤ 1/n$, we may suppose, maybe after swapping $i, i'$ and/or $j, j'$ that $s(i) ≤ s(i') < s(j') ≤ s(j)$ or $s(i) ≥ s(i') > s(j') ≥ s(j)$, and in both cases we have $i, i' < j, j'$.
		Suppose the former.
		The latter case is analogous.
		Let $i ≤ i'' ≤ j'' ≤ j$ be such that $s(i) = s(i'')$ and $s(j) = s(j'')$ and $\abs{i'' - j''}$ is minimal possible.
		It follows that $s\restr{[i'', j'']}$ has a unique minimum at $i''$ and a unique maximum at $j''$.
		Since the map $s$ is crooked, there are $i'' ≤ j''' ≤ i''' ≤ j''$ such that $s(i'') ≈ s(i''')$ and $s(j'') ≈ s(j''')$.
		Let $x' := i'''/m$ and $y' := j'''/m$.
		Since $\abs{f(x) - f(y)} > 2/n$, we have $s(j') - s(i) ≥ 2$, and hence $s(i) = s(i'') < s(j''')$.
		Then, since $\abs{i - i'} ≤ 1$, we have $i, i' ≤ j'''$, and so $x ≤ y'$.
		Similarly, we have $x' ≤ y$.
		Also, we have $s(i) = s(i'') ≤ s(i''') ≈ s(i)$ and $s(i) ≤ s(i') ≈ s(i)$, hence also $s(i') ≈ s(i''')$, and so $f(x) ≈_ε f(x')$.
		Similarly $f(y) ≈_ε f(y')$.
		
		For the other implication, suppose that $f$ is $ε$-crooked, so for every $i ≤ j ∈ \II_m$ there are $i/m ≤ y ≤ x ≤ j/m$ such that $f(x) ≈_ε s(i)/n$ and $f(y) ≈_ε s(j)/n$.
		Let $i' ≤ i'' ∈ \II_m$ be such that $x ∈ [i'/m, i''/m]$ and $\abs{i' - i''}$ is minimal possible.
		Similarly, let $j' ≤ j''$ be the corresponding pair for $y$.
		There are two cases: either $i ≤ j' ≤ j'' ≤ i' ≤ i'' ≤ j$, or $i ≤ j' = i' < j'' = i'' ≤ j$.
		In the first case, we can use the following argument since both $j', j''$ are below $i', i''$.
		Because of the piecewise linear definition of $f$, we have that $s(i')/n ≈_ε s(i)/n$ or $s(i'')/n ≈_ε s(i)/n$.
		Since $ε ≤ 2/n$, we have $s(i') ≈ s(i)$ or $s(i'') ≈ s(i)$.
		The same argument works for $j, j', j''$.
		
		In the second case, we have $y ≤ x ∈ (i'/m, i''/m) = (j'/m, j''/m)$.
		We may suppose that $\abs{s(i) - s(j)} > 2$, and so $≥ 3$, since otherwise we can pick $i ≤ k ≤ j$ such that $s(i) ≈ s(k) ≈ s(j)$.
		We also suppose that $s(i) < s(j)$ since the other case is symmetric.
		Now, if $s(i') ≥ s(i'')$, we are done since then $s(i)/n ≤ s(i'')/n ≤ f(x) ≈_ε s(i)/n$, and so $s(i) ≈ s(i'')$ since $ε ≤ 2/n$, and similarly $s(j) ≈ s(i')$.
		Otherwise, $s(i') ≤ s(i'')$ and the interval $[s(i)/n, s(j)/n]$ is covered by $[s(i)/n, f(x)] ∪ [f(y), s(j)/n]$.
		Hence, we have $3/n ≤ (f(x) - s(i)/n) + (s(j)/n - f(y)) < 2ε$, and so $ε > 3/2n$, which is a contradiction with the hypothesis.
	\end{proof}
\end{proposition}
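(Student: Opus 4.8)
The plan is to reduce both halves of the statement to the arc characterisation of $ε$-crookedness for maps on the interval, Theorem~\ref{thm:graph_crookedness}\,(iii): writing $f := \geom{s}\maps \II \to \II$, the map $f$ is $ε$-crooked exactly when for all $x ≤ y$ in $\II$ there are $x ≤ y' ≤ x' ≤ y$ with $f(x) ≈_ε f(x')$ and $f(y) ≈_ε f(y')$. The only features of $f$ I would use are $f(i/m) = s(i)/n$ at vertices and that on each edge $[i/m, i'/m]$ of $\geom{\II_m}$ (so $\abs{i - i'} ≤ 1$) the map $f$ is linear, with image the interval between $s(i)/n$ and $s(i')/n$, of length $≤ 1/n$. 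Two remarks would be invoked repeatedly to pass between the ``integer'' and the ``geometric'' picture: integers whose $(1/n)$-rescalings lie within $1/n$ of each other are adjacent in $\II_n$, and — since $ε ≤ 3/2n < 2/n$ — so are integers whose $(1/n)$-rescalings lie within $ε$.

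For ``$s$ crooked implies $\geom{s}$ is $ε$-crooked for every $ε > 1/n$'', I would fix $x ≤ y$ and split on $\abs{f(x) - f(y)}$. If it is $≤ 2/n$, an intermediate point $z ∈ [x, y]$ with $f(z)$ the midpoint of $f(x)$ and $f(y)$ works with $x' := y' := z$, since both gaps are then $≤ 1/n < ε$. If it exceeds $2/n$, I would put $x, y$ in minimal edges, note that $f(x)$ is within $1/n$ of both endpoint-values of its edge (and $f(y)$ likewise), and use the gap to arrange — swapping vertices inside an edge if needed — a chain $s(i) ≤ s(i') < s(j') ≤ s(j)$ (or its reverse) of the four endpoint-values, with the two edges on disjoint index ranges. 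Next I would replace the interval $[i, j]$ between the outermost vertices by a cleaner subinterval $[i'', j'']$ (chosen with $s(i'') = s(i)$, $s(j'') = s(j)$, $\abs{i'' - j''}$ minimal) on which $s$ has a strict minimum at the left end and a strict maximum at the right end, feed $i'' ≤ j''$ to crookedness of the simplicial map $s$ to obtain $i'' ≤ j''' ≤ i''' ≤ j''$ with $s(i''') ≈ s(i'')$ and $s(j''') ≈ s(j'')$, and set $x' := i'''/m$, $y' := j'''/m$. Checking $x ≤ y' ≤ x' ≤ y$ would use only that the $s$-value separation keeps $j'''$ to the right of $x$'s edge and $i'''$ to the left of $y$'s edge; checking $\abs{f(x) - f(x')} ≤ 1/n < ε$ would use that both endpoint-values of $x$'s edge and also $s(i''')$ all lie in $\{s(i''), s(i'') + 1\}$, so $f(x)$ and $f(x') = s(i''')/n$ share an interval of length $1/n$; symmetrically for $y, y'$.

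For ``$\geom{s}$ is $ε$-crooked with $ε ≤ 3/2n$ implies $s$ crooked'', I would fix vertices $i ≤ j$, feed the arc $[i/m, j/m]$ to $ε$-crookedness of $f$, and get $i/m ≤ y ≤ x ≤ j/m$ with $f(x) ≈_ε s(i)/n$ and $f(y) ≈_ε s(j)/n$. Taking the minimal edge of $x$ and that of $y$, and using $y ≤ x$, there are two configurations: the two edges are disjoint or share only an endpoint, or $x$ and $y$ both lie in the interior of one nondegenerate edge. In the first configuration, $f(x)$ is within $1/n$ of both endpoint-values of $x$'s edge and within $ε$ of $s(i)/n$, so a short check (on whether $s(i)/n$ lies inside or outside the edge-image) yields an endpoint-vertex of $x$'s edge adjacent to $i$, and symmetrically one of $y$'s edge adjacent to $j$; these two vertices, ordered by the separation of the edges, are the desired $i ≤ j' ≤ i' ≤ j$. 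In the second configuration $f(x), f(y)$ lie in a common edge-image of length $≤ 1/n$, forcing $\abs{s(i) - s(j)} < 1/n + 2ε ≤ 4/n$, hence $≤ 3$; if it is $≤ 2$ a vertex $k ∈ [i, j]$ carrying the middle value finishes it, so I may assume $\abs{s(i) - s(j)} = 3$ and, by symmetry, $s(i) < s(j)$. Then I would split on the direction of $f$ on that edge: if non-increasing, $f(x), f(y)$ lie between the two endpoint-values and — using that those differ by $≤ 1$ together with $ε ≤ 3/2n$ — one endpoint-value is pinned to $s(i) + 1$ (adjacent to $i$) and the other to $s(j) - 1$ (adjacent to $j$), giving the conclusion; if non-decreasing, then $f(y) ≤ f(x)$, so $[s(i)/n, s(j)/n]$ is covered by $[s(i)/n, f(x)] ∪ [f(y), s(j)/n]$, whose lengths sum to $≥ 3/n$ while each is $< ε$, yielding $3/n < 2ε ≤ 3/n$ — a contradiction, so this sub-case is vacuous.

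The step I expect to be the real obstacle is precisely this ``same-edge'' case of the converse: the constant $3/2$ is sharp there, and the hypothesis $ε ≤ 3/2n$ has to be spent exactly to turn a length count into a strict inequality that kills the non-decreasing sub-case. The remaining ingredients — the two case splits, the within-edge swaps, and the index bookkeeping in both directions — should be routine.
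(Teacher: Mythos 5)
Your proposal is correct and follows essentially the same route as the paper's proof: the same reduction via Theorem~\ref{thm:graph_crookedness}\,(iii), the same case split on $\abs{f(x)-f(y)}$ with the minimal subinterval $[i'',j'']$ in the forward direction, and the same disjoint-edges versus same-edge dichotomy with the final length-count contradiction spending $ε ≤ 3/2n$ in the converse. The only cosmetic difference is that you pin $\abs{s(i)-s(j)}$ to exactly $3$ and compute the edge endpoint values explicitly, where the paper argues slightly more tersely with $\abs{s(i)-s(j)} ≥ 3$; both are fine.
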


\begin{construction} \label{con:crooked}
	For every $n ∈ ω$ we construct a canonical crooked simplicial surjection $c_n\maps \II_{\crn(n)} \to \II_n$ (see Figure~\ref{fig:crooked}).
	These correspond to the $\I$-maps originally considered by Lewis and Minc~\cite{LM10}.
	It follows via the geometric realization that for every $ε > 0$ there is an $ε$-crooked $\I$-map.
	We also define a reversed version of $c_n$ by $c'_n(i) := c_n(\crn(n) - i)$, $i ∈ \II_{\crn(n)}$.
	We put $\crn(0) := 0$, $\crn(1) := 1$, and $\crn(n) := 2 \crn(n - 1) + \crn(n - 2)$ for $n ≥ 2$.
	For the maps we put $c_0 := \id_{\II_0}$, $c_1 := \id_{\II_1}$, and $c_n$, $n ≥ 2$, is defined as the “concatenation” of $c_{n - 1}$, $c'_{n - 2}$, and another copy of $c_{n - 1}$, i.e. 
	\begin{alignat*}{4}
		c_n(i) &:= c_{n - 1}(i), &&\quad i ≤ \crn(n - 1), \\
		c_n(b_1 + i) &:= c'_{n - 2}(i) + 1, &&\quad i ≤ \crn(n - 2), &&\quad b_1 := \crn(n - 1), \\
		c_n(b_2 + i) &:= c_{n - 1}(i) + 1, &&\quad i ≤ \crn(n - 1), &&\quad b_2 := b_1 + \crn(n - 2).
	\end{alignat*}

\begin{figure}[ht]
	\centering
	\begin{tikzpicture}[
		x = {(0.6em, 0)},
		y = {(0, 1.5em)},
	]
		\begin{scope}[gray]
			\foreach \x in {0, 29}
				\draw (\x, 0) -- (\x, 5);
			\foreach \y in {0, ..., 5}
				\draw (0, \y) -- (29, \y);
		\end{scope}
		\draw[thick] (0, 0)
			-- (1, 1) -- (2, 2) -- (3, 1) -- (4, 2) -- (5, 3)
			-- (6, 2) -- (7, 1)
			-- (8, 2) -- (9, 3) -- (10, 2) -- (11, 3) -- (12, 4)
			-- (13, 3) -- (14, 2) -- (15, 3) -- (16, 2) -- (17, 1)
			-- (18, 2) -- (19, 3) -- (20, 2) -- (21, 3) -- (22, 4)
			-- (23, 3) -- (24, 2)
			-- (25, 3) -- (26, 4) -- (27, 3) -- (28, 4) -- (29, 5);
	\end{tikzpicture}
	\caption{The canonical crooked simplicial surjection $c_5$.}
	\label{fig:crooked}
\end{figure}

	\begin{proof}
		Let $i ≤ j ∈ \II_{\crn(n)}$.
		We show that there are $i ≤ j' ≤ i' ≤ j ∈ \II_{\crn(n)}$ such that $c_n(i) ≈ c_n(i')$ and $c_n(j) ≈ c_n(j')$.
		If $n ≤ 1$, this is easy.
		If $n > 1$, $i = 0$ and $j = \crn(n)$, it is enough to put $j' := b_1$ and $i' := b_2$.
		Otherwise, let $i ≤ i'' ≤ j'' ≤ j$ be such that $c_n(i'') = c_n(i)$, $c_n(j'') = c_n(j)$, and $\abs{i'' - j''}$ is minimal possible.
		It follows that $c_n\restr{[i'', j'']}$ has a unique minimum and maximum at the end-points.
		Since $i > 0$ or $j < \crn(n)$, we have $b_1, b_2 ∉ (i'', j'')$, and it is enough to solve the problem in one of the restrictions $c_n\maps [0, b_1] \to [0, n - 1]$, $c_n\maps [b_1, b_2] \to [1, n - 1]$, and $c_n\maps [b_2, \crn(n)] \to [1, n]$, which are copies of $c_{n - 1}$, $c'_{n - 2}$, and $c_{n - 1}$, respectively.
		It follows by induction that there are $i'' ≤ j' ≤ i' ≤ j''$ such that $c_n(i') ≈ c_n(i'') = c_n(i)$ and $c_n(j') ≈ c_n(j'') = c_n(j)$.
		%, and moreover $c_n\restr{[i'', j'']}$ is a copy of $c_{\abs{i'' - j''}}$.
	\end{proof}
\end{construction}

\begin{construction}
	Lewis and Minc~\cite[Theorem~3.24]{LM10} gave an algorithmic construction of a crooked $\I$-sequence and so of the pseudo-arc.
	Let $m_0 := 1$, and $m_{n + 1} := \crn(2 m_n)$ for $n ∈ ω$.
	For every $n$ we put $ε_n := 1 / m_n$ and $f_n := \geom{c_{2m_n}}$.
	Every map $f_n$ is $ε$-crooked for every $ε > 1 / (2m_n)$, and so is $ε_n$-crooked.
	Also every $c_n$ is $\crn(n)/n$-Lipschitz, and so $f_n$ is $L$-Lipschitz for $L = \crn(2m_n) / (2m_n) = m_{n + 1} / (2m_n) = ε_n / (2ε_{n + 1})$.
	Hence, every composition $f_{n, n'}$, $n ≤ n'$, is $\tuple{ε_n / 2^{n' - n}, ε_{n'}}$-continuous, and the sequence $f_*$ is crooked as in Construction~\ref{thm:crooked_sequence_exists}.
\end{construction}

\subsection{Crookedness factorization theorem}

The following theorem seems to be known (see \cite[p.~179]{HO16}), however we could not find an explicit proof, and so we prove it here.

\begin{theorem}[Crookedness factorization] \label{thm:crooked_factorization}
	For every $\I$-map $g$ and every $ε > 0$ there is $δ > 0$ such that for every $δ$-crooked $\I$-map $f$ there is an $\I$-map $h$ such that $f ≈_ε g ∘ h$.
\end{theorem}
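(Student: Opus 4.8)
The plan is to reduce the problem to a combinatorial statement about simplicial maps and then use the explicit crooked maps from Construction~\ref{con:crooked}. Given an $\I$-map $g\maps \II \to \II$ and $ε > 0$, first I would fix a uniform-continuity modulus for $g$: pick $η > 0$ so that $x ≈_η y$ implies $g(x) ≈_{ε/2} g(y)$. The key idea is that a $δ$-crooked map $f$, for $δ$ small relative to $η$, must be "very wiggly" in the following sense: its graph visits points that are $η$-dense in $\II$ in a strongly oscillating pattern, so that one can define $h$ by reading off, as $f(x)$ sweeps up and down, a point of $\II$ that $g$ sends close to $g$ of a corresponding break-point. More precisely, I expect to choose a mesh $k$ with $1/k < η$, break $\II$ into the points $0, 1/k, \dots, 1$, and use $ε$-crookedness of $f$ at every pair of these break-points to locate, inside each maximal monotone piece of $f$, a preimage under $f$ of each value $j/k$.

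The cleanest route is via the discrete machinery already set up. By Theorem~\ref{thm:graph_crookedness}(iii), $δ$-crookedness of $f$ says: for every arc $[x,y]$ there are $x \le y' \le x' \le y$ with $f(x) ≈_δ f(x')$ and $f(y) ≈_δ f(y')$. Iterating this, a $δ$-crooked map with $δ$ tiny forces $f$ to pass, in order, through a long crooked pattern of near-values at the grid $\{0, 1/k, \dots, 1\}$; quantitatively, I would prove a lemma saying that for $δ$ small enough there is a partition $0 = t_0 < t_1 < \dots < t_N = 1$ of the domain such that the sequence $\big(\lfloor k f(t_i)\rfloor\big)_{i\le N}$ (after rounding) contains, as a "pattern", the vertex sequence of the canonical crooked simplicial map $c_{k}\maps \II_{\crn(k)} \to \II_k$ up to refinement — i.e. $f$ is, up to $1/k \le η$ in the target, a subdivision of $\geom{c_k}$ composed with something. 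Once $f$ is recognised as (approximately) factoring through $\geom{c_k}$, and since by Proposition~\ref{thm:n_crooked} / Construction~\ref{con:crooked} the canonical $c_k$ is itself crooked, I can write $f ≈_{ε/2} \geom{c_k} \circ p$ for an $\I$-map $p$ and then only need to factor $g$ through $\geom{c_k}$: produce $q\maps \II \to \II$ with $g \circ q ≈_{ε/2} \geom{c_k}$, and set $h := q \circ p$.

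The heart of the matter, then, is the purely combinatorial claim: for every $n$ there is a simplicial map $q\maps \II_{\crn(n)} \to \II_m$ (for suitable $m$) with $c_n \circ q$ equal to a subdivision of any prescribed simplicial map $\bar g\maps \II_m \to \II_n$ up to one step — equivalently, every monotone "edge" $s(i) \to s(i+1)$ of a target map can be traced inside a crooked map by going out and coming back. This is where the specific recursive structure $c_n = c_{n-1} \concat c'_{n-2} \concat c_{n-1}$ is used: a crooked map, read on the subinterval $[i'', j'']$ between a consecutive min and max (as in the proof of Construction~\ref{con:crooked}), is monotone with a single min and max at the endpoints, so it realises an arbitrary monotone segment; stitching these together along the whole domain recovers an arbitrary piecewise-linear $g$ up to the mesh $1/n$. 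I expect this stitching argument — keeping track of orientations and ensuring the pieces concatenate into a genuine (continuous, single-valued) map $h$ with $h(0)$ and $h(1)$ correct — to be the main obstacle, together with the bookkeeping needed to turn "$δ$-crooked" into "contains the pattern of $c_k$", i.e. choosing $δ$ as an explicit function of the modulus $η$ of $g$ and the mesh $k$.
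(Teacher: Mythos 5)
Your outline coincides with the paper's own proof strategy: write $f \approx_{\varepsilon/2} \geom{c_k}\circ p$ for the canonical crooked map $c_k$, separately produce $q$ with $g\circ q \approx_{\varepsilon/2} \geom{c_k}$, and set $h := q\circ p$ (precomposition being non-expansive, the errors add as you say). However, both pivotal steps are only asserted, and the reasons you give for them do not hold up. For the first, ``since $c_k$ is itself crooked, I can write $f\approx\geom{c_k}\circ p$'' is a non sequitur: crookedness of $c_k$ is not what is needed; what is needed is that \emph{every} crooked simplicial surjection $s\maps \II_m\to\II_k$ factors \emph{exactly} as $s = c_k\circ s'$ (Lemma~\ref{thm:simplest_crooked}), combined with a quantitative simplicial-approximation step converting $\delta$-crookedness of $f$, for $\delta$ of order $1/k$, into crookedness of an approximating simplicial surjection (Lemma~\ref{thm:simplicial_approximation} together with Proposition~\ref{thm:n_crooked}). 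Neither of these is routine, and neither is sketched.

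The second step is where your sketch actually fails. The restriction of a crooked map to the interval between a consecutive minimum and maximum is \emph{not} monotone --- it only attains its extrema uniquely at the endpoints; in fact, once $n\ge 3$ the canonical $c_n$ has no monotone run of amplitude greater than two, whereas the target $\bar g$ may be monotone on a long stretch (e.g.\ $\bar g$ close to the identity). So pieces of $c_n$ between consecutive extrema cannot ``realise an arbitrary monotone segment'', and the run-by-run stitching you describe cannot produce the required $q$. (Your displayed composite also has the wrong order: with $q\maps\II_{\crn(n)}\to\II_m$ the claim must be $\bar g\circ q = c_n$ up to the stated tolerance, i.e.\ $c_n$ lifts through $\bar g$.) The correct mechanism is the opposite of the one you describe: the lift $q$ backtracks many times \emph{inside a single monotone piece} of $\bar g$ in order to reproduce the wiggles of $c_n$ whose values stay in the range of that piece, and making consecutive pieces match is handled in the paper by induction on the recursion $c_n = c_{n-1}\concat (c_{n-2}\circ r)\concat c_{n-1}$, splitting the domain of $\bar g$ at preimages of the values $n-1$ and $1$ (Lemma~\ref{thm:simplicial_crooked_factorization}). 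That induction in turn requires first normalising the piecewise-linear approximation of $g$ --- endpoints fixed and break-points taking pairwise distinct grid values, arranged by the choice of grid and rescaling in Lemma~\ref{thm:crooked_approximation} --- a preparatory step absent from your plan and precisely what makes the lifting argument go through. So while the architecture is right, the combinatorial heart of the theorem remains unproved in your proposal.
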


We will use the theorem later to observe that crookedness of an $\I$-sequence is enough for it to be a Fraïssé sequence.
This yields an alternative proof of Bing's Theorem on uniqueness of the pseudo-arc via uniqueness of a generic object, see Remark~\ref{thm:Bing_reproved}.

\medskip

The proof will follow after several preparatory lemmas.

\begin{notation}
	For two simplicial maps $s\maps \II_m \to \II_n$ and $s'\maps \II_{m'} \to \II_n$ such that $s(m) = s'(0)$ let $s \concat s'$ denote the \emph{concatenation} $\II_{m + m'} \to \II_n$ defined by $(s \concat s')(i) := s(i)$ for $i ≤ m$ and $s'(i - m)$ for $i ≥ m$.
	Note that $t ∘ (s \concat s') = (t ∘ s) \concat (t ∘ s')$ for a simplicial map $t\maps \II_n \to \II_{n'}$.
	For every $m + k ≤ n$ we denote the simplicial inclusion $i \mapsto i + k\maps \II_m \to \II_n$ by $e^n_{m, k}$.
	We have $(s \concat s') ∘ e^{m + m'}_{m, 0} = s$ and $(s \concat s') ∘ e^{m + m'}_{m', m} = s'$.
	For every $m ∈ ω$ let $r_m\maps \II_m \to \II_m$ denote the “inversion” $i \mapsto m - i$.
	So for every simplicial map $s\maps \II_m \to \II_n$, its “reversed version” is $s ∘ r_m$, e.g. for the canonical crooked maps we have $c'_n = c_n ∘ r_{\crn(n)}$.
	Since the lower index is often clear, we write just $r$ instead of $r_m$.
	
	Using this notation, we may write the recursive definition of the canonical crooked map (Construction~\ref{con:crooked}) as $c_n := (e^n_{n - 1, 0} ∘ c_{n - 1}) \concat (e^n_{n - 2, 1} ∘ c_{n - 2} ∘ r) \concat (e^n_{n - 1, 1} ∘ c_{n - 1})$ for $n ≥ 2$.
	
	Additionally, let $B_s$ denote the set of \emph{break-points} of $s$, i.e. points $i ∈ \II_m$ such that $s(i + 1) - s(i) ≠ s(i) - s(i - 1)$, which includes the end-points $0, m$.
\end{notation}

\begin{lemma} \label{thm:simplicial_crooked_factorization}
	Let $s\maps \II_m \to \II_n$ be a simplicial surjection.
	If $s(0) = 0$, $s(m) = n$, and $s\restr{B_s}$ is an injection, then there is a simplicial surjection $s'\maps \II_{\crn(n)} \to \II_m$ such that $s ∘ s'$ is the canonical crooked map $c_n$.
	
	\begin{proof}
		Clearly, if $n ≤ 1$, then $s = \id_{\II_n} = c_n$, or $s = r_n$ and $s ∘ r_n = \id_{\II_n} = c_n$.
		Suppose that $n > 1$, so $1 ≤ n - 1$.
		By the assumptions on $s$, either $s\fiber{n - 1} = \set{b_1, m - 1}$ for a unique break-point $b_1$, or $s\fiber{n - 1} = \set{m - 1}$, in which case we put $b_1 := m - 1$.
		Similarly, either $s\fiber{1} = \set{b_2, 1}$ for a unique break-point $b_2$, or $s\fiber{1} = \set{1}$, in which case we put $b_2 := 1$.
		We put $m_0 := b_1$, $m_1 := \abs{b_2 - b_1}$, and $m_2 := m - b_2$, so $m = m_0 ± m_1 + m_2$ depending on the order of $b_1$ and $b_2$.
		
		Let $s_0\maps \II_{m_0} \to \II_{n - 1}$ be the restriction $s\restr{[0, b_1]}$, i.e. $s ∘ e^m_{m_0, 0} = e^n_{n - 1, 0} ∘ s_0$.
		We see that every break-point of $s_0$ is a break-point of $s$, or it is equal to the point $m_0 = b_1$, where $s_0$ has a unique maximum.
		So $s_0$ satisfies the assumptions of our lemma, and by the induction there is a simplicial surjection $s'_0\maps \II_{\crn(n - 1)} \to \II_{m_0}$ such that $s_0 ∘ s'_0 = c_{n - 1}$.
		Similarly, for the translation $s_2\maps \II_{m_2} \to \II_{n - 1}$ of the restriction $s\restr{[b_2, m]}$ satisfying $s ∘ e^m_{m_2, b_2} = e^n_{n - 1, 1} ∘ s_2$, there is a simplicial surjection $s'_2\maps \II_{\crn(n - 1)} \to \II_{m_2}$ such that $s_2 ∘ s'_2 = c_{n - 1}$.
		
		Suppose that $b_1 ≤ b_2$.
		We have that $s\restr{[b_1, b_2]}$ has a unique maximum $n - 1$ at $b_1$ and a unique minimum $1$ at $b_2$.
		Let $s_1\maps \II_{m_1} \to \II_{n - 2}$ be the unique simplicial map satisfying $s ∘ e^m_{m_1, b_1} = e^n_{n - 2, 1} ∘ s_1$.
		The map $s_1 ∘ r$ satisfies the assumptions of our lemma, and so by the induction there is a simplicial map $s'_1\maps \II_{\crn(n - 2)} \to m_1$ such that $s_1 ∘ r ∘ s'_1 = c_{n - 2}$.
		
		Since the last point $m_0$ is the unique maximum of $s_0$ and since $s_0 ∘ s'_0 = c_{n - 2}$, we have that $s'_0$ maps the last point $\crn(n - 1)$ to $m_0$.
		Hence, $(e^m_{m_0, 0} ∘ s'_0)$ maps the last point to $m_0 = b_1 ∈ \II_m$.
		Similarly, $(e^m_{m_1, b_1} ∘ r ∘ s'_1 ∘ r)$ maps the first point to the unique point in $[b_1, b_2]$ that gets mapped to $n - 1$ by $s$, i.e. to $b_1$, and the last point to $b_2$.
		Finally, $(e^m_{m_2, b_2} ∘ s'_2)$ maps the first point to $b_2$.
		Hence, we may define the concatenation $s'\maps \II_{\crn(n)} \to \II_m$ of the maps satisfying
		\begin{align*}
			s ∘ s' :={}& s ∘ \bigl((e^m_{m_0, 0} ∘ s'_0) \concat (e^m_{m_1, b_1} ∘ r ∘ s'_1 ∘ r) \concat (e^m_{m_2, b_2} ∘ s'_2)\bigr) \\
				={}& (e^n_{n - 1, 0} ∘ s_0 ∘ s'_0) \concat (e^n_{n - 2, 1} ∘ s_1 ∘ r ∘ s'_1 ∘ r) \concat (e^n_{n - 1, 1} ∘ s_2 ∘ s'_2) \\
				={}& (e^n_{n - 1, 0} ∘ c_{n - 1}) \concat (e^n_{n - 2, 1} ∘ c_{n - 2} ∘ r) \concat (e^n_{n - 1, 1} ∘ c_{n - 1}) = c_n.
		\end{align*}
		
		If $b_2 ≤ b_1$, we change the definitions accordingly.
		We consider the restriction $s\restr{[b_2, b_1]}$ and the unique simplicial map $s_1\maps \II_{m_1} \to \II_{n - 2}$ such that $s ∘ e^m_{m_1, b_2} = e^n_{n - 2, 1} ∘ s_1$.
		Again, $s_1$ satisfies the assumptions of our lemma, and so by the induction there is a simplicial surjection $s'_1\maps \II_{\crn(n - 2)} \to \II_{m_1}$ such that $s_1 ∘ s'_1 = c_{n - 2}$.
		We define the second part of $s'$ by $e^m_{m_1, b_2} ∘ s'_1 ∘ r$.
		Then we have $s ∘ (e^m_{m_1, b_2} ∘ s'_1 ∘ r) = e^n_{n - 2, 1} ∘ s_1 ∘ s'_1 ∘ r = e^n_{n - 2, 1} ∘ c_{n - 2} ∘ r$.
	\end{proof}
\end{lemma}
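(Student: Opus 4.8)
The plan is to argue by induction on $n$, mirroring the recursive definition $c_n = (e^n_{n-1,0} \circ c_{n-1}) \concat (e^n_{n-2,1} \circ c_{n-2} \circ r) \concat (e^n_{n-1,1} \circ c_{n-1})$: I would cut $s$ into three consecutive blocks on which $c_n$ is assembled from copies of $c_{n-1}$, $c_{n-2}$, $c_{n-1}$, solve each block by the inductive hypothesis, and then concatenate the partial solutions into a single $s'$. For the base case $n \leq 1$, injectivity of $s\restr{B}$ forces $B = \{0, m\}$, since any interior break-point would be a turning point attaining the value $0$ or $n$ already taken at an endpoint; hence $s$ is affine and equals $\id_{\II_n} = c_n$, so $s' := \id$ works.

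For the inductive step $n \geq 2$, I would first use injectivity on break-points to pin down the shape of $s$ near its extreme values. Since $s\restr{B}$ is injective, the value $n$ is attained only at the endpoint $m$ (a plateau or an interior local maximum at height $n$ would produce a repeated break-point value), whence $s(m-1) = n-1$; dually $s(1) = 1$. Consequently the value $n-1$ occurs only along this final ascent (at $m-1$) and possibly at one further break-point $b_1$, a local maximum, with the convention $b_1 := m-1$ when no such point exists; symmetrically I read off $b_2$ from the fiber $s\fiber{1}$. The points $b_1, b_2$ split $\II_m$ into $[0, b_1]$, the middle interval between $b_1$ and $b_2$, and $[b_2, m]$. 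Restricting and translating so the images land in the standard copies, I obtain simplicial surjections $s_0 \maps \II_{m_0} \to \II_{n-1}$, $s_1 \maps \II_{m_1} \to \II_{n-2}$, $s_2 \maps \II_{m_2} \to \II_{n-1}$ characterized by $s \circ e^m_{m_0,0} = e^n_{n-1,0} \circ s_0$ and the analogous identities for $s_1, s_2$.

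The heart of the argument, and where I expect the main difficulty, is verifying that each block again satisfies the hypotheses of the lemma for the smaller parameter. Each block's endpoints carry the required extreme values, because the cut points $b_1, b_2$ become new endpoints realizing the new maximum or minimum; injectivity on break-points is inherited because the interior break-points of a block are genuine break-points of $s$, while $b_1, b_2$ realize their values uniquely by the global injectivity; and surjectivity onto the target follows from the intermediate-value property of simplicial maps. Since the middle block runs from a maximum down to a minimum, I must apply the induction to $s_1 \circ r$ rather than to $s_1$. This yields $s_0', s_1', s_2'$ with $s_0 \circ s_0' = c_{n-1}$, $(s_1 \circ r) \circ s_1' = c_{n-2}$, and $s_2 \circ s_2' = c_{n-1}$.

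Finally I would set $s' := (e^m_{m_0,0} \circ s_0') \concat (e^m_{m_1, b_1} \circ r \circ s_1' \circ r) \concat (e^m_{m_2, b_2} \circ s_2')$. The concatenation is well-defined because $s_0'$ sends its last vertex to $b_1$ (the unique preimage of the maximum of $s_0$), the reversed middle map runs from $b_1$ to $b_2$, and $s_2'$ starts at $b_2$; so the junctions agree. Composing with $s$ and invoking the naturality identity $t \circ (u \concat v) = (t \circ u) \concat (t \circ v)$ together with the recursive formula for $c_n$ then gives $s \circ s' = c_n$. The case $b_2 \leq b_1$ is symmetric: the middle block is then taken with the opposite orientation, so the reversals $r$ on that block are rearranged and the induction is applied directly to $s_1$.
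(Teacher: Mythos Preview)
Your proposal is correct and follows essentially the same approach as the paper: induction on $n$, locating the cut points $b_1, b_2$ from the fibers $s^{-1}(n-1)$ and $s^{-1}(1)$, verifying that the three blocks (with the middle one suitably reversed) again satisfy the hypotheses, and concatenating the inductively obtained $s_0', s_1', s_2'$ into $s'$ via exactly the formula the paper writes down. Your base case is in fact slightly cleaner than the paper's (the alternative $s = r_n$ mentioned there cannot occur under the stated endpoint hypotheses), and your treatment of the $b_2 \le b_1$ case, while brief, correctly identifies that the reversal now goes on the outside rather than the inside of the middle block.
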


\begin{lemma} \label{thm:simplest_crooked}
	Let $s\maps \II_m \to \II_n$ be a crooked simplicial surjection.
	There is a simplicial surjection $s'\maps \II_m \to \II_{\crn(n)}$ such that $s = c_n ∘ s'$.
	So the canonical crooked surjections are the simplest crooked surjections.
	
	\begin{proof}
		If $n ≤ 1$, then this is easy to see.
		Otherwise, we proceed by induction.
		Note that if $s(0) = c_n(s'(0))$ is an end-point of $\II_n$, then $s'(0)$ is an end-point of $\II_{\crn(n)}$.
		The same is true for $m$ instead of $0$.
		
		First, suppose that $s$ has a unique minimum at $0$ and a unique maximum at $m$.
		Let $b_1 := \min(s\fiber{n - 1})$ and $b_2 := \max(s\fiber{1})$.
		Since $s$ is crooked and $n ≥ 2$, we have $b_1 ≤ b_2$.
		Let $m_0 := b_1$, $m_1 := b_2 - b_1$, and $m_2 := m - b_2$, so $m = m_0 + m_1 + m_2$.
		We consider the crooked restrictions $s\restr{[0, b_1]}$, $s\restr{[b_1, b_2]}$, $s\restr{[b_2, m]}$, the corresponding simplicial surjections $s_0\maps \II_{m_0} \to \II_{n - 1}$, $s_1\maps \II_{m_1} \to \II_{n - 2}$, and $s_2\maps \II_{m_2} \to \II_{n - 1}$, and their factorizations via simplicial surjections $s'_0\maps \II_{m_0} \to \II_{\crn(n - 1)}$, $s'_1\maps \II_{m_1} \to \II_{\crn(n - 2)}$, and $s'_2\maps \II_{m_2} \to \II_{\crn(n - 1)}$ obtained by the induction.
		The maps satisfy the following conditions.
		We also recall some identities coming from the inductive definition of $c_n$.
		\begin{align*}
			s ∘ e^m_{m_0, 0} &= e^n_{n - 1, 0} ∘ s_0,
				& s_0 &= c_{n - 1} ∘ s'_0, 
				& c_n ∘ e^{\crn(n)}_{\crn(n - 1), 0} &= e^n_{n - 1, 0} ∘ c_{n - 1}, \\
			s ∘ e^m_{m_1, b_1} &= e^n_{n - 2, 1} ∘ s_1,
				&\quad s_1 &= c_{n - 2} ∘ s'_1, 
				& c_n ∘ e^{\crn(n)}_{\crn(n - 2), \crn(n - 1)} &= e^n_{n - 2, 1} ∘ c_{n - 2} ∘ r, \\
			s ∘ e^m_{m_2, b_2} &= e^n_{n - 1, 1} ∘ s_2,
				& s_2 &= c_{n - 1} ∘ s'_2, 
				&\quad c_n ∘ e^{\crn(n)}_{\crn(n - 1), \crn(n) - \crn(n - 1)} &= e^n_{n - 1, 1} ∘ c_{n - 1}.
		\end{align*}
		Together, we have
		\begin{align*}
			s &= s ∘ (e^m_{m_0, 0} \concat e^m_{m_1, b_1} \concat e^m_{m_2, b_2}) \\
				&= (e^n_{n - 1, 0} ∘ s_0) \concat (e^n_{n - 2, 1} ∘ s_1) \concat (e^n_{n - 1, 1} ∘ s_2) \\
				&= (e^n_{n - 1, 0} ∘ c_{n - 1} ∘ s'_0) 
					\concat (e^n_{n - 2, 1} ∘ c_{n - 2} ∘ r ∘ r ∘ s'_1) 
					\concat (e^n_{n - 1, 1} ∘ c_{n - 1} ∘ s'_2) \\
				&= c_n ∘ \bigl((e^{\crn(n)}_{\crn(n - 1), 0} ∘ s'_0) 
					\concat (e^{\crn(n)}_{\crn(n - 2), \crn(n - 1)} ∘ r ∘ s'_1) 
					\concat (e^{\crn(n)}_{\crn(n - 1), \crn(n) - \crn(n - 1)} ∘ s'_2) \bigr) \\
				& =: c_n ∘ s'.
		\end{align*}
		The definition of $s'$ is correct since the surjections $s'_0$ and $s'_2$ fix the end-points, $s'_1$ reverse the end-points, and the connecting values are $\crn(n - 1)$ and $\crn(n - 1) + \crn(n - 2) = \crn(n) - \crn(n - 1)$.
		
		Now suppose that the crooked surjection $s\maps \II_m \to \II_n$ is arbitrary.
		Let $[a_i, b_i]$, $i ≤ k$, be the increasing enumeration of intervals in $\II_m$ with disjoint interiors such that $s(a_i) = 0$ and $s(b_i) = n$ or vice versa, and the restriction $s\restr{[a_i, b_i]}$ has unique extrema at the end-points.
		We may apply the special case of our lemma to the map $s_i$ or $s_i ∘ r$ where $s_i := (s ∘ e^m_{b_i - a_i, a_i})\maps \II_{b_i - a_i} \to \II_n$.
		Hence, there is a simplicial surjection $s'_i\maps \II_{\crn(n)} \to \II_{b_i - a_i}$ such that $s_i = c_n ∘ s'_i$.
		
		Let us consider the restriction $s\restr{[0, a_0]}$ and suppose $s(a_0) = n$.
		The other case $s(a_0) = 0$ is analogous.
		Let $v := \min(s\restr{[0, a_0]}) ∈ (0, n)$, let $n' := n - v$, and let $t_0\maps \II_{a_0} \to \II_{n'}$ be the crooked simplicial surjection such that $s ∘ e^m_{a_0, 0} = e^n_{n', v} ∘ t_0$.
		By the induction there is a simplicial surjection $t'_0\maps \II_{a_0} \to \II_{\crn(n')}$ such that $t_0 = c_{n'} ∘ t'_0$.
		It is easy to see that $c_n ∘ e^{\crn(n)}_{\crn(n'), \crn(n) - \crn(n')} = e^n_{n', n - n'} ∘ c_{n'}$.
		Together, we have 
		\[
			s ∘ e^m_{a_0, 0} = e^n_{n', v} ∘ t_0 = e^n_{n', v} ∘ c_{n'} ∘ t'_0 = c_n ∘ e^{\crn(n)}_{\crn(n'), \crn(n) - \crn(n')} ∘ t'_0 =: c_n ∘ t''_0.
		\]
		We have $t''_0(a_0) = \crn(n)$, and so $t''_0 \concat s'_0$ is well-defined, and it satisfies $c_n ∘ (t''_0 \concat s'_0) = (s ∘ e^m_{a_0, 0}) \concat (s ∘ e^m_{b_0 - a_0, a_0}) = s ∘ e^m_{b_0, 0}$.
		Similarly, we handle the restriction $s\restr{[b_k, m]}$.
		
		Let us consider the restriction $s\restr{[b_i, a_{i + 1}]}$ for $i < k$.
		Suppose that $s(b_i) = s(a_{i + 1}) = 0$.
		The other case $s(b_i) = s(a_{i + 1}) = n$ is analogous.
		Let $n' := \max(s\restr{[b_i, a_{i + 1}]})$,
		and let $t_{i + 1}\maps \II_{a_{i + 1} - b_i} \to \II_{n'}$ be the crooked simplicial surjection such that $s ∘ e^m_{a_{i + 1} - b_i, b_i} = e^n_{n', 0} ∘ t_{i + 1}$.
		By the induction there is a simplicial surjection $t'_{i + 1}\maps \II_{a_{i + 1} - b_i} \to \II_{\crn(n')}$ such that $t_{i + 1} = c_{n'} ∘ t'_{i + 1}$.
		We have $c_n ∘ e^{\crn(n)}_{\crn(n'), 0} = e^n_{n', 0} ∘ c_{n'}$.
		Together, we have 
		\[
			s ∘ e^m_{a_{i + 1} - b_i, b_i} = e^n_{n', 0} ∘ t_{i + 1} = e^n_{n', 0} ∘ c_{n'} ∘ t'_{i + 1} = c_n ∘ e^{\crn(n)}_{\crn(n'), 0} ∘ t'_{i + 1} =: c_n ∘ t''_{i + 1}.
		\]
		We have $t''_{i + 1}(0) = t''_{i + 1}(a_{i + 1} - b_i) = 0$, and so $s'_i \concat t'_{i + 1} \concat s'_{i + 1}$ is well-defined, and it satisfies $c_n ∘ (s'_i \concat t'_{i + 1} \concat s'_{i + 1}) = s ∘ e^m_{b_{i + 1} - a_i, a_i}$.
		
		Altogether, we have partitioned $\II_m$ to subintervals $[d_i, d_i + m_i]$, $i < 2(k + 1) + 1$, such that $s ∘ e^m_{m_i, d_i} = c_n ∘ q_i$ for suitable simplicial maps $q_i\maps \II_{m_i} \to \II_{\crn(n)}$.
		The maps $q_i$ can be concatenated to a single map $s'\maps \II_m \to \II_{\crn(n)}$, and $c_n ∘ s'$ is the concatenation of the maps $c_n ∘ q_i$, and so equal to $s$.
	\end{proof}
\end{lemma}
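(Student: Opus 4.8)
The plan is to induct on $n$. The base cases $n \leq 1$ are immediate: when $n = 0$ every simplicial map into $\II_0$ is the unique constant surjection and $c_0 = \id$, while for $n = 1$ we have $\crn(1) = 1$ and $c_1 = \id$, so we may take $s' := s$. For $n \geq 2$ the whole point is to read off from a crooked $s$ the three-block structure mirroring the recursion $c_n = (e^n_{n-1,0} \circ c_{n-1}) \concat (e^n_{n-2,1} \circ c_{n-2} \circ r) \concat (e^n_{n-1,1} \circ c_{n-1})$, and to factor each block through the appropriate shorter canonical map by the inductive hypothesis.

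First I would treat the clean case where $s$ attains its minimum $0$ only at the left endpoint and its maximum $n$ only at the right endpoint. Set $b_1 := \min(s\fiber{n-1})$ and $b_2 := \max(s\fiber{1})$. The decisive use of crookedness is the inequality $b_1 \leq b_2$: applying the crookedness condition to the pair $0 \leq m$ yields $0 \leq j' \leq i' \leq m$ with $s(j') \in \set{n-1, n}$ and $s(i') \in \set{0, 1}$, so a near-top value is forced to precede a near-bottom value; in the clean case the extreme values are only attained at the endpoints, so in fact $s(j') = n-1$ and $s(i') = 1$, whence $b_1 \leq j' \leq i' \leq b_2$. Cutting $[0,m]$ at $b_1,b_2$ gives three crooked restrictions, namely $s_0 := s\restr{[0,b_1]} \maps \II_{m_0} \to \II_{n-1}$, a downward sweep $s_1 \maps \II_{m_1} \to \II_{n-2}$ on $[b_1,b_2]$ (living in the translated interval $[1,n-1]$), and $s_2 \maps \II_{m_2} \to \II_{n-1}$ on $[b_2,m]$. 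The induction factors them as $s_0 = c_{n-1} \circ s'_0$, $s_1 = c_{n-2} \circ s'_1$, $s_2 = c_{n-1} \circ s'_2$, and I would then concatenate $s'_0$, a reversal-adjusted $s'_1$, and $s'_2$, matching them against the three blocks of $c_n$ via the inclusion identity $c_n \circ e^{\crn(n)}_{\crn(n-1),0} = e^n_{n-1,0} \circ c_{n-1}$ and its analogues. The only care needed is that the connecting values of the concatenation land at $\crn(n-1)$ and $\crn(n) - \crn(n-1) = \crn(n-1) + \crn(n-2)$, which is exactly where the reversal $r$ on the middle block enters.

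To reduce the general case to this, I would partition $[0,m]$ at the successive positions where $s$ performs a full sweep between the two extremes with unique extrema at its endpoints, obtaining maximal subintervals $[a_i,b_i]$ on which $s$ (possibly after precomposing with the inversion $r$) satisfies the clean-case hypothesis and hence factors through $c_n$. The complementary pieces — the initial arc $[0,a_0]$, the terminal arc $[b_k,m]$, and the intermediate arcs $[b_i,a_{i+1}]$ where $s$ turns around before reaching one extreme — each stay inside a proper subinterval $[v,n]$ or $[0,n']$ with $n' < n$, so after translation each is a crooked surjection onto $\II_{n'}$ and the induction on $n$ applies; composing with a block-inclusion identity of the form $c_n \circ e^{\crn(n)}_{\crn(n'),k} = e^n_{n',k'} \circ c_{n'}$ re-embeds the resulting factorization as one through $c_n$. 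Concatenating all the pieces over the whole partition produces the desired $s' \maps \II_m \to \II_{\crn(n)}$ with $c_n \circ s' = s$.

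The main obstacle I anticipate is organizational rather than conceptual: keeping orientations consistent across the seams. Each block is built from $c_{n-1}$, $c_{n-2}$, or $c_{n'}$ in a definite direction, and whether a given piece must be precomposed with $r$ is dictated by whether its endpoints map to $0$ or to $n$; at every junction one must check that the outgoing value of one block equals the incoming value of the next so that the concatenation $\concat$ is even defined, and that the chosen $s'_i$ fix or reverse endpoints as required. The two substantive facts hidden inside this bookkeeping are that restrictions of a crooked simplicial surjection to these canonically chosen subintervals are again crooked surjections onto the appropriate shorter linear graph, and the inequality $b_1 \leq b_2$ in the clean case; once these are in hand the gluing is a mechanical verification against the recursive definition of $c_n$.
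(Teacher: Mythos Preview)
Your proposal is correct and follows essentially the same approach as the paper: induct on $n$, handle first the ``clean'' case (unique minimum at $0$, unique maximum at $m$) by cutting at $b_1 = \min(s\fiber{n-1})$ and $b_2 = \max(s\fiber{1})$ and matching the three pieces against the recursive blocks of $c_n$, then reduce the general case by partitioning $[0,m]$ into full-sweep intervals $[a_i,b_i]$ (handled by the clean case, possibly after reversal) and complementary pieces whose range is a proper subinterval (handled by the induction on $n$). You have also correctly isolated the two substantive points---the crookedness-derived inequality $b_1 \leq b_2$ and the fact that the restrictions are again crooked surjections onto shorter linear graphs---with the rest being bookkeeping of reversals and endpoint matching exactly as in the paper.
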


\begin{lemma} \label{thm:crooked_approximation}
	For every $\I$-map $g$ and every $ε > 0$ there is $n_0 ∈ ω$ such that for every $n ≥ n_0$ there is an $\I$-map $g'$ such that $g ∘ g' ≈_ε \geom{c_n}$.
	
	\begin{proof}
		First, let $g'_0$ be an $\I$-map such that $g ∘ g'_0$ fixes both $0$ and $1$.
		Next, it is well-known that every continuous map $\II \to \II$ can be approximated by a piecewise linear map (and it also follows from Lemma~\ref{thm:simplicial_approximation}), so there are points $0 = x_0 < x_1 < \cdots < x_k = 1$ and the corresponding values $y_i$, $i ≤ k$, such that the piecewise linear map $h\maps \II \to \II$ extending $\set{x_i \mapsto y_i: i ≤ k}$ satisfies $h ≈_ε g ∘ g'_0$.
		There is $n_0 ∈ ω$ such that for every $n ≥ n_0$ we can make sure that the values $y_i$, $i ≤ k$, are of the form $j_i / n$ for pairwise distinct values $j_i ∈ \II_n$ such that $j_0 = 0$ and $j_k = n$.
		Next, we rescale the intervals $[x_i, x_{i + 1}]$.
		Let $m_0 := 0$ and for every $0 < i ≤ k$ let $m_i := m_{i - 1} + \abs{j_i - j_{i - 1}} > m_{i - 1}$ (we know that $j_i ≠ j_{i - 1}$) and $m := m_k$.
		Let $h'$ be the piecewise linear homeomorphism extending $\set{m_i/m \mapsto x_i: i ≤ k}$, so $h ∘ h' = \geom{s}$ for a simplicial surjection $s\maps \II_m \to \II_n$.
		By the construction, $s$ satisfies the assumptions of Lemma~\ref{thm:simplicial_crooked_factorization}, and so there is a simplicial surjection $s'\maps \II_{m'} \to \II_m$ such that $s ∘ s' = c_n$.
		Together, for $g' := g'_0 ∘ h' ∘ \geom{s'}$ we have $g ∘ g' ≈_ε h ∘ h' ∘ \geom{s'} = \geom{s ∘ s'} = \geom{c_n}$.
	\end{proof}
\end{lemma}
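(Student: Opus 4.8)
The plan is to reduce this analytic approximation problem to the purely combinatorial factorization already secured in Lemma~\ref{thm:simplicial_crooked_factorization}. The key observation is that the canonical crooked map $\geom{c_n}$ is the geometric realization of the simplicial surjection $c_n\maps \II_{\crn(n)} \to \II_n$, so it suffices to arrange, after a harmless adjustment of $g$, that $g$ be $ε$-close to the geometric realization $\geom{s}$ of some simplicial surjection $s\maps \II_m \to \II_n$ meeting the hypotheses of that lemma. The lemma then supplies a simplicial surjection $s'$ with $s ∘ s' = c_n$, and precomposition turns this identity into the desired approximation $g ∘ g' ≈_ε \geom{c_n}$.

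First I would normalize the endpoints. Since $g$ is a surjection, I can pick $a, b ∈ \II$ with $g(a) = 0$ and $g(b) = 1$ and take an $\I$-map $g'_0$ (for instance a piecewise linear sweep running $a \to 0 \to 1 \to b$, which is surjective) with $g'_0(0) = a$ and $g'_0(1) = b$; then $g ∘ g'_0$ fixes both $0$ and $1$. Next I would invoke the standard fact that a continuous self-map of $\II$ is uniformly approximable by piecewise linear maps to obtain break-points $0 = x_0 < \cdots < x_k = 1$ with values $y_i$ (where $y_0 = 0$, $y_k = 1$) whose piecewise linear interpolant $h$ satisfies $h ≈_ε g ∘ g'_0$. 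The number $n_0$ enters here: for all sufficiently large $n$ I can perturb the finitely many values $y_i$ slightly to rationals $j_i/n$ that are \emph{pairwise distinct} and satisfy $j_0 = 0$, $j_k = n$, while keeping the approximation within $ε$.

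The combinatorial heart is then to realize $h$ as the geometric realization of a unit-slope simplicial map. I would rescale the domain intervals by the piecewise linear homeomorphism $h'$ sending $m_i/m \mapsto x_i$, where $m_i := m_{i-1} + \abs{j_i - j_{i-1}}$ and $m := m_k$; because each domain increment $\abs{j_i - j_{i-1}}$ matches the corresponding range increment, $h ∘ h' = \geom{s}$ for a simplicial surjection $s\maps \II_m \to \II_n$ whose break-points carry pairwise distinct $s$-values (as the $j_i$ are distinct) and with $s(0) = 0$, $s(m) = n$. These are exactly the hypotheses of Lemma~\ref{thm:simplicial_crooked_factorization}, which yields a simplicial surjection $s'\maps \II_{\crn(n)} \to \II_m$ with $s ∘ s' = c_n$. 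Setting $g' := g'_0 ∘ h' ∘ \geom{s'}$ (a composite of surjections, hence an $\I$-map) and using that precomposition does not increase the sup-distance, I compute $g ∘ g' ≈_ε h ∘ h' ∘ \geom{s'} = \geom{s} ∘ \geom{s'} = \geom{s ∘ s'} = \geom{c_n}$.

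The main obstacle is the bookkeeping that makes $s$ genuinely simplicial and injective on its break-points. I must choose $n_0$ large enough that perturbing the $y_i$ to distinct rationals $j_i/n$ stays within the $ε$-budget, and the rescaling lengths $\abs{j_i - j_{i-1}}$ must be forced to coincide with the range increments so that every segment of $\geom{s}$ has slope $\pm 1$. Distinctness of the $j_i$ is precisely what guarantees that $s\restr{B}$ is injective on the set $B$ of break-points, which is the one nontrivial hypothesis of Lemma~\ref{thm:simplicial_crooked_factorization}; once that is in place, the remaining steps are a routine precomposition identity and a sup-norm estimate.
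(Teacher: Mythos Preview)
Your proposal is correct and follows essentially the same approach as the paper: normalize the endpoints via $g'_0$, approximate $g\circ g'_0$ by a piecewise linear map with distinct rational node values $j_i/n$, rescale the domain so the result is the geometric realization of a simplicial surjection $s$ satisfying the hypotheses of Lemma~\ref{thm:simplicial_crooked_factorization}, and then precompose with the $s'$ that lemma provides. Your added explanations (why the slopes are $\pm 1$, why $s$ is injective on break-points) are accurate and make explicit what the paper leaves implicit.
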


\begin{lemma} \label{thm:simplicial_approximation}
	Let $f\maps \II \to \II$ be a continuous map and let $0 < n ∈ ω$.
	If $ε > \frac{1}{2n}$, then there is $m ∈ ω$ and a simplicial map $s\maps \II_m \to \II_n$ such that $\geom{s} ≈_ε f$.
	Moreover, if $f$ is surjective and $ε ≤ \frac{1}{n}$, then $s$ is surjective as well.
	If additionally $f$ is $δ$-crooked for $δ ≤ 2(\frac{3}{4n} - ε)$, then $s$ is crooked.
	Note that the choice $ε = \frac{5}{8n}$ and $δ = \frac{1}{4n}$ works in all cases.
	
	\begin{proof}
		For every $j ≤ n$ let $V_j := (j/n - ε, j/n + ε)$.
		If $ε > \frac{1}{2n}$, the sets $V_j$, $j ≤ n$, cover $\II$.
		We may suppose without loss of generality that $ε ≤ \frac{1}{n}$ so that $V_j ∩ V_{j'} ≠ ∅$ if and only if $\abs{j - j'} ≤ 1$.
		By the continuity of $f$ and the compactness of $\II$ there is a finite open cover $\tuple{U_i: i ≤ N}$ of $\II$ by intervals such that for every $i ≤ N$ there is $j(i) ≤ n$ with $f\im{U_i} ⊆ V_{j(i)}$.
		Moreover, we may suppose that no interval $U_i$ is covered by the others, and so there is $a_i ∈ U_i \setminus ⋃_{i' ≠ i} U_{i'}$.
		We may re-enumerate the intervals $U_i$ so that the sequence $\tuple{a_i}_{i ≤ N}$ is strictly increasing.
		Because of the one-dimensionality of $\II$, it follows that $U_i ∩ U_{i'} ≠ ∅$ if and only if $\abs{i - i'} ≤ 1$, 
		and also that $0 ∈ U_0$ and $1 ∈ U_N$.
		
		Let $m ∈ ω$ be large enough so that for every $i < N$ there is $k_i ∈ \II_m$ satisfying $B_i := [k_i/m, (k_i + 1)/m] ⊆ U_i ∩ U_{i + 1}$.
		Let us define a map $s\maps \II_m \to \II_n$ by putting $s(k_i) := j(i)$ and $s(k_i + 1) := j(i + 1)$ for $i < N$ and letting $s$ be constant on every interval $[k_i + 1, k_{i + 1}]$, $i < N - 1$, as well as on $[0, k_0]$ and $[k_{N - 1} + 1, m]$.
		Let $A_i$, $i ≤ N$, denote the corresponding subintervals of $\II$, i.e. scaled down by $m$.
		It follows from $U_i ∩ U_{i + 1} ≠ ∅$ that $V_{j(i)} ∩ V_{j(i + 1)} ≠ ∅$ and $\abs{j(i) - j(i + 1)} ≤ 1$.
		Hence, $s$ is a simplicial map.
		
		Let $g := \geom{s}$.
		We show $g ≈_ε f$.
		$\II$ is covered by the intervals $A_i$, $i ≤ N$, and $B_i$, $i < N$.
		For every $i ≤ N$ we have $f\im{A_i} ⊆ V_{j(i)}$ since $A_i ⊆ U_i$.
		At the same time, $g$ is constant on $A_i$ with the value $j(i)/n$ being the center of $V_{j(i)}$.
		For $i < N$ we have $f\im{B_i} ⊆ V_{j(i)} ∩ V_{j(i + 1)} = (y' - ε, y + ε)$ where $\set{y, y'} = \set{j(i)/n, j(i + 1)/n}$ with $y < y'$.
		At the same time, $g\im{B_i} ⊆ [y, y']$.
		Together, $\geom{s} ≈_ε f$.
		
		If $ε ≤ \frac{1}{n}$ and $f$ is surjective, then there is $x ∈ \II$ such that $f(x) = 0$ and $g(x) < \frac{1}{n}$.
		There is $i ∈ \II_m$ such that $x ∈ [i/m, (i + 1)/m]$, and so at least one of $g(i/m), g((i + 1)/m) < \frac{1}{n}$.
		Hence, $s(i) = 0$ or $s(i + 1) = 0$.
		Similarly, there is $i' ∈ \II_m$ such that $s(i') = n$, and so $s$ is surjective.
		If $f$ is $δ$-crooked for $δ ≤ 2(\frac{3}{4n} - ε)$, then $g$ is $(δ + 2ε)$-crooked by Proposition~\ref{thm:crooked_calculus}, and $δ + 2ε ≤ \frac{3}{2n}$.
		Hence, $s$ is crooked by Proposition~\ref{thm:n_crooked}.
	\end{proof}
\end{lemma}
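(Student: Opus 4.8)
The plan is to build the simplicial map $s$ by pulling back a carefully chosen cover of the target interval through $f$. First I would set up, in the codomain $\II$, the open intervals $V_j := (j/n - ε, j/n + ε)$ centered at the vertex positions $j/n$, $j ≤ n$. The hypothesis $ε > \frac{1}{2n}$ is exactly what guarantees that consecutive $V_j$ overlap, so that $\set{V_j}_{j ≤ n}$ covers $\II$; and when $ε ≤ \frac{1}{n}$ one checks that $V_j ∩ V_{j'} ≠ ∅$ precisely when $\abs{j - j'} ≤ 1$, which will make the resulting assignment of values automatically simplicial.

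Next I would pull this back: by the continuity of $f$ and the compactness of $\II$, there is a finite cover of the domain by open intervals $U_i$ such that each image $f\im{U_i}$ lands in a single $V_{j(i)}$. The combinatorial heart of the argument is to turn this cover into a \emph{chain}. After discarding redundant intervals, each $U_i$ contains a witness point $a_i$ lying in no other $U_{i'}$; re-enumerating by the order of the $a_i$ along $\II$ and using that $\II$ is one-dimensional forces $U_i ∩ U_{i'} ≠ ∅$ precisely when $\abs{i - i'} ≤ 1$, with $0 ∈ U_0$ and $1 ∈ U_N$. I expect this bookkeeping — ensuring the nerve of the cover is a path rather than something more complicated — to be the main obstacle, though it is elementary once one exploits one-dimensionality.

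With the chain in hand I would place a break-point in each overlap $U_i ∩ U_{i+1}$ (a point $k_i/m$ there, for $m$ large enough) and define $s$ to take the value $j(i)$ on the body of $U_i$ and to interpolate across each overlap. Because adjacent targets overlap, $\abs{j(i) - j(i+1)} ≤ 1$, so $s$ is a genuine simplicial map $\II_m → \II_n$. For the approximation estimate I would split $\II$ into the body intervals $A_i ⊆ U_i$, where $f$ lands in $V_{j(i)}$ and $\geom{s}$ is constantly $j(i)/n$ (its center), and the transition intervals $B_i = U_i ∩ U_{i+1}$, where both $f$ and $\geom{s}$ land in $V_{j(i)} ∪ V_{j(i+1)}$; in either region the sup-distance stays below $ε$, giving $\geom{s} ≈_ε f$.

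Finally I would read off the three refinements. For surjectivity, when $f$ is onto and $ε ≤ \frac{1}{n}$, a preimage of $0$ forces some adjacent value of $s$ to be strictly below $1$, hence equal to $0$, and likewise $s$ attains $n$; since a simplicial map changes by at most $1$ per step, a discrete intermediate-value argument shows $s$ hits every value, so $s$ is surjective. For crookedness, I would feed the approximation into Proposition~\ref{thm:crooked_calculus}(iii): from $\geom{s} ≈_ε f$ and $δ$-crookedness of $f$, the realization $\geom{s}$ is $(δ + 2ε)$-crooked, and the hypothesis $δ ≤ 2(\frac{3}{4n} - ε)$ gives $δ + 2ε ≤ \frac{3}{2n}$, so Proposition~\ref{thm:n_crooked} converts this into crookedness of $s$. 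The closing remark is pure arithmetic: $ε = \frac{5}{8n}$ satisfies $\frac{1}{2n} < ε ≤ \frac{1}{n}$, and $δ = \frac{1}{4n}$ satisfies $δ ≤ 2(\frac{3}{4n} - ε)$ with equality, so all three conclusions apply simultaneously.
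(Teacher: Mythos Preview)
Your proposal is correct and follows essentially the same approach as the paper's proof: set up the target cover $V_j$, pull it back to a chain cover $U_i$ of the domain via one-dimensionality, place break-points in the overlaps to define $s$, and then verify the approximation on body and transition intervals before invoking Proposition~\ref{thm:crooked_calculus}(iii) and Proposition~\ref{thm:n_crooked} for the crookedness. One small point to tighten when writing it out: on the transition intervals $B_i ⊆ U_i ∩ U_{i+1}$ you should use that $f\im{B_i}$ lies in the \emph{intersection} $V_{j(i)} ∩ V_{j(i+1)}$ (not the union), which together with $\geom{s}\im{B_i} ⊆ [\,j(i)/n,\, j(i+1)/n\,]$ is what actually yields the $ε$-bound there.
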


\begin{proof}[Proof of Theorem~\ref{thm:crooked_factorization}]
	Let $g$ be an $\I$-map and let $ε > 0$.
	By Lemma~\ref{thm:crooked_approximation} there is $n ∈ ω$ and an $\I$-map $g'$ such that $\frac{1}{n} ≤ ε/2$ and $g ∘ g' ≈_{ε/2} \geom{c_n}$.
	We put $δ := \frac{1}{4n}$ and $ε' := \frac{5}{8n} < \frac{1}{n} ≤ ε/2$.
	
	Let $f$ be any $δ$-crooked $\I$-map.
	By Lemma~\ref{thm:simplicial_approximation} there is a crooked simplicial surjection $s\maps \II_m \to \II_n$ such that $f ≈_{ε'} \geom{s}$.
	By Lemma~\ref{thm:simplest_crooked}, there is a simplicial surjection $s'\maps \II_m \to \II_{\crn(n)}$ such that $s = c_n ∘ s'$.
	Together, we have $f ≈_{ε/2} \geom{s} = \geom{c_n} ∘ \geom{s'} ≈_{ε/2} g ∘ (g' ∘ \geom{s'}$).
\end{proof}

\section{Generic objects}
\label{sec:generic}

We recall the key notion of a \emph{generic object} and show how it applies to the pseudo-arc.
This general notion makes sense in any category and is based on the abstract Banach–Mazur game, which was considered by the second author~\cite{Kubis22}.

\begin{definition} \label{def:BM}
	Let $\K$ be a category.
	We consider the abstract \emph{Banach–Mazur game} played in $\K$.
	The play scheme is as follows.
	We have two players, \emph{Eve} and \emph{Odd}.
	Eve starts the play by picking a $\K$-map $f_0\maps X_0 \from X_1$.
	Odd responds by picking a $\K$-map $f_1\maps X_1 \from X_2$.
	Eve continues with a $\K$-map $f_2\maps X_2 \from X_3$, and so on.
	The outcome of the play is a sequence $f_*$ in $\K$.
	This scheme is denoted by $\BM(\K)$.

	To obtain an actual game, we need to specify the goal for the players.
	For a class $\G$ of sequences in $\K$, $\BM(\K, \G)$ denotes the game played according to the scheme $\BM(\K)$ where Odd, the second player, wins a play $f_*$ if $f_* ∈ \G$.
	Often, we fix a bigger category $\L ⊇ \K$ and a family $\F$ of $\L$-objects, and we consider $\G_\F$: the class of all sequences in $\K$ that have some $X ∈ \F$ as a limit.
	Since the limit of a sequence is determined up to isomorphism, it makes sense to restrict to isomorphism-closed families $\F$.
	In fact, we are mostly interested in the case when $\F$ is the isomorphism class of a single $\L$-object $X$.
	The corresponding class of all sequences in $\K$ whose limit in $\L$ is $X$ is denoted by $\G_X$.
	
	We say that a class $\G$ of sequences in $\K$ is \emph{generic in $\K$} if Odd has a winning strategy in $\BM(\K, \G)$.
	Similarly, for a category $\L ⊇ \K$ and an isomorphism-closed family $\F$ of $\L$-objects we say that $\F$ is \emph{generic in $\tuple{\K, \L}$} if Odd has a winning strategy in $\BM(\K, \G_\F)$.
	In particular, an $\L$-object $X$ is \emph{generic in $\tuple{\K, \L}$} if Odd has a winning strategy in $\BM(\K, \G_X)$.
	If the category $\L$ is clear from the context, we may say “generic over $\K$” instead of “generic in $\tuple{\K, \L}$”, whereas “generic in $\L$” means “generic in $\tuple{\L, \L}$”, i.e. the game is played in $\L$ and not in $\K$.
\end{definition}

\begin{observation}
	The generic object is unique up to isomorphism.
	This is because the rules of the game are symmetric, and because a subsequence has the same limit as the original sequence.
	Let $X, Y$ be generic objects in $\tuple{\K, \L}$.
	Let $f_0$ be Eve's first move.
	Then Odd responds by $f_1$ according to his winning strategy for $X$.
	We let $g_0 := f_1$ be Eve's first move in a parallel game, and we let Odd respond by $g_1$ according to his winning strategy for $Y$.
	Then we put $f_2 := g_1$, we continue the same way.
	In the end, the limit of $f_*$ is $X$, and the limit of $g_*$ is $Y$.
	But $g_*$ is just $f_*$ without the first term.
\end{observation}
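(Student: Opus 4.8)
The plan is to interleave Odd's two winning strategies inside a single play of $\BM(\K)$, with each strategy ``watching'' a different shift of the play. Fix generic objects $X, Y$ in $\tuple{\K, \L}$ and let $σ_X$, $σ_Y$ be winning strategies for Odd in $\BM(\K, \G_X)$ and $\BM(\K, \G_Y)$, respectively (recall that such a strategy assigns to each finite partial play on which Odd is to move the next $\K$-map). I would then build a sequence $f_* = \tuple{f_n}_{n ∈ ω}$ recursively: let $f_0$ be an arbitrary $\K$-map (if $\K$ has no maps the statement is vacuous); for odd $n ≥ 1$ put $f_n := σ_X(f_0, \dots, f_{n - 1})$, viewing $f_*$ as a play in which Eve makes the even-indexed moves and Odd follows $σ_X$; for even $n ≥ 2$ put $f_n := σ_Y(f_1, \dots, f_{n - 1})$, viewing the shifted sequence $g_* := \tuple{f_{n + 1}}_{n ∈ ω}$ as a play in which Eve opens with $g_0 = f_1$ and Odd follows $σ_Y$, so that Odd's moves in that game are exactly $f_2, f_4, \dots$. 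The two parities interlock perfectly: each $f_n$ with $n ≥ 1$ is prescribed by exactly one of the two strategies, and since a strategy always returns a legitimate response, the domains and codomains chain up, so $f_*$ — and hence $g_*$ — is a well-defined play of $\BM(\K)$.

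Then I would read off the conclusion. The play $f_*$ is consistent with Odd's winning strategy $σ_X$, so $f_* ∈ \G_X$, i.e. it has a limit in $\L$ isomorphic to $X$. Likewise $g_*$ is consistent with $σ_Y$, so it has a limit in $\L$ isomorphic to $Y$. But $g_*$ is obtained from $f_*$ by deleting its first term: it is the cofinal subsequence $\tuple{X_{n_*}, f_{n_*}}$ with $n_k := k + 1$, and a cofinal subsequence of an inverse sequence has the same limit as the sequence itself (the limit cone of $f_*$ restricts to a limit cone of $g_*$). Hence the limit of $g_*$ is isomorphic both to $X$ and to $Y$, so $X \cong Y$, as claimed.

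The only delicate point is the turn-bookkeeping: one must keep track of which player ``owns'' each $f_n$ in each of the two virtual games and use that a strategy depends only on the history. The shift by one is precisely what aligns the positions at which $σ_Y$ acts as Odd with the positions of $f_*$ that $σ_X$, acting as Odd for the unshifted play, does not control. I do not expect a genuine obstacle here; the symmetry of the scheme $\BM(\K)$ under exchanging the roles of Eve and Odd is exactly what guarantees that the shifted sequence is again a legitimate play.
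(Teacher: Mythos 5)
Your proposal is correct and is essentially the paper's own argument: you interleave the two winning strategies so that the odd-indexed moves follow the strategy for $X$ and the even-indexed moves follow the strategy for $Y$ applied to the play shifted by one, then use that a sequence and its shift have the same limit. The only cosmetic difference is that you spell out the turn-bookkeeping explicitly via the strategy functions $σ_X$, $σ_Y$, which the paper leaves implicit.
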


A more elaborate construction gives a stronger result.
\begin{proposition}[{\cite[Theorem~6.2]{Kubis22}}]
	Let $\K ⊆ \L$ be categories.
	If $\set{\F_k}_{k ∈ ω}$ are generic families in $\tuple{\K, \L}$, then $⋂_{k ∈ ω} \F_k$ is also generic in $\tuple{\K, \L}$.
\end{proposition}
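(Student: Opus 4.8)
The plan is to obtain Odd's winning strategy in $\BM(\K, \G_{⋂_k \F_k})$ by interleaving winning strategies for the individual games in a round-robin fashion, using that a cofinal subsequence of an inverse sequence has the same limit. Fix, for each $k ∈ ω$, a winning strategy $σ_k$ for Odd in $\BM(\K, \G_{\F_k})$, and fix a function $π\maps ω → ω$ such that $\set{t ∈ ω : π(t) = k}$ is infinite for every $k$. During a single play of $\BM(\K)$ Odd will maintain one \emph{simulated} play of $\BM(\K)$ for each index $k$; at his $t$-th move he attends to the game with index $π(t)$, feeding to $σ_{π(t)}$ — as simulated Eve's next move — the composite of \emph{all} real maps played since he last attended to that game (this composite harmlessly absorbs Odd's intervening moves for the other indices).

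Concretely, suppose it is Odd's $t$-th move, so the play so far is $f_0, \dots, f_{2t}$; put $k := π(t)$ and let $t_0 < \dots < t_{i-1}$ enumerate the earlier move-indices $t'$ with $π(t') = k$. The simulated play recorded so far for index $k$ is $\langle a_0, b_0, \dots, a_{i-1}, b_{i-1}\rangle$, where $b_j := f_{2t_j + 1}$ and $a_j := f_{2t_{j-1} + 2,\, 2t_j + 1}$ (with $a_0 := f_{0,\, 2t_0 + 1}$); by induction $b_j = σ_k(a_0, b_0, \dots, a_j)$, and successive domains and codomains agree. Odd now sets $a_i := f_{2t_{i-1} + 2,\, 2t + 1}$ (or $a_0 := f_{0,\, 2t + 1}$ if $i = 0$), which is a legitimate $\K$-map serving as simulated Eve's move, computes $b_i := σ_k(a_0, b_0, \dots, a_i)$, and plays $f_{2t+1} := b_i$. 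Since $σ_k$ is a strategy this always produces a valid $\K$-map, so the recipe defines a strategy for Odd, depending only on the finite history of the play (and on the fixed data $σ_*, π$).

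To verify this strategy is winning, let $f_*$ be any outcome in which Odd obeys it and fix $k ∈ ω$. Because $\set{t : π(t) = k}$ is infinite, Odd attends to index $k$ infinitely often, so the simulated play $\langle a_0, b_0, a_1, b_1, \dots\rangle$ for index $k$ is an infinite play of $\BM(\K)$ in which Odd followed $σ_k$; as $σ_k$ is winning, it lies in $\G_{\F_k}$, i.e. it has a limit in $\L$ belonging to $\F_k$. On the other hand, unravelling the construction shows this simulated play to be a subsequence of $f_*$: its objects are $X_0, X_{2t_0+1}, X_{2t_0+2}, X_{2t_1+1}, X_{2t_1+2}, \dots$, its maps are the corresponding composites $f_{n_j, n_{j+1}}$, and the index set $\set{0} ∪ \set{2t_j + 1,\, 2t_j + 2 : j ∈ ω}$ is cofinal in $ω$. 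A sequence and any of its cofinal subsequences share the same limit, so $f_*$ has a limit in $\L$ lying in $\F_k$; since $k$ was arbitrary, this limit lies in $⋂_{k ∈ ω} \F_k$, whence $f_* ∈ \G_{⋂_k \F_k}$. Thus Odd's strategy is winning in $\BM(\K, \G_{⋂_k \F_k})$, i.e. $⋂_{k ∈ ω} \F_k$ is generic in $\tuple{\K, \L}$.

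The substantive content is the bookkeeping: one must set up the interleaving so that it genuinely is a strategy (output a function of the history alone) and so that, for each $k$, the reconstructed index-$k$ play is at once a legal play of $\BM(\K)$, a play respecting $σ_k$, and a cofinal subsequence of the real outcome — all of which is routine once the indices are organised carefully. The only external fact used is that cofinal subsequences of an inverse sequence have the same limit, which holds in the categories $\L$ under consideration (indeed it is immediate from the universal property of the limit).
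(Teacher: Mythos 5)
Your interleaving argument is correct: the bookkeeping makes each simulated index-$k$ play a legal play of $\BM(\K)$ in which Odd follows $σ_k$, and simultaneously a cofinal subsequence of the real outcome, so a fixed limit of the outcome is isomorphic to a limit lying in $\F_k$ and hence (generic families being isomorphism-closed by definition) lies in every $\F_k$. The paper gives no proof of this proposition, only the citation of Kubiś \cite[Theorem~6.2]{Kubis22}, but your round-robin construction is exactly the expected ``more elaborate construction'' alluded to in the text, generalizing the two-strategy interleaving used in the preceding uniqueness observation, so it matches the intended approach.
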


Let us apply the notion of generic property to hereditarily indecomposable continua.

\begin{proposition} \label{thm:crooked_sequence_generic}
	The family of all crooked sequences is generic in $\I$.
	
	\begin{proof}
		We refine Construction~\ref{thm:crooked_sequence_exists}.
		Let $ε_0 := 1$, and let $f_0$ be Eve's first more.
		After every move $f_n$, we fix $ε_{n + 1} > 0$ such that every $f_{k, n + 1}$, $k < n + 1$, is $\tuple{ε_k / 2^{n + 1 - k}, ε_{n + 1}}$-continuous.
		The Odd's strategy is to play for every odd $n ∈ ω$ an $ε_n$-crooked $\I$-map $f_n$.
		This is a winning strategy since for every $n ∈ ω$ and $ε > 0$ there is odd number $n' ≥ n$ such that $ε_n / 2^{n' - n} < ε$, and so the map $f_{n, n'}$ is $\tuple{ε, ε_{n'}}$-continuous, and the map $f_{n, n' + 1}$ is $ε$-crooked.
	\end{proof}
\end{proposition}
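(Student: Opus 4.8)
The plan is to let Odd keep, alongside the play, an auxiliary sequence of positive reals $\epsilon_n$ controlling the moduli of uniform continuity of the partial composites, using the bookkeeping of Construction~\ref{thm:crooked_sequence_exists} adapted to the situation where Eve controls the even-indexed bonding maps. Concretely, Odd sets $\epsilon_0 := 1$, and after any move $f_n$ has been made (by either player) Odd chooses $\epsilon_{n+1} > 0$ small enough that every composite $f_{k, n+1}$ with $k \le n$ is $\tuple{\epsilon_k / 2^{n+1-k}, \epsilon_{n+1}}$-continuous; this is possible because each $f_{k, n+1}$ is a continuous self-map of the compact metric space $\II$, hence uniformly continuous, and only finitely many $k$ are involved. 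Odd's strategy is then: on each of Odd's turns, i.e. whenever $n$ is odd, play any $\epsilon_n$-crooked $\I$-map $f_n$. Such a map exists for every $\epsilon_n > 0$ by Construction~\ref{con:crooked} (take the geometric realization of a sufficiently deep canonical crooked simplicial surjection).

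Next I would check that this is a winning strategy, i.e. that the outcome $f_*$ of any play following it is a crooked sequence. By Theorem~\ref{thm:crooked_limit} it is enough to verify condition~(v): for every $n \in \omega$ and $\epsilon > 0$ there is $m \ge n$ with $f_{n, m}$ $\epsilon$-crooked. Given $n$ and $\epsilon$, choose an \emph{odd} $n' \ge n$ with $\epsilon_n / 2^{n'-n} < \epsilon$, which exists since $\epsilon_n / 2^{j} \to 0$. By the choice of the thresholds, $f_{n, n'}$ is $\tuple{\epsilon_n / 2^{n'-n}, \epsilon_{n'}}$-continuous, hence a fortiori $\tuple{\epsilon, \epsilon_{n'}}$-continuous; and since $n'$ is odd, $f_{n'}$ is $\epsilon_{n'}$-crooked by Odd's play. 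Proposition~\ref{thm:crooked_calculus}(ii) now yields that $f_{n, n'+1} = f_{n, n'} \circ f_{n'}$ is $\epsilon$-crooked, so $m := n' + 1$ works.

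The only real subtlety — and the step I would flag as the main obstacle — is that Eve, moving on the even turns, is free to insert maps that are arbitrarily far from crooked (for instance homeomorphisms), so crookedness of the intermediate bonding maps cannot be maintained as an invariant. The way around this, built into the scheme above, is that crookedness is stable under post-composition with a uniformly continuous map, with only a controlled degradation of the parameter, by Proposition~\ref{thm:crooked_calculus}(i)--(ii); hence it suffices for Odd to inject sufficiently crooked maps cofinally often, while simultaneously shrinking $\epsilon_{n+1}$ after each move so that the continuity moduli of all already-played composites stay under control. With that bookkeeping in place the verification reduces to exactly the refinement of Construction~\ref{thm:crooked_sequence_exists} described above.
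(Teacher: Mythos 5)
Your proposal is correct and follows essentially the same route as the paper: Odd maintains the epsilon sequence of Construction~\ref{thm:crooked_sequence_exists}, plays an $\epsilon_n$-crooked map on each odd turn, and the verification that the outcome is crooked goes via the $\tuple{\epsilon, \epsilon_{n'}}$-continuity of $f_{n, n'}$ and Proposition~\ref{thm:crooked_calculus}(ii), exactly as in the paper. The additional remarks on the existence of arbitrarily crooked maps and on why Eve's arbitrary moves cause no harm are consistent with the surrounding text and require no changes.
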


Since every crooked $\I$-sequence has the pseudo-arc as limit (see Section~\ref{sec:pseudo}), we obtain the following.
\begin{corollary} \label{thm:generic_pseudoarc}
	The pseudo-arc is a generic object in $\tuple{\I, σ\I}$.
\end{corollary}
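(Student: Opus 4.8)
The plan is to derive this immediately from Proposition~\ref{thm:crooked_sequence_generic} together with the uniqueness of the pseudo-arc. Proposition~\ref{thm:crooked_sequence_generic} hands Odd a winning strategy in $\BM(\I, \G)$, where $\G$ denotes the class of all crooked $\I$-sequences. The key observation I would make is that $\G \subseteq \G_\PP$: once this inclusion is established, the very same strategy is winning in $\BM(\I, \G_\PP)$, and by Definition~\ref{def:BM} this says precisely that $\PP$ is generic in $\tuple{\I, σ\I}$.

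To prove $\G \subseteq \G_\PP$, I would take an arbitrary crooked sequence $\tuple{X_*, f_*}$ in $\I$ and let $X_∞$ be its limit in $σ\I$. First I would note that this limit makes sense and is an arc-like continuum: the $X_n$ are copies of $\II$ and the $f_n$ are continuous surjections, so the inverse limit is an arc-like continuum, and it is non-degenerate because an inverse limit of surjections between compacta maps onto each coordinate, whence $X_∞$ surjects onto $\II$ and has at least two points. Next, by the equivalence (i)$\iff$(v) of Theorem~\ref{thm:crooked_limit}, crookedness of $f_*$ is exactly the statement that $X_∞$ is hereditarily indecomposable. Finally, Bing's theorem (recalled in Section~\ref{sec:continua}) gives that a non-degenerate hereditarily indecomposable arc-like continuum is homeomorphic to $\PP$, so $X_∞ \homeo \PP$, i.e. $f_* ∈ \G_\PP$, as required.

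I do not expect a genuine obstacle here: all the substance lies in Proposition~\ref{thm:crooked_sequence_generic} (Odd can force crookedness) and in Theorem~\ref{thm:crooked_limit} together with Bing's theorem (crookedness of the sequence pins down the limit up to homeomorphism). The only care needed is the bookkeeping — checking that the limit genuinely lands in $σ\I$, that it is non-degenerate so Bing's theorem applies, and that $\G_\PP$ is isomorphism-closed (being the homeomorphism class of $\PP$) so that the criterion of Definition~\ref{def:BM} may be invoked verbatim.
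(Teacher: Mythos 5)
Your proposal is correct and follows exactly the paper's route: Proposition~\ref{thm:crooked_sequence_generic} gives Odd a strategy forcing a crooked sequence, and the inclusion of crooked sequences into $\G_\PP$ is precisely the paper's appeal to Section~\ref{sec:pseudo} (Theorem~\ref{thm:crooked_limit} plus Bing's theorem). Your extra bookkeeping (non-degeneracy, isomorphism-closedness of $\G_\PP$) is fine but not a departure from the paper's argument.
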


A generic property is also generic over every so-called \emph{dominating subcategory} (cf.~\cite[Section~3.2]{Kubis14}), and vice versa.

\begin{definition} \label{def:dominating_subcategory}
	A subcategory $\D ⊆ \K$ is called \emph{dominating} if it is both
	\begin{itemize}
		\item \emph{cofinal}, i.e. for every $\K$-object $X$, there is a $\K$-map $f\maps X \from Y$ from a $\D$-object $Y$,% and
		\item \emph{absorbing}, i.e. for every $\D$-object $X$ and every $\K$-map $f\maps X \from Y$ there is a $\K$-map $g\maps Y \from Z$ such that $f ∘ g ∈ \D$.
	\end{itemize}
	Note that every full cofinal subcategory is dominating.
\end{definition}

\begin{example} \label{ex:Peano}
	Let $\PeanoS$ be the category of all Peano continua and all continuous surjections.
	Clearly, $\I$ is a full dominating subcategory of $\PeanoS$.
	In fact, $\PeanoS$ is the largest subcategory of $\MCptS$ in which $\I$ is dominating, and every full subcategory of $\PeanoS$ containing a non-degenerate space is dominating since $\II$ is a continuous image of every non-degenerate connected Tychonoff space, and every Peano continuum is a continuous image of $\II$.
\end{example}

\begin{proposition}[a simplified version of {\cite[Theorem~6.1]{Kubis22}}] \label{thm:strictly_dominating_subcategory}
	Let $\K ⊆ \L$ be categories and let $\D ⊆ \K$ be a dominating subcategory.
	A family $\F$ of $\L$-objects is generic in $\tuple{\K, \L}$ if and only if it is generic in $\tuple{\D, \L}$.
	
	%\begin{proof}
		%Suppose that $\F$ is generic over $\K$.
		%Let $f_0$ be Eve's first move in a play of $\BM(\D)$.
		%We let $g_0 := \id_{\dom(f_0)}$ be Eve's first move in a play of $\BM(\K)$, and let $g_1$ Odd's response according to his winning strategy for $\F$.
		%By the domination there is a $\K$-map $h_1$ such that $f_1 := g_1 ∘ h_1 ∈ \D$.
		%We let $f_1$ be Odd's response in $\BM(\D)$, and let $f_2$ be Eve's reaction.
		%We take $g_2 := h_1 ∘ f_2$ as the Eve's reaction in $\BM(\K)$, and continue in a similar manner.
		%In the end, $f_*$ is a play in $\BM(\D)$ where Odd plays according the strategy we have just defined, $g_*$ is a play in $\BM(\K)$ where Odd plays according to a winning strategy for $\F$, and both $f_*$ and $g_*$ are subsequences of the sequence $\tuple{f_0, g_1, h_1, f_2, …}$, and so $\lim f_* = \lim g_* ∈ \F$.
		%
		%The other implication is similar.
		%Suppose that $\F$ is generic over $\D$.
		%Let $f_0\maps X_0 \from X_1$ be Eve's first move in a play of $\BM(\K)$.
		%By the domination there is a $\K$-map $h_0\maps X_1 \from Y_1$ such that $Y_1 ∈ \Ob(\D)$.
		%We let $g_0 := \id_{Y_1}$ be Eve's first move in a play of $\BM(\D)$, and let $g_1$ be Odd's response according to his winning strategy for $\F$.
		%We let $f_1 := h_0 ∘ g_1$ be Odd's response in $\BM(\K)$, and let $f_2$ be Eve's reaction.
		%By the domination there is a $\K$-map $h_2$ such that $g_2 := f_2 ∘ h_2 ∈ \D$.
		%We take $g_2$ as Eve's reaction in $\BM(\D)$, and continue in a similar manner.
		%In the end, $f_*$ and $g_*$ are both subsequences of $\tuple{f_0, h_0, g_1, f_2, …}$.
	%\end{proof}
\end{proposition}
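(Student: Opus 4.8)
The plan is to prove both implications by the same parallel--play device: from a winning strategy for Odd in one of the two games I would build one in the other by simulating a play of the second game alongside the actual play of the first, translating each move through the cofinality and absorbing properties of $\D$, and then checking that the two inverse sequences produced in the end share a cofinal subsequence -- with identical objects and identical bonding maps -- so that they have the same limit (a subsequence having the same limit as the whole sequence, as noted after Definition~\ref{def:BM}) and membership of that limit in $\F$ transfers from one game to the other. The one elementary fact I would use throughout is that the codomain of a $\D$-map is automatically a $\D$-object; hence each application of the absorbing property to a $\K$-map $f$ out of a $\D$-object yields a $\K$-map $g$ with $f \circ g \in \D$ whose \emph{codomain also lies in} $\Ob(\D)$.

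For the forward implication ($\F$ generic in $\tuple{\K, \L}$ implies $\F$ generic in $\tuple{\D, \L}$), fix a winning strategy $\Sigma$ for Odd in $\BM(\K, \G_\F)$. Odd's strategy in $\BM(\D, \G_\F)$ would run as follows. When Eve opens with a $\D$-map $d_0$, Odd reads it as the opening move $c_0 := d_0$ of a simulated play of $\BM(\K)$ (legal since $\D \subseteq \K$), sets $c_1 := \Sigma(c_0)$, and -- the domain of $c_1$ being the codomain of $d_0$, hence a $\D$-object -- applies the absorbing property to obtain a $\K$-map $h_0$ with $d_1 := c_1 \circ h_0 \in \D$, which Odd plays. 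Inductively, after Eve's move $d_{2k}$ ($k \ge 1$) Odd sets $c_{2k} := h_{k-1} \circ d_{2k}$ (a legal simulated Eve move in $\BM(\K)$), forms $c_{2k+1} := \Sigma(c_0, \dots, c_{2k})$, and absorbs its domain (the codomain of $d_{2k}$, a $\D$-object) to get $h_k$ with $d_{2k+1} := c_{2k+1} \circ h_k \in \D$, which Odd plays. Since $c_{2k-1} \circ c_{2k} = (c_{2k-1} \circ h_{k-1}) \circ d_{2k} = d_{2k-1} \circ d_{2k}$ and the relevant objects agree, the odd-indexed subsequences of $c_*$ and of $d_*$ are literally the same sequence; being cofinal in $c_*$ and in $d_*$ they force these two sequences to have the same limit, which lies in $\F$ because $c_*$ obeys the winning strategy $\Sigma$.

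For the converse, fix a winning strategy $\Sigma$ for Odd in $\BM(\D, \G_\F)$. In $\BM(\K, \G_\F)$ Odd would first answer Eve's opening move $f_0$ by a $\K$-map $f_1$ with codomain in $\Ob(\D)$ (possible by cofinality), and thereafter, whenever Eve has just played a $\K$-map $f_{2k}$ out of a $\D$-object, Odd absorbs to get a $\K$-map $g_k$ with $e := f_{2k} \circ g_k \in \D$, feeds $e$ to $\Sigma$ as an Eve move, takes the resulting $\D$-map answer $e'$, and plays $f_{2k+1} := g_k \circ e'$ -- again a $\K$-map whose codomain lies in $\Ob(\D)$, as $e' \in \D$. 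Exactly as before, the play of $\BM(\D)$ fed to $\Sigma$ and the play $f_*$ of $\BM(\K)$ share a cofinal subsequence (on the $\K$ side it is $f_2 \circ f_3$, $f_4 \circ f_5$, $\dots$ between the $\D$-objects that appear), hence have the same limit, which lies in $\F$ because $\Sigma$ is winning.

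The main obstacle I expect is not a single hard step but the bookkeeping: one has to verify that the inserted simulated Eve moves keep the simulated plays consistent with the fixed winning strategies, that every invocation of the absorbing property is applied to a map genuinely departing from a $\D$-object (this is precisely where the observation that the codomain of a $\D$-map is a $\D$-object does its work), and that the two sequences one ends up with do admit a common cofinal subsequence with matching bonding maps -- the point at which one is entitled to conclude that both limits lie in $\F$.
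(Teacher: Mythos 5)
Your proof is correct and follows essentially the same parallel-play simulation that the paper uses (its proof of the approximate analogue, Proposition~\ref{thm:dominating_subcategory}, is the same scheme with $\varepsilon$-bookkeeping and Brown's theorem replacing your exact equalities). The only cosmetic difference is in the converse, where the paper starts the simulated $\BM(\D)$ play with $\id$ on the $\D$-object supplied by cofinality, while you delay the simulation by one round and feed in the absorbed composite; both yield the same common cofinal subsequence argument.
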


Directly from Corollary~\ref{thm:generic_pseudoarc}, Example~\ref{ex:Peano}, and Proposition~\ref{thm:strictly_dominating_subcategory} we obtain the following theorem.

\begin{theorem} \label{thm:generic_Peano}
	The pseudo-arc $\PP$ is generic over every dominating subcategory $\K ⊆ \PeanoS$ (in particular, over every full subcategory $\K$ containing a non-degenerate space).
	That is, when playing $\BM(\K)$, the second player may always force the limit of the resulting sequence to be $\PP$.
\end{theorem}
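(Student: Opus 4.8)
The plan is to obtain this immediately by combining three facts already in hand: that $\PP$ is generic in $\tuple{\I, σ\I}$ (Corollary~\ref{thm:generic_pseudoarc}), that the inclusion $\I ⊆ \PeanoS$ is dominating and so is the given $\K ⊆ \PeanoS$ (Example~\ref{ex:Peano}), and that genericity of a family of objects transfers both ways between a category and any of its dominating subcategories (Proposition~\ref{thm:strictly_dominating_subcategory}).

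First I would fix a common ambient category $\L$ in which all the relevant inverse limits live and which contains the pseudo-arc; the category $\MContS$ of non-empty metrizable continua and continuous surjections serves this purpose, since the limit of an inverse sequence of continuous surjections between non-degenerate Peano continua is a non-empty metrizable continuum. Enlarging the ambient category from $σ\I$ to $\MContS$ changes nothing about which plays Odd wins, because the limit of a sequence in $\I$ is computed the same way in either ambient category; hence Corollary~\ref{thm:generic_pseudoarc} already gives that $\PP$ is generic in $\tuple{\I, \MContS}$.

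Next I would apply Proposition~\ref{thm:strictly_dominating_subcategory} to the dominating subcategory $\I ⊆ \PeanoS$ (with ambient category $\MContS$), obtaining that $\set{\PP}$ is generic in $\tuple{\I, \MContS}$ if and only if it is generic in $\tuple{\PeanoS, \MContS}$; so $\PP$ is generic in $\tuple{\PeanoS, \MContS}$. Then I would apply Proposition~\ref{thm:strictly_dominating_subcategory} once more, this time to the given dominating subcategory $\K ⊆ \PeanoS$ (which is dominating in particular whenever it is full and contains a non-degenerate space, again by Example~\ref{ex:Peano}), obtaining that $\set{\PP}$ is generic in $\tuple{\K, \MContS}$ if and only if it is generic in $\tuple{\PeanoS, \MContS}$. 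This yields that $\PP$ is generic in $\tuple{\K, \MContS}$, i.e.\ Odd has a winning strategy in $\BM(\K, \G_\PP)$ — precisely the stated conclusion.

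The argument has essentially no hard step; the only point deserving a line of care is the bookkeeping around the ambient category $\L$, namely checking that the two invocations of Proposition~\ref{thm:strictly_dominating_subcategory} can be carried out with one and the same $\L = \MContS$ and that this $\L$ is large enough that every play of $\BM(\PeanoS)$ has a limit lying in it. No genuine obstacle arises.
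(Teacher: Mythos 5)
Your proof is correct and follows exactly the paper's route: the paper derives the theorem directly from Corollary~\ref{thm:generic_pseudoarc}, Example~\ref{ex:Peano}, and Proposition~\ref{thm:strictly_dominating_subcategory}, just as you do, applying the transfer proposition through the dominating inclusions $\I ⊆ \PeanoS$ and $\K ⊆ \PeanoS$. Your extra care about fixing a single ambient category such as $\MContS$ (and noting the limit of an $\I$-sequence is the same there as in $σ\I$) is a harmless and reasonable tightening of what the paper leaves implicit.
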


Domination can be defined also for sequences – in presence of the amalgamation property this is called the \emph{Fraïssé sequence} (see \cite[Section~3]{Kubis14} and Definition~\ref{def:Fraisse_sequence} with the paragraph before).

\begin{definition} \label{def:dominating_sequence}
	A sequence $\tuple{X_*, f_*}$ in a category $\K$ is called \emph{dominating} if it is both
	\begin{itemize}
		\item \emph{cofinal}, i.e. for every $\K$-object $Y$, there is $n ∈ ω$ and a $\K$-map $f\maps Y \from X_n$, and
		\item \emph{absorbing}, i.e. for every $n ∈ ω$ and every $\K$-map $f\maps X_n \from Y$ there is $n' ≥ n$ and a $\K$-map $g\maps Y \from X_{n'}$ such that $f ∘ g = f_{n, n'}$.
	\end{itemize}
\end{definition}

A stronger form of the following proposition was proved by Kubiś~\cite[Theorem~6.3]{Kubis22}.
We give a proof in the simpler situation for illustration.

\begin{proposition} \label{thm:strictly_dominating_sequence}
	A dominating sequence $\tuple{X_*, f_*}$ in $\K$ serves as a winning strategy for Odd in $\BM(\K)$.
	If $\tuple{X_*, f_*}$ has a limit $X_∞$ in a larger category $\L ⊇ \K$, then $X_∞$ is a generic object in $\tuple{\K, \L}$.
	
	\begin{proof}
		Let $g_0\maps Y_0 \from Y_1$ be Eve's first move in $\BM(\K)$.
		From the cofinality, Odd may respond with a map $g_1\maps Y_1 \from X_{n_0}$ for some $n_0 ∈ ω$.
		Eve continues with a map $g_2\maps X_{n_0} \from Y_3$.
		From the absorption, Odd may respond with a map $g_3\maps Y_3 \from X_{n_1}$ for some $n_1 ≥ n_0$ such that $g_2 ∘ g_3 = f_{n_0, n_1}$.
		We continue according to this strategy for Odd and end up with a play $g_*$ such that $g_{2k, 2k + 2} = f_{n_{k - 1}, n_k}$ for $k > 1$.
		It follows that $\lim g_* = \lim f_*$ if any of the limits exists.
	\end{proof}
\end{proposition}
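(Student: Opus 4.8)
The plan is to have Odd use $\tuple{X_*, f_*}$ as a \emph{template}: at each round Odd answers Eve's move by a map whose domain is a later term of the dominating sequence, chosen so that the composition of Eve's move with Odd's answer is a bonding map of $\tuple{X_*, f_*}$. Concretely, if Eve opens with $g_0 \maps Y_0 \from Y_1$, then by cofinality there are $n_0 \in \omega$ and a $\K$-map $g_1 \maps Y_1 \from X_{n_0}$, which Odd plays. Inductively, suppose $n_{k - 1}$ has been chosen and it is Eve's turn with current object $X_{n_{k - 1}}$; say Eve plays $g_{2k} \maps X_{n_{k - 1}} \from Y_{2k + 1}$. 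By the absorbing property there are $n' \geq n_{k - 1}$ and a $\K$-map $h \maps Y_{2k + 1} \from X_{n'}$ with $g_{2k} \circ h = f_{n_{k - 1}, n'}$; Odd then plays $g_{2k + 1} := h \circ f_{n', n' + 1} \maps Y_{2k + 1} \from X_{n' + 1}$ and records $n_k := n' + 1$. Then $n_k > n_{k - 1}$ and $g_{2k} \circ g_{2k + 1} = f_{n_{k - 1}, n'} \circ f_{n', n_k} = f_{n_{k - 1}, n_k}$, so this is a legal and fully specified strategy for Odd.

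The next step is to read off the outcome. In any play $g_*$ following this strategy, the objects at the positions $2, 4, 6, \dots$ are $X_{n_0}, X_{n_1}, X_{n_2}, \dots$ and the connecting maps are $g_{2k, 2k + 2} = g_{2k} \circ g_{2k + 1} = f_{n_{k - 1}, n_k}$ for $k \geq 1$. Since $\tuple{n_k}_{k \in \omega}$ is strictly increasing it is cofinal in $\omega$, so this portion is simultaneously a subsequence of $g_*$ and the subsequence $\tuple{X_{n_*}, f_{n_*}}$ of $\tuple{X_*, f_*}$. A sequence and any of its subsequences have a common limit --- one exists exactly when the other does, and then they agree (recall the Observation after Definition~\ref{def:BM}) --- hence $\lim g_* = \lim f_*$, with either side existing iff the other does. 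This is the first assertion: by following the template $\tuple{X_*, f_*}$, Odd controls the limit of the play.

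For the second assertion, assume $\tuple{X_*, f_*}$ has a limit $X_\infty$ in a category $\L \supseteq \K$. Computing limits inside $\L$, the previous paragraph gives $\lim g_* = X_\infty$ in $\L$ for every play $g_*$ produced by Odd's strategy, that is, $g_* \in \G_{X_\infty}$. Therefore Odd's strategy is winning in $\BM(\K, \G_{X_\infty})$, which is precisely the statement that $X_\infty$ is generic in $\tuple{\K, \L}$.

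I expect the only delicate point to be the inductive step. One must ensure that Odd's reply lands on a term of $\tuple{X_*, f_*}$ with a \emph{strictly} larger index --- otherwise $\tuple{n_k}_{k \in \omega}$ could be eventually constant and the extracted subsequence would have a limit unrelated to $X_\infty$ --- while still keeping the triangle $g_{2k} \circ g_{2k + 1} = f_{n_{k - 1}, n_k}$ commutative. Post-composing the map supplied by the absorbing property with one further bonding map $f_{n', n' + 1}$ secures both at once; everything else is bookkeeping about whose turn it is.
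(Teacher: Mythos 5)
Your proof is correct and follows essentially the same route as the paper: cofinality supplies Odd's first reply, absorption supplies each subsequent reply so that $g_{2k} \circ g_{2k+1} = f_{n_{k-1}, n_k}$, and the play then shares its limit with $\tuple{X_*, f_*}$, giving genericity of $X_\infty$. Your extra post-composition with $f_{n', n'+1}$ to force the indices $n_k$ to be strictly increasing is a sensible tightening of a point the paper leaves implicit (without it Odd could in principle stall at a fixed index, and the thinned sequence would no longer be cofinal), but it does not change the argument.
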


We know that the pseudo-arc is generic over $\I$, but it turns out that there is no dominating sequence in the (ordinary) category $\I$ (see Observation~\ref{thm:mountain_climbing}).
However, the situation changes in the approximate setting.

\subsection{Approximate setting}

Recall that by a theorem of Brown~\cite{Brown60} (see also \cite[Chapter~4]{IM12}), two sequences $\tuple{X_*, f_*}$, $\tuple{Y_*, g_*}$ of metric compacta and continuous maps such that $X_* = Y_*$ and $f_n ≈_{ε_n} g_n$ for every $n ∈ ω$ for a suitable sequence $\tuple{ε_n}_{n ∈ ω}$ of strictly positive numbers have homeomorphic limits $X_∞ \homeo Y_∞$.

We shall prove an abstract and refined version of Brown's approximation theorem (Corollary~\ref{thm:Brown}) as well as an approximate back and forth construction (Corollary~\ref{thm:back_and_forth}) that will allow us to generalize the results on dominating subcategories and sequences to the approximate setting.
Since so far the notion of domination was defined in any category, we do not want to limit ourselves to the concrete case of metric compacta for the approximate setting, and so we introduce the following abstraction.

\begin{definition} \label{def:MU-category}
	By an \emph{MU-category} we mean a category $\K$ such that
	\begin{enumerate}
		\item every hom-set $\K(X, Y)$ is an $∞$-metric space;
		\item for every map $f ∈ \K$ we have $d(g ∘ f, h ∘ f) ≤ d(g, h)$ for all compatible maps $g, h ∈ \K$,
			i.e. for every $Y ∈ \Ob(\K)$ the map $(- ∘ f)\maps \K(\cod(f), Y) \to \K(\dom(f), Y)$ between $∞$-metric spaces is non-expansive;
		\item for every map $f ∈ \K$ and $ε > 0$ there is $δ > 0$ such that $f$ is \emph{$\tuple{ε, δ}$-continuous},
			i.e. $d(g, h) < δ$ implies $d(f ∘ g, f ∘ h) < ε$ for all compatible maps $g, h ∈ \K$;
			in other words, for every $X ∈ \Ob(\K)$ the map $(f ∘ -)\maps \K(X, \dom(f)) \to \K(X, \cod(f))$ is uniformly continuous, and moreover the continuity is uniform also across all domains $X$.
	\end{enumerate}
	The letters “M” and “U” in the name refer to “metric” and “uniformity”, respectively.
	By a \emph{metric-enriched category} we mean an MU-category $\K$ such that condition~(iii) is replaced by a stronger condition
	\begin{enumerate}
		\item[(iii')] Every map $f ∈ \K$ is \emph{non-expansive}, i.e. $d(f ∘ g, f ∘ h) ≤ d(g, h)$ for all compatible maps $g, h ∈ \K$,
			i.e. for every $X ∈ \Ob(\K)$ the map $(f ∘ -)\maps \K(X, \dom(f)) \to \K(X, \cod(f))$ non-expansive.
	\end{enumerate}
	An MU-category $\K$ is called \emph{locally complete} if all $∞$-metric spaces $\K(X, Y)$ are complete.
\end{definition}

\begin{remark}
	Metric-enriched categories are exactly categories enriched over the monoidal category of all $∞$-metric spaces and all non-expansive maps where the monoidal product is the cartesian product endowed with the $\ell_1$-metric.
	In the context of Fraïssé theory, metric-enriched categories were considered by Kubiś~\cite{Kubis13}.
	
	Note that the opposite category of an MU-category is not necessarily an MU-category.
	In fact, metric-enriched categories are exactly MU-categories whose opposite categories are also MU-categories.
\end{remark}

\begin{example}
	The category $\MetU$ of all metric spaces and all uniformly continuous maps is an MU-category when the hom-sets are endowed with the supremum $∞$-metric (i.e. we put $d(f, g) = \sup_{x ∈ X} d(f(x), g(x)) ∈ [0, ∞]$ for uniformly continuous maps $f, g\maps X \to Y$ between metric spaces).
	In fact, the notion of MU-category is an abstraction of this particular category.
	Condition~(ii) follows from the fact that the supremum metric is really measured in the codomain space, and it holds for all, not necessarily uniformly continuous, maps.
	On the other hand, condition~(iii) holds because the maps considered are uniformly continuous.
	Non-expansive maps in the sense of (iii') are exactly maps that are non-expansive in the classical sense.
	Hence, the wide subcategory $\Met ⊆ \MetU$ of all non-expansive maps is a metric-enriched category.
	
	Let $\CMetU ⊆ \MetU$ be the full subcategory of all complete metric spaces.
	It is well-known that a uniformly Cauchy sequence of uniformly continuous maps between complete metric spaces uniformly converges to a uniformly continuous map.
	Hence, $\CMetU$ is a locally complete MU-category.
	Note that non-expansivity is a closed property: if for a map $f$ we have $d(f ∘ g, f ∘ h) > d(g, h)$ for some maps $g, h$, then the same is true also for every $f'$ sufficiently close to $f$.
	Hence, $\CMet = \CMetU ∩ \Met$ is a locally complete metric-enriched category.
\end{example}

\begin{definition}
	By an \emph{MU-continuous functor} (or \emph{MU-functor} for short) we mean a functor $F\maps \K \to \L$ between MU-categories such that for every $Y ∈ \Ob(\K)$ and every $ε > 0$ there is $δ > 0$ such that $f ≈_δ g$ implies $F(f) ≈_ε F(g)$ for every $\K$-object $X$ and every $f, g ∈ \K(X, Y)$.
	That means, for a fixed $\K$-object $Y$, not only the maps $F\maps \K(X, Y) \to \L(F(X), F(Y))$ are uniformly continuous, but we also have uniformity across all domains $X$.
	
	MU-continuity serves as a base for other category-theoretic properties of functors in the context of MU-categories:
	an \emph{MU-isomorphism} is an isomorphism $F$ of MU-categories such that both $F$ and $F\inv$ are MU-functors,
	an \emph{MU-equivalence} is an equivalence of MU-categories consisting of a pair of MU-functors.
\end{definition}

\begin{example} \label{ex:cpt_mucat}
	The category $\MCpt$ of all metrizable compacta and all continuous maps can be viewed as an MU-category.
	By choosing a compatible metric on every metrizable compactum, $\MCpt$ becomes a full subcategory of $\CMetU$ since every continuous map on a compact space is uniformly continuous.
	Moreover, the particular choice of metrics does not matter since for any other choice the functor $\id_{\MCpt}$ becomes an MU-isomorphism.
	So we obtain a unique MU-category up to isomorphism.
	
	As a full subcategory of the locally complete MU-category $\CMetU$, $\MCpt$ is itself locally complete.
	Let $\MCptS$ be the subcategory of $\MCpt$ of all non-empty metrizable compacta and all continuous surjections.
	It is easy to see that the limit of a uniformly Cauchy sequence of uniformly continuous surjections between complete metric spaces has a dense image, and so if the spaces are compact, the limit map is surjective.
	This shows that $\MCptS$ is also locally complete.
	Note that our category $σ\I$ is locally complete as well as a full subcategory of $\MCptS$.
\end{example}

\begin{definition} \label{def:discrete}
	We say that an MU-category $\K$ is \emph{discrete at $Y ∈ \Ob(\K)$} if there is $ε > 0$ such that $f ≈_ε g$ implies $f = g$ for every $\K$-object $X$ and all $\K$-maps $f, g\maps X \to Y$.
	This means not only that the metric space $\K(X, Y)$ is (uniformly) discrete, but also that the discreteness is uniform across all domains $X$ (when $Y$ is fixed).
	The whole MU-category $\K$ is \emph{discrete} if it is discrete at every object.
	
	Observe that every functor from a discrete MU-category to an arbitrary MU-category is MU-continuous.
	Also, every discrete MU-category is locally complete.
	Moreover, every category can be viewed as a discrete metric-enriched category.
	It is enough to put on every hom-set $d(f, g) = 1$ if $f ≠ g$ and $d(f, f) = 0$.
\end{definition}

Now we shall prove abstract Brown's approximation theorem and a back and forth construction in the context of MU-categories.

\begin{definition}
	By an \emph{epsilon sequence} for a sequence $f_*$ in an MU-category $\K$ we mean a sequence $\tuple{ε_n}_{n ∈ ω}$ of strictly positive real numbers such that the map $f_{n, n'}$ is $\tuple{ε_n/2^{n' - n}, ε_{n'}}$-continuous for every $n ≤ n' ∈ ω$.
	Such sequences can be seen in \cite[Lemma~5]{MS63} and are closely related to the notion of \emph{Lebesgue sequences} \cite{Brown60}.
	
	Note that for a finite sequence $\tuple{ε_i}_{i < n}$ in $(0, ∞)$ there is $ε_n > 0$ such that $f_{i, n}$ is $\tuple{ε_i/2^{n - i}, ε_n}$-continuous for every $i < n$, so it is easy to build an epsilon sequence inductively.
	Also, if $\K$ is metric-enriched, then $\tuple{2^{-n}}_{n ∈ ω}$ is an epsilon sequence for every sequence in $\K$.
\end{definition}

We shall deal with cones and limits of sequences, which we recalled in Section~\ref{sec:preliminaries}.

\begin{proposition}[Cone transfer] \label{thm:cone_transfer}
	Let $\tuple{X_*, f_*}$ and $\tuple{Y_*, g_*}$ be sequences in a locally complete MU-category $\K$.
	Let $\tuple{ε_n}_{n ∈ ω}$ be an epsilon sequence for $g_*$ and let $φ_* = \tuple{φ_n\maps X_n \to Y_n}_{n ∈ ω}$ be a sequence of $\K$-maps such that $φ_n ∘ f_n ≈_{ε_n} g_n ∘ φ_{n + 1}$ for every $n ∈ ω$.
	\begin{enumerate}
		\item For every $n ≤ n' ≤ n'' ∈ ω$ we have $g_{n, n'} ∘ φ_{n'} ∘ f_{n', n''} ≈_{2ε_n/2^{n' - n}} g_{n, n''} ∘ φ_{n''}$.
		\item For every cone $\tuple{Z, γ_*}$ for $f_*$ there is a cone $C_{φ_*}(γ_*)$ defined by the formula $C_{φ_*}(γ_*)_n := \lim_{n' ≥ n} (g_{n, n'} ∘ φ_{n'} ∘ γ_{n'})$ for $n ∈ ω$.
			We also have $C_{φ_*}(γ_* ∘ h) = C_{φ_*}(γ_*) ∘ h$ for every $h\maps W \to Z$.
		\item For every limit $\tuple{X_∞, f_{*, ∞}}$ of $f_*$ and every limit $\tuple{Y_∞, g_{*, ∞}}$ of $g_*$ there is a unique $\K$-map $φ_∞\maps X_∞ \to Y_∞$ such that $C_{φ_*}(f_{*, ∞}) = g_{*, ∞} ∘ φ_∞$.
			It satisfies $φ_n ∘ f_{n, ∞} ≈_{2ε_n} g_{n, ∞} ∘ φ_∞$ for every $n ∈ ω$.
	\end{enumerate}
	Let, additionally, $\tuple{δ_n}_{n ∈ ω}$ be an epsilon sequence for $f_*$ and let $ψ_* = \tuple{ψ_n\maps Y_{n + 1} \to X_n}_{n ∈ ω}$ be a sequence of $\K$-maps such that $ψ_n ∘ g_{n + 1} ≈_{δ_n} f_n ∘ ψ_{n + 1}$, so we have also the map $C_{ψ_*}$ assigning to every cone for $g_*$ (or equivalently $g_*$ shifted by one) a cone for $f_*$.
	Moreover, suppose that $φ_n ∘ ψ_n ≈_{ε_n} g_n$ and $ψ_n ∘ φ_{n + 1} ≈_{δ_n} f_n$ for every $n ∈ ω$.
	\begin{enumerate}[resume]
		\item $C_{φ_*}$ and $C_{ψ_*}$ are mutually inverse bijections fixing domain objects of cones.
		\item If $f_{*, ∞}$ is a limit of $f_*$, then $C_{φ_*}(f_{*, ∞})$ is a limit of $g_*$.
			Hence, $f_*$ has a limit if and only if $g_*$ has a limit.
		\item For every limit $\tuple{X_∞, f_{*, ∞}}$ of $f_*$ and every limit $\tuple{Y_∞, g_{*, ∞}}$ of $g_*$ the map $φ_∞\maps X_∞ \to Y_∞$ is a $\K$-isomorphism.
	\end{enumerate}
	
	\begin{proof}
		(i) We have
		\begin{talign*}
			d(g_{n, n'} &∘ φ_{n'} ∘ f_{n', n''},\, g_{n, n''} ∘ φ_{n''}) \\
			&≤ ∑_{i < n'' - n'} d(g_{n, n' + i} ∘ φ_{n' + i} ∘ f_{n' + i, n''},\, 
				g_{n, n' + i + 1} ∘ φ_{n' + i + 1} ∘ f_{n' + i + 1, n''}) \\
			&= ∑_{i < n'' - n'} \text{“$g_{n, n' + i} ∘ 
				d(φ_{n' + i} ∘ f_{n' + i},\, g_{n' + i} ∘ φ_{n' + i + 1}) ∘ f_{n' + i + 1, n''}$”} \\
			&< ∑_{i < n'' - n'} \text{“$g_{n, n' + i} ∘ ε_{n' + i} ∘ f_{n' + i + 1, n''}$”} 
				≤ ∑_{i < n'' - n'} ε_n / 2^{n' + i - n} < 2 ε_n / 2^{n' - n}.
		\end{talign*}
		The quoted formulas merely suggest how the bound on the middle part gets transformed by the precomposition and the postcomposition.
		
		(ii) The sequence $\tuple{g_{n, n'} ∘ φ_{n'} ∘ γ_{n'}}_{n' ≥ n}$ is Cauchy in the complete space $\K(Z, Y_n)$ since for every $n ≤ n' ≤ n''$ we have 
		\[
			g_{n, n'} ∘ φ_{n'} ∘ γ_{n'} = g_{n, n'} ∘ φ_{n'} ∘ f_{n', n''} ∘ γ_{n''} ≈_{2ε_n/2^{n' - n}} g_{n, n''} ∘ φ_{n''} ∘ γ_{n''}
		\]
		by (i).
		Hence, the limit exists.
		The family of maps $C_{φ_*}(γ_*)$ is a cone since the postcompositions $(g_{n, n'} ∘ -)$ are continuous in any MU-category.
		Similarly, we have $C_{φ_*}(γ_* ∘ h) = C_{φ_*}(γ_*) ∘ h$ since the precomposition $(- ∘ h)$ is also continuous (even non-expansive).
		
		(iii) Since $C_{φ_*}(f_{*, ∞})$ is a cone for $g_*$ by (ii), $φ_∞$ is the unique $\K$-map $X_∞ \to Y_∞$ witnessing that $g_{*, ∞}$ is a limit of $g_*$.
		By (i) we have $g_n ∘ φ_{n + 1} ∘ f_{n + 1, n'} ≈_{2ε_n / 2} g_{n, n'} ∘ φ_{n'}$ for every $n' ≥ n$.
		Together,
		\begin{align*}
			g_{n, ∞} ∘ φ_∞ &= \lim_{n' ≥ n + 1} (g_{n, n'} ∘ φ_{n'} ∘ f_{n', ∞}) ≈_{≤ε_n} \lim_{n' ≥ n + 1} (g_n ∘ φ_{n + 1} ∘ f_{n + 1, n'} ∘ f_{n', ∞}) \\
				&= g_n ∘ φ_{n + 1} ∘ f_{n + 1, ∞} ≈_{ε_n} φ_n ∘ f_{n, ∞}.
		\end{align*}
		We do the extra step from $n$ to $n + 1$ to recover the strict inequality at the limit.
		
		(iv) Let $γ_*$ be a cone for $g_*$.
		We have \begin{talign*}
			C_{φ_*}(C_{ψ_*}(γ_*))_n &= \lim_{n' ≥ n} (g_{n, n'} ∘ φ_{n'} ∘ C_{ψ_*}(γ_*)_{n'}) \\
				&= \lim_{n' ≥ n} (g_{n, n'} ∘ φ_{n'} ∘ \lim_{n'' ≥ n'} (f_{n', n''} ∘ ψ_{n''} ∘ γ_{n'' + 1})) \\
				&= \lim_{n' ≥ n} \lim_{n'' ≥ n'} (g_{n, n'} ∘ φ_{n'} ∘ f_{n', n''} ∘ ψ_{n''} ∘ γ_{n'' + 1}),
		\end{talign*}
		\[\begin{taligned}
			\text{and \hspace{2em}} γ_n = g_{n, n''} ∘ g_{n''} ∘ γ_{n'' + 1} 
				&≈_{ε_n/ 2^{n'' - n}} g_{n, n''} ∘ (φ_{n''} ∘ ψ_{n''}) ∘ γ_{n'' + 1} \\
			(g_{n, n'} ∘ φ_{n'} ∘ f_{n', n''}) ∘ ψ_{n''} ∘ γ_{n'' + 1} 
				&≈_{2ε_n/2^{n' - n}} (g_{n, n''} ∘ φ_{n''}) ∘ ψ_{n''} ∘ γ_{n'' + 1}
		\end{taligned}\,\Bigr){=}\]
		It follows that $C_{φ_*}(C_{ψ_*}(γ_*))_n ≈_{≤3ε_n/2^{n'-n}} γ_n$ for every $n ≤ n' ∈ ω$, and so $C_{φ_*} ∘ C_{ψ_*} = \id$.
		The situation is symmetric, and we also obtain $C_{ψ_*} ∘ C_{φ_*} = \id$.
		
		(v) If $\tuple{X_∞, f_{*, ∞}}$ is a limit of $f_*$, then $C_{φ_*}(f_{*, ∞})$ is a cone for $g_*$ by (ii).
			For every cone $\tuple{Y, γ_*}$ for $g_*$ we have that $C_{ψ_*}(γ_*)$ is a cone for $f_*$, and so there is a unique $\K$-map $h\maps Y \to X_∞$ such that $C_{ψ_*}(γ_*) = f_{*, ∞} ∘ h$, which is equivalent to 
			\[
				γ_* = C_{φ_*}(C_{ψ_*}(γ_*)) = C_{φ_*}(f_{*, ∞} ∘ h) = C_{φ_*}(f_{*, ∞}) ∘ h
			\]
			by (iv), and so $C_{φ_*}(f_{*, ∞})$ is a limit of $g_*$.
		
		(vi) By (v) we know that $C_{φ_*}(f_{*, ∞})$ is a limit of $g_*$, and so $φ_∞$ is a $\K$-isomorphism since it is the limit factorization map.
	\end{proof}
\end{proposition}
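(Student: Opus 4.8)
The plan is to prove the six items in order, since (ii) rests on (i), (iii) on (ii), and (iv)--(vi) are diagram chases with the limit's universal property once the analytic estimates of (i), (ii) and the core estimate of (iv) are in hand; morally this is the ``approximate'' version of the statement that a ladder of approximately commuting squares induces a map of inverse limits. For (i) I would telescope: bound $d(g_{n,n'}\circ\varphi_{n'}\circ f_{n',n''},\, g_{n,n''}\circ\varphi_{n''})$ by $\sum_{i<n''-n'}d_i$, where $d_i$ is the distance between the consecutive maps $g_{n,n'+i}\circ\varphi_{n'+i}\circ f_{n'+i,n''}$ and $g_{n,n'+i+1}\circ\varphi_{n'+i+1}\circ f_{n'+i+1,n''}$. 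Writing $g_{n,n'+i+1}=g_{n,n'+i}\circ g_{n'+i}$ and $f_{n'+i,n''}=f_{n'+i}\circ f_{n'+i+1,n''}$, these two maps differ only by replacing the rung $\varphi_{n'+i}\circ f_{n'+i}$ with $g_{n'+i}\circ\varphi_{n'+i+1}$, which are $\varepsilon_{n'+i}$-close by hypothesis. Precomposition with $f_{n'+i+1,n''}$ is non-expansive by axiom (ii) of an MU-category, and postcomposition with $g_{n,n'+i}$ is $\langle\varepsilon_n/2^{n'+i-n},\varepsilon_{n'+i}\rangle$-continuous because $\langle\varepsilon_n\rangle$ is an epsilon sequence for $g_*$, so $d_i<\varepsilon_n/2^{n'+i-n}$; summing the geometric tail gives the bound $2\varepsilon_n/2^{n'-n}$.

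For (ii), given a cone $\langle Z,\gamma_*\rangle$ for $f_*$, the sequence $\langle g_{n,n'}\circ\varphi_{n'}\circ\gamma_{n'}\rangle_{n'\ge n}$ in $\K(Z,Y_n)$ is Cauchy: substitute $\gamma_{n'}=f_{n',n''}\circ\gamma_{n''}$ and precompose the estimate of (i) by $\gamma_{n''}$. Local completeness produces the limit $C_{\varphi_*}(\gamma_*)_n$; that this family is a cone for $g_*$ and that $C_{\varphi_*}(\gamma_*\circ h)=C_{\varphi_*}(\gamma_*)\circ h$ both follow because $(g_n\circ-)$ and $(-\circ h)$ are continuous in an MU-category (the latter even non-expansive), hence commute with the limit, together with $g_n\circ g_{n+1,n'}=g_{n,n'}$. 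For (iii), $C_{\varphi_*}(f_{*,\infty})$ is then a cone for $g_*$ with apex $X_\infty$, so the universal property of $\langle Y_\infty,g_{*,\infty}\rangle$ yields the unique $\varphi_\infty$ with $g_{*,\infty}\circ\varphi_\infty=C_{\varphi_*}(f_{*,\infty})$. The estimate $\varphi_n\circ f_{n,\infty}\approx_{2\varepsilon_n}g_{n,\infty}\circ\varphi_\infty$ is a triangle inequality through the intermediate map $g_n\circ\varphi_{n+1}\circ f_{n+1,\infty}$: it is $\varepsilon_n$-close to $\varphi_n\circ f_{n,\infty}$ (rung estimate precomposed by $f_{n+1,\infty}$) and $(\le\varepsilon_n)$-close to $g_{n,\infty}\circ\varphi_\infty$ (take the limit over $n'$ of the instance of (i) with indices $n,n+1,n'$, precomposed by $f_{n',\infty}$); routing through index $n+1$ is exactly what turns the non-strict $\le$ coming out of the limit into a strict inequality below $2\varepsilon_n$.

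For (iv) I would expand $C_{\varphi_*}(C_{\psi_*}(\gamma_*))_n$, pull the outer composition inside both limits by continuity, and bound a general term $g_{n,n'}\circ\varphi_{n'}\circ f_{n',n''}\circ\psi_{n''}\circ\gamma_{n''+1}$ (for $n''\ge n'\ge n$): apply (i) to the first three factors (error $2\varepsilon_n/2^{n'-n}$), then use $\varphi_{n''}\circ\psi_{n''}\approx_{\varepsilon_{n''}}g_{n''}$ postcomposed by the suitably continuous $g_{n,n''}$ and precomposed by $\gamma_{n''+1}$ (error $\varepsilon_n/2^{n''-n}$), and conclude with the cone identity $g_{n,n''}\circ g_{n''}\circ\gamma_{n''+1}=\gamma_n$; the total error is $\le 3\varepsilon_n/2^{n'-n}$, uniformly in $n''$, so the double limit equals $\gamma_n$ and $C_{\varphi_*}\circ C_{\psi_*}=\id$. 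The mirror computation (exchanging the two sequences, $\varphi$ with $\psi$, $\langle\varepsilon_n\rangle$ with $\langle\delta_n\rangle$, and using $\psi_n\circ\varphi_{n+1}\approx_{\delta_n}f_n$) gives $C_{\psi_*}\circ C_{\varphi_*}=\id$; that both maps fix the apex of a cone is visible from the defining formula. Item (v) is then formal: $C_{\varphi_*}(f_{*,\infty})$ is a cone for $g_*$; any cone $\langle Y,\gamma_*\rangle$ for $g_*$ gives the cone $C_{\psi_*}(\gamma_*)$ for $f_*$, which factors uniquely through $f_{*,\infty}$ via some $h\maps Y\to X_\infty$, and applying $C_{\varphi_*}$, together with its naturality and (iv), shows $h$ is the unique factorization of $\gamma_*$ through $C_{\varphi_*}(f_{*,\infty})$; the equivalence ``$f_*$ has a limit iff $g_*$ does'' follows by symmetry. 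Finally (vi): by (v) both $\langle X_\infty,C_{\varphi_*}(f_{*,\infty})\rangle$ and $\langle Y_\infty,g_{*,\infty}\rangle$ are limits of $g_*$, and $\varphi_\infty$ from (iii) is the canonical comparison map between them, hence a $\K$-isomorphism.

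The only genuinely delicate part is the $\varepsilon$-bookkeeping: extracting the factor $2$ in (i) from the geometric tail, inserting the extra index $n+1$ in (iii) to upgrade a $\le$ to a strict $\approx$, and in (iv) dominating the double-indexed error by the single quantity $3\varepsilon_n/2^{n'-n}$ that tends to $0$ as $n'\to\infty$ independently of $n''$. Everything else reduces to an MU-axiom (continuity of pre- and post-composition) or a routine chase with the universal property of the inverse limit.
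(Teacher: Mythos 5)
Your proposal is correct and follows essentially the same route as the paper: the same telescoping estimate for (i), Cauchy-plus-local-completeness for (ii), the same detour through index $n+1$ in (iii) to recover strictness, the $3ε_n/2^{n'-n}$ bound in (iv), and the same formal arguments for (v) and (vi). No gaps to report.
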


\begin{corollary}[Abstract Brown's approximation theorem] \label{thm:Brown}
	Let $\tuple{X_*, f_*}$ and $\tuple{X_*, g_*}$ be sequences with the same sequences of objects in a locally complete MU-category $\K$.
	If $f_n ≈_{ε_n} g_n$ for every $n ∈ ω$ and an epsilon sequence $\tuple{ε_n}_{n ∈ ω}$ for both $f_*$ and $g_*$, then $f_*$ has a limit if and only if $g_*$ a limit, and any such limits are isomorphic.
	
	\begin{proof}
		We put $φ_n := \id_{X_n}$, $ψ_n := f_n$, and $δ_n := ε_n$ for every $n ∈ ω$.
		We have $φ_n ∘ ψ_n = φ_n ∘ f_n = f_n ≈_{ε_n} g_n = g_n ∘ φ_{n + 1}$ and $ψ_n ∘ φ_{n + 1} = f_n$ and also $ψ_n ∘ g_{n + 1} = f_n ∘ g_{n + 1} ≈_{δ_n} f_n ∘ f_{n + 1} = f_n ∘ ψ_{n + 1}$ since $g_{n + 1} ≈_{ε_{n + 1}} f_{n + 1}$ and $f_n$ is $\tuple{δ_n = ε_n, ε_{n + 1}}$-continuous.
		Now we can apply Proposition~\ref{thm:cone_transfer}.
	\end{proof}
\end{corollary}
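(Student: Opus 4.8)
The plan is to obtain this as a direct specialization of the Cone transfer proposition (Proposition~\ref{thm:cone_transfer}), whose parts~(v) and~(vi) already assert exactly ``$f_*$ has a limit iff $g_*$ has a limit, and any two such limits are isomorphic'' — provided one supplies suitable transfer data between the two sequences. So the whole task reduces to choosing $\varphi_*$, $\psi_*$, $\tuple{\delta_n}$ and checking the routine near-commutativity inequalities the proposition demands.

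Since here the two sequences share the same object sequence $X_*$, the natural choice is $\varphi_n := \id_{X_n}$ and $\psi_n := f_n$, regarded as a map $X_{n+1} \to X_n$ (precisely the type $Y_{n+1}\to X_n$ required, with $Y_* = X_*$), together with $\delta_n := \varepsilon_n$. Then I would verify the conditions in turn. The diagram condition $\varphi_n\circ f_n \approx_{\varepsilon_n} g_n\circ\varphi_{n+1}$ is just $f_n \approx_{\varepsilon_n} g_n$, the hypothesis. The condition $\psi_n\circ g_{n+1}\approx_{\delta_n} f_n\circ\psi_{n+1}$ reads $f_n\circ g_{n+1} \approx_{\varepsilon_n} f_n\circ f_{n+1}$, which holds because $g_{n+1}\approx_{\varepsilon_{n+1}} f_{n+1}$ and $f_n$ is $\tuple{\varepsilon_n/2,\varepsilon_{n+1}}$-continuous — hence $\tuple{\varepsilon_n,\varepsilon_{n+1}}$-continuous — as $\tuple{\varepsilon_n}$ is an epsilon sequence for $f_*$. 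The ``quasi-inverse'' conditions are $\varphi_n\circ\psi_n = f_n \approx_{\varepsilon_n} g_n$ (again the hypothesis) and $\psi_n\circ\varphi_{n+1} = f_n \approx_{\delta_n} f_n$ (trivial). Finally, $\tuple{\varepsilon_n}$ is an epsilon sequence for $g_*$ and $\tuple{\delta_n} = \tuple{\varepsilon_n}$ is one for $f_*$, both by hypothesis, so all assumptions of Proposition~\ref{thm:cone_transfer} are met; parts~(v) and~(vi) then give the claim, with the isomorphism furnished by the limit factorization map $\varphi_\infty$.

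There is no genuinely hard step: the corollary is a clean corollary of the Cone transfer machinery, and the verification is just bookkeeping about types and epsilon bounds. The only points that need a moment's care are that $\psi_n$ must have type $X_{n+1}\to X_n$ so that both $\psi_n\circ g_{n+1}$ and $f_n\circ\psi_{n+1}$ are composable, and that the epsilon-sequence hypothesis for $f_*$ is exactly what supplies the uniform-continuity modulus $\varepsilon_{n+1}$ of $f_n$ (for target error $\varepsilon_n/2$) used in the second check.
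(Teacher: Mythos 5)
Your proposal is correct and follows essentially the same route as the paper: the identical choices $φ_n := \id_{X_n}$, $ψ_n := f_n$, $δ_n := ε_n$, the same near-commutativity checks (including using that $f_n$ is $\tuple{ε_n/2, ε_{n+1}}$-continuous, hence $\tuple{ε_n, ε_{n+1}}$-continuous, to get $f_n ∘ g_{n+1} ≈_{ε_n} f_n ∘ f_{n+1}$), and the same appeal to Proposition~\ref{thm:cone_transfer}. Nothing further is needed.
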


\begin{corollary}[Back and forth] \label{thm:back_and_forth}
	Let $\tuple{X_*, f_*}$ and $\tuple{Y_*, g_*}$ be sequences in a locally complete MU-category.
	Let $f_{m_*}$ and $g_{n_*}$ be subsequences with epsilon sequences $\tuple{δ_k}_{k ∈ ω}$ and $\tuple{ε_k}_{k ∈ ω}$, respectively.
	If $h_*$ is a sequence in $\K$ such that for every $k ∈ ω$ we have that
	\begin{enumerate}
		\item $h_{2k}\maps Y_{n_k} \from X_{m_k}$ is $\tuple{ε_k/2, δ_k}$-continuous and $h_{2k + 1}\maps X_{m_k} \from Y_{n_{k + 1}}$ is $\tuple{δ_k/2, ε_{k + 1}}$-continuous,
		\item $h_{2k, 2k + 2} ≈_{ε_k/2} g_{n_k, n_{k + 1}}$ and $h_{2k + 1, 2k + 3} ≈_{δ_k/2} f_{m_k, m_{k + 1}}$,
	\end{enumerate}
	as in Figure~\ref{fig:zigzag},
	then $f_*$ has a limit if and only if $g_*$ has a limit, and for all such limits $\tuple{X_∞, f_{*, ∞}}$ and $\tuple{Y_∞, g_{*, ∞}}$ there is an isomorphism $h_∞\maps X_∞ \to Y_∞$ such that $h_0 ∘ f_{m_0, ∞} ≈_{2ε_0} g_{n_0, ∞} ∘ h_∞$.

\begin{figure}[!ht]
	\centering
	
	\begin{tikzpicture}[
			x = {(4em, 0em)},
			y = {(0em, 5em)},
			text height = 1.5ex,
			text depth = 0.25ex,
			label/.style = {
				edge label = {#1}, 
				every node/.style = {node font=\footnotesize},
			},
			label'/.style = {
				label = {#1},
				swap,
			},
		]
		
		\path
			++(0, 0) node (X0) {$X_0$}
			++(0, -1) node (Y0) {$Y_0$}
			
			++(0.8, 0) node (Yn0) {$Y_{n_0}$}
			++(1, +1) node (Xm0) {$X_{m_0}$}
			++(1, -1) node (Yn1) {$Y_{n_1}$}
			++(1, +1) node (Xm1) {$X_{m_1}$}
			++(1, -1) node (Yn2) {$Y_{n_2}$}
			++(1, +1) node (Xm2) {$X_{m_2}$}
		
			++(1, 0) node (Xend) {$\cdots$}
			++(0, -1) node (Yend) {$\cdots$}
			++(-0.5, 0.5) node[inner sep=0] (hend) {$\quad\cdots$}
			
			(Xend) ++(0.7, 0) node (Xinf) {$X_∞$}
			++(0, -1) node (Yinf) {$Y_∞$}
		;
		
		\path[node font=\small]
			(barycentric cs:Yn0=2.9,Xm0=1,Yn1=1.1) node {$≈_{ε_0/2}$}
			(barycentric cs:Xm0=2.9,Yn1=1,Xm1=1.1) node {$≈_{δ_0/2}$}
			(barycentric cs:Yn1=2.9,Xm1=1,Yn2=1.1) node {$≈_{ε_1/2}$}
			(barycentric cs:Xm1=2.9,Yn2=1,Xm2=1.1) node {$≈_{δ_1/2}$}
			(barycentric cs:Yn2=2.9,Xm2=1,Yend=1.1) node {$≈_{ε_2/2}$}
		;
		
		\graph{
			(X0) <- (Xm0) <-[label=$g_{m_0, m_1}$] (Xm1) <-[label=$g_{m_1, m_2}$] (Xm2) <- (Xend),
			(Y0) <- (Yn0) <-[label'=$f_{n_0, n_1}$] (Yn1) <-[label'=$f_{n_1, n_2}$] (Yn2) <- (Yend),
			(Yinf) <-[label'=$h_∞$] (Xinf),
		};
		\begin{scope}[
			label'/.append style={inner sep=0.2ex},
		]
		\graph{
			(Yn0) <-[label=$h_0$] (Xm0) <-[label'=$h_1$] (Yn1) <-[label=$h_2$] (Xm1)
				<-[label'=$h_3$] (Yn2) <-[label=$h_4$] (Xm2) <- (hend),
		};
		\end{scope}
	\end{tikzpicture}
	
	\caption{The zig-zag sequence in the back and forth construction.}
	\label{fig:zigzag}
\end{figure}

	\begin{proof}
		We put $φ_k := h_{2k}$ and $ψ_k := h_{2k + 1}$ for $k ∈ ω$.
		We have \[
			φ_k ∘ f_{m_k, m_{k + 1}} ≈_{ε_k/2} φ_k ∘ h_{2k + 1, 2k + 3} = h_{2k, 2k + 2} ∘ φ_{k + 1} ≈_{ε_k/2} g_{n_k, n_{k + 1}} ∘ φ_{k + 1},
		\]
		so $φ_k ∘ (f_{m_*})_k ≈_{ε_k} (g_{n_*})_k ∘ φ_{k + 1}$, and similarly $ψ_k ∘ (g_{n_*})_{k + 1} ≈_{δ_k} (f_{m_*})_k ∘ ψ_{k + 1}$. 
		We apply Proposition~\ref{thm:cone_transfer} to the subsequences $f_{m_*}$ and $g_{n_*}$, and use the fact that the limit of a subsequence is the same as the limit of the whole sequence.
	\end{proof}
\end{corollary}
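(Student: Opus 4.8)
The plan is to obtain the statement as an application of the Cone transfer lemma (Proposition~\ref{thm:cone_transfer}) to the two subsequences $f_{m_*}$ and $g_{n_*}$, after splitting the single zig-zag sequence $h_*$ into its even and odd parts. Concretely, I would set $φ_k := h_{2k}\maps X_{m_k} \to Y_{n_k}$ and $ψ_k := h_{2k+1}\maps Y_{n_{k+1}} \to X_{m_k}$ for $k ∈ ω$, and check that the families $\tuple{φ_k}_{k ∈ ω}$ and $\tuple{ψ_k}_{k ∈ ω}$, together with the epsilon sequence $\tuple{δ_k}_{k ∈ ω}$ for $f_{m_*}$ and the epsilon sequence $\tuple{ε_k}_{k ∈ ω}$ for $g_{n_*}$, satisfy every hypothesis of Proposition~\ref{thm:cone_transfer} (local completeness being inherited from $\K$).

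The mechanism that makes everything fit is the telescoping identity $h_{2k, 2k+2} = h_{2k} ∘ h_{2k+1} = φ_k ∘ ψ_k$, and, shifted by one step, $h_{2k+1, 2k+3} = ψ_k ∘ φ_{k+1}$. Substituting this into hypothesis~(ii) of the corollary immediately yields the "approximate inverse" conditions $φ_k ∘ ψ_k ≈_{ε_k} (g_{n_*})_k$ and $ψ_k ∘ φ_{k+1} ≈_{δ_k} (f_{m_*})_k$ (in fact with room $ε_k/2$, $δ_k/2$ to spare). For the approximate-commutation conditions I would compute, using the same identity,
\[
	φ_k ∘ (f_{m_*})_k ≈_{ε_k/2} φ_k ∘ h_{2k+1, 2k+3} = h_{2k, 2k+2} ∘ φ_{k+1} ≈_{ε_k/2} (g_{n_*})_k ∘ φ_{k+1},
\]
where the first estimate combines the bound $h_{2k+1, 2k+3} ≈_{δ_k/2} (f_{m_*})_k$ from~(ii) with the $\tuple{ε_k/2, δ_k}$-continuity of $φ_k = h_{2k}$ from~(i), and the last estimate is the bound $h_{2k, 2k+2} ≈_{ε_k/2} (g_{n_*})_k$ from~(ii) composed on the right with $φ_{k+1}$, which is a non-expansive operation by the defining axioms of an MU-category. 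Hence $φ_k ∘ (f_{m_*})_k ≈_{ε_k} (g_{n_*})_k ∘ φ_{k+1}$; a mirror-image computation, now invoking the $\tuple{δ_k/2, ε_{k+1}}$-continuity of $ψ_k = h_{2k+1}$, gives $ψ_k ∘ (g_{n_*})_{k+1} ≈_{δ_k} (f_{m_*})_k ∘ ψ_{k+1}$.

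Once the hypotheses of Proposition~\ref{thm:cone_transfer} are verified, its part~(v) gives that $f_{m_*}$ has a limit if and only if $g_{n_*}$ does; since an inverse sequence and any of its subsequences have the same limit, this is precisely the assertion that $f_*$ has a limit iff $g_*$ does. For limits $\tuple{X_∞, f_{*,∞}}$ of $f_*$ and $\tuple{Y_∞, g_{*,∞}}$ of $g_*$ — which restrict to limits of the respective subsequences — part~(vi) supplies the $\K$-isomorphism $h_∞ := φ_∞\maps X_∞ \to Y_∞$, and part~(iii) applied with index $0$ gives $h_0 ∘ f_{m_0, ∞} = φ_0 ∘ (f_{m_*})_{0,∞} ≈_{2ε_0} (g_{n_*})_{0,∞} ∘ φ_∞ = g_{n_0, ∞} ∘ h_∞$, the desired estimate. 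I expect the only genuinely delicate step to be the bookkeeping of the halved epsilons: the whole construction is arranged so that the two $/2$-bounds in~(ii) and the continuity estimate from~(i) recombine into a clean $ε_k$ (resp.\ $δ_k$) bound with a strict inequality, and one has to make sure that no factor of $2$ is silently dropped along the way.
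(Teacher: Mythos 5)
Your proposal is correct and follows essentially the same route as the paper: split the zig-zag into $φ_k := h_{2k}$, $ψ_k := h_{2k+1}$, derive the approximate-commutation and approximate-inverse conditions from hypotheses (i)–(ii) exactly as you do, and then invoke Proposition~\ref{thm:cone_transfer} (parts (iii), (v), (vi)) for the subsequences together with the fact that a subsequence has the same limit as the full sequence. Your bookkeeping of the halved epsilons matches the paper's computation verbatim, so there is nothing to add.
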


Let us generalize the strict definitions \ref{def:dominating_subcategory} and \ref{def:dominating_sequence} to the context of MU-categories.
The new definitions are closely related to the definitions in \cite[Section~3]{Kubis13}.

\begin{definition} \label{def:dominating}
	Let $\K$ be an MU-category.
	A subcategory $\D ⊆ \K$ is called \emph{dominating} if it is both
	\begin{itemize}
		\item \emph{cofinal}, i.e. for every $\K$-object $X$, there is a $\K$-map $f\maps X \from Y$ from a $\D$-object $Y$,% and
		\item \emph{absorbing}, i.e. for every $\D$-object $X$, every $ε > 0$, and every $\K$-map $f\maps X \from Y$ there is a $\K$-map $g\maps Y \from Z$ and a $\D$-map $h\maps X \from Z$ such that $f ∘ g ≈_ε h$.
	\end{itemize}
	Similarly, a sequence $\tuple{X_*, f_*}$ in a category $\K$ is called \emph{dominating} if it is both
	\begin{itemize}
		\item \emph{cofinal}, i.e. for every $\K$-object $Y$, there is $n ∈ ω$ and a $\K$-map $f\maps Y \from X_n$, and
		\item \emph{absorbing}, i.e. for every $n ∈ ω$, every $ε > 0$, and every $\K$-map $f\maps X_n \from Y$ there is $n' ≥ n$ and a $\K$-map $g\maps Y \from X_{n'}$ such that $f ∘ g ≈_ε f_{n, n'}$.
	\end{itemize}
	Since now being dominating/absorbing in $\K$ depends on the MU-structure and not just on the underlying category (which can at the same time be viewed as a discrete MU-catetory), we shall call a subcategory or a sequence \emph{strictly dominating/absorbing} if it is dominating/absorbing with respect to the discrete MU-structure on $\K$, as opposed to the original given MU-structure on $\K$.
\end{definition}

\begin{observation} \label{def:locally_dense}
	In a discrete MU-category the new definition is indeed equivalent to the old definition: if $ε$ is small enough, we have an equality.
	We also obtain refinements of some claims, e.g. a cofinal subcategory $\D ⊆ \K$ is dominating not only when it is full, it is enough if it is \emph{locally dense}, meaning that every $\D$-hom-set is dense in the corresponding $\K$-hom-set.
\end{observation}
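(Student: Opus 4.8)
The plan is to treat the two assertions of the observation separately, since they are essentially independent. For the first assertion — that in a discrete MU-category the approximate notions of Definition~\ref{def:dominating} coincide with the strict notions of Definitions~\ref{def:dominating_subcategory} and~\ref{def:dominating_sequence} — I would note first that cofinality is literally the same condition in both, so only the two flavours of \emph{absorbing} need to be compared. One direction is trivial: if $f ∘ g ∈ \D$ (resp.\ $f ∘ g = f_{n, n'}$), then $f ∘ g ≈_ε h$ with $h := f ∘ g$ (resp.\ $f ∘ g ≈_ε f_{n, n'}$) for every $ε > 0$, so strict absorbing implies approximate absorbing. For the converse I would invoke discreteness of $\K$: fix the object $X$ (resp.\ $X_n$) occurring as the codomain of the composite in question, and let $ε_X > 0$ be a witness for discreteness at that object in the sense of Definition~\ref{def:discrete}, i.e.\ $φ ≈_{ε_X} ψ$ implies $φ = ψ$ for all $\K$-maps $φ, ψ$ into it. Applying the approximate absorbing property with this particular $ε = ε_X$ produces a $\K$-map $g$ together with a $\D$-map $h$ (resp.\ an index $n' ≥ n$ and a $\K$-map $g$) such that $f ∘ g ≈_{ε_X} h$ (resp.\ $f ∘ g ≈_{ε_X} f_{n, n'}$); since $f ∘ g$ and $h$ (resp.\ $f_{n, n'}$) are both $\K$-maps with codomain $X$ (resp.\ $X_n$), discreteness forces $f ∘ g = h ∈ \D$ (resp.\ $f ∘ g = f_{n, n'}$), which is exactly the strict absorbing condition. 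This is the ``if $ε$ is small enough, we have an equality'' argument.

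For the second assertion I would verify directly that a cofinal, locally dense subcategory $\D ⊆ \K$ is absorbing in the sense of Definition~\ref{def:dominating}. Let $X$ be a $\D$-object, let $ε > 0$, and let $f\maps X \from Y$ be a $\K$-map. Cofinality applied to the object $Y$ yields a $\K$-map $g\maps Y \from Z$ with $Z ∈ \Ob(\D)$; then $f ∘ g\maps X \from Z$ is a $\K$-map between two $\D$-objects, so by local density the set $\D(Z, X)$ is dense in $\K(Z, X)$ and we may pick a $\D$-map $h\maps X \from Z$ with $f ∘ g ≈_ε h$. This is precisely what absorbing requires, so $\D$ is dominating. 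Taking $\D$ full is the special case $\D(Z, X) = \K(Z, X)$, which recovers the remark in Definition~\ref{def:dominating_subcategory} that every full cofinal subcategory is dominating; and since in a discrete MU-category density of a hom-set already forces it to equal the whole hom-set (apply density with $δ < ε_X$, where $X$ is the codomain), ``locally dense'' collapses there to ``full over $\Ob(\D)$'', so the refinement is genuinely new only in the non-discrete setting.

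I do not expect a real obstacle here: the only thing that needs care is bookkeeping of the directions of the maps and ensuring that the object to which density is applied is a $\D$-object on \emph{both} ends — which is why cofinality must be invoked \emph{before} local density in the second argument. Everything else is an immediate unwinding of the definitions, and no use of MU-axioms (ii) or (iii) is needed beyond the bare notion of a discrete MU-category.
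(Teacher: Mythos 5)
Your proof is correct and is essentially the routine unwinding the paper has in mind (the observation is stated without proof): discreteness at the codomain object turns the $ε$-approximation in the new absorbing condition into an equality, and cofinality followed by local density of the relevant hom-set gives absorbing in the approximate sense. The one point needing care — that the discreteness constant is uniform across domains with the codomain fixed, and that density is applied to a hom-set between two $\D$-objects after cofinality has been used — is handled correctly.
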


Propositions~\ref{thm:strictly_dominating_subcategory} and \ref{thm:strictly_dominating_sequence} are still true in the approximate context.

\begin{proposition} \label{thm:dominating_subcategory}
	Let $\K ⊆ \L$ be MU-categories and let $\D ⊆ \K$ be a dominating subcategory.
	A family $\F$ of $\L$-objects is generic in $\tuple{\K, \L}$ if and only if it is generic in $\tuple{\D, \L}$.
	
	\begin{proof}
		Suppose that $\F$ is generic over $\K$.
		We inductively define Odd's winning strategy for $\F$ in $\BM(\D)$ while simultaneously playing $\BM(\K)$.
		Let $f_{2k}$ be Eve's move in a play of $\BM(\D)$.
		We put $g_{2k} := g'_{2k - 1} ∘ f_{2k}$ (and $g_0 := f_0$) to be Eve's corresponding move in a play of $\BM(\K)$.
		We pick $ε_k > 0$ such that the maps $f_{2j + 1, 2k + 1}$ and $g_{2j + 1, 2k + 1}$ are $\tuple{ε_j / 2^{k - j}, ε_k}$-continuous for $j < k$.
		Let $g_{2k + 1}$ be Odd's response in $\BM(\K)$ according to his winning strategy.
		By the absorption there is $g'_{2k + 1} ∈ \K$ and $f_{2k + 1} ∈ \D$ such that $g_{2k + 1} ∘ g'_{2k + 1} ≈_{ε_k} f_{2k + 1}$ 
			(and so later $g_{2k + 1} ∘ g_{2k + 2} = g_{2k + 1} ∘ g'_{2k + 1} ∘ f_{2k + 2} ≈_{ε_k} f_{2k + 1} ∘ f_{2k + 2}$).
		We take $f_{2k + 1}$ as Odd's response in $\BM(\D)$.
		In the end, we see that the subsequences $f_{n_*}$ and $g_{n_*}$ where $n_k := 2k + 1$, $k ∈ ω$, satisfy the assumptions of Corollary~\ref{thm:Brown}, and so have isomorphic limits.
		
		The other implication is somewhat similar.
		Suppose that $\F$ is generic over $\D$.
		Let $f_0\maps X_0 \from X_1$ be Eve's first move in a play of $\BM(\K)$.
		By the cofinality there is a $\K$-map $f'_0\maps X_1 \from Y_1$ such that $Y_1 ∈ \Ob(\D)$.
		We let $g_0 := \id_{Y_1}$ be Eve's first move in a play of $\BM(\D)$, $g_1$ be Odd's response according to his winning strategy for $\F$, and $f_1 := f'_0 ∘ g_1$ be Odd's corresponding response in $\BM(\K)$.
		We continue inductively. Let $k ∈ ω$.
		We pick $ε_k > 0$ such that the maps $f_{2j + 2, 2k + 2}$ and $g_{2j + 2, 2k + 2}$ are $\tuple{ε_j / 2^{k - j}, ε_k}$-continuous for $j < k$.
		Let $f_{2k + 2}$ be Eve's next move in $\BM(\K)$.
		By the absorption there is $f'_{2k + 2} ∈ \K$ and $g_{2k + 2} ∈ \D$ such that $f_{2k + 2} ∘ f'_{2k + 2} ≈_{ε_k} g_{2k + 2}$.
		Let $g_{2k + 3}$ be Odd's response in $\BM(\D)$ according to his winning strategy, and let $f_{2k + 3} := f'_{2k + 2} ∘ g_{2k + 3}$ be Odd's corresponding response in $\BM(\K)$.
		Again, by Corollary~\ref{thm:Brown}, the subsequences $f_{n_*}$ and $g_{n_*}$ where $n_k := 2k + 2$, $k ∈ ω$, have isomorphic limits.
	\end{proof}
\end{proposition}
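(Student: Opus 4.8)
The plan is to prove the two implications symmetrically, each time simulating a play of one Banach--Mazur game inside a play of the other: the dominating structure of $\D ⊆ \K$ lets us push moves back and forth, and the abstract Brown approximation theorem (Corollary~\ref{thm:Brown}) converts the fact that the two simulated plays are ``close'' into the fact that their limits are isomorphic. Since $\F$ is isomorphism-closed, a winning play for one game is thereby turned into a winning play for the other. This is the approximate counterpart of Proposition~\ref{thm:strictly_dominating_subcategory}.

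For the forward implication, suppose Odd has a winning strategy for $\G_\F$ in $\BM(\K)$. I would let Odd play $\BM(\D)$ while maintaining a shadow play of $\BM(\K)$. When Eve plays a $\D$-map $f_{2k}$, Odd feeds $g_{2k} := g'_{2k-1} ∘ f_{2k}$ (with $g_0 := f_0$) into the shadow $\BM(\K)$-play, where $g'_{2k-1}$ is a correction map built at the previous step. Before invoking the strategy, Odd commits to an $ε_k > 0$ chosen small enough that the partial compositions $f_{2j+1, 2k+1}$ and $g_{2j+1, 2k+1}$ (for $j < k$) are $\tuple{ε_j/2^{k-j}, ε_k}$-continuous, i.e.\ so that $\tuple{ε_k}_{k ∈ ω}$ is an epsilon sequence for the odd-indexed subsequences. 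The strategy yields a $\K$-response $g_{2k+1}$ whose codomain is a $\D$-object; by the absorbing property of $\D$ there are $g'_{2k+1} ∈ \K$ and $f_{2k+1} ∈ \D$ with $g_{2k+1} ∘ g'_{2k+1} ≈_{ε_k} f_{2k+1}$, and Odd plays $f_{2k+1}$ in $\BM(\D)$. By construction the objects agree along the odd indices, and $g_{2k-1, 2k+1} ≈_{ε_{k-1}} f_{2k-1, 2k+1}$ for every $k$, so Corollary~\ref{thm:Brown} applied to the odd-indexed subsequences gives $\lim f_* \cong \lim g_*$; since $\lim g_* ∈ \F$, also $\lim f_* ∈ \F$, and Odd has won $\BM(\D, \G_\F)$.

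For the converse, suppose Odd has a winning strategy for $\G_\F$ in $\BM(\D)$. Given Eve's first move $f_0 \maps X_0 \from X_1$ in $\BM(\K)$, I would use cofinality to fix a $\K$-map $f'_0 \maps X_1 \from Y_1$ with $Y_1 ∈ \Ob(\D)$, start a shadow $\BM(\D)$-play with Eve's move $g_0 := \id_{Y_1}$, take Odd's strategic response $g_1$, and play $f_1 := f'_0 ∘ g_1$ in $\BM(\K)$. Inductively, when Eve plays $f_{2k+2}$ in $\BM(\K)$, fix $ε_k$ as above, use absorption to get $f'_{2k+2} ∈ \K$ and $g_{2k+2} ∈ \D$ with $f_{2k+2} ∘ f'_{2k+2} ≈_{ε_k} g_{2k+2}$, feed $g_{2k+2}$ into the shadow $\BM(\D)$-play, take the strategic response $g_{2k+3}$, and play $f_{2k+3} := f'_{2k+2} ∘ g_{2k+3}$ in $\BM(\K)$. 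Now the even-indexed subsequences share objects and are termwise $ε_k$-close, so Corollary~\ref{thm:Brown} again forces $\lim f_* \cong \lim g_* ∈ \F$.

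I expect the main obstacle to be the epsilon bookkeeping rather than the categorical combinatorics: $ε_k$ must be fixed \emph{before} querying the assumed strategy, yet it has to dominate errors that only become visible once further moves are composed in. The resolution, standard for this type of argument, is to choose $ε_k$ purely from the uniform-continuity moduli of the finitely many composition maps already in play, with geometric decay of the shape $ε_j/2^{k-j}$, so that every fixed composite $f_{n,n'}$ is displaced by only a summable total amount and the hypotheses of Corollary~\ref{thm:Brown} (an epsilon sequence for both subsequences, together with termwise $ε_n$-closeness) hold along the chosen subsequence. The remaining points---legality of the simulated plays, alignment of objects along that subsequence, and the fact that a subsequence has the same limit as the whole sequence---are routine.
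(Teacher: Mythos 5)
Your proposal is correct and follows essentially the same route as the paper: the identical shadow-play simulation ($g_{2k} := g'_{2k-1} ∘ f_{2k}$ with absorption producing the corrections in one direction, cofinality plus $g_0 := \id_{Y_1}$ and absorption in the other), the same epsilon-sequence bookkeeping fixed before querying the strategy, and the same final appeal to Corollary~\ref{thm:Brown} on the odd- resp.\ even-indexed subsequences. No gaps worth noting.
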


\begin{proposition} \label{thm:dominating_sequence}
	A dominating sequence $\tuple{X_*, f_*}$ in an MU-category $\K$ serves as a winning strategy for Odd in $\BM(\K)$.
	If $\tuple{X_*, f_*}$ has a limit $X_∞$ in a larger category $\L ⊇ \K$, then $X_∞$ is a generic object in $\tuple{\K, \L}$.
	
	\begin{proof}
		Let $g_0\maps Y_0 \from Y_1$ be Eve's first move in $\BM(\K)$.
		From the cofinality, Odd may respond with a map $g_1\maps Y_1 \from X_{n_0}$ for some $n_0 ∈ ω$.
		Let us fix $ε_0 > 0$.
		Eve continues with a map $g_2\maps X_{n_0} \from Y_3$.
		From the absorption, Odd may respond with a map $g_3\maps Y_3 \from X_{n_1}$ for some $n_1 ≥ n_0$ such that $h_0 := g_2 ∘ g_3 ≈_{ε_0} f_{n_0, n_1}$.
		There is $ε_1 > 0$ such that the maps $f_{n_0, n_1}$ and $h_0$ are $\tuple{ε_0 / 2, ε_1}$-continuous.
		
		We continue inductively: for $k ∈ ω$ we are at $X_{n_k}$ and have $ε_k$.
		Eve plays $g_{2k + 2}\maps X_{n_k} \from Y_{2k + 3}$, and Odd responds with $g_{2k + 3}\maps Y_{2k + 3} \from X_{n_{k + 1}}$ for some $n_{k + 1} ≥ n_k$ such that $h_k := g_{2k + 2} ∘ g_{2k + 3} ≈_{ε_k} f_{n_k, n_{k + 1}}$.
		We pick $ε_{k + 1} > 0$ such that the maps $f_{n_j, n_{k + 1}}$ and $h_{j, k + 1}$ are $\tuple{ε_j / 2^{k + 1 - j}, ε_{k + 1}}$-continuous for $j < k + 1$.
		In the end, by Corollary~\ref{thm:Brown} we have that $\lim h_* = \lim f_{n_*}$, and so $\lim g_* = \lim f_*$, and the specified strategy is winning for $X_∞$ in $\tuple{\K, \L}$.
	\end{proof}
\end{proposition}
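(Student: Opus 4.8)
The plan is to use the dominating sequence $\tuple{X_*, f_*}$ itself as the backbone of Odd's strategy: each time Eve plays a map, Odd steers the play back onto a subsequence of $f_*$ up to a controlled error, and then one invokes the abstract Brown approximation theorem (Corollary~\ref{thm:Brown}) to conclude that the outcome has the same limit as $f_*$.

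First I would describe the strategy explicitly. Let $g_0\maps Y_0 \from Y_1$ be Eve's opening move in $\BM(\K)$. By cofinality, Odd answers with a $\K$-map $g_1\maps Y_1 \from X_{n_0}$ for some $n_0 \in \omega$, thereby landing on the dominating sequence; fix any $\varepsilon_0 > 0$. Inductively, suppose after Odd's $(2k{+}1)$-st move the play sits at $X_{n_k}$ and a threshold $\varepsilon_k > 0$ has been chosen. When Eve plays $g_{2k+2}\maps X_{n_k} \from Y_{2k+3}$, the absorbing property gives some $n_{k+1} \ge n_k$ and a $\K$-map $g_{2k+3}\maps Y_{2k+3} \from X_{n_{k+1}}$ with $h_k := g_{2k+2} \circ g_{2k+3} \approx_{\varepsilon_k} f_{n_k, n_{k+1}}$; this is Odd's response. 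Immediately after, I would pick $\varepsilon_{k+1} > 0$ small enough that the finitely many composites $f_{n_j, n_{k+1}}$ and $h_{j, k+1}$, $j < k+1$, are all $\tuple{\varepsilon_j/2^{k+1-j}, \varepsilon_{k+1}}$-continuous; axiom (iii) of MU-categories makes this possible. This defines a strategy for Odd.

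Then I would check it is winning. Along any resulting play $g_*$ we have the objects $X_{n_*}$, the subsequence $f_{n_*}$ of $f_*$, and the sequence $h_* = \tuple{h_k}_{k\in\omega}$ of ``double steps''; by construction $h_k \approx_{\varepsilon_k} (f_{n_*})_k$ for every $k$, and $\tuple{\varepsilon_k}_{k\in\omega}$ is, by the choices above, an epsilon sequence for both $h_*$ and $f_{n_*}$. Hence Corollary~\ref{thm:Brown} yields $\lim h_* \iso \lim f_{n_*} = \lim f_*$, the last equality because a subsequence has the same limit. On the other hand $h_k = g_{2k+2} \circ g_{2k+3}$, so $h_*$ is exactly the sequence of double steps of the tail of $g_*$, and therefore $\lim h_* = \lim g_*$. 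Combining, any limit of $g_*$ is isomorphic to any limit of $f_*$; in particular, if $\tuple{X_*, f_*}$ has a limit $X_\infty$ in $\L$, then every play consistent with the strategy has limit $X_\infty$ in $\L$, i.e. $X_\infty$ is generic in $\tuple{\K, \L}$.

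The one delicate point will be bookkeeping the epsilon sequence: $\varepsilon_{k+1}$ must be fixed only after $n_{k+1}$ and $h_k$ are known, yet must retroactively dominate the continuity moduli of all earlier composites in both $h_*$ and $f_{n_*}$. This is routine — only finitely many uniform-continuity witnesses are constrained at each step, exactly as in the standard inductive construction of epsilon sequences recalled before Proposition~\ref{thm:cone_transfer} — so I do not expect any genuine obstacle beyond this interleaving.
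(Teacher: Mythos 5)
Your proposal is correct and follows essentially the same route as the paper's proof: land on the dominating sequence by cofinality, return to it via absorption with errors $\varepsilon_k$ chosen inductively so that $\tuple{\varepsilon_k}_{k\in\omega}$ is an epsilon sequence for both $h_*$ and $f_{n_*}$, and conclude via Corollary~\ref{thm:Brown} that $\lim g_* = \lim f_*$. No issues.
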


\begin{remark} \label{rm:generic_Peano}
	It follows that Theorem~\ref{thm:generic_Peano} is still true when $\PeanoS$ is viewed as an MU-subcategory of $\MCpt$ (as opposed to being a discrete MU-category), and so allowing for more dominating subcategories.
\end{remark}

Next, we show that unlike in the discrete case there is a dominating sequence in the MU-category $\I ⊆ \MCpt$.

\begin{proposition}
	A sequence in $\I$ is dominating if and only if it is crooked.
	
	\begin{proof}
		Obviously, every sequence in $\I$ is cofinal.
		An absorbing sequence $f_*$ in $\I$ is crooked since 
		for every $n ∈ ω$ and $ε > 0$ there is an $ε/3$-crooked map $g\maps X_n \from Y$,
		and there is $n' ≥ n$ and $h\maps Y \from X_{n'}$ such that $g ∘ h ≈_{ε/3} f_{n, n'}$.
		It follows from Proposition~\ref{thm:crooked_calculus} that $f_{n, n'}$ is $ε$-crooked.
		On the other hand, by Theorem~\ref{thm:crooked_factorization}, every crooked sequence in $\I$ is absorbing.
	\end{proof}
\end{proposition}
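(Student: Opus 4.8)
The plan is to notice first that cofinality is automatic in $\I$: every object is a copy of $\II$ and $\id_{\II}$ is an $\I$-map, so for any object $Y$ there is an $\I$-map $Y \from X_n$ (indeed for every $n$). Hence a sequence in $\I$ is dominating precisely when it is absorbing, and the whole statement reduces to the equivalence ``absorbing $\iff$ crooked''.

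For the direction \emph{absorbing $\implies$ crooked}, I would fix $n \in \omega$ and $\varepsilon > 0$ and produce $m \ge n$ with $f_{n,m}$ being $\varepsilon$-crooked. By the construction of the canonical crooked simplicial surjections (passing through geometric realizations), for every $\delta > 0$ there is a $\delta$-crooked $\I$-map; in particular pick an $(\varepsilon/3)$-crooked $\I$-map $g\maps X_n \from Y$. Absorption then gives $m \ge n$ and an $\I$-map $h\maps Y \from X_m$ with $g \circ h \approx_{\varepsilon/3} f_{n,m}$. Proposition~\ref{thm:crooked_calculus}(i) makes $g \circ h$ itself $(\varepsilon/3)$-crooked, and then Proposition~\ref{thm:crooked_calculus}(iii) (applied with the roles $f \leftarrow g\circ h$ and $\delta = \varepsilon/3$) shows that $f_{n,m}$ is $(\varepsilon/3 + 2\cdot\varepsilon/3) = \varepsilon$-crooked. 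So every absorbing sequence in $\I$ is crooked.

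For the converse \emph{crooked $\implies$ absorbing}, essentially all the work has already been done by the crookedness factorization theorem. Given $n$, $\varepsilon > 0$, and an $\I$-map $f\maps X_n \from Y$, apply Theorem~\ref{thm:crooked_factorization} with $f$ in the role of its ``$g$'' and with the given $\varepsilon$; this yields $\delta > 0$ such that every $\delta$-crooked $\I$-map factorizes $\varepsilon$-approximately through $f$. Since the sequence is crooked there is $m \ge n$ with $f_{n,m}$ being $\delta$-crooked, so the theorem supplies an $\I$-map $h\maps Y \from X_m$ with $f_{n,m} \approx_\varepsilon f \circ h$ — which is exactly the absorption condition, witnessed by taking $g := h$.

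I do not expect a genuine obstacle here: the substantive content sits in Theorem~\ref{thm:crooked_factorization} and in the existence of arbitrarily crooked $\I$-maps, both of which we may invoke. The only point needing care is the constant bookkeeping in the forward implication — splitting $\varepsilon$ into thirds so that Proposition~\ref{thm:crooked_calculus}(i) and (iii) combine to give exactly $\varepsilon$ — together with matching up the roles of the maps correctly when feeding $f$ and $f_{n,m}$ into the factorization theorem in the backward implication.
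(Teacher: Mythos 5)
Your proposal is correct and follows essentially the same route as the paper: cofinality is trivial since $\I$ has the single object $\II$, the forward direction uses an $\varepsilon/3$-crooked map plus absorption and Proposition~\ref{thm:crooked_calculus} (parts (i) and (iii)) to get $\varepsilon$-crookedness of $f_{n,m}$, and the converse is exactly the application of Theorem~\ref{thm:crooked_factorization}. The constant bookkeeping and the role-matching of maps in the factorization theorem are handled correctly, so there is nothing to fix.
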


\begin{remark} \label{thm:Bing_reproved}
	The previous proposition shows that the pseudo-arc is a limit of a dominating sequence in $\I$, and also gives an alternative proof of Bing's theorem.
	A hereditarily indecomposable arc-like continuum is a limit of a crooked sequence in $\I$ by Theorem~\ref{thm:crooked_limit}.
	Now we equivalently know that the sequence is dominating.
	But by Proposition~\ref{thm:dominating_sequence}, the limit of a dominating sequence is a generic object, which is unique.
	So there is a unique (up to isomorphism) hereditarily indecomposable arc-like continuum.
\end{remark}

\section{Fraïssé theory}
\label{sec:fraisse}

So far we have shown that the pseudo-arc is (characterized as) a generic object in $\tuple{\I, σ\I}$ and that it is a $σ\I$-limit of a dominating sequence in $\I$ (Remark~\ref{thm:Bing_reproved}).
In 2006, Irwin and Solecki~\cite{IS06} introduced projective Fraïssé theory for topological structures and characterized the pseudo-arc as (using our language, see below) the unique homogeneous object in $σ\I$.
They essentially considered the category of the finite linear graphs $\II_n$, $n ∈ ω$, and simplicial surjections, and obtained its Fraïssé limit in the category of topological graphs (see Example~\ref{ex:IS}).
The Fraïssé limit is the Cantor space with a special closed equivalence relation such that the quotient topological space is the pseudo-arc, so it is the pre-space and not the pseudo-arc itself what is the Fraïssé limit.
Here we generalize abstract projective Fraïssé theory to approximate setting and prove the results by directly working in the category $\I$ of continuous surjections on the unit interval.

Note that the classical Fraïssé theory, as formulated in \cite[Chapter~7]{Hodges93}, originating in the work by Fraïssé~\cite{Fraisse54}, is concerned with embeddings of finite and countable first order structures, a Fraïssé limit is the union of a countable increasing chain of finite structures (which corresponds to a direct limit), and its extension property is called \emph{injectivity}.
For this reason the classical theory could be called \emph{injective}.
On the other hand, in the projective Fraïssé theory of Irwin and Solecki, extended structure is the domain of a quotient map as opposed to the codomain of an embedding, and the generic object is obtained as the inverse limit of an inverse sequence.
However, in abstract Fraïssé theory formulated using the language of category theory~\cite{DG93}, \cite{Kubis14}, injectivity vs.\ projectivity is a mere convention in definitions of relevant properties – the objects grow either along or against the direction of morphisms, and there is really a single Fraïssé theory.
In this paper we build our approximate abstract Fraïssé theory using the projective convention as it fits the case of continuous surjections on metrizable compacta.

We shall develop the general theory while demonstrating some of the concepts on the pseudo-arc on the way.
The full summary of the pseudo-arc as a Fraïssé limit in our setting can be found in Observation~\ref{thm:mountain_climbing} and Theorem~\ref{thm:pseudo-arc} below.

\subsection{Key notions}

\begin{definition}
	Let $\K ⊆ \L$ be MU-categories.
	An $\L$-object $U$ is 
	\begin{itemize}
		\item \emph{cofinal} in $\tuple{\K, \L}$ if for every $\K$-object $X$ there is an $\L$-map $f\maps X \from U$,
		\item \emph{projective} in $\tuple{\K, \L}$ if for every $\K$-object $X$, every $ε > 0$, every $\L$-map $f\maps X \from U$, and every $\K$-map $g\maps X \from Y$ there is an $\L$-map $h\maps Y \from U$ such that $f ≈_ε g ∘ h$,
		\item \emph{homogeneous} in $\tuple{\K, \L}$ if for every $\K$-object $X$, every $ε > 0$, and every pair of $\L$-maps $f, g\maps X \from U$ there is an $\L$-automorphism $h\maps U \from U$ such that $f ≈_ε g ∘ h$.
	\end{itemize}
	If $\K = \L$, we say just “cofinal/projective/homogeneous in $\L$”.
	The projectivity of $U$ in $\tuple{\K, \L}$ is also called the \emph{extension property}.
\end{definition}

\begin{observation} \label{thm:homogeneous_to_projective}
	A cofinal homogeneous object $U$ in $\tuple{\K, \L}$ is also projective in $\tuple{\K, \L}$.
	
	\begin{proof}
		Let $f\maps X \from U$ be an $\L$-map, let $g\maps X \from Y$ be a $\K$-map, and let $ε > 0$.
		Since $U$ is cofinal, there is an $\L$-map $g'\maps Y \from U$.
		Since $U$ is homogeneous there is an $\L$-automorphism $h\maps U \from U$ such that $f ≈_ε g ∘ (g' ∘ h)$.
	\end{proof}
\end{observation}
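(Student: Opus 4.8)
The plan is to derive projectivity directly from the two hypotheses: cofinality supplies an $\L$-map out of $U$ into the relevant object, and homogeneity then corrects it up to the prescribed $\varepsilon$.

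First I would unpack what has to be verified. Projectivity of $U$ asks: given a $\K$-object $X$, a real $\varepsilon > 0$, an $\L$-map $f \maps X \from U$, and a $\K$-map $g \maps X \from Y$, produce an $\L$-map $h \maps Y \from U$ with $f \approx_\varepsilon g \circ h$. So I fix such data and aim to build $h$.

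Next, since $U$ is cofinal in $\tuple{\K, \L}$ and $Y$ is a $\K$-object, there is an $\L$-map $g' \maps Y \from U$. Composing, $g \circ g' \maps X \from U$ is an $\L$-map, and since $X$ is a $\K$-object the pair $f, g \circ g' \maps X \from U$ is precisely the kind of data to which homogeneity applies. Applying homogeneity of $U$ to this pair and the given $\varepsilon$ yields an $\L$-automorphism $h' \maps U \from U$ with $f \approx_\varepsilon (g \circ g') \circ h' = g \circ (g' \circ h')$. Setting $h := g' \circ h' \maps Y \from U$, which is again an $\L$-map because $\L$ is closed under composition, we obtain $f \approx_\varepsilon g \circ h$, as required.

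The main obstacle is essentially nonexistent: the proof is pure bookkeeping of domains and codomains, making sure that cofinality is invoked at the object $Y$ and homogeneity at the object $X$ with legitimate inputs. The only point meriting a moment's care is that the object fed into homogeneity, namely $X$, is a $\K$-object (true by hypothesis), and that $g \circ g'$ and $g' \circ h'$ genuinely lie in $\L$ rather than in some ambient category — both immediate since $\K \subseteq \L$ is a subcategory. No metric estimates beyond the single application of the homogeneity $\varepsilon$ are needed, and in particular none of the epsilon-sequence machinery from the approximate setting is required here.
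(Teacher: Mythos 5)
Your proof is correct and follows exactly the paper's argument: cofinality gives $g'\maps Y \from U$, homogeneity applied to the pair $f$ and $g\circ g'$ gives the automorphism, and the required map is $g'$ composed with that automorphism. No differences worth noting.
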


Even though the converse of the previous proposition is not true in general, we will see that it holds in several special situations.

\begin{observation} \label{thm:projective_cofinal}
	A projective object $U$ in $\tuple{\K, \L}$ is already cofinal if the category $\K$ is \emph{connected} (i.e. there is no family $∅ ≠ \F ⊊ \Ob(\K)$ such that $\K(X, Y) = ∅$ for every $X ∈ \F$ and $Y ∈ \Ob(\K) \setminus \F$) and if there is any $\L$-map from $U$ to a $\K$-object.
	
	\begin{proof}
		Let $\F$ be the family of all $\K$-objects $X$ such that there is an $\L$-map $f\maps X \from U$.
		Let $g\maps X \from Y$ be a $\K$-map.
		Clearly, if $Y ∈ \F$, then $X ∈ \F$.
		But by the extension property, the other implication holds as well.
	\end{proof}
\end{observation}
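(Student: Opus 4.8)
The plan is to let $\F$ be the class of all $\K$-objects receiving an $\L$-map from $U$, that is $\F := \set{X ∈ \Ob(\K) : \L(U, X) ≠ ∅}$, and to show that $\F$ is closed under $\K$-morphisms in both directions. Once this is established, connectedness of $\K$ leaves no room for $\F$ to be a non-empty proper subclass, so $\F = \Ob(\K)$, which is exactly the statement that $U$ is cofinal in $\tuple{\K, \L}$.

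First I would check the ``forward'' inclusion, which uses nothing beyond $\K ⊆ \L$: if $X ∈ \F$, witnessed by an $\L$-map $f\maps X \from U$, and $g\maps Y \from X$ is any $\K$-map, then $g ∘ f\maps Y \from U$ is an $\L$-map, so $Y ∈ \F$. In particular no $\K$-morphism goes from an object of $\F$ to an object outside $\F$. Next — the only place the hypothesis on $U$ is used — I would check the ``backward'' inclusion via the extension property: given $X ∈ \F$ with witness $f\maps X \from U$ and any $\K$-map $g\maps X \from Y$, projectivity of $U$ (applied with, say, $ε = 1$) yields an $\L$-map $h\maps Y \from U$ with $f ≈_1 g ∘ h$; since only the non-emptiness of $\L(U, Y)$ matters, the approximation itself is irrelevant here, and we conclude $Y ∈ \F$. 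Combining the two inclusions: whenever there is a $\K$-morphism between two objects, in either direction, they belong to $\F$ together or not at all, so $\F$ is a union of connected components of $\K$.

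It then remains to invoke connectedness. Since $\F$ is ``source-closed'' (no $\K$-morphism leaves it) and $\K$ is connected, $\F ∈ \set{∅, \Ob(\K)}$; and $\F ≠ ∅$ because $U$ admits an $\L$-map to at least one $\K$-object — automatic in the cases where the observation is applied, in particular whenever $U$ is an $\L$-limit of a sequence in $\K$. Hence $\F = \Ob(\K)$, i.e. $U$ is cofinal.

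I do not expect any genuine obstacle; the statement is a bookkeeping remark. The only point requiring a moment's care is tracking arrow directions under the $f\maps X \from U$ convention: the forward inclusion comes from post-composing a map $U → X$ with a $\K$-map out of $X$, while the backward inclusion is precisely the extension property read as ``every $\L$-map $U → X$ factors, up to $ε$, through every $\K$-map $g\maps X \from Y$''.
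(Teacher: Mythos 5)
Your proof is correct and follows essentially the same route as the paper: take $\F = \set{X ∈ \Ob(\K) : \L(U,X) ≠ ∅}$, show it is closed along $\K$-maps forwards by composition and backwards by the extension property, and let connectedness force $\F = \Ob(\K)$. You even spell out the non-emptiness of $\F$ (automatic in the situations where the observation is used, e.g. when $U$ is an $\L$-limit of a $\K$-sequence), a point the paper's proof leaves implicit.
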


\begin{definition}
	A map $f\maps X \to Y$ in an MU-category $\L$ is called a \emph{near-isomorphism} if for every $ε > 0$ there is an isomorphism $g\maps X \to Y$ such that $f ≈_ε g$.
	If $X = Y$, a near-isomorphism is also called a \emph{near-automorphism}.
	Note that in a discrete MU-category, a near-isomorphism is already an isomorphism.
\end{definition}

\begin{observation} \label{thm:near_automorphism}
	Every $\L$-map $f\maps U \to U$ for a homogeneous object $U$ in an MU-category $\L$ is a near-automorphism
	since by the homogeneity, for every $ε > 0$ there is an automorphism $h\maps U \to U$ such that $f ≈_ε \id_U ∘ h$.
\end{observation}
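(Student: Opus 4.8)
The plan is to read the conclusion off directly from the definition of homogeneity. Fix an $\L$-map $f\maps U \to U$ and let $\varepsilon > 0$ be arbitrary; I want to exhibit an $\L$-automorphism of $U$ lying within $\varepsilon$ of $f$ in the $\infty$-metric on the hom-set $\L(U, U)$, since that is precisely the defining property of a near-automorphism of $U$.

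The single substantive step is to apply homogeneity of $U$ in $\L$ with the test object $X := U$ and the two $\L$-maps $f\maps U \from U$ and $\id_U\maps U \from U$. Homogeneity then supplies an $\L$-automorphism $h\maps U \from U$ such that $f \approx_\varepsilon \id_U \circ h = h$. Hence $h$ is an automorphism of $U$ that is $\varepsilon$-close to $f$, and since $\varepsilon > 0$ was arbitrary, $f$ is a near-automorphism.

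I do not expect any obstacle here: the statement is merely the instance of the homogeneity axiom in which the second comparison map is the identity, and no structural facts about MU-categories beyond the relevant definitions are used. The only point worth a moment's care is the variance convention — homogeneity is phrased for maps $X \from U$, and here one sets $X = U$, so the two maps being compared are genuine endomorphisms of $U$ and the composite $\id_U \circ h$ collapses to $h$.
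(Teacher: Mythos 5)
Your proposal is correct and is exactly the paper's own argument: homogeneity applied with test object $X = U$ and the pair of maps $f$, $\id_U$ yields, for each $\varepsilon > 0$, an automorphism $h$ with $f \approx_\varepsilon \id_U \circ h = h$. Nothing further is needed.
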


\begin{proposition} \label{thm:near_isomorphism}
	Every $\L$-map $f\maps X \to Y$ between projective objects in a locally complete MU-category $\L$ is a near-isomorphism.
	
	\begin{proof}
		Let $ε_0 := ε/2$ and $h_0 := f$.
		Since $Y$ is projective, there is an $\L$-map $h_1\maps X \from Y$ such that $h_0 ∘ h_1 ≈_{ε_0/2} \id_Y$.
		Let $δ_0 > 0$ be such that $h_0$ is $\tuple{ε_0/2, δ_0}$-continuous.
		Since $X$ is projective, there is an $\L$-map $h_2\maps Y \from X$ such that $h_1 ∘ h_2 ≈_{δ_0/2} \id_X$.
		There is $0 < ε_1 ≤ ε_0/2$ such that $h_1$ is $\tuple{δ_0/2, ε_1}$-continuous.
		We continue this way.
		Let $f_*$ be the sequence in $\L$ consisting of the identities on $X$, let $g_*$ be sequence of the identities on $Y$, and let $\tuple{X, f_{*, ∞}}$ and $\tuple{Y, g_{*, ∞}}$ be the respective limits of $f_*$ and $g_*$ consisting of identities.
		Since we have taken $ε_{k + 1} ≤ ε_k/2$ for every $k ∈ ω$, $\tuple{ε_k}_{k ∈ ω}$ is an epsilon sequence for $f_*$, and similarly $\tuple{δ_k}_{k ∈ ω}$ is an epsilon sequence for $g_*$.
		We have chosen the maps $h_k$, $k ∈ ω$, so that we can apply Corollary~\ref{thm:back_and_forth}.
		Hence, there is an isomorphism $h_∞\maps X \to Y$ such that $f = h_0 ∘ f_{0, ∞} ≈_ε g_{0, ∞} ∘ h_∞ = h_∞$.
	\end{proof}
\end{proposition}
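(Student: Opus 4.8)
The plan is to bounce back and forth between $X$ and $Y$ using the extension property of each, recording a zig-zag sequence of $\L$-maps, and then feed that sequence into the abstract back-and-forth construction (Corollary~\ref{thm:back_and_forth}) applied to the two constant inverse sequences of identities on $X$ and on $Y$.

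Fix $ε > 0$; we must produce an $\L$-isomorphism $g\maps X \to Y$ with $f ≈_ε g$. Set $h_0 := f\maps X \to Y$ and $ε_0 := ε/2$, and build $h_*$ inductively. Given $h_{2k}\maps X \to Y$, apply projectivity of $Y$ (taking $\id_Y$ for the map out of the projective object $U = Y$ and $h_{2k}$ for the comparison map) to get $h_{2k + 1}\maps Y \to X$ with $h_{2k} ∘ h_{2k + 1} ≈_{ε_k/2} \id_Y$, then pick $δ_k > 0$ with $δ_k ≤ δ_{k - 1}/2$ (for $k ≥ 1$) small enough that $h_{2k}$ is $\tuple{ε_k/2, δ_k}$-continuous. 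Next apply projectivity of $X$ to get $h_{2k + 2}\maps X \to Y$ with $h_{2k + 1} ∘ h_{2k + 2} ≈_{δ_k/2} \id_X$, and pick $0 < ε_{k + 1} ≤ ε_k/2$ small enough that $h_{2k + 1}$ is $\tuple{δ_k/2, ε_{k + 1}}$-continuous.

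Now let $f_*$ be the constant sequence of identities on $X$ and $g_*$ the constant sequence of identities on $Y$; both have limits ($X$ and $Y$, with identity limit cones), and because $ε_{k + 1} ≤ ε_k/2$ and $δ_{k + 1} ≤ δ_k/2$ the sequences $\tuple{δ_k}$ and $\tuple{ε_k}$ are epsilon sequences for $f_*$ and $g_*$, respectively. By construction the estimates $h_{2k} ∘ h_{2k + 1} ≈_{ε_k/2} \id_Y$ and $h_{2k + 1} ∘ h_{2k + 2} ≈_{δ_k/2} \id_X$ are exactly hypothesis~(ii) of Corollary~\ref{thm:back_and_forth} (the bonding maps of the constant sequences being identities), and the continuity moduli just chosen are exactly hypothesis~(i). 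Since $\L$ is locally complete, Corollary~\ref{thm:back_and_forth} yields an isomorphism $h_∞\maps X \to Y$ with $f = h_0 ∘ \id_X ≈_{2ε_0} \id_Y ∘ h_∞ = h_∞$, i.e. $f ≈_ε h_∞$. As $ε > 0$ was arbitrary, $f$ is a near-isomorphism.

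The one delicate point is the interleaved bookkeeping of the tolerances: the halvings $ε_{k + 1} ≤ ε_k/2$, $δ_{k + 1} ≤ δ_k/2$ must be reconciled with the uniform-continuity moduli of the successive maps $h_{2k}$, $h_{2k + 1}$ so that simultaneously $\tuple{ε_k}$ and $\tuple{δ_k}$ remain genuine epsilon sequences and conditions~(i)--(ii) of the back-and-forth corollary hold exactly as stated. Everything else is a routine unwinding of the definition of projectivity together with a single appeal to Corollary~\ref{thm:back_and_forth}.
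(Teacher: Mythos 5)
Your proposal is correct and follows essentially the same route as the paper: a back-and-forth construction of a zig-zag $h_*$ via projectivity of $Y$ and $X$, with halved tolerances chosen against the relevant continuity moduli, fed into Corollary~\ref{thm:back_and_forth} applied to the two constant identity sequences. Your explicit requirements $δ_k ≤ δ_{k-1}/2$ and the assignment of $\tuple{δ_k}$, $\tuple{ε_k}$ as epsilon sequences for the $X$- and $Y$-sequences are exactly the bookkeeping the paper leaves implicit (and, if anything, stated more carefully).
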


\begin{deftheorem}
	A \emph{Fraïssé object} in a locally complete MU-category $\L$ is an object $U$ satisfying the following equivalent conditions:
	\begin{enumerate}
		\item $U$ is cofinal and projective in $\L$,
		\item $U$ is cofinal and homogeneous in $\L$.
	\end{enumerate}
	Such object $U$ is unique up to isomorphism, and every map $U \to U$ is a near-automorphism.
	
	\begin{proof}
		If $U$ is cofinal and homogeneous, then it is projective by Observation~\ref{thm:homogeneous_to_projective}.
		If $U$ is projective, $f, g\maps X \from U$ are $\L$-maps, and $ε > 0$, then there is an $\L$-map $h\maps U \from U$ such that $f ≈_ε g ∘ h$.
		There is $0 < ε' < ε$ such that $f ≈_{ε'} g ∘ h$ and $δ > 0$ such that $g$ is $\tuple{ε - ε', δ}$-continuous.
		By Proposition~\ref{thm:near_isomorphism} there is an automorphism $h'\maps U \from U$ such that $h ≈_δ h'$, and so $f ≈_{ε'} g ∘ h ≈_{ε - ε'} g ∘ h'$, and so $U$ is homogeneous.
		
		Let $U, U'$ be Fraïssé objects in $\L$.
		By the cofinality, there is an $\L$-map $f\maps U \to U'$.
		By Proposition~\ref{thm:near_isomorphism}, $f$ is a near-isomorphism, and so there is an isomorphism $U \to U'$.
	\end{proof}
\end{deftheorem}

\begin{remark}
	By the result of Irwin and Solecki~\cite[Theorem~4.2]{IS06}, the pseudo-arc is homogeneous in the locally complete MU-category $σ\I$.
	By the classical result of Mioduszewski~\cite{Mioduszewski62} (also reproved in \cite{IS06}), the pseudo-arc is cofinal in $σ\I$.
	It follows that the pseudo-arc is a Fraïssé object in $σ\I$.
\end{remark}

\begin{observation} \label{thm:fraisse_object_is_generic}
	We have the following three tiers of objects $U$ in a locally complete MU-category $\L$:
	(i) a Fraïssé object $\implies$ (ii) a generic object $\implies$ (iii) a cofinal object.
	Also, a cofinal object $U$ such that every $\L$-map $U \to U$ is a near-isomorphism is generic.
	
	\begin{proof}
		Clearly, a generic object in $\L$ is cofinal in $\L$ since Eve may start a play of $\BM(\L)$ at any object.
		If $U$ is Fraïssé in $\L$, then it is cofinal and every self-map is a near-automorphism by Observation~\ref{thm:near_automorphism}.
		In such situation we describe a winning strategy for Odd and $U$ in $\BM(\L)$.
		Let $f_0\maps X_0 \from X_1$ be Eve's first move.
		Since $U$ is cofinal, there is an $\L$-map $f_1\maps X_1 \from U$, which shall be Odd's response.
		Eve reacts with a map $f_2\maps U \from X_3$.
		Odd will again respond with a map $f_3\maps X_3 \from U$, and so on.
		Let $g_*$ be the subsequence of $f_*$ whose all objects are copies of $U$.
		It is enough to show that $\lim g_* = U$.
		This follows from the fact that every map $g_n$ is a near-automorphism.
		Let $ε_0 > 0$.
		There is an isomorphism $h_0\maps U \from U$ such that $g_0 ≈_{ε_0} h_0$ and $ε_1 > 0$ such that both $g_0$ and $h_0$ are $\tuple{ε_0/2, ε_1}$-continuous.
		We may continue so that the obtained sequence $h_*$ of isomorphisms satisfies the assumptions of Corollary~\ref{thm:Brown}.
		It follows that the limit of $g_*$ (and so of $f_*$) is isomorphic to the limit of $h_*$, which is clearly $U$.
	\end{proof}
\end{observation}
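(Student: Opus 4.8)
The plan is to establish $(\mathrm{i})\Rightarrow(\mathrm{ii})\Rightarrow(\mathrm{iii})$ together with the last sentence, treating the last sentence as the main point. The implication $(\mathrm{ii})\Rightarrow(\mathrm{iii})$ is immediate: if $U$ is generic, then for any $\L$-object $X$ we let Eve open a play of $\BM(\L)$ with $\id_X\maps X\from X$ and let Odd answer by a winning strategy for $\G_U$; the resulting play $f_*$ has a limit $\L$-isomorphic to $U$, and composing the cone map $f_{0,\infty}\maps X_\infty\to X_0=X$ with that isomorphism exhibits an $\L$-map $X\from U$, so $U$ is cofinal. And $(\mathrm{i})\Rightarrow(\mathrm{ii})$ will be subsumed by the last sentence: a Fraïssé object is cofinal, and, being homogeneous, has all its self-maps near-automorphisms by Observation~\ref{thm:near_automorphism}, hence in particular near-isomorphisms.

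So it remains to show: \emph{a cofinal object $U$ all of whose $\L$-endomorphisms are near-isomorphisms is generic in $\L$}. Here I would hand Odd the strategy of perpetually steering the play back to $U$. Given Eve's opening $f_0\maps X_0\from X_1$, Odd uses cofinality to play some $f_1\maps X_1\from U$; and whenever Eve later plays $f_{2k}\maps U\from X_{2k+1}$, Odd again uses cofinality to play $f_{2k+1}\maps X_{2k+1}\from U$. Then the objects $X_2,X_4,X_6,\dots$ of the play are all copies of $U$, so the corresponding subsequence $g_*$ of $f_*$ has bonding maps $g_k\maps U\from U$, each of which is a near-isomorphism by assumption. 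Since a subsequence shares the limit of the whole sequence, it suffices to prove $\lim g_*\cong U$.

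For this I would compare $g_*$ with a sequence $h_*$ of honest $\L$-automorphisms of $U$. Building an epsilon sequence $\tuple{\varepsilon_k}_{k\in\omega}$ inductively \emph{as the play unfolds} --- just as in the proofs of Propositions~\ref{thm:crooked_sequence_generic} and~\ref{thm:dominating_sequence} --- one picks, as soon as $g_k$ is available (right after Odd's move producing it), an $\L$-isomorphism $h_k\maps U\from U$ with $g_k\approx_{\varepsilon_k} h_k$, arranging the choices so that $\tuple{\varepsilon_k}$ is an epsilon sequence for both $g_*$ and $h_*$. Then Corollary~\ref{thm:Brown} gives $\lim g_*\cong\lim h_*$; and the limit of the sequence $h_*$ of isomorphisms between copies of $U$ is canonically $U$ itself, via the cone $\bigl(U,(h_{0,n}^{-1})_{n\in\omega}\bigr)$ --- for any cone $(Y,\gamma_*)$ over $h_*$ the factorization $Y\to U$ is forced to be $\gamma_0$, and this indeed works. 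Hence $\lim f_*=\lim g_*\cong U$, so the strategy wins and $U$ is generic.

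I expect the only delicate step to be the bookkeeping in the on-the-fly construction of $\tuple{\varepsilon_k}$: $\varepsilon_k$ must be committed before $h_k$ is chosen (so as to fix how close an isomorphism to demand), yet the completed sequence $\tuple{\varepsilon_k}$ must be an epsilon sequence for the already-committed compositions $g_{j,k}$ and $h_{j,k}$, $j<k$. This is exactly the inductive pattern used in the two cited propositions, so it is routine; everything else is the application of Corollary~\ref{thm:Brown} and the elementary identification of the limit of an inverse sequence of isomorphisms with its initial object.
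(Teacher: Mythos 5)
Your proposal is correct and follows essentially the same route as the paper: (ii)$\implies$(iii) by letting Eve open at an arbitrary object, (i)$\implies$(ii) reduced to the final claim via Observation~\ref{thm:near_automorphism}, and the final claim proved by Odd steering the play back to $U$ through cofinality, approximating the resulting self-maps of $U$ by isomorphisms along an inductively built epsilon sequence, and invoking Corollary~\ref{thm:Brown} together with the identification of the limit of a sequence of isomorphisms with $U$. The only difference is that you spell out details (the cone $(U,(h_{0,n}^{-1})_n)$ and the timing of the $\varepsilon_k$) that the paper leaves implicit.
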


\begin{remark}
	Note that for MU-categories $\K ⊆ \L$, a cofinal/projective/homogeneous object in $\L$ is such also in $\tuple{\K, \L}$, but this is not the case with being generic – by taking a smaller subcategory $\K$ we are restricting both players in $\BM(\K)$.
	So for example the fact that the pseudo-arc is generic over $\I$ does not directly follow from its genericity in $σ\I$.
\end{remark}

Next we describe a way how to obtain a Fraïssé object – as a \emph{Fraïssé limit} of a suitable MU-subcategory $\K ⊆ \L$.
For this we need the notion of \emph{Fraïssé sequence}.
Fraïssé sequences were explicitly introduced by Kubiś~\cite{Kubis14} as a key tool for abstract Fraïssé theory.
It was defined as a dominating sequence in an injective and transfinite setting, but often used in presence of the amalgamation property.
Here we work in a countable approximate projective setting, and we reserve the name Fraïssé sequence for the case with the amalgamation property.

\begin{definition} \label{def:Fraisse_sequence}
	Let $\K$ be an MU-category.
	We say that a sequence $f_*$ in $\K$ is \emph{projective} or that it has the \emph{extension property} if for every $\K$-object $Z$, every $ε > 0$, and every $\K$-maps $f\maps Z \from X_n$ and $g\maps Z \from Y$ there is $n' ≥ n$ and a $\K$-map $g'\maps Y \from X_{n'}$ such that $f ∘ f_{n, n'} ≈_ε g ∘ g'$.
	Clearly, every projective sequence is absorbing.
	
	We say that $\K$ has the \emph{amalgamation property} if for every $\K$-object $Z$, every $ε > 0$, and every pair of $\K$-maps $f\maps Z \from X$ and $g\maps Z \from Y$ there is a $\K$-object $W$ and there are $\K$-maps $f'\maps X \from W$ and $g'\maps Y \from W$ such that $f ∘ f' ≈_ε g ∘ g'$.
	
	On one hand, a projective sequence in $\K$ can be viewed as a generalization of a projective object in $\K$.
	On the other hand, projectivity of a sequence can be viewed as a form of the amalgamation property along the sequence.
	In fact, by the next proposition, a dominating sequence in $\K$ is projective if and only if $\K$ has the amalgamation property.
	Such a sequence is called a \emph{Fraïssé sequence}.
\end{definition}

\begin{proposition} \label{thm:projective_dominating_sequence}
	A dominating sequence $f_*$ in an MU-category $\K$ is projective if and only if $\K$ has the amalgamation property.
	
	\begin{proof}
		Suppose that $f_*$ is projective.
		Let $f\maps Z \from X$ and $g\maps Z \from Y$ be $\K$-maps, and let $ε > 0$.
		By the cofinality, there is a $\K$-map $f'\maps X \from X_n$ for some $n ∈ ω$.
		By the projectivity, there is a $n' ≥ n$ and a $\K$-map $g'\maps Y \from X_{n'}$ such that $f ∘ (f' ∘ f_{n, n'}) ≈_ε g ∘ g'$, which proves the amalgamation property.
		
		Suppose that $\K$ has the amalgamation property.
		Let $f\maps Z \from X_n$ and $g\maps Z \from Y$ be $\K$-maps.
		By the amalgamation property there are $\K$-maps $f'$ and $g'$ such that $f ∘ f' ≈_{ε/2} g ∘ g'$.
		There is $δ > 0$ such that $f$ is $\tuple{ε/2, δ}$-continuous.
		Since $f_*$ is absorbing, there is $n' ≥ n$ and a $\K$-map $g''$ such that $f' ∘ g'' ≈_δ f_{n, n'}$.
		Altogether we have $f ∘ f_{n, n'} ≈_{ε/2} f ∘ f' ∘ g'' ≈_{ε/2} g ∘ (g' ∘ g'')$, which proves the projectivity.
	\end{proof}
\end{proposition}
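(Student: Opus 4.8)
The plan is to prove the two implications separately, in each case unwinding the relevant definitions and tracking the error parameters through the MU-category axioms.

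For the forward implication, I would assume $f_*$ is projective and verify that $\K$ has the amalgamation property. Given $\K$-maps $f\maps Z \from X$ and $g\maps Z \from Y$ and $\varepsilon > 0$, I would first use cofinality of $f_*$ to obtain some $n \in \omega$ and a $\K$-map $f'\maps X \from X_n$. Then $f \circ f'\maps Z \from X_n$ and $g\maps Z \from Y$ are both $\K$-maps with codomain $Z$, so the extension property of the sequence, applied to the map $f \circ f'$ out of $X_n$, yields $n' \geq n$ and a $\K$-map $g'\maps Y \from X_{n'}$ with $(f \circ f') \circ f_{n, n'} \approx_{\varepsilon} g \circ g'$. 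Taking $W := X_{n'}$ together with the maps $f' \circ f_{n, n'}\maps X \from W$ and $g'\maps Y \from W$ then witnesses amalgamation at tolerance $\varepsilon$.

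For the converse, I would assume $\K$ has the amalgamation property and fix $\K$-maps $f\maps Z \from X_n$ and $g\maps Z \from Y$ together with $\varepsilon > 0$. Amalgamation applied to $f$ and $g$ produces a $\K$-object $W$ and $\K$-maps $f'\maps X_n \from W$, $g'\maps Y \from W$ with $f \circ f' \approx_{\varepsilon/2} g \circ g'$. Next choose $\delta > 0$ witnessing $\tuple{\varepsilon/2, \delta}$-continuity of $f$ (axiom (iii) of an MU-category). Absorption of $f_*$, applied to the map $f'\maps X_n \from W$ with tolerance $\delta$, then provides $n' \geq n$ and a $\K$-map $g''\maps W \from X_{n'}$ with $f' \circ g'' \approx_{\delta} f_{n, n'}$. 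Postcomposing this with $f$ and invoking the chosen $\delta$ gives $f \circ f_{n, n'} \approx_{\varepsilon/2} f \circ f' \circ g''$, while precomposing the first estimate with $g''$ and using that precomposition is non-expansive (axiom (ii)) gives $f \circ f' \circ g'' \approx_{\varepsilon/2} g \circ (g' \circ g'')$; the triangle inequality then yields $f \circ f_{n, n'} \approx_{\varepsilon} g \circ (g' \circ g'')$, so $g' \circ g''\maps Y \from X_{n'}$ is the required witness for the extension property.

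I do not expect a genuine obstacle here; the only delicate point is the asymmetry between the two composition axioms. Precomposition is automatically non-expansive, whereas pushing a $\delta$-estimate through postcomposition by $f$ is legitimate only once $\delta$ has been chosen small relative to $\varepsilon$ via the $\tuple{\varepsilon/2, \delta}$-continuity of $f$. Splitting $\varepsilon$ as $\varepsilon/2 + \varepsilon/2$ and ordering the three choices correctly — amalgamate, then pick $\delta$, then absorb — is all the argument needs.
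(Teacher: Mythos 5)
Your proof is correct and follows essentially the same route as the paper: cofinality plus the extension property of the sequence applied to $f\circ f'$ for the forward direction, and amalgamation at tolerance $\varepsilon/2$, then choice of $\delta$ from $\tuple{\varepsilon/2,\delta}$-continuity of $f$, then absorption, for the converse. The parameter bookkeeping (including the use of non-expansiveness of precomposition) matches the paper's argument exactly.
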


The notion of a Fraïssé object was introduced in \cite[p.~1762]{Kubis14} in the discrete injective setting by a condition corresponding to the constant identity sequence being dominating.
In the terminology used here, that would correspond to a “dominating object”.
The following proposition clarifies the relationship between the notions.

\begin{proposition} \label{thm:fraisse_object}
	For an object $U$ in a locally complete MU-category $\L$, the following conditions are equivalent.
	\begin{enumerate}
		\item $U$ is a Fraïssé object in $\L$.
		\item $U$ is cofinal in $\L$, $\L$ has the amalgamation property, and for every $ε > 0$ and $\L$-map $f\maps U \from X$ there is an $\L$-map $g\maps X \from U$ such that $f ∘ g ≈_ε \id_U$.
		\item $U$ is cofinal in $\L$, $\L$ has the amalgamation property, and every $\L$-map $U \to U$ is a near-automorphism.
	\end{enumerate}
	
	\begin{proof}
		Observe that $U$ is a Fraïssé object if and only if $\tuple{\id_U}_{n ∈ ω}$ is a Fraïssé sequence, and similarly $U$ satisfies (ii) if and only if $\L$ has the amalgamation property and $\tuple{\id_U}_{n ∈ ω}$ is a dominating sequence.
		Hence, the equivalence of (i) and (ii) follows from Proposition~\ref{thm:projective_dominating_sequence}.
		Given the above equivalence, we have already established that (i) resp.\ (ii) implies (iii).
		
		It remains to show that (iii) implies that $U$ is homogeneous in $\L$.
		Let $f, g\maps X \from U$ be $\L$-maps and let $ε > 0$.
		By the amalgamation property, there are $\L$-maps $f', g'\maps U \from Y$ such that $f ∘ f' ≈_{ε/3} g ∘ g'$.
		By the cofinality, there is an $\L$-map $h\maps Y \from U$.
		Since the maps $f' ∘ h$ and $g' ∘ h$ are near-automorphisms, there are automorphisms $f'', g''\maps U \from U$ such that $f ∘ f'' ≈_{ε/3} f ∘ (f' ∘ h) ≈_{ε/3} g ∘ (g' ∘ h) ≈_{ε/3} g ∘ g''$.
		Hence, $f ≈_ε g ∘ (g'' ∘ (f'')\inv)$.
	\end{proof}
\end{proposition}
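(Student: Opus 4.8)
The plan is to exploit the dictionary between an object $U$ and the constant sequence $\tuple{\id_U}_{n\in\omega}$ to settle the equivalence (i)$\iff$(ii), and to treat (iii)$\implies$(i) by proving homogeneity of $U$ directly.

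For (i)$\iff$(ii) I would first unwind the definitions: the constant identity sequence is cofinal precisely when $U$ is cofinal, projective precisely when $U$ is projective, and absorbing precisely when the approximate right-inverse condition in (ii) holds (for every $\varepsilon>0$ and $f\maps U\from X$ there is $g\maps X\from U$ with $f\circ g\approx_\varepsilon\id_U$). Thus (i) asserts that $\tuple{\id_U}_{n\in\omega}$ is cofinal and projective, equivalently — since a projective sequence is automatically absorbing — that it is a Fraïssé sequence; whereas (ii) asserts that $\tuple{\id_U}_{n\in\omega}$ is dominating and that $\L$ has the amalgamation property. Proposition~\ref{thm:projective_dominating_sequence} states exactly that a dominating sequence is projective iff the category has amalgamation, which yields (i)$\iff$(ii). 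Moreover (i)/(ii)$\implies$(iii) is then immediate: cofinality and the amalgamation property appear verbatim in (ii), and a Fraïssé object has every self-map a near-automorphism by the Definition and Theorem characterising Fraïssé objects above.

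The substantive implication is (iii)$\implies$(i): since $U$ is already cofinal, it suffices to show $U$ is homogeneous. Given $\L$-maps $f,g\maps X\from U$ and $\varepsilon>0$, use the amalgamation property to obtain $\L$-maps $f',g'\maps U\from Y$ with $f\circ f'\approx_{\varepsilon/3}g\circ g'$, and use cofinality to obtain $h\maps Y\from U$. Then $f'\circ h$ and $g'\circ h$ are endomorphisms of $U$, hence near-automorphisms, so — having first fixed $\delta>0$ witnessing $\tuple{\varepsilon/3,\delta}$-continuity of both $f$ and $g$ — we may choose automorphisms $f'',g''\maps U\from U$ with $f'\circ h\approx_\delta f''$ and $g'\circ h\approx_\delta g''$, whence $f\circ f''\approx_{\varepsilon/3}f\circ(f'\circ h)\approx_{\varepsilon/3}g\circ(g'\circ h)\approx_{\varepsilon/3}g\circ g''$. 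Then $h':=g''\circ(f'')\inv$ is an $\L$-automorphism of $U$ with $f\approx_\varepsilon g\circ h'$. The one point requiring care is the order in which the tolerances are chosen in this last step, so that replacing the near-automorphisms $f'\circ h,\ g'\circ h$ by the genuine automorphisms $f'',g''$ and then precomposing with $f,g$ never costs more than $\varepsilon/3$ each; this bookkeeping is routine in an MU-category and is the only obstacle I anticipate.
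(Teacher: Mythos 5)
Your proof is correct and follows essentially the same route as the paper: translating (i) and (ii) into statements about the constant identity sequence and invoking Proposition~\ref{thm:projective_dominating_sequence}, then proving (iii)$\implies$(i) via the amalgamation--cofinality--near-automorphism argument with the $\varepsilon/3$ splitting. Your explicit choice of $\delta$ witnessing the $\tuple{\varepsilon/3,\delta}$-continuity of $f$ and $g$ before replacing the near-automorphisms by genuine automorphisms just makes precise a step the paper leaves implicit.
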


Recall that our goal is to obtain a Fraïssé object.
We can do this by taking the limit of a Fraïssé sequence, but extra assumptions on $\tuple{\K, \L}$ are needed.

\begin{definition} \label{def:free_completion}
	Let $\K ⊆ \L$ be MU-categories. We say that $\tuple{\K, \L}$ is a \emph{free completion} if it satisfies the following conditions.
	\begin{itemize}
		\item[(L1)] $\L$ is locally complete and every $\K$-sequence has a limit in $\L$.
		\item[(L2)] Every $\L$-object is a limit of a $\K$-sequence.
		\item[(F1)] For every $\K$-sequence $f_*$, every its limit $\tuple{X_∞, f_{*, ∞}}$ in $\L$, every $\K$-object $Y$, every $ε > 0$, and every $\L$-map $h\maps Y \from X_∞$ there is $n ∈ ω$ and a $\K$-map $g\maps Y \from X_n$ such that $h ≈_ε g ∘ f_{n, ∞}$.
		\item[(F2)] For every $\K$-object $Y$ and every $ε > 0$ there is $δ > 0$ such that for every $\K$-sequence $f_*$, every its limit $\tuple{X_∞, f_{*, ∞}}$ in $\L$, every $n ∈ ω$, and all $\K$-maps $g, g'\maps Y \from X_n$ such that $g ∘ f_{n, ∞} ≈_δ g' ∘ f_{n, ∞}$ there is $n' ≥ n$ such that $g ∘ f_{n, n'} ≈_ε g' ∘ f_{n, n'}$.
		\item[(C)] For every $\K$-sequence $f_*$, every its limit $\tuple{X_∞, f_{*, ∞}}$, and every $ε > 0$ there is $n ∈ ω$ and $δ > 0$ such that for all $\L$-maps $h, h'\maps X_∞ \from Y$ such that $f_{n, ∞} ∘ h ≈_δ f_{n, ∞} ∘ h'$ we have $h ≈_ε h'$.
	\end{itemize}
	Conditions (L1) and (L2) are called the \emph{limit conditions}, (F1) and (F2) are called the \emph{factorization conditions}, and (C) is called the \emph{continuity condition}.
\end{definition}
Before discussing some ideas behind the conditions and some examples where the conditions are fulfilled, let us apply them in one of the main theorems of the section.

\begin{theorem}[Characterization of the Fraïssé limit] \label{thm:fraisse_limit}
	Let $\tuple{\K, \L}$ be a free completion of an MU-category.
	The following conditions are equivalent for an $\L$-object $U$:
	\begin{enumerate}
		\item $U$ is an $\L$-limit of a Fraïssé sequence in $\K$,
		\item $U$ is cofinal and projective in $\tuple{\K, \L}$,
		\item $U$ is cofinal and homogeneous in $\tuple{\K, \L}$,
		\item $U$ is a Fraïssé object in $\L$.
	\end{enumerate}
	Moreover, such object $U$ is unique up to isomorphism, and every $\K$-sequence with limit $U$ is Fraïssé.
	Such object is called the \emph{Fraïssé limit} of $\K$ in $\L$.
	
	\begin{proof}
		First, the implication (iv)$\implies$(iii) is trivial, and (iii)$\implies$(ii) follows from Observation~\ref{thm:homogeneous_to_projective}.
		Also, the uniqueness follows from the uniqueness of the Fraïssé object, as well as from the uniqueness of the generic object since the limit of a Fraïssé sequence is a generic object by Proposition~\ref{thm:dominating_sequence}.
		
		Let $\tuple{X_*, f_*}$ be a sequence in $\K$ and let $\tuple{U, f_{*, ∞}}$ be its limit in $\L$.
		If $U$ is cofinal and projective in $\tuple{\K, \L}$, then $f_*$ is a Fraïssé sequence in $\K$.
		Let $X$ be a $\K$-object.
		Since $U$ is cofinal in $\tuple{\K, \L}$, there is an $\L$-map $f\maps X \from U$.
		By (F1) there is a $\K$-map $g\maps X \from X_n$ for some $n ∈ ω$, so $f_*$ is cofinal in $\K$.
		Let $f\maps Z \from X_n$ and $g\maps Z \from Y$ be $\K$-maps and let $ε > 0$.
		Let us consider $δ > 0$ from (F2) for $Z$ and $ε$, and let $δ' > 0$ be such that $g$ is $\tuple{δ/2, δ'}$-continuous.
		Since $U$ is projective in $\tuple{\K, \L}$, there is an $\L$-map $h\maps Y \from U$ such that $f ∘ f_{n, ∞} ≈_{δ/2} g ∘ h$.
		By (F1) there is a $\K$-map $g'\maps Y \from X_{n'}$ for some $n' ≥ n$ such that $h ≈_{δ'} g' ∘ f_{n', ∞}$.
		Altogether, we have $f ∘ f_{n, ∞} ≈_{δ/2} g ∘ h ≈_{δ/2} g ∘ g' ∘ f_{n', ∞}$.
		By the choice of $δ$ there is $n'' ≥ n'$ such that $f ∘ f_{n, n''} ≈_ε g ∘ (g' ∘ f_{n', n''})$.
		Hence, $f_*$ is projective in $\K$.
		By (L2) there is always some sequence $f_*$ for $U$, and so the implication (ii)$\implies$(i) follows.

		To prove (i)$\implies$(iii), let $\tuple{X_*, f_*}$ be a Fraïssé sequence in $\K$ and suppose that $\tuple{U, f_{*, ∞}}$ is its limit in $\L$.
		Let $X$ be a $\K$-object.
		Since $f_*$ is cofinal in $\K$, there is a $\K$-map $f\maps X \from X_n$ for some $n ∈ ω$, and so $f ∘ f_{n, ∞}$ is an $\L$-map $X \from U$.
		Hence, $U$ is cofinal in $\tuple{\K, \L}$.
		Next, let $ε > 0$ and let $f, g\maps X \from U$ be $\L$-maps.
		By (F1) there are $\K$-maps $f'\maps X \from X_m$ and $g'\maps X \from X_{n_0}$ such that $f ≈_{ε/4} f' ∘ f_{m, ∞}$ and $g ≈_{ε/4} g' ∘ f_{n_0, ∞}$.
		Since $f_*$ is projective, there is a $\K$-map $h_0\maps X_{n_0} \from X_{m_0}$ for some $m_0 ≥ m$ such that $f' ∘ f_{m, m_0} ≈_{ε/4} g' ∘ h_0$.
		Let $ε_0 > 0$ be such that $g'$ is $\tuple{ε/4, 2ε_0}$-continuous.
		Since $f_*$ is absorbing, there is a $\K$-map $h_1\maps X_{m_0} \from X_{n_1}$ for some $n_1 ≥ n_0$ such that $h_0 ∘ h_1 ≈_{ε_0/2} f_{n_0, n_1}$.
		Let $δ_0 > 0$ be such that $h_0$ is $\tuple{ε_0/2, δ_0}$-continuous.
		Since $f_*$ is absorbing, there is a $\K$-map $h_2\maps X_{n_1} \from X_{m_1}$ for some $m_1 ≥ n_1$ such that $h_1 ∘ h_2 ≈_{δ_0/2} f_{m_0, m_1}$.
		We pick $ε_1 > 0$ such that $f_{n_0, n_1}$ is $\tuple{ε_0/2, ε_1}$-continuous and $h_1$ is $\tuple{δ_0/2, ε_1}$-continuous, and we continue in a similar manner so that the zig-zag sequence $h_*$ satisfies the assumptions of Corollary~\ref{thm:back_and_forth} for $f_{m_*}$ and $f_{n_*}$.
		Hence, there is an automorphism $h_∞\maps U \from U$ such that $h_0 ∘ f_{m_0, ∞} ≈_{2ε_0} f_{n_0, ∞} ∘ h_∞$.
		Altogether we have 
		\[
			f ≈_{ε/4} f' ∘ f_{m, ∞} ≈_{ε/4} g' ∘ h_0 ∘ f_{m_0, ∞} ≈_{ε/4} g' ∘ f_{n_0, ∞} ∘ h_∞ ≈_{ε/4} g ∘ h_∞,
		\] and so $U$ is homogeneous in $\tuple{\K, \L}$.
		
		It remains to conclude (iv) from the other conditions.
		Suppose that $\tuple{U, f_{*, ∞}}$ is a limit of a Fraïssé sequence $\tuple{X_*, f_*}$.
		We show that $U$ is cofinal in $\L$.
		Let $Y$ be an $\L$-object.
		By (L2) there is a sequence $\tuple{Y_*, g_*}$ in $\K$ with a limit $\tuple{Y, g_{*, ∞}}$ in $\L$.
		Let $\tuple{ε_k}_{k ∈ ω}$ be an epsilon sequence for $g_*$.
		Since $f_*$ is cofinal, there is a $\K$-map $φ_0\maps Y_0 \from X_{n_0}$ for some $n_0 ∈ ω$.
		Since $f_*$ is projective, we may inductively choose a $\K$-map $φ_{k + 1}\maps Y_{k + 1} \from X_{n_{k + 1}}$ for some $n_{k + 1} > n_k$ such that $φ_k ∘ f_{n_k, n_{k + 1}} ≈_{ε_k} g_{k, k + 1} ∘ φ_{k + 1}$ for every $k ∈ ω$.
		By Proposition~\ref{thm:cone_transfer}~(iii) applied to $φ_*, f_{n_*}, g_*$ there is an $\L$-map $φ_∞\maps Y \from U$.
		
		To show that $U$ is homogeneous in $\L$ let $X$ be an $\L$-object, let $ε > 0$, and let $f, g\maps X \from U$ be $\L$-maps.
		By (L2) there is a $\K$-sequence $f_*$ with $\L$-limit $\tuple{X, f_{*, ∞}}$.
		By (C) there is suitable $n ∈ ω$ and $δ > 0$ for $ε$.
		Since $U$ is homogeneous in $\tuple{\K, \L}$, there is an automorphism $h\maps U \from U$ such that $f_{n, ∞} ∘ f ≈_δ f_{n, ∞} ∘ g ∘ h$, and so $f ≈_ε g ∘ h$ by the choice of $n$ and $δ$.
	\end{proof}
\end{theorem}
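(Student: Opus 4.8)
The plan is to run the cycle of implications (iv)$\implies$(iii)$\implies$(ii)$\implies$(i)$\implies$(iii), establish (i)$\implies$(iv) separately, and then harvest uniqueness and the final clause. The first two arrows come for free: (iv)$\implies$(iii) because a Fraïssé object is by definition cofinal and homogeneous in $\L$, hence a fortiori in $\tuple{\K, \L}$; and (iii)$\implies$(ii) is exactly Observation~\ref{thm:homogeneous_to_projective}. For uniqueness I would observe that an $\L$-limit of a Fraïssé sequence is a generic object (combine Proposition~\ref{thm:projective_dominating_sequence} with Proposition~\ref{thm:dominating_sequence}) and generic objects are unique; alternatively, once (iv) is available, uniqueness follows from uniqueness of the Fraïssé object.

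For (ii)$\implies$(i) I would fix any $\K$-sequence $\tuple{X_*, f_*}$ with $\L$-limit $\tuple{U, f_{*, \infty}}$ — one exists by (L2) — and show it is Fraïssé; this simultaneously proves the final clause of the theorem. Cofinality of $f_*$ is immediate from (F1): cofinality of $U$ gives an $\L$-map $X \from U$ for any $\K$-object $X$, and (F1) produces an honest $\K$-map $X \from X_n$. For projectivity of $f_*$, given $\K$-maps $f\maps Z \from X_n$, $g\maps Z \from Y$ and $\varepsilon > 0$, I would take $\delta$ from (F2) for $Z$ and $\varepsilon$, use projectivity of $U$ to obtain an $\L$-map $h\maps Y \from U$ with $f \circ f_{n, \infty} \approx_{\delta/2} g \circ h$, approximate $h$ by $g' \circ f_{n', \infty}$ via (F1) finely enough (using uniform continuity of $g$) that $f \circ f_{n, \infty} \approx_\delta g \circ g' \circ f_{n', \infty}$, and finally invoke (F2) to pull the estimate back to a finite stage $n''$: $f \circ f_{n, n''} \approx_\varepsilon g \circ (g' \circ f_{n', n''})$.

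For (i)$\implies$(iii), with $f_*$ Fraïssé and limit $U$, cofinality of $U$ in $\tuple{\K, \L}$ is clear. For homogeneity in $\tuple{\K, \L}$, given $\L$-maps $f, g\maps X \from U$ and $\varepsilon > 0$, I would approximate $f \approx f' \circ f_{m, \infty}$ and $g \approx g' \circ f_{n_0, \infty}$ by (F1) with $\K$-maps $f'\maps X \from X_m$ and $g'\maps X \from X_{n_0}$, then alternate the projectivity and the absorbing property of $f_*$ to build a zig-zag sequence $h_*$ between the subsequences $f_{m_*}$ and $f_{n_*}$ meeting the hypotheses of Corollary~\ref{thm:back_and_forth}; this yields an automorphism $h_\infty\maps U \from U$, and chaining the approximations gives $f \approx_\varepsilon g \circ h_\infty$.

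The step I expect to be the main obstacle is (i)$\implies$(iv), where the free completion conditions must do the work of promoting ``cofinal/homogeneous in $\tuple{\K, \L}$'' to ``cofinal/homogeneous in $\L$''. For cofinality in $\L$: given an $\L$-object $Y$, write it by (L2) as the limit of a $\K$-sequence $\tuple{Y_*, g_*}$ with an epsilon sequence $\tuple{\varepsilon_k}$, and build $\K$-maps $\varphi_k\maps Y_k \from X_{n_k}$ inductively (cofinality of $f_*$ at step $0$, projectivity of $f_*$ afterwards) so that $\varphi_k \circ f_{n_k, n_{k+1}} \approx_{\varepsilon_k} g_{k, k+1} \circ \varphi_{k+1}$; then Proposition~\ref{thm:cone_transfer}(iii) produces an $\L$-map $Y \from U$. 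For homogeneity in $\L$: given $\L$-maps $f, g\maps X \from U$ and $\varepsilon > 0$, realize $X$ by (L2) as a $\K$-sequence limit with cone $f_{*, \infty}$, take $n$ and $\delta$ from the continuity condition (C) for $\varepsilon$, apply homogeneity in $\tuple{\K, \L}$ (already proved) to $f_{n, \infty} \circ f$ and $f_{n, \infty} \circ g$ to get an automorphism $h$ with $f_{n, \infty} \circ f \approx_\delta f_{n, \infty} \circ g \circ h$, and conclude $f \approx_\varepsilon g \circ h$ by (C). Throughout, the real labor is the bookkeeping of the $\varepsilon$'s and $\delta$'s so that the hypotheses of Corollaries~\ref{thm:Brown} and \ref{thm:back_and_forth} and of Proposition~\ref{thm:cone_transfer} are literally met; no idea beyond these three tools should be required.
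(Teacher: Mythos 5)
Your proposal is correct and follows essentially the same route as the paper: the same cycle (iv)$\implies$(iii)$\implies$(ii)$\implies$(i)$\implies$(iii), the same use of (L2), (F1), (F2) to show any $\K$-sequence with limit $U$ is Fraïssé (yielding the final clause), the same zig-zag/back-and-forth argument for homogeneity in $\tuple{\K,\L}$, and the same promotion to cofinality and homogeneity in $\L$ via the cone transfer and condition (C). The only cosmetic difference is your passing mention of Corollary~\ref{thm:Brown}, which the paper's proof does not actually need here.
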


\begin{remark} \label{thm:cofinality_trivial}
	If the category $\K$ is connected, then by Observation~\ref{thm:projective_cofinal}, condition~(ii) is equivalent to $U$ just being projective in $\tuple{\K, \L}$.
	Similarly, if $\L$ is connected (which turns out to be equivalent to $\K$ being connected), then (iv) is equivalent to $U$ being just projective in $\L$.
	However, the situation is different with homogeneity – we do not have cofinality and uniqueness.
	If $\L$ is a \emph{thin} category (i.e. a category where the hom-sets are degenerate), then every object is homogeneous in $\L$.
	In our example, if we considered the MU-category $\I ∪ \set{*}$ with a degenerate space and constant surjections added, then the point $*$ would be homogeneous in $σ\I ∪ \set{*}$ as well (besides the pseudo-arc).
	The reason the pseudo-arc is the only homogeneous object in $\tuple{\I, σ\I}$ is simply because every $σ\I$-object is cofinal in $\tuple{\I, σ\I}$ trivially.
\end{remark}

\begin{remark}
	At a first look it may seem that the conditions defining the free completion are just technical conditions for the proof of Theorem~\ref{thm:fraisse_limit} to work, but it can be shown that a free completion for $\K$ always exists and that it is unique up to MU-equivalence.
	The conditions are also meaningful on their own.
	(L1) says that all necessary limits exist, while (L2) makes sure that all $\L$-objects are “close to” $\K$.
	Altogether, they say that $\tuple{\K, \L}$ is a “completion”.
	(F1) is a factorization existence condition, while (F2) guarantees that the factorizations are “continuously unique”.
	Altogether, they say that $\K$-objects are “small” with respect to limits of sequences.
	
	If $\L$ is discrete at every $\K$-object, then the factorization conditions simplify: (F1) says that for every $\L$-map $h\maps Y \from X_∞$ to a $\K$-object there is a $\K$-map $g\maps Y \from X_n$ such that $h = g ∘ f_{n, ∞}$, and (F2) says that if $g ∘ f_{n, ∞} = g' ∘ f_{n, ∞}$ for some $\K$-maps $g, g'$, then already $g ∘ f_{n, n'} = g' ∘ f_{n, n'}$ for some $n' ≥ n$: the equality is witnessed before reaching the limit, so the factorization of $h$ is essentially unique.
	The discrete combination of (F1) and (F2) is closely related to the notion of \emph{finitely presentable object}~\cite[Definition~1.1]{AR94}, but here we are in the projective setting, and $\K$-objects are finitely presentable only with respect to $\K$-sequences (as opposed to any directed diagram in $\L$).
	The factorization conditions are also closely related to the notion of a \emph{resolution} and to Morita's conditions defining an \emph{expansion} (see \cite[6.1 and 7.1]{Mardesic00}), which are used in shape theory.
	
	Any limit $\tuple{X_∞, f_{*, ∞}}$ separates maps to the limit object, i.e. for $\L$-maps $h ≠ h'\maps X_∞ \from Y$ there is $n ∈ ω$ such that $f_{n, ∞} ∘ h ≠ f_{n, ∞} ∘ h'$.
	Condition~(C) says that this happens MU-continuously.
\end{remark}

\subsection{Obtaining a free completion and the pseudo-arc}

Next we look at how the conditions defining the free completion can be fulfilled, and then we conclude the story of the pseudo-arc as a Fraïssé limit.

Intuitively, obtaining a free completion works as follows.
We start with a “complete” ambient MU-category $\L$, choose a “nice” subcategory $\K ⊆ \L$ and form its closure $σ\K$ under limits of sequences.
Then $\tuple{\K, σ\K}$ is a free completion.
Alternatively, we already start with a free completion $\tuple{\K, \L}$.
Then $\tuple{\F, σ\F}$ is a free completion for every $\F ⊆ \K$ that is “nicely placed” in $\L$.

\begin{definition} \label{def:sigma_closure}
	Let $\K ⊆ \L$ be MU-categories.
	The \emph{local closure} $\clo{\K} ⊆ \L$ is the MU-subcategory of $\L$ with the same objects as $\K$ such that every hom-set $\clo{\K}(X, Y)$ is the closure of $\K(X, Y)$ in the $∞$-metric space $\L(X, Y)$.
	The MU-category $\K$ is said to be \emph{locally closed} in $\L$ if $\clo{\K} = \K$.
	
	We define the \emph{$σ$-closure} $σ\K ⊆ \L$ (sometimes denoted by $σ_\L \K$) as the smallest subcategory $\L' ⊆ \L$ that contains $\K$, is locally closed, contains all $\L$-limits of $\K$-sequences, and these remain limits from the point of view of $\L'$.
	The MU-category $\K$ is \emph{$σ$-closed} in $\L$ if $σ\K = \K$.
\end{definition}

\begin{remark} \label{rmk:sigma_closure}
	The $σ$-closure is well-defined, but we postpone technical details of the general definition to Appendix~\ref{sec:appendix}.
	Intuitively, the $σ$-closure consists of morphisms that can be approximated by $\K$-maps, and $σ\K$-maps are analogous to \emph{$\K$-admissible embeddings}~\cite{Masumoto20} from the setup based on continuous model theory (e.g. compare the properties listed in \cite[Remark~2.5]{JV22} with Appendix~\ref{sec:appendix}).
	
	Below we give a simpler description of the $σ$-closure under an extra hypothesis.
	For now we explicitly mention that $σ\K$-objects are exactly $\L$-limit objects of $\K$-sequences, and that if $\tuple{\K, \L}$ satisfies (L1), then limits of $\K$-sequences are exactly the same from the point of view of $σ\K$ and $\L$, i.e. for a $\K$-sequence $f_*$, an $\L$-cone $f_{*, ∞}$ is an $\L$-limit of $f_*$ if and only if it is an $σ\K$-limit of $f_*$ (see Corollary~\ref{thm:absolute}).
	
	Moreover, $σ\K$ is a \emph{replete} subcategory of $\L$, meaning that it is closed under isomorphic copies and isomorphisms, i.e. for every $σ\K$-object $X$ and $\L$-isomorphism $f\maps X \to Y$ to an $\L$-object $Y$, both $Y$ and $f$ are in $σ\K$.
	This follows directly from the fact that limits are defined up to isomorphism.
\end{remark}

\begin{remark} \label{rmk:sigma_closed}
	To some extent it does not matter in which ambient MU-category $\L$ we take the $σ$-closure of $\K$.
	Namely, let $\L$ be an MU-category such that every sequence has a limit, let $\K ⊆ \K' ⊆ \L$ be subcategories, and let $\L' = σ\K' ⊆ \L$.
	By the previous remark, limits of $\K'$-sequences are the same in $\L'$ and $\L$ (in particular if $\K' = \L'$ is $σ$-closed).
	It follows that $σ_{\L'} \K = σ_\L \K ⊆ \L'$.
	Moreover, a subcategory $\L'' ⊆ \L'$ is $σ$-closed in $\L'$ if and only if it is $σ$-closed in $\L$.
	Also note that $σ$-closed subcategories are stable under intersections, and that a full subcategory $\L' ⊆ \L$ is $σ$-closed if and only if it is closed under limits of sequences.
\end{remark}

\begin{example}
	$\MCpt$ is $σ$-closed in $\Top$ as a full subcategory closed under limits of sequences.
	Strictly speaking, $\Top$ is not an MU-category, but $\MCpt$ would be locally closed with respect to any MU-structure since it is full.
	The category of all metrizable continua and all continuous maps $\MCont$ is $σ$-closed in $\MCpt$, again as a full subcategory closed under limits of sequences.
	$\MCptS$ and $\MContS := \MCont ∩ \MCptS$ are also $σ$-closed in $\MCpt$.
	If $\tuple{X_∞, f_{*, ∞}}$ is a $\MCpt$-limit of a $\MCptS$-sequence, then $X_∞$ is non-empty, the maps $f_{n, ∞}$ are surjective, and if for a continuous map $h\maps Y \to X_∞$ from a metrizable compactum $Y$ the maps $f_{n, ∞} ∘ h$ are surjective, then $h$ is surjective as well.
	This is all well-known.
	Let us comment on the last point.
	The sets $(f_{n, ∞})\preim{U}$ for $n ∈ ω$ and $U ⊆ X_n$ open form an open base of $X_∞$, and so the map $h$ has dense image.
	We have already mentioned in Example~\ref{ex:cpt_mucat} that $\MCptS$ is locally complete.
\end{example}

It might be tempting define the $σ$-closure as follows.
Given two $\K$-sequences $\tuple{X_*, f_*}$, $\tuple{Y_*, g_*}$ with $\L$-limits $\tuple{X_∞, f_{*, ∞}}$, $\tuple{Y_∞, g_{*, ∞}}$, we put an $\L$-map $h\maps X_∞ \to Y_∞$ to $σ\K$ if and only if it can be approximated by a sequence of $\K$-maps $φ_* = \tuple{φ_n\maps X_{m_n} \to Y_n}_{n ∈ ω}$ as in Proposition~\ref{thm:cone_transfer}, which is more or less equivalent to the condition that for every $n ∈ ω$ and $ε > 0$ there is $m ∈ ω$ and a $\K$-map $φ_n\maps Y_n \from X_m$ such that $φ_n ∘ f_{m, ∞} ≈_ε g_{n, ∞} ∘ h$.

However, the defining condition may depend on the choice of the sequences $\tuple{X_*, f_*}$ and $\tuple{Y_*, g_*}$.
In order to be sufficiently closed under limits, we would need to include $h$ in $σ\K$ if the condition is fulfilled for \emph{some} pair of sequences $\tuple{X_*, f_*}$ and $\tuple{Y_*, g_*}$ (and even that may not be enough – see Example~\ref{ex:non_sigma_consistent}), while to assure (F1) we would need to include $h$ only if it satisfies the condition for \emph{all} pairs of sequences.
An extra assumption that makes the condition independent on the choice of sequences is the following one.

\begin{definition} \label{def:sigma_consistent}
	We say that an MU-category $\K ⊆ \L$ is \emph{$σ$-consistent} if for every pair of $\K$-sequences $\tuple{X_*, f_*}$, $\tuple{Y_*, g_*}$ with $\L$-limits $\tuple{X_∞, f_{*, ∞}}$, $\tuple{Y_∞, g_{*, ∞}}$ such that $X_∞ = Y_∞$ and every $n ∈ ω$ and $ε > 0$ there is $m ∈ ω$ and a $\K$-map $h\maps Y_n \from X_m$ such that $h ∘ f_{m, ∞} ≈_ε g_{n, ∞}$.
\end{definition}

\begin{observation} \label{thm:sigma_consistent_necessary}
	Note that $σ$-consistency could be also dubbed “(F1) for limits” as it exactly says that $\tuple{\K, \L}$ satisfies (F1) but only for $\L$-maps $h\maps Y \from X_∞$ that are of the form $g_{n, ∞}$ for a $\K$-sequence $\tuple{Y_*, g_*}$.
	This together with the fact that every $\L$-limit cone of a $\K$-sequence $g_*$ is also a $σ\K$-limit cone of $g_*$ means that $σ$-consistency is necessary for $\tuple{\K, σ\K}$ to satisfy (F1).
\end{observation}

\begin{remark}
	Using $σ$-consistency inductively for a given pair of sequences, we can build a back and forth $\K$-sequence $h_*$ with $h_{2k}\maps Y_{n_k} \from X_{m_k}$ and $h_{2k + 1}\maps X_{m_k} \from Y_{n_{k + 1}}$ such that $h_{2k} ∘ f_{m_k, ∞} ≈_{ε_k} g_{n_k, ∞}$ and $h_{2k + 1} ∘ g_{n_{k + 1}, ∞} ≈_{δ_k} f_{n_k, ∞}$ for every $k ∈ ω$ for arbitrarily small sequences $\tuple{ε_k}_{k ∈ ω}, \tuple{δ_k}_{k ∈ ω}$.
	In the discrete case in the context of projective Fraïssé theory, such property called \emph{consistency} was considered in \cite[after Lemma~2.5]{ChKR25}.
	
	More generally, $σ$-consistency means that every $\L$-isomorphism between limits of $\K$-sequences is witnessed from $\K$ by a back and forth sequence, giving a converse to Corollary~\ref{thm:back_and_forth} – see Remark~\ref{rem:sigma_consistency_back_forth}.
	Another characterization of $σ$-consistency is given in Proposition~\ref{thm:AE_sigma_consistent}.
\end{remark}

Recall that an MU-category $\K ⊆ \L$ is \emph{locally dense} (see Observation~\ref{def:locally_dense}) if $\K(X, Y) ⊆ \L(X, Y)$ is dense for all $\K$-objects $X, Y$.
In particular, every full subcategory is locally dense.
It is easy to see that if $\tuple{\K, \L}$ is a free completion, then $\K ⊆ \L$ is locally dense.
The proof of the following observation is clear and left to the reader.

\begin{observation} \label{thm:sigma_conditions_hereditary}
	The properties (L1), (F2), (C) of $\tuple{\K, \L}$ are hereditary in $\K$, and (F1) and $σ$-consistency are hereditary in $\K$ with respect to locally dense subcategories.
	More precisely, let $\F ⊆ \K ⊆ \L$ be MU-categories.
	\begin{enumerate}
		\item If $\tuple{\K, \L}$ satisfies (L1), then $\tuple{\F, \L}$ satisfies (L1).
		\item If $\tuple{\K, \L}$ satisfies (F1) and $\F ⊆ \K$ is locally dense, then $\tuple{\F, \L}$ satisfies (F1).
		\item If $\K ⊆ \L$ is $σ$-consistent and $\F ⊆ \K$ is locally dense, then $\F ⊆ \L$ is $σ$-consistent.
		\item If $\tuple{\K, \L}$ satisfies (F2), then $\tuple{\F, \L}$ satisfies (F2).
		\item If $\tuple{\K, \L}$ satisfies (C), then $\tuple{\F, \L}$ satisfies (C).
	\end{enumerate}
\end{observation}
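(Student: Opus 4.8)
The plan is to read each of (L1), (F1), (F2), (C) as a statement of the shape ``for every piece of data built from $\K$-sequences, $\K$-objects and $\K$-maps in prescribed slots, there exists a witness'', and then to note that replacing $\K$ by a subcategory $\K' \subseteq \K$ only narrows the universal quantifiers: every $\K'$-sequence is a $\K$-sequence, every $\K'$-object lies in $\Ob(\K)$, every $\K'$-map is a $\K$-map, and the $\infty$-metrics on the $\K'$-hom-sets are the restrictions of the $\K$- and $\L$-metrics. So, given an arbitrary instance of the condition for $\tuple{\K', \L}$, I would reinterpret it as an instance for $\tuple{\K, \L}$, invoke the hypothesis to produce a witness, and then check that the witness is legitimate for $\K'$.

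For (L1) there is nothing more to check: local completeness of $\L$ does not mention $\K$, and a $\K'$-sequence, being a $\K$-sequence, has an $\L$-limit by hypothesis. For (F2) the witness is a number $n' \geq n$, and the conclusion it yields, $g \circ f_{n,n'} \approx_\varepsilon g' \circ f_{n,n'}$, mentions only $\L$-maps and the composites $f_{n,n'}$; the latter are $\K'$-maps because a subcategory is closed under composition, so the conclusion is literally a statement in $\K'$. Likewise for (C) the witnesses $n, \delta$ serve a conclusion that quantifies over $\L$-maps $h, h' \maps X_\infty \from Y$ only, with no reference to $\K$, so it transfers verbatim. In both cases no fullness hypothesis is needed, which is why heredity there is unconditional.

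The single point where the witness is not automatically admissible is (F1): the hypothesis hands back a $\K$-map $g \maps Y \from X_n$ with $h \approx_\varepsilon g \circ f_{n,\infty}$, and I must see that $g$ is in fact a $\K'$-map. But $X_n$ is one of the objects of the $\K'$-sequence $f_*$, hence $X_n \in \Ob(\K')$, and $Y \in \Ob(\K')$ by the very quantifier of (F1) for $\tuple{\K',\L}$. If $\K' \subseteq \K$ is full, then $\K'(X_n, Y) = \K(X_n, Y)$, so $g \in \K'(X_n, Y)$, as required. Without fullness this step can genuinely fail — a $\K$-map between two $\K'$-objects need not lie in $\K'$ — so (F1) is hereditary only for full subcategories.

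I do not anticipate a real obstacle: the argument is quantifier bookkeeping, and the only subtlety is isolating where fullness enters, namely the identification of the factorizing map in (F1) as a $\K'$-map. Accordingly I would keep the written proof to these few lines, consistent with the paper recording it as clear.
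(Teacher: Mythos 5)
Your proposal is correct and is exactly the quantifier-restriction argument the paper has in mind when it leaves the proof to the reader: for (L1), (F2), (C) every $\K'$-instance is a $\K$-instance and the witnesses ($\L$-limits, $δ$, $n'$, $n$ and $δ$) transfer verbatim, while in (F1) the witness is a $\K$-map $g\maps Y \from X_n$ between $\K'$-objects, so fullness is precisely what puts it into $\K'$. Nothing is missing.
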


Now we are ready to summarize when the conditions of being a free completion are fulfilled by the $σ$-closure.

\begin{proposition} \label{thm:sigma_conditions}
	Let $\K ⊆ \L$ be MU-categories such that $\tuple{\K, \L}$ satisfies (L1).
	\begin{enumerate}
		\item $\tuple{\K, σ\K}$ satisfies (L1) and (L2).
		\item $\tuple{\K, σ\K}$ satisfies (F1) if and only if $\K ⊆ \L$ is $σ$-consistent.
		\item $\tuple{\K, σ\K}$ satisfies (F1) with $σ\K ⊆ \L$ being full if and only if $\tuple{\K, \L}$ satisfies (F1).
		\item $\tuple{\K, σ\K}$ satisfies (F2) if and only if $\tuple{\K, \L}$ satisfies (F2).
		\item $\tuple{\K, σ\K}$ satisfies (C) if $\tuple{\K, \L}$ satisfies (C).
		\item $\tuple{\K, σ\K}$ is a free completion if $\tuple{\K, \L}$ satisfies (L1), (F2), (C), and if $\K ⊆ \L$ is $σ$-consistent.
		\item $\tuple{\K, σ\K}$ is a free completion with $σ\K ⊆ \L$ being full if $\tuple{\K, \L}$ satisfies (L1), (F1), (F2), and (C).
		\item $\tuple{\K, σ\K}$ is a free completion with $σ\K = \L$ if and only if $\tuple{\K, \L}$ is a free completion.
	\end{enumerate}
	
	\begin{proof}
		By Remark~\ref{rmk:sigma_closure}, the limits of $\K$-sequences are the same from the point of view of $\L$ and $σ\K$.
		We use this in the proof of all claims.
		Claim~(i) also uses the fact that $σ\K$ is locally closed in $\L$ and that $σ\K$-objects are exactly $\L$-limit objects of $\K$-sequences.
		Claims~(ii) and (iii) follow from Corollary~\ref{thm:F1}.
		Claim~(iv) is clear.
		The difference in (C) between $\L$ and $σ\K$ is only in the family of the maps $h, h'\maps X_∞ \from Y$ to be MU-continuously separated, so (C) for $\L$ implies (C) for $σ\K$, which is claim~(v).
		Claims (vi) and (vii) follow from the previous claims.
		One implication in (viii) is trivial.
		For the other, if $\tuple{\K, \L}$ is a free completion, then $σ\K ⊆ \L$ is full by (iii) and also wide by (L2).
	\end{proof}
\end{proposition}

\begin{corollary} \label{thm:free_completion_hereditary}
	Let $\tuple{\K, \L}$ be a free completion of MU-categories.
	For an MU-category $\F ⊆ \K$ we consider $σ\F ⊆ \L$.
	
	\begin{enumerate}
		\item $\tuple{\F, σ\F}$ is a free completion if and only if $\F ⊆ \L$ is $σ$-consistent.
		\item $\tuple{\F, σ\F}$ is a free completion with $σ\F ⊆ \L$ being full if and only if $\F ⊆ \K$ is locally dense, in particular if $\F ⊆ \K$ is full.
	\end{enumerate}
	
	\begin{proof}
		By Observation~\ref{thm:sigma_conditions_hereditary}, $\tuple{\F, \L}$ satisfies (L1), (F2), and (C).
		Hence, (i) follows from Proposition~\ref{thm:sigma_conditions}~(vi) and (ii).
		Given the above, by Proposition~\ref{thm:sigma_conditions}~(iii), $\tuple{\F, σ\F}$ is a free completion with $σ\F ⊆ \L$ being full if and only if $\tuple{\F, \L}$ satisfies (F1), which is equivalent to $\F ⊆ \K$ being locally dense – one implication follows from Observation~\ref{thm:sigma_conditions_hereditary}~(ii), the other from an easy observation that $\F$ has to be locally dense even in $\L$ by (F1).
	\end{proof}
\end{corollary}

Next, we describe some concrete situations such that $\tuple{\K, \L}$ satisfies (F2) or (C).

\begin{definition}
	By a \emph{metric epimorphism} in an MU-category $\K$ we mean a $\K$-map $f\maps X \to Y$ such that for every $Z ∈ \Ob(\K)$ and every $g, h ∈ \K(Y, Z)$ we have $d(g ∘ f, h ∘ f) = d(g, h)$.
	Clearly, a metric epimorphism in $\K$ is also a metric epimorphism in any MU-subcategory containing it,
	and every metric epimorphism is an epimorphism.
	On the other hand, if $\K$ is discrete with the $0$-$1$ metric, then metric epimorphisms are precisely epimorphisms.
	The dual notion of \emph{metric monomorphism} was considered in \cite[page~7]{Kubis13}.
\end{definition}

\begin{example}
	In $\MetU$ (recall Table~\ref{tab:categories}), metric epimorphisms are exactly epimorphisms, i.e. uniformly continuous maps with dense image.
	Similarly, in $\MCpt$, metric epimorphisms are exactly epimorphisms, i.e. continuous surjections.
	Hence, the MU-category $\MCptS$ consists of metric epimorphisms.
\end{example}

\begin{observation}
	Let $\K ⊆ \L$ be MU-categories.
	$\tuple{\K, \L}$ satisfies (F2) (even with $δ = ε$) in the following two cases.
	\begin{enumerate}
		\item $\L$ consists of metric epimorphisms.
		\item $\L = \MCpt$.
	\end{enumerate}
	
	\begin{proof}
		Let $\tuple{X_*, f_*}$ be a sequence in $\K$ with a limit $\tuple{X_∞, f_{*, ∞}}$ in $\L$, let $g, g'\maps Y \from X_n$ be $\K$-maps, and let $ε > 0$ such that $g ∘ f_{n, ∞} ≈_ε g' ∘ f_{n, ∞}$.
		In case (i) we can just cancel out $f_{n, ∞}$ and obtain $g ≈_ε g'$.
		In case (ii) let us put $A := \set{x_* ∈ ∏_{k ∈ ω} X_k: d(g(x_n), g'(x_n)) ≥ ε}$.
		Note that $A$ is a closed subset of the product.
		We also put $F_m := \set{x_* ∈ ∏_{k ∈ ω} X_k: x_i = f_{i, j}(x_j)$ for every $i ≤ j ≤ m}$ for $m ∈ ω$, and $F_∞ := ⋂_{m ≥ n} F_m$.
		Without loss of generality, $F_∞$ with the restricted projections is the chosen limit of $\tuple{X_*, f_*}$.
		For every $n ≤ n' ≤ ∞$ we have $g ∘ f_{n, n'} ≈_ε g' ∘ f_{n, n'}$ if and only if $A ∩ F_{n'} = ∅$.
		Hence, the claim follows from compactness: if $A ∩ F_∞ = ∅$, then $A ∩ F_{n'} = ∅$ for some $n' ≥ n$.
	\end{proof}
\end{observation}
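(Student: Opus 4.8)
The plan is to verify (F2) directly with $\delta := \varepsilon$, treating the two cases by quite different means. Fix a $\K$-object $Y$ and $\varepsilon > 0$. Given a $\K$-sequence $\tuple{X_*, f_*}$ with $\L$-limit $\tuple{X_\infty, f_{*, \infty}}$, an index $n \in \omega$, and $\K$-maps $g, g' \maps Y \from X_n$ with $g \circ f_{n, \infty} \approx_\varepsilon g' \circ f_{n, \infty}$, I must find $n' \geq n$ with $g \circ f_{n, n'} \approx_\varepsilon g' \circ f_{n, n'}$. In case (i) this is immediate: $f_{n, \infty}$ is an $\L$-map, hence a metric epimorphism, so $d(g, g') = d(g \circ f_{n, \infty}, g' \circ f_{n, \infty}) < \varepsilon$, and then axiom (ii) of an MU-category gives $d(g \circ f_{n, n'}, g' \circ f_{n, n'}) \leq d(g, g') < \varepsilon$ for every $n' \geq n$ --- in fact already $n' = n$ works.

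Case (ii), where $\L = \MCpt$, is the substantive one, since $f_{n, \infty}$ need not be surjective and so cannot simply be cancelled. Here I would fall back on the concrete model of the $\Top$-limit sitting inside the product $\prod_{k \in \omega} X_k$, and run a compactness argument. Concretely, I would introduce the closed set $A := \set{x_* \in \prod_{k \in \omega} X_k : d(g(x_n), g'(x_n)) \geq \varepsilon}$ and, for $m \in \omega$, the closed sets $F_m := \set{x_* : x_i = f_{i, j}(x_j) \text{ for all } i \leq j \leq m}$, so that $F_\infty := \bigcap_{m \geq n} F_m$ is (up to the canonical identification) the limit space. The crux is the bookkeeping fact that for every $n \leq m \leq \infty$ one has $g \circ f_{n, m} \approx_\varepsilon g' \circ f_{n, m}$ if and only if $A \cap F_m = \emptyset$; this holds because $\set{x_n : x_* \in F_m} = f_{n, m}[X_m]$ (any point of $X_m$ extends to a member of $F_m$ by pushing it down along $f_*$ and choosing the coordinates above $m$ arbitrarily), together with the fact that, by compactness of $F_m$, the supremum $\sup_{x_* \in F_m} d(g(x_n), g'(x_n))$ is attained, so the strict sup-inequality is equivalent to no point of $F_m$ realising distance $\geq \varepsilon$. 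Granting this, the hypothesis says $A \cap F_\infty = \emptyset$, i.e. $\bigcap_{m \geq n} (A \cap F_m) = \emptyset$; since the $A \cap F_m$ are closed subsets of the compact product and decrease with $m$, the finite intersection property gives $A \cap F_{n'} = \emptyset$ for some $n' \geq n$, which is exactly the conclusion.

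The only place demanding genuine care is that equivalence $g \circ f_{n, m} \approx_\varepsilon g' \circ f_{n, m} \iff A \cap F_m = \emptyset$: one must get the strict-versus-non-strict inequality right (this is where attainment of the supremum is used) and dispose of the degenerate possibility that some $X_m$, and hence $F_m$, is empty --- in which case $g \circ f_{n, m}$ and $g' \circ f_{n, m}$ are the unique empty map and agree trivially, so nothing is lost. Everything else --- the reduction to $\delta = \varepsilon$, the epimorphism cancellation of case (i), and the final finite-intersection step --- is routine.
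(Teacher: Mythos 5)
Your proposal is correct and follows essentially the same route as the paper: cancellation via the metric-epimorphism property in case (i), and in case (ii) the concrete product model with the closed sets $A$ and $F_m$, the equivalence $g ∘ f_{n,m} ≈_ε g' ∘ f_{n,m} \iff A ∩ F_m = ∅$, and a compactness/finite-intersection argument. The extra care you take with attainment of the supremum and the empty-space degeneracy is a fair elaboration of details the paper leaves implicit.
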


\begin{definition}
	Let $\L$ be an MU-category.
	We say that an $\L$-map $f\maps X \to Y$ is \emph{$ε$-monic} for $ε > 0$ if for every pair of $\L$-maps $h, h'$ such that $f ∘ h = f ∘ h'$ we have $h ≈_ε h'$.
	Similarly, the map $f$ is \emph{$\tuple{ε, δ}$-monic} for $ε, δ > 0$ if for every pair of $\L$-maps $h, h'$ such that $f ∘ h ≈_δ f ∘ h'$ we have $h ≈_ε h'$.
	Note that condition~(C) for a pair $\tuple{\K, \L}$ says that for every $\K$-sequence $f_*$ with an $\L$-limit $\tuple{X_∞, f_{*, ∞}}$ and $ε > 0$ there is $n ∈ ω$ and $δ > 0$ such that $f_{n, ∞}$ is $\tuple{ε, δ}$-monic.
	Clearly, an $ε$-monic or $\tuple{ε, δ}$-monic map in $\L$ is such also in every MU-subcategory $\L' ⊆ \L$ containing it.
\end{definition}

\begin{remark}
	Recall that a continuous surjection $f\maps X \to Y$ between metric compacta is an \emph{$ε$-map} (see \cite[2.11]{Nadler92} or \cite{MS63}) if all fibers $f\fiber{y}$, $y ∈ Y$, have diameter $< ε$ (sometimes $≤ ε$ is used).
	These are exactly $ε$-monic $\MCptS$-maps since by the compactness both conditions are equivalent to $f(x) = f(x') \implies d(x, x') < ε$.
	Also, $f$ is $\tuple{ε, δ}$-monic if and only if $d(f(x), f(x')) < δ \implies d(x, x') < ε$.
\end{remark}

\begin{observation} \label{thm:epsilon_monic}
	For every sequence $\tuple{X_*, f_*}$ and its limit $\tuple{X_∞, f_{*, ∞}}$ in $\MCpt$ and every $ε > 0$ there is $n ∈ ω$ such that the map $f_{n, ∞}$ is $ε$-monic.
	For every $ε$-monic $\MCpt$-map $f\maps X \to Y$ there is $δ > 0$ such that $f$ is $\tuple{ε, δ}$-monic.
	It follows that $\tuple{\K, \MCpt}$ satisfies (C) for every $\K ⊆ \MCpt$.
	
	\begin{proof}
		For the first part we take for every $x ∈ X_∞$ an open set $x ∈ U_x ⊆ X_∞$ of diameter $< ε$, and $n_x ∈ ω$ and an open set $V_x ⊆ X_{n_x}$ such that $x ∈ f_{n_x, ∞}\preim{V_x} ⊆ U_x$.
		By the compactness there is a finite set $F ⊆ X_∞$ such that $⋃_{x ∈ F} f_{n_x, ∞}\preim{V_x} = X_∞$, and so $\V := \set{f_{n_x, n}\preim{V_x}: x ∈ F}$ where $n := \max\set{n_x: x ∈ F}$ is a cover of $f_{n, ∞}\im{X_∞} ⊆ X_n$ such that for every $V ∈ \V$, $f_{n, ∞}\preim{V}$ has diameter $< ε$. This proves that $f_{n, ∞}$ is $ε$-monic.
		For the second part it is enough to put $δ := \min\set{d(f(x), f(x')): d(x, x') ≥ ε} > 0$.
	\end{proof}
\end{observation}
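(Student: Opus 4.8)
The plan is to establish the three assertions in order; the third is then immediate, since (as recorded just above) condition~(C) for $\tuple{\K,\MCpt}$ says precisely that for every $\K$-sequence with an $\MCpt$-limit $\tuple{X_\infty,f_{*,\infty}}$ and every $\varepsilon>0$ there are $n\in\omega$ and $\delta>0$ with $f_{n,\infty}$ being $\tuple{\varepsilon,\delta}$-monic. Throughout I would move freely between the categorical descriptions of $\varepsilon$-monic and $\tuple{\varepsilon,\delta}$-monic maps (quantifying over test maps $h,h'$) and their concrete reformulations for $\MCpt$-maps $f$, namely $f(x)=f(x')\Rightarrow d(x,x')<\varepsilon$ and $d(f(x),f(x'))<\delta\Rightarrow d(x,x')<\varepsilon$; the equivalence uses compactness of the domains of $h,h'$ — constant maps give one implication, and a supremum over a compact domain (which is attained) gives the other.

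For the first assertion I would use that $X_\infty$ carries the subspace topology inherited from $\prod_{k\in\omega}X_k$, so the sets $f_{n,\infty}\preim{V}$ with $n\in\omega$ and $V\subseteq X_n$ open form a base of $X_\infty$. Given $\varepsilon>0$, for each $x\in X_\infty$ choose an open $U_x\ni x$ of diameter $<\varepsilon$ and a basic neighbourhood with $x\in f_{n_x,\infty}\preim{V_x}\subseteq U_x$; by compactness finitely many of these, indexed by $x\in F$, cover $X_\infty$. With $n:=\max\set{n_x:x\in F}$, pulling the $V_x$ back along $f_{n_x,n}$ shows that the sets $f_{n_x,n}\preim{V_x}$ cover $f_{n,\infty}\im{X_\infty}$, and since $f_{n,\infty}\preim{f_{n_x,n}\preim{V_x}}=f_{n_x,\infty}\preim{V_x}\subseteq U_x$, every nonempty fibre of $f_{n,\infty}$ lies inside some $U_x$ and thus has diameter $<\varepsilon$; hence $f_{n,\infty}$ is $\varepsilon$-monic.

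For the second assertion, fix an $\varepsilon$-monic $\MCpt$-map $f\maps X\to Y$ and consider the compact set $K:=\set{\tuple{x,x'}\in X\times X : d(x,x')\ge\varepsilon}$. The continuous function $\tuple{x,x'}\mapsto d(f(x),f(x'))$ is strictly positive on $K$: a zero $\tuple{x,x'}\in K$ would give $f(x)=f(x')$ yet $d(x,x')\ge\varepsilon$, contradicting $\varepsilon$-monicity. So it attains a positive minimum $\delta>0$ on $K$ (and if $K=\emptyset$, i.e.\ $\operatorname{diam}X<\varepsilon$, any $\delta$ works), and this $\delta$ witnesses $\tuple{\varepsilon,\delta}$-monicity, since $d(f(x),f(x'))<\delta$ forces $\tuple{x,x'}\notin K$.

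Finally, (C) for $\tuple{\K,\MCpt}$ follows: a $\K$-sequence is in particular an $\MCpt$-sequence, hence has an $\MCpt$-limit; given $\varepsilon>0$, the first part provides $n$ with $f_{n,\infty}$ $\varepsilon$-monic, and the second part provides $\delta>0$ with $f_{n,\infty}$ $\tuple{\varepsilon,\delta}$-monic, exactly as (C) demands. I do not expect a real obstacle; the only delicate point is the passage between the categorical and metric formulations of $\varepsilon$-monicity, where compactness must be used to upgrade pointwise strict inequalities to uniform ones.
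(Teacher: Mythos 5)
Your proposal is correct and follows essentially the same route as the paper: the first part uses the base of the limit topology plus compactness and pulls the chosen basic sets back along $f_{n_x,n}$, and the second part takes $δ$ as the attained minimum of $d(f(x),f(x'))$ over pairs with $d(x,x')≥ε$, exactly as in the paper's argument. Your extra care about the equivalence of the categorical and fiber-diameter formulations of $ε$-monicity (and about the degenerate case $K=∅$) only makes explicit what the paper handles in the remark preceding the observation.
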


By combining the previous observations with Proposition~\ref{thm:sigma_conditions}~(ii) we obtain the following corollary.

\begin{corollary} \label{thm:compact_conditions}
	$\tuple{\K, σ\K}$ is a free completion for every $σ$-consistent $\K ⊆ \MCpt$.
\end{corollary}

Next, we prove that full categories of connected polyhedra are $σ$-consistent.
Recall that an \emph{(abstract) simplicial complex} is a pair $\tuple{V, \S}$ where $V$ is a set and $\S$ is a hereditary family of non-empty finite subsets of $V$ covering it.
A \emph{simplicial map} $\tuple{V, \S} \to \tuple{V', \S'}$ is a map $f\maps V \to V'$ such that $f\im{S} ∈ \S'$ for every $S ∈ \S$.
The \emph{geometric realization} $\geom{\tuple{V, \S}}$ of a simplicial complex is the metric space $\set{x ∈ \II^V: ∑x = 1$ and $\supp(x) ∈ \S}$ where $\supp(x) := \set{v ∈ V: x(v) > 0}$ and the metric is inherited from the supremum metric on $\II^V$.
The topology is equivalently inherited from the product topology on $\II^V$.
The topology is compact if and only if $V$ is finite.
Each point $x ∈ \geom{\tuple{V, \S}}$ can also be uniquely written as $∑_{v ∈ V} x(v)\, \geom{v}_V$ where $\geom{v}_V ∈ \II^V$ is the characteristic function of $\set{v}$.
The \emph{geometric realization} of a simplicial map $f\maps \tuple{V, \S} \to \tuple{V', \S'}$ is the continuous map $\geom{f}\maps \geom{\tuple{V, \S}} \to \geom{\tuple{V', \S'}}$ defined by $∑_{v ∈ V} x(v)\, \geom{v}_V \mapsto ∑_{v ∈ V} x(v)\, \geom{f(v)}_{V'}$, i.e. $\bigl(\geom{f}(x)\bigr)(v') = ∑_{v ∈ f\fiber{v'}} x(v)$.
The geometric realization is a functor from simplicial complexes and simplicial maps to metric spaces and continuous maps.
On the full subcategory $\set{\II_n: n ∈ ω}$ the functor is isomorphic to the geometric realization functor defined in Section~\ref{sec:continua}, so the definitions are consistent.

Recall that a \emph{polyhedron} is a (necessarily metrizable compact) space homeomorphic to the geometric realization of a finite simplicial complex.
Let $\CPolS$ denote the category of all non-empty connected polyhedra and all continuous surjections.
Let $\P$ be a family of non-empty connected polyhedra (identified also with the corresponding full subcategory of $\CPolS$).
Recall that a metrizable continuum $X$ is called \emph{$\P$-like} if for every $ε > 0$ there is an $ε$-map from $X$ onto a $\P$-object.
This is equivalent (see \cite{MS63}) to being a limit of a $\P$-sequence, so $\P$-like continua are exactly $σ\P$-objects.
One implication follows from Observation~\ref{thm:epsilon_monic}, the other one from the following proposition.
Since we have all the needed machinery at hand, we prove the second implication in Corollary~\ref{thm:P-like}.
Also note that the notion of arc-like continuum and the category $σ\I$ fit into this scheme.

\begin{proposition}[{\cite[Lemma~4]{MS63}}] \label{thm:polyhedral_factorization}
	For every continuous surjection $f\maps X \to P$ from a metrizable continuum onto a connected polyhedron and for every $ε > 0$ there is $δ > 0$ such that for every connected polyhedron $P'$ and every $δ$-monic continuous surjection $f'\maps X \to P'$ there is a continuous surjection $g\maps P' \to P$ such that $f ≈_ε g ∘ f'$.
\end{proposition}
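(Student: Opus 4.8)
The plan is to pin $\delta$ down from $f$ and $\varepsilon$ alone by means of a sufficiently fine triangulation of $P$, then to manufacture $g$ by simplicial approximation, and finally to repair surjectivity. First I would fix a triangulation $K$ of $P$ all of whose open stars $\operatorname{st}_K(v)$ (over vertices $v$ of $K$) have diameter $<\varepsilon/3$. Then $\mathcal V:=\set{\operatorname{st}_K(v):v\text{ a vertex of }K}$ is an open cover of $P$, so $\mathcal U:=\set{f\preim V:V\in\mathcal V}$ is an open cover of $X$; let $\delta>0$ be a Lebesgue number for $\mathcal U$. Since $\delta$ is computed only from $f$, $K$, and $\varepsilon$, it does not depend on $P'$ or $f'$. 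Suppose now $f'\maps X\to P'$ is a $\delta$-monic continuous surjection. Each fiber $f'\fiber y$ has diameter $<\delta$, hence lies in a single member $f\preim{V_y}$ of $\mathcal U$; in particular $f\im{f'\fiber y}\subseteq V_y$ has diameter $<\varepsilon/3$. As $f'$ is a closed map, the sets $W_y:=P'\setminus f'\im{X\setminus f\preim{V_y}}$ are open, each contains $y$, and each satisfies $f'\preim{W_y}\subseteq f\preim{V_y}$; so $\set{W_y:y\in P'}$ is an open cover of $P'$.

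Next I would pick a triangulation $L$ of $P'$ fine enough that every closed star $\overline{\operatorname{st}}_L(w)$ lies inside some $W_{y(w)}$. For a vertex $w$ of $L$ this gives $f\im{f'\preim{\overline{\operatorname{st}}_L(w)}}\subseteq V_{y(w)}=\operatorname{st}_K(v_{y(w)})$, and I put $\phi(w):=v_{y(w)}$. If $w_0,\dots,w_k$ span a simplex $\sigma$ of $L$, then choosing an interior point $z$ of $\sigma$ and some $x\in f'\fiber z$ yields $f(x)\in\bigcap_i\operatorname{st}_K(\phi(w_i))$, so the vertices $\phi(w_0),\dots,\phi(w_k)$ span a simplex of $K$; hence $\phi$ is a simplicial map $L\to K$ and $g_0:=\geom\phi\maps P'\to P$ is continuous. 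For any $x\in X$ the point $f'(x)$ sits in a closed simplex of $L$ with some vertex $w$, and then both $g_0(f'(x))$ and $f(x)$ lie in $\overline{\operatorname{st}}_K(\phi(w))$, a set of diameter $<\varepsilon/3$; thus $g_0\circ f'\approx_\varepsilon f$.

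The genuinely delicate point — and the one I expect to be the main obstacle — is that $g_0$ need not be surjective: its image is a connected $\varepsilon$-dense subpolyhedron of $P$ that contains every vertex of $K$, yet this need not be all of $P$ (an arc that is $\varepsilon$-dense in a circle is the model example), and when $\dim P'<\dim P$ no piecewise-linear map $P'\to P$ can be onto at all, so genuinely non-simplicial adjustments are forced. I would resolve this by recasting the target: the assignment $y\mapsto f\im{f'\fiber y}$ is an upper semicontinuous, set-valued map from $P'$ to $P$ with values of diameter $<\varepsilon/3$ whose values cover $P$ (since $f$ is onto), and any continuous surjection $g\maps P'\to P$ with $g(y)\in N_{\varepsilon/3}\bigl(f\im{f'\fiber y}\bigr)$ for every $y$ automatically obeys $g\circ f'\approx_\varepsilon f$. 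Such a $g$ is obtained from $g_0$ by subdividing $P'$ and rerouting $g$ cell by cell through the portion of $P$ that this set-valued map prescribes, using the local connectedness of $P$ and the connectedness of $P'$ to glue the local modifications and to sweep over the still-missing parts of $P$, all the while keeping each value within $\varepsilon/3$ of the set it is responsible for. Checking that this rerouting stays continuous and literally exhausts $P$ is the heart of the matter; it is precisely the argument of \cite[Lemma~4]{MS63}, which may instead simply be invoked at this point.
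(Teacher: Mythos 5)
Your proposal should be measured against the fact that the paper does not actually prove this proposition: it is imported verbatim from Mardešić--Segal, and the paper's entire ``proof'' is the citation \cite[Lemma~4]{MS63}. Relative to that benchmark your proposal is fine, and it even supplies more detail than the paper does. The first half — fixing a triangulation $K$ of $P$ with open stars of diameter $<\varepsilon/3$, taking $\delta$ to be a Lebesgue number of $\{f^{-1}(\operatorname{st}_K(v))\}$, pushing the fibers of a $\delta$-monic $f'$ into single stars via the saturated open sets $W_y$, and reading off a simplicial map $\phi\colon L\to K$ whose realization $g_0$ satisfies $g_0\circ f'\approx_\varepsilon f$ — is correct and is indeed the standard nerve-type argument underlying the cited lemma (one small inessential slip: the image of $g_0$ need not contain every vertex of $K$, only be $\varepsilon$-dense). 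The second half, forcing surjectivity, is the genuine content of \cite[Lemma~4]{MS63}, and you correctly identify it as such; but your ``rerouting cell by cell / sweeping over the missing parts'' paragraph is a description of a goal rather than an argument (as you yourself note, it cannot be purely simplicial when $\dim P'<\dim P$), so as a self-contained proof the proposal has a gap exactly there. Since you close by invoking \cite[Lemma~4]{MS63} for that step, the end result coincides with what the paper does, and the reduction you give (any surjection $g$ with $g(y)\in N_{\varepsilon/3}(f[f'^{-1}(y)])$ works) is a correct and useful way to package what remains to be quoted.
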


\begin{corollary}[{\cite[Theorem~1]{MS63}}] \label{thm:P-like}
	Let $\P ⊆ \CPolS$ be a full subcategory and let $X$ be a metrizable compactum.
	If for every $ε > 0$ there is an $ε$-map from $X$ onto a $\P$-object, then $X$ is a $σ\P$-object.
	
	\begin{proof}
		We inductively build a sequence $\tuple{δ_n}_{n ∈ ω}$ of strictly positive numbers converging to zero, a sequence of $δ_n$-maps $φ_n\maps X \to P_n$, a $\P$-sequence $\tuple{P_*, g_*}$, and an epsilon sequence $\tuple{ε_n}_{n ∈ ω}$ for $g_*$ such that every $φ_n$ is $\tuple{δ_n, 4ε_n}$-monic and $φ_n ≈_{ε_n} g_n ∘ φ_{n + 1}$ for $n ∈ ω$.
		This is possible:
			$φ_n$ depending only on $δ_n$ exists by the assumption; 
			$ε_n$ depending only on $δ_n$ and $\tuple{ε_k, g_k}_{k < n}$ exists by Observation~\ref{thm:epsilon_monic};
			$δ_{n + 1} ≤ 1/n$ depending only on $φ_n$ and $ε_n$ is obtained by Proposition~\ref{thm:polyhedral_factorization},
			and so a suitable map $g_n$ depending on $φ_n, ε_n, φ_{n + 1}$ exists as well.
		
		Let $\tuple{P_∞, g_{*, ∞}}$ be a limit of $g_*$ in $\MCptS$.
		By Proposition~\ref{thm:cone_transfer}~(iii) applied to the identity sequence on $X$, $φ_*$, and $g_*$ in $\MCptS$ there is a continuous surjection $φ_∞\maps X \to P_∞$ such that $φ_n ≈_{2ε_n} g_{n, ∞} ∘ φ_∞$ for every $n ∈ ω$.
		To show that $φ_∞$ is a homeomorphism it is enough to prove that it is one-to-one.
		Let $Y$ be any metrizable compactum and let $h, h'\maps Y \to X$ be any continuous maps such that $φ_∞ ∘ h = φ_∞ ∘ h'$.
		For every $n ∈ ω$ we have $φ_n ∘ h ≈_{2ε_n} g_{n, ∞} ∘ φ_∞ ∘ h = g_{n, ∞} ∘ φ_∞ ∘ h' ≈_{2ε_n} φ_n ∘ h'$.
		Since $φ_n$ is $\tuple{δ_n, 4ε_n}$-monic, we have $h ≈_{δ_n} h'$.
		Since $δ_n ≤ 1/n$ and $n$ was arbitrary, we have $h = h'$, and so $φ_∞$ is one-to-one.
	\end{proof}
\end{corollary}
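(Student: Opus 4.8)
The plan is to realize $X$ from the inside as a limit of a $\P$-sequence, using the factorization theorem~\ref{thm:polyhedral_factorization} to splice successive $ε$-maps together and the cone transfer~\ref{thm:cone_transfer} to land a homeomorphism into the limit. First note that $X$ is automatically a continuum, so that Proposition~\ref{thm:polyhedral_factorization} applies with domain $X$: if $X = A \disunion B$ for nonempty clopen $A, B$, then any $ε$-map onto a connected space with $ε ≤ d(A, B)$ would have some fibre meeting both $A$ and $B$, hence of diameter $≥ d(A,B) ≥ ε$, a contradiction.

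Then I would construct by recursion on $n ∈ ω$ positive reals $δ_n → 0$, $δ_n$-maps $φ_n\maps X \to P_n$ onto $\P$-objects, $\P$-maps $g_n\maps P_{n+1} \to P_n$, and positive reals $ε_n$ forming an epsilon sequence for $g_* := \tuple{g_n}_{n ∈ ω}$, so that moreover each $φ_n$ is $\tuple{δ_n, 4ε_n}$-monic and $φ_n ≈_{ε_n} g_n ∘ φ_{n+1}$. Put $δ_0 := 1$ and let $φ_0$ be given by the hypothesis. At stage $n$, with $δ_n, φ_n, P_n$ and $\tuple{ε_k, g_k}_{k < n}$ in hand: using Observation~\ref{thm:epsilon_monic} choose $ε_n > 0$ small enough that $φ_n$ is $\tuple{δ_n, 4ε_n}$-monic and $\tuple{ε_k}_{k ≤ n}$ still extends to an epsilon sequence for $g_*$ (this only constrains the maps $g_{i,n}$ with $i < n$, which already exist); then Proposition~\ref{thm:polyhedral_factorization} applied to $φ_n$ and $ε_n$ produces $δ_{n+1} ≤ 1/(n+1)$ such that any $δ_{n+1}$-monic surjection from $X$ onto a connected polyhedron $ε_n$-factorizes $φ_n$; pick a $δ_{n+1}$-map $φ_{n+1}\maps X \to P_{n+1}$ by the hypothesis (an $ε$-map is $ε$-monic); and the factorization then yields a surjection $g_n\maps P_{n+1} \to P_n$ with $φ_n ≈_{ε_n} g_n ∘ φ_{n+1}$.

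Next, let $\tuple{P_∞, g_{*,∞}}$ be a limit of $g_*$ in $\MCptS$ (it exists with surjective connecting maps, $\MCptS$ being locally complete and $σ$-closed in $\MCpt$), so $P_∞$ is a $σ\P$-object. Applying Proposition~\ref{thm:cone_transfer}~(iii) to the constant identity sequence on $X$, to $φ_*$, and to $g_*$ — all sequences in $\MCptS$ — gives a continuous surjection $φ_∞\maps X \to P_∞$ with $φ_n ≈_{2ε_n} g_{n,∞} ∘ φ_∞$ for every $n$. It remains to check that $φ_∞$ is injective, for then it is a homeomorphism of compacta, so $X \homeo P_∞$ is a $σ\P$-object. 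Given continuous $h, h'\maps Y \to X$ with $φ_∞ ∘ h = φ_∞ ∘ h'$, for each $n$ we get $φ_n ∘ h ≈_{2ε_n} g_{n,∞} ∘ φ_∞ ∘ h = g_{n,∞} ∘ φ_∞ ∘ h' ≈_{2ε_n} φ_n ∘ h'$, hence $φ_n ∘ h ≈_{4ε_n} φ_n ∘ h'$, and $\tuple{δ_n, 4ε_n}$-monicity of $φ_n$ forces $h ≈_{δ_n} h'$; as $δ_n → 0$, $h = h'$. Taking $Y$ a one-point space gives injectivity of $φ_∞$.

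The step I expect to be the main obstacle is the interlocking bookkeeping of the two families of constants: $ε_n$ must simultaneously witness $\tuple{δ_n, 4ε_n}$-monicity of $φ_n$ and serve as the $n$-th term of an epsilon sequence for a $g_*$ that is only partially built when $ε_n$ is chosen, while $δ_{n+1}$ gets pinned down by the factorization theorem before $φ_{n+1}$ and $g_n$ even exist; one has to verify that these dependencies admit a consistent order and that the slacks $2ε_n, 4ε_n$ produced after the cone transfer still suffice. The rest is a direct application of the quoted results.
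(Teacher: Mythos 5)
Your proof is correct and takes essentially the same route as the paper's: the same inductive interleaving of $δ_n$, $ε_n$, $φ_n$, $g_n$ via Observation~\ref{thm:epsilon_monic} and Proposition~\ref{thm:polyhedral_factorization}, followed by Proposition~\ref{thm:cone_transfer}~(iii) and the $\tuple{δ_n, 4ε_n}$-monicity argument for injectivity of $φ_∞$. Your only addition is the explicit verification that $X$ is connected (so that Proposition~\ref{thm:polyhedral_factorization} applies to maps with domain $X$), a point the paper leaves implicit; it is correct and harmless.
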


Recall that $\MContS$ denotes the full MU-subcategory of $\MCptS$ of all metrizable continua (see also Table~\ref{tab:categories}).

\begin{theorem} \label{thm:polyhedral_categories}
	$\tuple{\CPolS, \MContS}$ is a free completion.
	Hence, for every locally dense (in particular, full) subcategory $\P ⊆ \CPolS$ we have that $\tuple{\P, σ\P}$ is a free completion with $σ\P ⊆ \MContS$ being full, and for every $\P ⊆ \CPolS$ that is $σ$-consistent in $\MContS$ we have that $\tuple{\P, σ\P}$ is a free completion.
	
	\begin{proof}
		Since $\MContS$ is $σ$-closed in $\MCpt$, we have $σ\CPolS ⊆ \MContS$.
		Let $f_*$ be a $\CPolS$-sequence with limit $\tuple{X_∞, f_{*, ∞}}$, let $h\maps Y \from X_∞$ be a continuous surjection onto a connected polyhedron, and let $ε > 0$.
		Take $δ > 0$ obtained from Proposition~\ref{thm:polyhedral_factorization}.
		By Observation~\ref{thm:epsilon_monic} there is $n ∈ ω$ such that $f_{n, ∞}$ is $δ$-monic, and so there is a continuous surjection $g\maps Y \from X_n$ such that $h ≈_ε g ∘ f_{n, ∞}$.
		Hence, $\tuple{\CPolS, \MContS}$ satisfies (F1), and $\CPolS$ is $σ$-consistent in $\MContS$ or equivalently in $\MCpt$.
		By Corollary~\ref{thm:compact_conditions}, $\tuple{\CPolS, σ\CPolS}$ is a free completion, and by Proposition~\ref{thm:sigma_conditions}~(iii), $σ\CPolS ⊆ \MContS$ is full.
		
		It remains to show $σ\CPolS = \MContS$, namely $\Ob(\MContS) ⊆ \Ob(σ\CPolS)$, i.e. every non-empty metrizable continuum $X$ is a limit of a $\CPolS$-sequence.
		By Corollary~\ref{thm:P-like} it is enough to show that $X$ admits an $ε$-map onto a polyhedron for every $ε > 0$, which is known~\cite[Example~1]{MS63}. See also \cite[Theorem~2.13]{Nadler92}.
		
		The rest follows from Corollary~\ref{thm:free_completion_hereditary}.
	\end{proof}
\end{theorem}

\begin{observation} \label{thm:mountain_climbing}
	To wrap up the story of the pseudo-arc, observe that $σ\I$ is a full subcategory of $\MContS$, and so the general definition is consistent with the particular definition of $σ\I$ from Section~\ref{sec:continua}.
	Also observe that $\tuple{\I, σ\I}$ is a free completion, and so we can use the characterization of the Fraïssé limit (Theorem~\ref{thm:fraisse_limit}).
	We already know that $\PP$ is the $σ\I$-limit of a dominating sequence in $\I$ (Remark~\ref{thm:Bing_reproved}).
	To show that $\PP$ is the Fraïssé limit of $\I$ in $σ\I$ it remains to observe that since $\I$ consists of a single object, every dominating sequence in $\I$ is projective:
	if $f_*$ is a dominating sequence in $\I$, then for all $\I$-maps $f, g$ and every $ε > 0$ there is $n ∈ ω$ and there are $\I$-maps $f', g'$ such that $f ∘ f' ≈_{ε/2} f_{0, n} ≈_{ε/2} g ∘ g'$, and so we have the amalgamation property, which is enough by Proposition~\ref{thm:projective_dominating_sequence}.

	In fact, it is well-known that suitable dense subcategories $\J ⊆ \I$ have even the strict amalgamation property (i.e. the amalgamation property with respect to the discrete MU-structure), from which the amalgamation property of $\I$ clearly follows.
	Results of this type are known as “mountain climbing theorems” or “mountain climbers' theorems”.
	Homma~\cite{Homma52} proved the result for $\J$ consisting of all nowhere constant maps,
	Sikorski and Zarankiewicz~\cite{SZ55} for $\J$ consisting of all piecewise monotone maps.
	(The $\J$-maps $f$ considered were limited by the condition $f(0) = 0$ and $f(1) = 1$, but for every $f ∈ \J$ one can take a piecewise linear map $f'$ such that $f ∘ f' ∈ \J$ and fixes the end-points.)
	On the other hand, the whole $\I$ does not have the strict amalgamation property (see \cite{Homma52}).
	It follows that there is no strictly dominating sequence in $\I$.
		
	The amalgamation property of $\I$ also follows directly from Theorem~\ref{thm:crooked_factorization}.
	Let $f, g$ be $\I$-maps and let $ε > 0$.
	We take a suitable $δ > 0$ for $g$ and $ε$ obtained by Theorem~\ref{thm:crooked_factorization}.
	There is $δ' > 0$ such that $f$ is $\tuple{δ, δ'}$-continuous.
	Let $f'$ be any $δ'$-crooked $\I$-map.
	Then $f ∘ f'$ is $δ$-crooked and by the choice of $δ$ there is an $\I$-map $g'$ such that $f ∘ f' ≈_ε g ∘ g'$.
\end{observation}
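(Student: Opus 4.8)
\emph{Plan.} The statement is mostly an assembly of results already available in the paper, so the plan is to peel off the structural claims by specialisation and then isolate the one piece of genuine content, the amalgamation property of $\I$. First, $\II$ is a non-empty connected compact polyhedron (the geometric realisation of a single $1$-simplex), so $\I$ is exactly the full subcategory of $\CPolS$ on the object $\II$; hence Theorem~\ref{thm:full_polyhedral_categories} applies with $\P=\I$ and gives that $\sigma\I\subseteq\MContS$ is a full subcategory and that $\tuple{\I,\sigma\I}$ is a free completion. Moreover the objects of $\sigma\I$ are exactly the $\I$-like, i.e.\ arc-like, continua (Corollary~\ref{thm:P-like} and Observation~\ref{thm:epsilon_monic}), so $\sigma\I$ coincides with the category of Section~\ref{sec:continua}. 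Being a free completion, $\tuple{\I,\sigma\I}$ satisfies the hypotheses of Theorem~\ref{thm:fraisse_limit}.

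Next I would identify the Fraïssé limit. Remark~\ref{thm:Bing_reproved} already records that $\PP$ is the $\sigma\I$-limit of a dominating sequence $f_*$ in $\I$ --- such a sequence exists because arbitrarily crooked $\I$-maps exist (Construction~\ref{con:crooked}), hence a crooked, and so dominating, sequence exists (Construction~\ref{thm:crooked_sequence_exists}). By Theorem~\ref{thm:fraisse_limit} it then remains to upgrade \emph{dominating} to \emph{Fraïssé}, i.e.\ to see that $f_*$ is projective; by Proposition~\ref{thm:projective_dominating_sequence} a dominating sequence is projective exactly when the ambient category has the amalgamation property. The only genuine task is thus to check that $\I$ amalgamates, and here I would use that $\I$ has a single object $\II$: given $\I$-maps $f,g$ and $\varepsilon>0$, applying the absorbing property of the already-available dominating sequence $f_*$ to $f$ and to $g$ separately produces one index $n$ and $\I$-maps $f',g'$ with $f\circ f'\approx_{\varepsilon/2}f_{0,n}\approx_{\varepsilon/2}g\circ g'$. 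Hence $\I$ amalgamates, $f_*$ is Fraïssé, and $\PP$ is the Fraïssé limit of $\I$ in $\sigma\I$.

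I would also give the self-contained derivation of amalgamation of $\I$ straight from the crookedness factorization theorem, which presupposes no dominating sequence: given $\I$-maps $f,g$ and $\varepsilon>0$, take $\delta>0$ witnessing Theorem~\ref{thm:crooked_factorization} for the pair $(g,\varepsilon)$, pick $\delta'>0$ with $f$ being $\tuple{\delta,\delta'}$-continuous, and let $f'$ be any $\delta'$-crooked $\I$-map; then $f\circ f'$ is $\delta$-crooked by Proposition~\ref{thm:crooked_calculus}(ii), so Theorem~\ref{thm:crooked_factorization} yields an $\I$-map $g'$ with $f\circ f'\approx_{\varepsilon}g\circ g'$. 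For the historical remarks I would note that a suitable \emph{dense} subcategory $\J\subseteq\I$ even has \emph{strict} amalgamation --- this is precisely a mountain-climbing theorem, Homma~\cite{Homma52} for nowhere-constant maps and Sikorski and Zarankiewicz~\cite{SZ55} for piecewise-monotone maps (one reduces to maps fixing the endpoints by precomposing with a suitable piecewise-linear map) --- and density transfers strict amalgamation of $\J$ to amalgamation of $\I$ by approximating the given pair of $\I$-maps by $\J$-maps, amalgamating those strictly, and using that $d(f\circ h,f'\circ h)\le d(f,f')$ in an MU-category. Conversely $\I$ itself fails strict amalgamation (Homma~\cite{Homma52}); and since a strictly dominating sequence in $\I$ would, by the single-object argument above read with exact equalities in place of $\approx$, yield strict amalgamation of $\I$, there is no strictly dominating sequence in $\I$.

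I do not expect a real technical obstacle: once Section~\ref{sec:continua}'s machinery is in place, each step above is a short bookkeeping deduction, and the only node carrying mathematical weight is the amalgamation property of $\I$, which is why it is worth extracting twice. The delicate point is organisational rather than technical --- one must either route amalgamation through Theorem~\ref{thm:crooked_factorization} (whose proof is the substantive part of Section~\ref{sec:continua}) or make sure that the dominating sequence used to deduce amalgamation is known to exist on independent grounds (Constructions~\ref{con:crooked} and~\ref{thm:crooked_sequence_exists}), so that ``$\PP$ is the Fraïssé limit'' is not argued in a circle.
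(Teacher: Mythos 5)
Your proposal is correct and follows essentially the same route as the paper: consistency of $σ\I$ via the full-subcategory result for polyhedra, amalgamation of $\I$ extracted both from the absorbing property of an existing dominating sequence (using the single object to align indices) and directly from Theorem~\ref{thm:crooked_factorization}, plus the same mountain-climbing remarks and the same single-object argument ruling out a strictly dominating sequence. No gaps to report.
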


Combining everything together, we obtain the following result.

\begin{theorem} \label{thm:pseudo-arc}
	The pseudo-arc $\PP$ is characterized (up to homeomorphism) by any of the following conditions.
	\begin{enumerate}
		\item $\PP$ is a hereditarily indecomposable arc-like continuum.
		\item $\PP$ is the Fraïssé limit of $\I$ in $σ\I$ (which already consists of several equivalent conditions).
		\item $\PP$ is a generic object in $σ\I$ (i.e. when the game is played in $σ\I$).
		\item $\PP$ is a generic object in $\tuple{\I, σ\I}$ (i.e. when the game is played in $\I$).
		\item $\PP$ is a generic object over any dominating subcategory of $\PeanoS$.
	\end{enumerate}
\end{theorem}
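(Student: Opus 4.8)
The plan is to assemble the five characterizations from results already established, in each case checking both that $\PP$ satisfies the condition and that the condition determines an object uniquely up to homeomorphism. The starting point is (i): by definition the pseudo-arc is \emph{the} hereditarily indecomposable arc-like continuum, and by Bing's theorem~\cite{Bing51} there is only one such continuum up to homeomorphism, so (i) is a legitimate characterization and the whole task reduces to showing that (ii)--(v) are each equivalent to (i).

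First I would prove (i)$\iff$(ii). A metrizable continuum is arc-like precisely when it is a limit of a sequence in $\I$, i.e.\ a $σ\I$-object, where I use Observation~\ref{thm:mountain_climbing} (together with Theorem~\ref{thm:full_polyhedral_categories}) to see that the concrete category $σ\I$ of Section~\ref{sec:continua} coincides with the abstract $σ$-closure of $\I$ and that its limits are the topological limits. By Theorem~\ref{thm:crooked_limit}, such a limit is hereditarily indecomposable exactly when the defining $\I$-sequence is crooked; by the proposition identifying dominating sequences in $\I$ with crooked ones, and since $\I$ has a single object (so every dominating sequence is projective, hence Fraïssé, by Proposition~\ref{thm:projective_dominating_sequence}), these are precisely the Fraïssé sequences in $\I$. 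Because $\tuple{\I, σ\I}$ is a free completion (Observation~\ref{thm:mountain_climbing}), Theorem~\ref{thm:fraisse_limit} says all their limits are isomorphic and equal the Fraïssé limit of $\I$ in $σ\I$. Hence (i)$\iff$(ii); this also re-derives Bing's uniqueness, as recorded in Remark~\ref{thm:Bing_reproved}.

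Next, (ii)$\implies$(iii) and (ii)$\implies$(iv): a Fraïssé object is cofinal with every self-map a near-automorphism, hence generic in $σ\I$ by Observation~\ref{thm:fraisse_object_is_generic}, which is (iii); and $\PP$ is generic in $\tuple{\I, σ\I}$ by Corollary~\ref{thm:generic_pseudoarc} (equivalently: it is the $σ\I$-limit of a dominating sequence in $\I$ by Remark~\ref{thm:Bing_reproved}, so Proposition~\ref{thm:dominating_sequence} applies), which is (iv). Condition (v) is Theorem~\ref{thm:generic_Peano} itself. Conversely, each of (iii), (iv), (v) characterizes $\PP$ up to homeomorphism because a generic object is unique (the observation following Definition~\ref{def:BM}); for (v) one uses that $\I$ is itself a full dominating subcategory of $\PeanoS$ (Example~\ref{ex:Peano}), so any object generic over every dominating subcategory is in particular generic over $\I$ and thus homeomorphic to $\PP$.

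The argument is essentially bookkeeping across the preceding sections, and no genuine obstacle remains. The one point that deserves a careful sentence rather than a bare citation is the coherence of the two descriptions of $σ\I$ — the ad hoc ``arc-like continua and continuous surjections'' and the abstract $σ$-closure of $\I$ — which is exactly what Theorem~\ref{thm:full_polyhedral_categories} and Observation~\ref{thm:mountain_climbing} supply, and which is needed so that ``arc-like'' in (i) names precisely the class of $σ\I$-objects appearing in (ii)--(iv).
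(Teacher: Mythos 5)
Your proposal is correct and follows essentially the same route as the paper, which states this theorem as a summary ("combining everything together") of exactly the ingredients you cite: Bing's theorem, Remark~\ref{thm:Bing_reproved}, Observation~\ref{thm:mountain_climbing} with Theorem~\ref{thm:fraisse_limit}, Observation~\ref{thm:fraisse_object_is_generic}, Corollary~\ref{thm:generic_pseudoarc}, Theorem~\ref{thm:generic_Peano}, and uniqueness of generic objects. Your extra care about identifying the concrete category $σ\I$ with the abstract $σ$-closure, and about deducing uniqueness in (v) from genericity over the dominating subcategory $\I$, is exactly the bookkeeping the paper leaves implicit.
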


\subsection{Fraïssé categories and ages}

We have turned the problem of obtaining a Fraïssé object in a locally complete MU-category $\L$ into obtaining a Fraïssé limit of a free completion $\tuple{\K, \L}$, which reduces to obtaining a Fraïssé sequence in $\K$.
Now we give a characterization of existence of a Fraïssé sequence.
This is more or less a translation of the known results~\cite[Corollary~3.8]{Kubis14}, \cite[Theorem~3.3]{Kubis13} to the context of MU-categories.

\begin{definition}
	A non-empty MU-category $\K$ is \emph{Fraïssé} if
	\begin{itemize}
		\item $\K$ is \emph{directed}, i.e. for every pair of $\K$-objects $X$ and $Y$ there are $\K$-maps $f\maps X \from Z$ and $g\maps Y \from Z$ for a $\K$-object $Z$,
		\item $\K$ has the amalgamation property,
		\item $\K$ has a countable dominating subcategory.
	\end{itemize}
\end{definition}

Of course, the theorem is that an MU-category has a Fraïssé sequence if and only if it is Fraïssé. We start with some preparations.

We have defined dominating subcategories and sequences (Definition~\ref{def:dominating}).
Let us for convenience define a more general notion of a dominating functor so that a subcategory is dominating if and only if the corresponding inclusion functor is dominating, and a sequence is dominating if and only if it is dominating when viewed as a functor from $\tuple{ω, ≤}$.

\begin{definition}
	A functor $F\maps \K \to \L$ between MU-categories (not necessarily MU-continuous) is called \emph{dominating} if it is both
	\begin{itemize}
		\item \emph{cofinal}, i.e. for every $\L$-object $X$, there is a $\K$-object $Y$ and an $\L$-map $f\maps X \from F(Y)$,% and
		\item \emph{absorbing}, i.e. for every $\K$-object $X$, every $ε > 0$, and every $\L$-map $f\maps F(X) \from Y$ there is an $\L$-map $g\maps Y \from F(Z)$ and a $\K$-map $h\maps X \from Z$ such that $f ∘ g ≈_ε F(h)$.
	\end{itemize}
\end{definition}

\begin{lemma}
	Let $F\maps \K \to \L$ and $G\maps \L \to \M$ be functors between MU-categories.
	\begin{enumerate}
		\item If $F$ and $G$ are cofinal, then $G ∘ F$ is cofinal.
		\item If $F$ and $G$ are absorbing and $G$ is MU-continuous, then $G ∘ F$ is absorbing.
	\end{enumerate}
	
	\begin{proof}
		(i) Let $X$ be an $\M$-object.
		Since $G$ is cofinal, there is an $\L$-object $Y$ and an $\M$-map $f\maps X \from G(Y)$.
		Since $F$ is cofinal, there is a $\K$-object $Z$ and an $\L$-map $g\maps Y \from F(Z)$.
		Hence, $f ∘ G(g)\maps X \from G(F(Z))$.
		
		(ii) Let $X$ be a $\K$-object, let $ε > 0$, and let $f\maps G(F(X)) \from Y$ be an $\M$-map.
		Since $G$ is absorbing, there is an $\L$-map $f'\maps F(X) \from Y'$ and an $\M$-map $g\maps Y \from G(Y')$ such that $G(f') ≈_{ε/2} f ∘ g$.
		Let $δ > 0$ be a witness of MU-continuity of $G$ at $F(X)$ for $ε/2$.
		Since $F$ is absorbing, there is a $\K$-map $f''\maps X \from Y''$ and an $\L$-map $g'\maps Y' \from F(Y'')$ such that $F(f'') ≈_δ f' ∘ g'$.
		Altogether, we have $G(F(f'')) ≈_{ε/2} G(f') ∘ G(g') ≈_{ε/2} f ∘ (g ∘ G(g'))$.
	\end{proof}
\end{lemma}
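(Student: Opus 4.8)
The plan is to prove both parts by unwinding the definitions of cofinality and absorption and chasing maps through the two functors; part (i) will be immediate and uses no continuity hypothesis, while part (ii) needs one extra ingredient, namely transporting an $\L$-level approximation through $G$, which is exactly what MU-continuity of $G$ provides.

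For (i), I would fix an arbitrary $\M$-object $X$. Cofinality of $G$ yields an $\L$-object $Y$ and an $\M$-map $f\maps X \from G(Y)$; then cofinality of $F$ yields a $\K$-object $Z$ and an $\L$-map $g\maps Y \from F(Z)$. The composite $f ∘ G(g)\maps X \from (G ∘ F)(Z)$ then witnesses cofinality of $G ∘ F$, and no hypothesis on continuity is invoked.

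For (ii), I would fix a $\K$-object $X$, a number $ε > 0$, and an $\M$-map $f\maps (G ∘ F)(X) \from Y$. First apply absorption of $G$ to $f$ with tolerance $ε/2$ to obtain an $\L$-object $Y'$, an $\L$-map $f'\maps F(X) \from Y'$, and an $\M$-map $g\maps Y \from G(Y')$ with $G(f') ≈_{ε/2} f ∘ g$. Next pick $δ > 0$ witnessing MU-continuity of $G$ at $F(X)$ for the bound $ε/2$, and apply absorption of $F$ to $f'$ with tolerance $δ$ to obtain a $\K$-object $Y''$, a $\K$-map $h\maps X \from Y''$, and an $\L$-map $g'\maps Y' \from F(Y'')$ with $F(h) ≈_δ f' ∘ g'$. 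Applying $G$ and using the choice of $δ$ gives $(G ∘ F)(h) ≈_{ε/2} G(f') ∘ G(g')$; since precomposition by $G(g')$ is non-expansive (axiom (ii) of an MU-category), the estimate $G(f') ≈_{ε/2} f ∘ g$ upgrades to $G(f') ∘ G(g') ≈_{ε/2} f ∘ (g ∘ G(g'))$. The triangle inequality then yields $(G ∘ F)(h) ≈_{ε} f ∘ (g ∘ G(g'))$, and since $g ∘ G(g')\maps Y \from (G ∘ F)(Y'')$ this exhibits absorption of $G ∘ F$.

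The only genuine subtlety, and the step I expect to need the most care, is the bookkeeping in (ii): one must absorb through $G$ first and only then through $F$ (so that the $\L$-map awaiting absorption is actually available), and one must remember that the $δ$-closeness produced by absorption of $F$ becomes $ε/2$-closeness only after applying $G$ — this is precisely where the hypothesis that $G$, and not merely $F$, is MU-continuous enters. Everything else is a routine use of the MU-category axioms.
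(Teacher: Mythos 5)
Your proposal is correct and matches the paper's proof essentially step for step: the same composition $f ∘ G(g)$ in (i), and in (ii) the same order of operations (absorb through $G$ at tolerance $ε/2$, choose $δ$ from MU-continuity of $G$ at $F(X)$, absorb through $F$ at tolerance $δ$, then combine via non-expansiveness of precomposition and the triangle inequality). No gaps.
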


\begin{corollary}
	Dominating MU-functors are stable under composition.
\end{corollary}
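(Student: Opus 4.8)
The plan is to derive the corollary directly from the preceding Lemma together with the routine observation that a composition of MU-functors is again an MU-functor. Let $F\maps \K \to \L$ and $G\maps \L \to \M$ be dominating MU-functors. First I would check that $G ∘ F$ is MU-continuous: fix a $\K$-object $Y$ and $ε > 0$; MU-continuity of $G$ at $F(Y)$ gives $η > 0$ such that $F(f) ≈_η F(g)$ implies $G(F(f)) ≈_ε G(F(g))$, and then MU-continuity of $F$ at $Y$ gives $δ > 0$ such that $f ≈_δ f'$ implies $F(f) ≈_η F(f')$; composing these two implications yields a single $δ$ witnessing MU-continuity of $G ∘ F$ at $Y$, uniformly over the domain object, so $G ∘ F$ is an MU-functor.

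It then remains to see that $G ∘ F$ is dominating. Cofinality of $G ∘ F$ is exactly part (i) of the Lemma, and absorption of $G ∘ F$ is part (ii) of the Lemma; the hypothesis of part (ii) that $G$ be MU-continuous is satisfied precisely because $G$ is an MU-functor by assumption. Hence $G ∘ F$ is a dominating MU-functor, as claimed.

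There is essentially no obstacle here: the only computation is the three-line $ε$–$η$–$δ$ chase above, and everything else is a bookkeeping reference to the Lemma. (One may also note that each identity functor $\id_\K$ is trivially a dominating MU-functor, so that dominating MU-functors form the morphisms of a subcategory of the ``category'' of MU-categories.)
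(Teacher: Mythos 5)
Your proposal is correct and is exactly the argument the paper intends: the corollary is stated immediately after the lemma and follows by combining its parts (i) and (ii) with the routine fact that a composition of MU-functors is an MU-functor, which your $ε$–$η$–$δ$ chase verifies correctly (including the required uniformity over the domain object). Nothing further is needed.
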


\begin{corollary} \label{thm:dominating_sequence_transitivity}
	If $\tuple{X_*, f_*}$ is a dominating sequence in $\D$, and $\D$ is a dominating subcategory of $\K$, then $\tuple{X_*, f_*}$ is dominating in $\K$.
\end{corollary}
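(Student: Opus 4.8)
The plan is to read this off the corollary that dominating MU-functors are stable under composition. By the convention fixed just above, the sequence $\tuple{X_*, f_*}$ being dominating in a category $\mathcal{C}$ is the same as the functor $\tuple{\omega, \leq} \to \mathcal{C}$ it represents being dominating. So I would write $F \maps \tuple{\omega, \leq} \to \D$ for this functor into $\D$ and let $\iota \maps \D \into \K$ be the inclusion functor of the subcategory. Then $\tuple{X_*, f_*}$ regarded as a sequence in $\K$ is precisely the composite $\iota \circ F \maps \tuple{\omega, \leq} \to \K$, and it suffices to show that $\iota \circ F$ is dominating.

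First I would observe that $F$ and $\iota$ are both MU-continuous: the domain $\tuple{\omega, \leq}$ is a discrete MU-category, since its hom-sets are singletons, so $F$ is MU-continuous by the remark in Definition~\ref{def:discrete}; and $\iota$ is MU-continuous because, $\D$ being an MU-subcategory of $\K$, the inclusion is an isometry on each hom-set. Next, $F$ is dominating by hypothesis, and $\iota$ is dominating precisely because $\D$ is a dominating subcategory of $\K$ — cofinality and absorption of the inclusion functor unwind exactly to the two clauses of Definition~\ref{def:dominating} for subcategories. Hence $F$ and $\iota$ are dominating MU-functors, so by the composition corollary their composite $\iota \circ F$ is a dominating MU-functor, i.e.\ $\tuple{X_*, f_*}$ is dominating in $\K$. (Alternatively, one can apply the preceding lemma directly: part~(i) yields cofinality of $\iota \circ F$ with no continuity hypothesis, and part~(ii) yields absorption using only the MU-continuity of the outer functor $\iota$.)

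There is no genuine obstacle here; the statement is a direct specialization of the composition result. The only points requiring care are the bookkeeping identifications — that a sequence and the functor it represents are dominating together, and that the inclusion of an MU-subcategory is an MU-functor — both of which are immediate from the definitions.
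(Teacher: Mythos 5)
Your proof is correct and matches the paper's intended argument: the corollary is stated immediately after the composition results and is meant to be read off exactly as you do, by viewing the sequence as a functor from $\tuple{ω, ≤}$ and the subcategory inclusion as an MU-continuous functor, then composing. Your remark that the lemma's part (ii) only needs MU-continuity of the outer (inclusion) functor is a correct and welcome precision, but it does not change the route.
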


\begin{lemma} \label{thm:fraisse_dominating}
	Every countable dominating subcategory $\D_0$ of a Fraïssé MU-category $\K$ can be extended to a countable dominating subcategory $\D$ with $\Ob(\D) = \Ob(\D_0)$ that is Fraïssé.
	
	\begin{proof}
		For every pair of $\D_0$-objects $X$ and $Y$ we pick some $\K$-maps $v_{X, Y, 0}\maps X \from V_{X, Y}$ and $v_{X, Y, 1}\maps Y \from V_{X, Y}$ witnessing that $\K$ is directed.
		We can manage to have $V_{X, Y} ∈ \Ob(\D_0)$ since $\D_0$ is cofinal.
		Similarly, for every pair of $\D_0$-maps $f\maps Z \from X$ and $g\maps Z \from Y$ and for $n ∈ ω$ we pick some $\K$-maps $w_{f, g, n, 0}\maps X \from W_{f, g, n}$ and $w_{f, g, n, 1}\maps Y \from W_{f, g, n}$ such that $f ∘ w_{f, g, n, 0} ≈_{1/n} g ∘ w_{f, g, n, 1}$, witnessing that $\K$ has the amalgamation property.
		Again, we can have $W_{f, g, n} ∈ \Ob(\D_0)$.
		We let $\D_1 ⊆ \K$ be the subcategory generated by $\D_0 ∪ \set{v_{X, Y, 0}, v_{X, Y, 1}: X, Y ∈ \Ob(\D_0)} ∪ \set{w_{f, g, n, 0}, w_{f, g, n, 1}: f, g ∈ \D_0$ with $\cod(f) = \cod(g)$, $n ∈ ω}$.
		Clearly, $\D_1$ is still countable and dominating in $\K$ since $\Ob(\D_1) = \Ob(\D_0)$.
		Moreover, it is directed, and it has the amalgamation property with respect to spans from $\D_0$.
		However, we have potentially added new spans, so we need to repeat the second part of the procedure to obtain a chain $\D_0 ⊆ \D_1 ⊆ \D_2 ⊆ \cdots$.
		Finally, we put $\D := ⋃_{n ∈ ω} \D_n$, which is still countable and dominating in $\K$, and is directed and has the amalgamation property.
		Altogether, $\D$ is Fraïssé.
	\end{proof}
\end{lemma}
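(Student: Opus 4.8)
\emph{Proof proposal.} The plan is to construct $\D$ as the union of an increasing chain $\D_0 \subseteq \D_1 \subseteq \D_2 \subseteq \cdots$ of countable dominating subcategories of $\K$, all with object class $\Ob(\D_0)$, arranged so that directedness and the amalgamation property hold in the limit. The first thing I would record is a harmless stability fact: any subcategory $\D' \subseteq \K$ with $\D_0 \subseteq \D'$ and $\Ob(\D') = \Ob(\D_0)$ is automatically dominating in $\K$. Indeed, cofinality is inherited because $\D_0$ is cofinal and $\D'$ has the same objects, and absorption is inherited because a $\D_0$-map witnessing absorption (for the given object, $\varepsilon$, and $\K$-map) is in particular a $\D'$-map. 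Hence it will suffice to keep each $\D_n$ countable and with object class $\Ob(\D_0)$, and to ensure the chain "catches up" with the two properties that require spans of morphisms.

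For the inductive step, given $\D_n$ I would adjoin witnesses of two kinds. For every ordered pair of $\D_n$-objects $X, Y$, use that $\K$ is directed to pick $\K$-maps $X \leftarrow V \rightarrow Y$, and then use that $\D_0$ is cofinal to precompose with a $\K$-map $V \leftarrow V'$ from a $\D_0$-object; this yields a directedness span whose apex lies in $\Ob(\D_0) = \Ob(\D_n)$. For every span $X \leftarrow Z \rightarrow Y$ of $\D_n$-maps and every $k \in \omega$, use the amalgamation property of $\K$ to obtain $\K$-maps $X \leftarrow W \rightarrow Y$ with the square commuting up to $1/k$, and again shift the apex into $\Ob(\D_0)$ by cofinality. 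Let $\D_{n+1}$ be the subcategory of $\K$ generated by $\D_n$ together with all these new maps. Since $\D_n$ is countable and only countably many generators were added, $\D_{n+1}$ is countable — finite compositions of countably many morphisms still form a countable class — it has object class $\Ob(\D_0)$, and by the stability fact it is dominating in $\K$.

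Finally I would set $\D := \bigcup_{n \in \omega} \D_n$. It is countable, dominating in $\K$, and $\Ob(\D) = \Ob(\D_0)$. It is directed: any two $\D$-objects already occur in some $\D_n$, so a span between them was placed in $\D_{n+1} \subseteq \D$. It has the amalgamation property in the MU-sense: given $\D$-maps $f, g$ with common codomain and $\varepsilon > 0$, both $f$ and $g$ lie in some $\D_n$, and for $k$ with $1/k \le \varepsilon$ the corresponding $(1/k)$-amalgamating cospan was placed in $\D_{n+1} \subseteq \D$. Being a countable dominating subcategory of itself, $\D$ is therefore Fraïssé. I do not expect a genuine obstacle in this argument; the only points needing a little care are that the generated categories must be kept countable, which forces the construction to be iterated $\omega$ times rather than performed in one pass, and that amalgamation is only approximate, which is why for each span a whole countable family of witnesses indexed by $k$ must be adjoined.
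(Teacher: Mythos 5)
Your proposal is correct and takes essentially the same route as the paper's proof: iteratively adjoin directedness spans and, for each span of previously added maps and each $k$, a $(1/k)$-amalgamating cospan with apex pulled into $\Ob(\D_0)$ via cofinality, then take the union of the resulting countable chain. The minor differences (explicitly isolating the fact that any intermediate subcategory containing $\D_0$ with the same objects is automatically dominating, and re-adding directedness witnesses at every stage rather than once) are harmless and do not change the argument.
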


\begin{lemma} \label{thm:countable_fraisse}
	Let $\K$ be a countable MU-category.
	We may view the set $\K$ of all morphisms as a countable discrete space and consider the Polish space $\K^ω$.
	The subset $\S ⊆ \K^ω$ consisting of all sequences in $\K$ (i.e. elements of $\K^ω$ consisting of composable maps) is closed and so a Polish space on its own.
	If $\K$ is Fraïssé, then the set of all Fraïssé sequences is dense $G_δ$ in $\S$.
	
	\begin{proof}
		$\S$ is indeed closed since for every $f_* ∈ \K^ω \setminus \S$ there is $n ∈ ω$ such that $\dom(f_n) ≠ \cod(f_{n + 1})$, and so $g_* ∉ \S$ for every $g_* ∈ \K^ω$ with $g_n = f_n$ and $g_{n + 1} = f_{n + 1}$.
		
		For every $\K$-object $X$, let $\D_X ⊆ \S$ be the set of all $\K$-sequences $\tuple{X_*, f_*}$ such that there is $n ∈ ω$ and a $\K$-map $f\maps X \from X_n$.
		The set $\D_X$ is open since the membership in $\D_X$ is again witnessed by a single coordinate.
		The set $\D_X$ is dense since for every finite composable sequence $\tuple{f_k\maps X_k \from X_{k + 1}}_{k < n}$ there are $\K$-maps $f_n\maps X_n \from X_{n + 1}$ and $f\maps X \from X_{n + 1}$ by the fact that $\K$ is directed.
		
		For every $n ∈ ω$, every $\K$-map $f\maps X \from Y$ and $ε > 0$ let $\E_{n, f, ε} ⊆ \S$ be the set of all $\K$-sequences $\tuple{X_*, f_*}$ such that $X_n ≠ X$ or there is $n' ≥ n$ and a $\K$-map $g\maps Y \from X_{n'}$ such that $f ∘ g ≈_ε f_{n, n'}$.
		The set $\E_{n, f, ε}$ is open since if $f_* ∈ \E_{n, f, ε}$, this is witnessed by a finite initial segments $\tuple{f_k}_{k < n'}$ for some $n' ≥ n$, and so every $\K$-sequence $g_*$ with $\tuple{g_k}_{k < n'} = \tuple{f_k}_{k < n'}$ is in $\E_{n, f, ε}$.
		The set $\E_{n, f, ε}$ is dense since for every finite composable sequence $\tuple{f_k\maps X_k \from X_{k + 1}}_{k < m}$ with $m ≥ n$ and with $X_n = X$ there are $\K$-maps $f_m\maps X_m \from X_{m + 1}$ and $g\maps Y \from X_{m + 1}$ such that $f_{n, m} ∘ f_m ≈_ε f ∘ g$ by the amalgamation property.
		
		Finally, the set of all dominating sequences is clearly 
		\[ \textstyle
			⋂\set{\D_X: X ∈ \Ob(\K)} ∩ ⋂\set{\E_{n, f, 1/k}: n ∈ ω,\, f ∈ \K,\, k ∈ ω},
		\]
		which is dense $G_δ$ by Baire category theorem and by the previous claims.
		Fraïssé sequences are exactly dominating sequences by Proposition~\ref{thm:projective_dominating_sequence}.
	\end{proof}
\end{lemma}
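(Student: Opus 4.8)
The plan is to exhibit the Fraïssé sequences as a dense $G_δ$ subset of $\S$ via the Baire category theorem, after first recording (using Proposition~\ref{thm:projective_dominating_sequence} together with the amalgamation hypothesis) that under the Fraïssé assumptions the Fraïssé sequences are exactly the dominating sequences. I would begin with the routine topological facts: $\S$ is closed because $f_* ∉ \S$ is witnessed by a single index $n$ with $\dom(f_n) ≠ \cod(f_{n+1})$, so the complement is open in the product topology; and $\S$ is non-empty, since directedness of $\K$ lets one prolong any finite composable segment, so $\S$ is a (non-empty) Polish space in its own right.

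Next I would treat the two halves of domination by separate families of dense open sets. For cofinality, for each $\K$-object $X$ let $\D_X ⊆ \S$ be the set of $\tuple{X_*, f_*}$ admitting a $\K$-map $X \from X_n$ for some $n ∈ ω$; membership is decided by a finite initial segment, so $\D_X$ is open, and it is dense because any finite composable segment ending in $X_n$ can be extended one step using a $\K$-object mapping to both $X_n$ and $X$ (directedness). For absorption, for each $n ∈ ω$, each $\K$-map $f\maps X \from Y$, and each $k ∈ ω$ let $\E_{n, f, 1/k} ⊆ \S$ consist of those $\tuple{X_*, f_*}$ with $X_n ≠ X$, or with some $n' ≥ n$ and a $\K$-map $g\maps Y \from X_{n'}$ such that $f ∘ g ≈_{1/k} f_{n, n'}$; again membership is witnessed by a finite initial segment, so $\E_{n, f, 1/k}$ is open, and density comes from the amalgamation property applied to $f_{n, m}$ and $f$ over a sufficiently late object, with the tolerance $1/k$ absorbing the approximation.

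Finally I would observe that the dominating sequences are precisely $⋂_{X ∈ \Ob(\K)} \D_X ∩ ⋂_{n, f, k} \E_{n, f, 1/k}$: the first intersection encodes cofinality and the second encodes absorption (it suffices to test rational tolerances $1/k$). Since $\K$ is countable, both $\Ob(\K)$ and the morphism set are countable, so this is a countable intersection of dense open subsets of the Polish space $\S$, hence dense $G_δ$ by Baire, and by Proposition~\ref{thm:projective_dominating_sequence} these are exactly the Fraïssé sequences. I expect the only genuinely delicate points to be the two density arguments---repackaging ``directed'' and ``amalgamation property'' as one-step extension statements for finite composable segments---and the small amount of bookkeeping needed to see that each defining condition depends on only finitely many coordinates, which is what makes $\D_X$ and $\E_{n, f, 1/k}$ open; everything else is formal.
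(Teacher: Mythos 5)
Your proposal is correct and follows essentially the same route as the paper: the same closedness argument for $\S$, the same dense open families $\D_X$ (cofinality via directedness) and $\E_{n,f,1/k}$ (absorption via amalgamation, with rational tolerances), Baire category, and the identification of Fraïssé with dominating sequences via Proposition~\ref{thm:projective_dominating_sequence}. Your extra remark that $\S$ is non-empty is harmless and is in fact the point used later when extracting an actual Fraïssé sequence.
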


\begin{theorem} \label{thm:fraisse_category}
	An MU-category $\K$ has a Fraïssé sequence if and only if it is a Fraïssé MU-category.
	
	\begin{proof}
		Suppose $\tuple{X_*, f_*}$ is a Fraïssé sequence in $\K$.
		Clearly $\K ≠ ∅$.
		$\K$ is directed since by the cofinality, for all $\K$-objects $X, Y$ there are $\K$-maps $f\maps X \from X_m$ and $g\maps Y \from X_n$ for some $m, n ∈ ω$, so $X_{\max(m, n)}$ works.
		$\K$ has the amalgamation property by Proposition~\ref{thm:projective_dominating_sequence}.
		The countable subcategory generated by $\set{f_n: n ∈ ω}$ (which may contain more compositions than $f_{n, m}$ since we may have $X_n = X_m$ for some $n, m ∈ ω$) is dominating since the sequence is dominating.
		We conclude that $\K$ is Fraïssé.
		
		Suppose that $\K$ is Fraïssé.
		By Lemma~\ref{thm:fraisse_dominating} there is a countable dominating subcategory $\D ⊆ \K$ that is Fraïssé.
		By Lemma~\ref{thm:countable_fraisse} and the fact that $\K$ and so $\D$ is non-empty, there is a Fraïssé sequence $f_*$ in $\D$.
		By Corollary~\ref{thm:dominating_sequence_transitivity}, $f_*$ is dominating in $\K$.
		By Proposition~\ref{thm:projective_dominating_sequence}, $f_*$ is Fraïssé in $\K$.
	\end{proof}
\end{theorem}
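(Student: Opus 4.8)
The plan is to prove both implications directly from the machinery already assembled above, so the argument is essentially an orchestration of Lemmas~\ref{thm:fraisse_dominating} and~\ref{thm:countable_fraisse}, Corollary~\ref{thm:dominating_sequence_transitivity}, and Proposition~\ref{thm:projective_dominating_sequence}. For the forward direction I would start from a Fraïssé sequence $\tuple{X_*, f_*}$ in $\K$ (that is, a dominating projective sequence) and read off the three defining properties of a Fraïssé MU-category. Non-emptiness is immediate. Directedness: given $\K$-objects $X$ and $Y$, cofinality of the sequence supplies $\K$-maps $X \from X_m$ and $Y \from X_n$, so $X_{\max(m,n)}$ together with the bonding maps $f_{m,\max(m,n)}$ and $f_{n,\max(m,n)}$ exhibits a common predecessor. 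The amalgamation property of $\K$ is exactly the content of Proposition~\ref{thm:projective_dominating_sequence} applied to the dominating sequence $f_*$. Finally, the (countable) subcategory of $\K$ generated by $\set{f_n : n \in ω}$ is dominating, since cofinality and absorption of the sequence translate verbatim into cofinality and absorption of this subcategory.

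For the converse I would assume $\K$ is Fraïssé and build a Fraïssé sequence in three moves. First, apply Lemma~\ref{thm:fraisse_dominating} to replace the given countable dominating subcategory by a countable dominating subcategory $\D \subseteq \K$ that is itself a Fraïssé MU-category. Second, since $\D$ is countable, non-empty, and Fraïssé, Lemma~\ref{thm:countable_fraisse} produces a Fraïssé sequence $f_*$ in $\D$ (indeed the Fraïssé sequences there form a dense $G_δ$ set, in particular a non-empty one). Third, transfer this sequence back up to $\K$: by Corollary~\ref{thm:dominating_sequence_transitivity} a dominating sequence in a dominating subcategory is dominating in the ambient category, so $f_*$ is dominating in $\K$, and then Proposition~\ref{thm:projective_dominating_sequence} — using that $\K$ has the amalgamation property — upgrades "dominating" to "projective", i.e.\ $f_*$ is a Fraïssé sequence in $\K$.

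The genuinely substantive input, and hence where I expect the real work to sit, is Lemma~\ref{thm:countable_fraisse}: one must actually produce a single sequence realizing cofinality together with the approximate extension property simultaneously, which is done by a Baire-category argument in the Polish space of sequences over the countable category $\D$, writing the dominating sequences as a countable intersection of dense open sets $\D_X$ (for cofinality at each object $X$) and $\E_{n,f,1/k}$ (for $ε$-amalgamation along the sequence). Everything else in the present theorem is bookkeeping about how "dominating" and "Fraïssé" behave under passing between a category, its countable dominating subcategories, and sequences in them — the only slightly delicate point being the MU-continuity hypothesis needed for absorbing functors to compose, which is automatic here because all inclusion functors of subcategories are MU-continuous and a sequence viewed as a functor from $\tuple{ω, ≤}$ on a discrete domain is MU-continuous as well.
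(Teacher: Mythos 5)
Your proposal is correct and follows essentially the same route as the paper: the forward direction reads off non-emptiness, directedness, amalgamation (via Proposition~\ref{thm:projective_dominating_sequence}), and the countable dominating subcategory generated by the sequence, while the converse chains Lemma~\ref{thm:fraisse_dominating}, the Baire-category Lemma~\ref{thm:countable_fraisse}, Corollary~\ref{thm:dominating_sequence_transitivity}, and Proposition~\ref{thm:projective_dominating_sequence} exactly as the paper does.
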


Let us formulate two observations helpful for showing that a given MU-category is Fraïssé.

\begin{observation} \label{thm:locally_separable}
	We may call an MU-category $\K$ \emph{locally separable} if every hom-set $\K(X, Y)$ is separable.
	It is well-known that $\MCpt$ (and so every $\K ⊆ \MCpt$) is locally separable (see e.g. \cite[(4.19)]{Kechris95}).
	
	If a locally separable MU-category $\K$ has a countable cofinal subcategory $\C$ (equivalently, a countable \emph{cofinal family of objects}), then $\K$ has a countable dominating subcategory $\D$: it is enough to let $\D ⊆ \K$ be the subcategory generated by $⋃\set{\D_{X, Y}: X, Y ∈ \Ob(\C)}$ where every $\D_{X, Y}$ is a countable dense subset of $\K(X, Y)$.
	It follows that every locally separable MU-category with a countable directed cofinal subcategory (in particular with a cofinal object, e.g. a full subcategory $∅ ≠ \K ⊆ \PeanoS$) is Fraïssé if and only if it has the amalgamation property.
\end{observation}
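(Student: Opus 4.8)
The plan is to dispatch the three assertions of the observation separately; only the construction of the dominating subcategory $\D$ carries any content, and even there the difficulty is bookkeeping rather than a genuine obstacle. For the local separability of $\MCpt$ I would fix once and for all a compatible metric on each metrizable compactum, so that each hom-set $\MCpt(X, Y)$ is a subspace of the space of continuous maps $X \to Y$ equipped with the supremum metric; that space is separable by the classical fact cited as \cite[(4.19)]{Kechris95}, and separability passes to subspaces of metric spaces, so every hom-set of $\MCpt$ — hence of any subcategory $\K \subseteq \MCpt$ — is separable.

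For the main point, suppose $\K$ is a locally separable MU-category with a countable cofinal subcategory $\C$ (equivalently, a countable cofinal family of objects $\Ob(\C)$). I would fix a countable dense subset $\D_{X, Y} \subseteq \K(X, Y)$ for each pair $X, Y \in \Ob(\C)$ and let $\D$ be the subcategory of $\K$ generated by $\bigcup\set{\D_{X, Y} : X, Y \in \Ob(\C)}$; this $\D$ is countable, because its morphisms are the identities on the objects involved together with the finite composites of maps drawn from a countable set, and $\Ob(\D) = \Ob(\C)$. What remains is to check that $\D$ is dominating in the sense of Definition~\ref{def:dominating}. Cofinality is the easy half: for a $\K$-object $X$, cofinality of $\Ob(\C)$ yields a $\K$-map $f\maps X \from Y$ with $Y \in \Ob(\C) = \Ob(\D)$. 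For absorption, given a $\D$-object $X$, an $\varepsilon > 0$, and a $\K$-map $f\maps X \from Y$, I would use cofinality to pick a $\K$-map $g\maps Y \from Z$ with $Z \in \Ob(\C)$, observe that $f \circ g$ is then a $\K$-map from $Z$ to $X$ with both endpoints in $\Ob(\C)$, and invoke density of $\D_{Z, X}$ in $\K(Z, X)$ to produce a $\D$-map $h\maps X \from Z$ with $f \circ g \approx_{\varepsilon} h$ — exactly the data required. I expect this absorption step — ``approximate the composite by a map already in the skeleton'' — to be the only place where a reader could stumble, and even there the stumble is purely over tracking the directions of the arrows, not over any real obstacle.

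For the final equivalence, let $\K$ be a locally separable MU-category with a countable \emph{directed} cofinal subcategory $\C$. The forward direction is immediate: a Fraïssé MU-category has the amalgamation property by definition. For the converse, assuming amalgamation, I would verify the remaining two clauses of the definition of a Fraïssé MU-category. Directedness of $\K$ follows by composition: cofinality of $\C$ reduces any pair of $\K$-objects to a pair of $\C$-objects, directedness of $\C$ provides a common $\C$-object mapping to both, and composing the maps returns to the original objects; a countable dominating subcategory is supplied by the previous paragraph applied to the countable cofinal family $\Ob(\C)$. Hence $\K$ is directed, has the amalgamation property, and has a countable dominating subcategory, i.e.\ it is Fraïssé. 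The parenthetical special cases then require no further work: a single cofinal object $U$ gives the trivially directed one-object full subcategory on $\set{U}$, and any non-empty full subcategory $\K \subseteq \PeanoS$ has a cofinal object (a non-degenerate Peano continuum it contains, or a singleton if it contains none) and is locally separable as a subcategory of $\MCpt$.
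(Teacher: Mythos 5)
Your proposal is correct and follows exactly the construction the paper sketches: take countable dense subsets $\D_{X,Y}$ of the hom-sets between objects of the countable cofinal family, generate $\D$, and verify absorption by first mapping cofinally into a $\C$-object and then approximating the composite by a map of the dense skeleton. The remaining verifications (directedness of $\K$ via composition through $\C$, and the reduction of the Fraïssé conditions to amalgamation) are the intended routine steps, so there is nothing to add.
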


\begin{observation} \label{thm:wide_dominating}
	Let $\K$ be an MU-category and $\D ⊆ \K$ a wide dominating subcategory.
	If $\D$ has the amalgamation property, then $\K$ has the amalgamation property as well.
	
	\begin{proof}
		For every pair of $\K$-maps $f_0, f_1$ with a common codomain and $ε > 0$ there are $\K$-maps $f'_i, g_i$, $i < 2$, such that $f_i ∘ f'_i ≈_{ε/3} g_i ∈ \D$, and there are $\D$-maps $g'_i$ such that $g_0 ∘ g'_0 ≈_{ε/3} g_1 ∘ g'_1$.
		Altogether, we have $f_0 ∘ (f'_0 ∘ g'_0) ≈_ε f_1 ∘ (f'_1 ∘ g'_1)$.
	\end{proof}
\end{observation}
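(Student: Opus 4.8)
The plan is to lift the amalgamation property from $\D$ to $\K$ by a short diagram chase, using only that $\D$ is absorbing and wide. Fix $\K$-maps $f_0\maps Z \from X$ and $f_1\maps Z \from Y$ with a common codomain $Z$, and let $ε > 0$. Since $\D$ is wide, $Z$ (indeed every object occurring below) is a $\D$-object, so the absorbing property applies to each $f_i\maps Z \from X_i$ (writing $X_0 := X$, $X_1 := Y$): it produces a $\K$-map $f_i'\maps X_i \from W_i$ and a $\D$-map $g_i\maps Z \from W_i$ with $f_i ∘ f_i' ≈_{ε/3} g_i$.

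Next I would amalgamate the span $\tuple{g_0, g_1}$ inside $\D$: since $\D$ has the amalgamation property, there are a $\D$-object $V$ and $\D$-maps $g_0'\maps W_0 \from V$ and $g_1'\maps W_1 \from V$ with $g_0 ∘ g_0' ≈_{ε/3} g_1 ∘ g_1'$. Now $f_0' ∘ g_0'\maps X \from V$ and $f_1' ∘ g_1'\maps Y \from V$ are $\K$-maps out of the common object $V$, so it remains to estimate $f_0 ∘ (f_0' ∘ g_0')$ against $f_1 ∘ (f_1' ∘ g_1')$. Precomposing each approximation $f_i ∘ f_i' ≈_{ε/3} g_i$ by $g_i'$ and invoking axiom~(ii) of an MU-category (precomposition is non-expansive) gives $(f_i ∘ f_i') ∘ g_i' ≈_{ε/3} g_i ∘ g_i'$ for $i = 0, 1$; combining these two estimates with $g_0 ∘ g_0' ≈_{ε/3} g_1 ∘ g_1'$ through the triangle inequality yields $f_0 ∘ (f_0' ∘ g_0') ≈_ε f_1 ∘ (f_1' ∘ g_1')$, which is exactly the amalgamation property for $\K$.

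I expect no genuine obstacle here; the statement is a one-step reduction. The only points that need care are the bookkeeping — splitting $ε$ into three equal parts — and the reminder that in an MU-category the hom-metrics are measured in the codomain, so only precomposition (not postcomposition) is guaranteed non-expansive; fortunately every estimate above is obtained by precomposing a pair of close maps, so this causes no trouble. It is also worth noting where wideness enters: it guarantees that the vertex $Z$ of the span $\tuple{g_0, g_1}$ is a $\D$-object, which is what lets us apply amalgamation in $\D$ at that step.
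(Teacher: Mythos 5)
Your proof is correct and follows exactly the paper's argument: use wideness so that the common codomain is a $\D$-object, apply absorption to each $f_i$ to get $f_i\circ f_i'\approx_{\varepsilon/3}g_i\in\D$, amalgamate $g_0,g_1$ in $\D$, and conclude by non-expansiveness of precomposition plus the triangle inequality. Your explicit remarks about where wideness enters and why only precomposition is needed are just spelled-out versions of what the paper leaves implicit.
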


We have connected the notions of a Fraïssé object, a Fraïssé sequence, and a Fraïssé category.
These concepts may sometimes overlap – a Fraïssé category $\K$ of small objects may already have a Fraïssé object, corresponding to a constant identity Fraïssé sequence (see Proposition~\ref{thm:fraisse_object}).
But even if it is not the case, $\K$ has the Fraïssé limit in a free completion $\L$, which is a Fraïssé object, and so $\L$ itself is a Fraïssé category.
To summarize, we formulate the following observation.

\begin{observation}
	Let $\tuple{\K, \L}$ be a free completion of MU-categories.
	\begin{enumerate}
		\item If $\K$ is locally complete with a Fraïssé object $U$, then $\K$ is a Fraïssé category, and $U$ is also a Fraïssé object in $\L$.
		\item If $\K$ is a Fraïssé category, then $\L$ is a locally complete Fraïssé category with a Fraïssé object.
	\end{enumerate}
\end{observation}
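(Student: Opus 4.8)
The plan is to derive both parts purely from the characterization of the Fraïssé limit (Theorem~\ref{thm:fraisse_limit}), the existence criterion for Fraïssé sequences (Theorem~\ref{thm:fraisse_category}), and the remark recorded inside the proof of Proposition~\ref{thm:fraisse_object} that an object $U$ of a locally complete MU-category $\M$ is a Fraïssé object in $\M$ if and only if the constant sequence $\tuple{\id_U}_{n ∈ ω}$ is a Fraïssé (i.e.\ dominating and projective) sequence in $\M$. So the whole argument is a matter of chaining already-established equivalences, and there is no real obstacle.

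For (i): assume $\K$ is locally complete with a Fraïssé object $U$. Applying the cited remark with $\M = \K$, the constant sequence $\tuple{\id_U}_{n ∈ ω}$ is a Fraïssé sequence in $\K$; hence $\K$ has a Fraïssé sequence and is a Fraïssé MU-category by Theorem~\ref{thm:fraisse_category}. For the second claim, note $U ∈ \Ob(\K) ⊆ \Ob(\L)$, and the constant $\K$-sequence $\tuple{\id_U}_{n ∈ ω}$ has $U$ as its $\L$-limit with identity cone maps (the identity cone over an eventually constant sequence is universal). Thus $U$ is an $\L$-limit of a Fraïssé sequence in $\K$, and the implication (i)$\implies$(iv) of Theorem~\ref{thm:fraisse_limit} yields that $U$ is a Fraïssé object in $\L$.

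For (ii): assume $\K$ is a Fraïssé category. By Theorem~\ref{thm:fraisse_category} there is a Fraïssé sequence $f_*$ in $\K$, and by condition (L1) of the free completion $\L$ is locally complete and $f_*$ has a limit $U$ in $\L$. Theorem~\ref{thm:fraisse_limit} (implication (i)$\implies$(iv)) shows $U$ is a Fraïssé object in $\L$. Applying the remark again, this time with $\M = \L$, the constant sequence $\tuple{\id_U}_{n ∈ ω}$ is a Fraïssé sequence in $\L$, so $\L$ has a Fraïssé sequence and is therefore a Fraïssé MU-category by Theorem~\ref{thm:fraisse_category}; it is locally complete by (L1) and has the Fraïssé object $U$.

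The only points needing a line of care are: (a) confirming that an eventually constant $\K$-sequence genuinely has the expected limit in the ambient $\L$ — immediate, as just noted; and (b) keeping track of the two distinct uses of condition (L1), namely "$\L$ is locally complete" (needed to invoke Theorem~\ref{thm:fraisse_limit} and to conclude that $\L$ is a Fraïssé category) and "every $\K$-sequence has an $\L$-limit" (needed in (ii) to produce the object $U$ from the Fraïssé sequence $f_*$).
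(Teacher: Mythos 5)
Your proposal is correct and follows essentially the route the paper itself indicates (the observation is left unproved, but the preceding paragraph sketches exactly this argument): the equivalence between a Fraïssé object and the constant identity sequence being Fraïssé from Proposition~\ref{thm:fraisse_object}, the existence criterion of Theorem~\ref{thm:fraisse_category}, and the implication from being an $\L$-limit of a Fraïssé sequence to being a Fraïssé object in Theorem~\ref{thm:fraisse_limit}. Your two points of care — that the constant identity sequence has $U$ itself as its $\L$-limit, and the two separate uses of (L1) — are exactly the right details to check.
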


In the classical Fraïssé theory of first-order structures, ages of countable structures are often discussed~\cite[Chapter~7]{Hodges93}, while the ambient pair $\tuple{\K, \L}$ forming a free completion is implicit.
The classical formulation is that a countable structure is homogeneous if and only if it is the Fraïssé limit of its age, and that this yields a one-to-one correspondence between countable homogeneous structures and (hereditary) Fraïssé classes \cite[Theorems~7.1.2 and 7.1.7]{Hodges93}.
We shall formulate the corresponding characterization of Fraïssé limits in our setting, so the connection to the classical Fraïssé theory is more direct.
Later, we shall view the classical theory as a special case of our setting.

\begin{definition}
	Let $\K$ be an MU-category.
	A subcategory $\F ⊆ \K$ is called \emph{hereditary} if for every $f ∈ \K$ we have $f ∈ \F$ if $\dom(f) ∈ \Ob(\F)$,
	i.e. $\F$ is a “downwards closed” full subcategory, where “downwards” means from the domain to the codomain, as is natural in the projective setting.
	Similarly, $\F$ is called a \emph{component} if it is both “downwards” and “upwards” closed full subcategory, i.e. $f ∈ \F$ whenever $\dom(f) ∈ \Ob(\F)$ or $\cod(f) ∈ \Ob(\F)$ for $f ∈ \K$ (this is related to the notion of connected category – see Observation~\ref{thm:projective_cofinal}).
	
	Let $\K ⊆ \L$ be MU-categories.
	For every $\L$-object $U$ let $\Age(U)$ (or more precisely $\Age_{\K, \L}(U)$) denote the full subcategory of $\K$ corresponding to the class of objects $\set{X ∈ \Ob(\K): \L(U, X) ≠ ∅}$.
	Clearly, this is a hereditary subcategory.
	In this context we will identify classes of $\K$-objects with the corresponding full subcategories.
\end{definition}

\begin{observation}
	Let $\tuple{\K, \L}$ be a free completion of MU-categories.
	If we start with an $\L$-object $U$, we have that $\tuple{\Age(U), \sAge(U)}$ is a free completion and that $\sAge(U) ⊆ \L$ is full by Corollary~\ref{thm:free_completion_hereditary}.
	By (L2) of $\tuple{\K, \L}$, $U$ is a $\sAge(U)$-object.
	Altogether, the characterization of the Fraïssé limit (Theorem~\ref{thm:fraisse_limit}) applies.
	
	On the other hand, if we start with a full subcategory $\F ⊆ \K$ that is Fraïssé, we have that $\tuple{\F, σ\F}$ is a free completion with a Fraïssé limit $U$.
	However, $\F ⊆ \Age(U)$ and $σ\F ⊆ \sAge(U)$ may be strict inclusions, and $U$ may not be the Fraïssé limit of its age.
	The inclusions are equalities if and only if $\F$ is hereditary, and $U$ is the Fraïssé limit of its age if and only if the age has the amalgamation property.
	But in any case, $U$ is generic in $\tuple{\Age(U), \sAge(U)}$ since $\F ⊆ \Age(U)$ is dominating (see Proposition~\ref{thm:dominating_subcategory}), and so $U$ is the only candidate for the Fraïssé limit.
\end{observation}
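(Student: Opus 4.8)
The plan is to handle the two halves of the statement separately, reducing each to the machinery already in place; the only genuine care needed is in keeping straight the nested ambient categories. For the case of a given $\L$-object $U$: since $\Age(U)$ is a full subcategory of $\K$, Corollary~\ref{thm:free_completion_hereditary} at once gives that $\tuple{\Age(U), \sAge(U)}$ is a free completion and that $\sAge(U) ⊆ \L$ is full, so the only remaining point is that $U$ is itself a $\sAge(U)$-object. I would see this by using (L2) of $\tuple{\K, \L}$ to present $U$ as the $\L$-limit $\tuple{U, f_{*, ∞}}$ of some $\K$-sequence $\tuple{X_*, f_*}$: each cone map $f_{n, ∞}\maps X_n \from U$ witnesses $\L(U, X_n) ≠ ∅$, so $X_n ∈ \Ob(\Age(U))$, and fullness of $\Age(U)$ in $\K$ then puts the whole sequence into $\Age(U)$; by the construction of the $σ$-closure, $U ∈ \Ob(σ\Age(U)) = \Ob(\sAge(U))$, and Theorem~\ref{thm:fraisse_limit} applies verbatim to $\tuple{\Age(U), \sAge(U)}$.

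For the case of a given Fraïssé class $\F ⊆ \K$: Corollary~\ref{thm:free_completion_hereditary} gives that $\tuple{\F, σ\F}$ is a free completion, Theorem~\ref{thm:fraisse_category} supplies a Fraïssé sequence $\tuple{X_*, f_*}$ in $\F$, and Theorem~\ref{thm:fraisse_limit} identifies its $σ\F$-limit $U$ as the unique Fraïssé object of $σ\F$ (which, by Corollary~\ref{thm:absolute}, is also the $\L$-limit of $f_*$). The inclusion $\F ⊆ \Age(U)$ then follows because, for $X ∈ \Ob(\F)$, cofinality of the Fraïssé sequence gives an $\F$-map $X \from X_n$ whose composite with $f_{n, ∞}$ is an $\L$-map $X \from U$; consequently $σ\F ⊆ \sAge(U)$ as well, and these inclusions can be proper, since $\Age(U)$ is always hereditary whereas $\F$ need not be.

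The two equivalences I would argue as follows. Since $\Age(U)$ is hereditary, $\F = \Age(U)$ forces $\F$ hereditary; conversely, if $\F$ is hereditary and $X ∈ \Ob(\Age(U))$ with some $\L$-map $h\maps X \from U$, I would apply (F1) of the \emph{original} pair $\tuple{\K, \L}$ — legitimate because $U$ is the $\L$-limit of the $\K$-sequence $f_*$ — to get $n$ and a $\K$-map $g\maps X \from X_n$ with $h ≈_ε g ∘ f_{n, ∞}$; as $\dom(g) = X_n ∈ \Ob(\F)$, heredity yields $g ∈ \F$, hence $X = \cod(g) ∈ \Ob(\F)$, so $\F = \Age(U)$ and thus $σ\F = \sAge(U)$. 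For ``$U$ is a Fraïssé limit of its age $\iff \Age(U)$ has the amalgamation property'': by Theorems~\ref{thm:fraisse_category} and~\ref{thm:fraisse_limit} this reduces to $\Age(U)$ being a Fraïssé MU-category, and here $\F ⊆ \Age(U)$ is full (both are full in $\K$) and cofinal (same (F1)-argument, with heredity of $\Age(U)$ keeping the factoring $\K$-map inside $\Age(U)$), hence dominating; therefore directedness of $\F$ and a countable dominating subcategory of $\F$ carry over to $\Age(U)$ (domination composes), so the amalgamation property is the only condition that can fail.

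Finally, $U$ is generic in $\tuple{\F, σ\F}$ by Observation~\ref{thm:fraisse_object_is_generic}; since limits of $\F$-sequences agree in $σ\F$ and in $\sAge(U)$ (Corollary~\ref{thm:absolute}), $U$ is generic in $\tuple{\F, \sAge(U)}$, and Proposition~\ref{thm:dominating_subcategory} transports this along the dominating inclusion $\F ⊆ \Age(U)$ to genericity in $\tuple{\Age(U), \sAge(U)}$; as any Fraïssé limit is generic and generic objects are unique, $U$ is the only candidate. The whole argument is organizational rather than substantive, and the one step to get right is invoking (F1) for $\tuple{\K, \L}$ (with $U$ viewed as the $\L$-limit of the $\F$-sequence) rather than for $\tuple{\F, σ\F}$, together with the absoluteness of limits across $σ\F ⊆ \sAge(U) ⊆ \L$; once that is pinned down, every clause is a direct citation of an already-proved result.
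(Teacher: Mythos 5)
Your argument is correct and follows essentially the same route as the paper's inline justification: Corollary~\ref{thm:free_completion_hereditary} plus (L2) for the first half, and for the second half the (F1)-factorization through the Fraïssé sequence (rightly invoked for the pair $\tuple{\K,\L}$), domination of $\F$ in $\Age(U)$, Proposition~\ref{thm:dominating_subcategory}, and uniqueness of generic objects. One small citation slip: genericity of $U$ in $\tuple{\F, σ\F}$ does not follow from Observation~\ref{thm:fraisse_object_is_generic}, which only gives genericity in the game played in $σ\F$ (a distinction the paper itself warns about); the correct reference is Proposition~\ref{thm:dominating_sequence}, since $U$ is the limit of a Fraïssé, hence dominating, sequence in $\F$ — after which your transfer argument goes through unchanged.
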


\begin{theorem} \label{thm:age_limit}
	Let $\tuple{\K, \L}$ be a free completion of MU-categories and let $U$ be an $\L$-object.
	\begin{enumerate}
		\item $U$ is homogeneous in $\tuple{\K, \L}$ if and only if $U$ is the Fraïssé limit of $\Age(U)$ in $\sAge(U)$.\kern-0.75em% to fit one line
		\item $U$ is projective in $\tuple{\K, \L}$ if and only if $U$ is the Fraïssé limit of $\Age(U)$ in $\sAge(U)$ and $\Age(U)$ is a component of $\K$.
	\end{enumerate}
	
	\begin{proof}
		We use the first part of the previous observation.
		By the definition of $\Age(U)$ and by the fact that $\sAge(U) ⊆ \L$ is full, we have that $U$ is homogeneous in $\tuple{\K, \L}$ if and only if it is homogeneous in $\tuple{\Age(U), \sAge(U)}$.
		Clearly, $U$ is always cofinal in $\tuple{\Age(U), \sAge(U)}$, hence claim~(i) follows.
		Similarly, $U$ is projective in $\tuple{\K, \L}$ if and only if it is projective in $\tuple{\Age(U), \sAge(U)}$ and $\Age(U)$ is a component of $\K$.
		Hence, claim~(ii) follows.
	\end{proof}
\end{theorem}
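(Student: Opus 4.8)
The plan is to reduce everything to the characterization of the Fraïssé limit (Theorem~\ref{thm:fraisse_limit}) applied to the free completion $\tuple{\Age(U), \sAge(U)}$, exploiting that $\sAge(U)$ is a full subcategory of $\L$ by Corollary~\ref{thm:free_completion_hereditary} (which applies since $\Age(U) ⊆ \K$ is hereditary, hence full). The key structural observation to invoke first is the one just before the statement: $\tuple{\Age(U), \sAge(U)}$ is a free completion, $\sAge(U) ⊆ \L$ is full, and $U$ is a $\sAge(U)$-object by (L2). So the four equivalent conditions of Theorem~\ref{thm:fraisse_limit} are available for $U$ inside $\tuple{\Age(U), \sAge(U)}$, and being a Fraïssé limit of $\Age(U)$ in $\sAge(U)$ is synonymous with being cofinal and homogeneous (equivalently cofinal and projective) in $\tuple{\Age(U), \sAge(U)}$.

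For part~(i): since $\sAge(U) ⊆ \L$ is full, for $\K$-objects $X$ and $\L$-maps $f, g\maps X \from U$, the property "$X ∈ \Ob(\Age(U))$" is automatic whenever $\L(U, X) ≠ ∅$, and any $\L$-automorphism of $U$ is a $\sAge(U)$-automorphism. Hence homogeneity of $U$ in $\tuple{\K, \L}$ only ever concerns objects $X$ with $\L(U, X) ≠ ∅$, i.e. $\Age(U)$-objects, and the witnessing automorphisms live in $\sAge(U)$; so $U$ is homogeneous in $\tuple{\K, \L}$ iff it is homogeneous in $\tuple{\Age(U), \sAge(U)}$. Moreover $U$ is trivially cofinal in $\tuple{\Age(U), \sAge(U)}$: for every $\Age(U)$-object $X$ there is by definition an $\L$-map (equivalently a $\sAge(U)$-map) $X \from U$. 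Combining with Theorem~\ref{thm:fraisse_limit} (equivalence of (iii) cofinal+homogeneous with (i) being an $\sAge(U)$-limit of a Fraïssé sequence in $\Age(U)$), we get (i).

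For part~(ii): again by fullness of $\sAge(U) ⊆ \L$, projectivity of $U$ in $\tuple{\K, \L}$ restricted to $\Age(U)$-objects is the same as projectivity in $\tuple{\Age(U), \sAge(U)}$. But plain projectivity in $\tuple{\K, \L}$ quantifies over \emph{all} $\K$-objects $X$ and $\K$-maps $g\maps X \from Y$, so it additionally forces $\Age(U)$ to be closed "upwards" in $\K$ (if $\L(U, Y) ≠ ∅$ and $g\maps X \from Y$ is a $\K$-map, the extension property produces $h\maps Y \from U$... and conversely the extension condition only makes sense relative to $X$ in the age). Concretely: if $X ∉ \Ob(\Age(U))$ but there is a $\K$-map $g\maps X \from Y$ with $Y ∈ \Ob(\Age(U))$, then taking any $\L$-map $f\maps X \from U$ — which exists once we know $\Age(U)$ is hereditary and... — actually the clean way is to use Observation~\ref{thm:projective_cofinal}: $U$ projective in $\tuple{\K, \L}$ implies the full subcategory $\Age(U)$ is both downward closed (automatic, it is an age) and, via the extension property, a component of $\K$. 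Conversely, if $\Age(U)$ is a component of $\K$, then every $\K$-object that maps to or from an $\Age(U)$-object is itself in $\Age(U)$, so the quantifier over all $\K$-objects in the definition of projectivity collapses to the quantifier over $\Age(U)$-objects, and projectivity in $\tuple{\K, \L}$ becomes equivalent to projectivity in $\tuple{\Age(U), \sAge(U)}$, i.e. (with cofinality, which is automatic) to $U$ being a Fraïssé limit of its age. This gives (ii).

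The only place requiring genuine care — and what I expect to be the main obstacle — is pinning down the "component" bookkeeping in part~(ii): one must check that the extension property for $U$ in $\tuple{\K, \L}$, when fed a $\K$-map $g\maps X \from Y$ with $X \notin \Ob(\Age(U))$ and an $\L$-map $f\maps X\from U$ (whose existence needs $X$ reachable from $U$ in the first place), really does propagate age-membership across $g$ in both directions, and that conversely no new obstructions to projectivity appear once $\Age(U)$ is known to be a component. This is the argument packaged in Observation~\ref{thm:projective_cofinal}, applied to the category $\K$ with the class $\F = \Ob(\Age(U))$; the rest is a matter of matching quantifiers and invoking the already-established equivalences. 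Everything else is formal transfer along the fullness of $\sAge(U)$ in $\L$.
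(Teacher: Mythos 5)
Your proposal is correct and follows essentially the same route as the paper: invoke the preceding observation that $\tuple{\Age(U), \sAge(U)}$ is a free completion with $\sAge(U) ⊆ \L$ full and $U$ a $\sAge(U)$-object, transfer homogeneity/projectivity across the fullness (cofinality in $\tuple{\Age(U), \sAge(U)}$ being trivial), and apply Theorem~\ref{thm:fraisse_limit}. Your handling of the component condition in part~(ii) — the extension property forces $\Ob(\Age(U))$ to be closed both downwards and upwards along $\K$-maps, and conversely a component lets the quantifier over all $\K$-objects collapse to the age — is exactly the argument the paper leaves implicit in its one-line "Similarly" step.
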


\begin{corollary}
	A projective object in a free completion $\tuple{\K, \L}$ of MU-categories is homogeneous.
\end{corollary}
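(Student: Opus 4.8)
The plan is to extract this immediately from Theorem~\ref{thm:age_limit}, which has just been proved, by chaining its two parts through the intermediate notion of being a Fraïssé limit of the age. First I would take an $\L$-object $U$ that is projective in $\tuple{\K, \L}$ and invoke Theorem~\ref{thm:age_limit}~(ii): this tells me not only that $\Age(U)$ is a component of $\K$, but crucially that $U$ is a Fraïssé limit of $\Age(U)$ in $\sAge(U)$. Then I would feed this conclusion back into Theorem~\ref{thm:age_limit}~(i), whose ``if'' direction says precisely that being a Fraïssé limit of $\Age(U)$ in $\sAge(U)$ implies homogeneity of $U$ in $\tuple{\K, \L}$. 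Composing the two implications yields the claim.

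So the argument is a one-line deduction: $U$ projective in $\tuple{\K, \L}$ $\implies$ $U$ is a Fraïssé limit of $\Age(U)$ in $\sAge(U)$ (by Theorem~\ref{thm:age_limit}~(ii)) $\implies$ $U$ is homogeneous in $\tuple{\K, \L}$ (by Theorem~\ref{thm:age_limit}~(i)). There is no genuine obstacle here; all the real work has been absorbed into Theorem~\ref{thm:age_limit}, which in turn rests on Corollary~\ref{thm:free_completion_hereditary} (so that $\tuple{\Age(U), \sAge(U)}$ is again a free completion with $\sAge(U) \subseteq \L$ full) and on the characterization of the Fraïssé limit (Theorem~\ref{thm:fraisse_limit}). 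The only point worth stating explicitly, to keep the deduction honest, is that Theorem~\ref{thm:age_limit}~(ii) really does produce the ``Fraïssé limit of the age'' conclusion and not merely the component condition, so that part~(i) is applicable; everything else is formal. Conceptually, the content being repackaged is that projectivity is a strictly stronger hypothesis than homogeneity precisely because it additionally forces $\Age(U)$ to be upward closed (a component), whereas homogeneity alone is insensitive to that.

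\begin{proof}
	Let $U$ be projective in $\tuple{\K, \L}$. By Theorem~\ref{thm:age_limit}~(ii), $U$ is a Fraïssé limit of $\Age(U)$ in $\sAge(U)$ (and moreover $\Age(U)$ is a component of $\K$, which we do not need here). Hence, by the ``if'' direction of Theorem~\ref{thm:age_limit}~(i), $U$ is homogeneous in $\tuple{\K, \L}$.
\end{proof}
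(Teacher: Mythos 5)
Your proposal is correct and is exactly the deduction the paper intends: the corollary follows immediately from Theorem~\ref{thm:age_limit} by combining part~(ii) (projectivity gives that $U$ is a Fraïssé limit of its age, the component condition being extra) with part~(i) (Fraïssé limit of the age gives homogeneity). No gap; this is the same approach as the paper's.
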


\begin{observation}
	For a free completion $\tuple{\K, \L}$ we may preorder homogeneous objects in $\tuple{\K, \L}$ by putting $U ≤ V$ if $\Age(U) ⊆ \Age(V)$.
	This is a well-defined preorder, and two homogeneous objects are $≤$-equivalent if and only if they have the same ages, and so if and only if they are $\L$-isomorphic, as Fraïssé limits.
\end{observation}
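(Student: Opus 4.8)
The plan is to handle the three assertions one at a time: that $≤$ is a preorder that respects isomorphism, that $≤$-equivalence is the same as equality of ages, and that equality of ages forces an $\L$-isomorphism between homogeneous objects via uniqueness of the Fraïssé limit.

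First I would check the preorder axioms. Reflexivity is simply $\Age(U) ⊆ \Age(U)$, and transitivity follows from transitivity of inclusion of classes of objects, since $\Age(U) ⊆ \Age(V) ⊆ \Age(W)$ gives $\Age(U) ⊆ \Age(W)$. For the fact that $≤$ is well defined on isomorphism classes, I would observe that an $\L$-isomorphism $φ\maps U \to U'$ induces, for every $\K$-object $X$, a bijection $(- ∘ φ\inv)\maps \L(U, X) \to \L(U', X)$; hence $\L(U, X) ≠ ∅$ if and only if $\L(U', X) ≠ ∅$, so $\Age(U) = \Age(U')$, and the relation descends to isomorphism classes. The middle equivalence is then formal: $U ≤ V$ and $V ≤ U$ hold precisely when $\Age(U) ⊆ \Age(V)$ and $\Age(V) ⊆ \Age(U)$, that is, when $\Age(U) = \Age(V)$; moreover the observation just made gives $U \iso V \implies \Age(U) = \Age(V)$.

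The substantive step is the last equivalence, and here the idea is to invoke the age version of the characterization of Fraïssé limits. Suppose $U$ and $V$ are homogeneous in $\tuple{\K, \L}$ with $\Age(U) = \Age(V) =: \A$. Since the $σ$-closure $σ\A$ is constructed from $\A$ and the fixed ambient category $\L$, it is literally the same category regardless of which of $U, V$ we started from, i.e.\ $\sAge(U) = σ\A = \sAge(V)$. By Theorem~\ref{thm:age_limit}~(i), homogeneity of $U$, resp.\ $V$, in $\tuple{\K, \L}$ is exactly the assertion that $U$, resp.\ $V$, is a Fraïssé limit of $\A$ in $σ\A$; the uniqueness part of Theorem~\ref{thm:fraisse_limit} then supplies a $σ\A$-isomorphism $U \to V$, and since $σ\A$-isomorphisms between $σ\A$-objects are exactly the $\L$-isomorphisms (by the $σ$-closure construction), $U$ and $V$ are $\L$-isomorphic. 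The reverse direction has already been noted.

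I do not expect a real obstacle. The only point that needs to be stated carefully is that $\sAge(U)$ depends only on $\Age(U)$ and on the fixed $\L$, so that $U$ and $V$ are Fraïssé limits of $\A$ in one and the same category — this is immediate from the definition of the $σ$-closure — together with the harmless identification of $σ\A$-isomorphisms with $\L$-isomorphisms recorded in that construction.
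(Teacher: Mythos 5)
Your proof is correct and follows exactly the route the paper intends: the paper states this observation without proof immediately after Theorem~\ref{thm:age_limit}, relying on the fact that a homogeneous object is a Fraïssé limit of its age in $\sAge(U)$ (which depends only on the age and $\L$), together with the uniqueness of Fraïssé limits and the fact that $σ$-closure isomorphisms are $\L$-isomorphisms. Nothing is missing.
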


\subsection{Applications}

Let us demonstrate the theory built so far on several examples, and let us clearly summarize how the classical Fraïssé theory and the projective Fraïssé theory are instances of the abstract theory.

\begin{example}[Cantor space] \label{ex:Cantor}
	Let $\FinS$ be the category of non-empty finite sets and surjective maps.
	These are equivalently non-empty finite discrete spaces with continuous surjections, and so $\FinS$ may be viewed as an MU-subcategory of $\MCptS$ (the category of non-empty metrizable compacta and continuous surjections).
	We will observe that $σ\FinS$ is the category of all non-empty zero-dimensional metrizable compacta and all continuous surjections.
	Clearly every limit $\tuple{X_∞, f_{*, ∞}}$ of a $\FinS$-sequence $\tuple{X_*, f_*}$ is a non-empty zero-dimensional metrizable compactum.
	$\tuple{\FinS, \MCptS}$ satisfies (F1): continuous surjections $g\maps X_∞ \to Y$ onto finite spaces correspond to finite clopen partitions of $X_∞$.
	As in Observation~\ref{thm:epsilon_monic}, there is $n ∈ ω$ such that the partition of $X_∞$ induced by $f_{n, ∞}$ refines the one induced by $g$, and so there is a surjection $h\maps X_n \to Y$ such that $h ∘ f_{n, ∞} = g$.
	It follows that $\tuple{\FinS, σ\FinS}$ is a free completion and that $σ\FinS ⊆ \MCptS$ is full (Proposition~\ref{thm:sigma_conditions}~(iii), Corollary~\ref{thm:compact_conditions}).
	Since clearly for every zero-dimensional metrizable compactum $X$ and $ε > 0$ there is an $ε$-map onto a finite space, $X$ is a $σ\FinS$-object as in Corollary~\ref{thm:P-like}.
	
	It is easy to see that $\FinS$ is a Fraïssé category, and so there is a Fraïssé limit $U$ of $\tuple{\FinS, σ\FinS}$.
	The limit $U$ is characterized by projectivity in $\tuple{\FinS, σ\FinS}$ since cofinality follows (Remark~\ref{thm:cofinality_trivial}), i.e. by the condition that for every continuous surjection $f\maps U \to F$ onto a finite space and every $\FinS$-map $g\maps F' \to F$ there is a continuous surjection $h\maps U \to F'$ such that $f = g ∘ h$.
	Equivalently, for every (necessarily finite) clopen partition of $U$ and every finite refinement pattern there is a corresponding refining clopen partition of $U$.
	Since every $\FinS$-map $g\maps F' \to F$ is a composition of “simple” maps, namely finite surjections splitting just one point into two, $U$ is projective in $\tuple{\FinS, σ\FinS}$ if and only if it has no isolated points.
	The well-known theorem that there is a unique non-empty zero-dimensional metrizable compact space without isolated points as well as the fact that the Fraïssé limit is the Cantor space $2^ω$ follow.
	As a byproduct, we obtain the known facts that $2^ω$ maps onto every non-empty zero-dimensional metrizable compactum, that every continuous surjection $2^ω \to 2^ω$ is a near-homeomorphism, as well as that for all continuous surjections $f, g\maps 2^ω \to Z$ onto a zero-dimensional compactum and every $ε > 0$ there is a homeomorphism $h\maps 2^ω \to 2^ω$ such that $f ≈_ε g ∘ h$.
	It is also easy to see that a $\FinS$-sequence $\tuple{X_*, f_*}$ is Fraïssé if and only if it “splits every point”, i.e. for every $n$ and $x ∈ X_n$ there is $n' ≥ n$ such that $f_{n, n'}\fiber{x}$ is non-degenerate.
	The winning strategy for Odd in $\BM(\FinS)$ for the Cantor space obviously consists of splitting every point, e.g. by playing the projection $X_n \from X_n × 2$.
\end{example}

The example above was discrete in the sense that we had even strict commutativity of diagrams into small objects.
Let us summarize how the theory simplifies in the discrete setting in general.

\begin{remark}[Discrete Fraïssé theory] \label{rm:discrete_fraisse}
	Let $\K ⊆ \L$ be categories viewed also as discrete MU-categories (Definition~\ref{def:discrete}).
	All the properties involving commutativity of diagrams up to arbitrarily small $ε > 0$ simplify to strict commutativity.
	For example, an $\L$-object $U$ is homogeneous in $\tuple{\K, \L}$ if and only if for every pair of $\L$-maps $f, g\maps U \to X$ to a $\K$-object there is an automorphism $h\maps U \to U$ such that $f = g ∘ h$.
	Similarly, $\K$ has the amalgamation property if and only if for every pair of $\K$-maps $f, g$ with common codomain there are $\K$-maps $f', g'$ such that $f ∘ f' = g ∘ g'$.
	
	The pair $\tuple{\K, \L}$ rarely satisfies the discrete version of the continuity condition (C).
	Hence, we say that $\tuple{\K, \L}$ is a \emph{discrete free completion} if it satisfies (L1), (L2), (F1), (F2), which simplify to the following:
	\begin{itemize}
		\item[(L1)] Every $\K$-sequence has a limit in $\L$.
		\item[(L2)] Every $\L$-object is a limit of a $\K$-sequence.
		\item[(F1)] For every $\K$-sequence $f_*$, every its limit $\tuple{X_∞, f_{*, ∞}}$ in $\L$, and every $\L$-map $h\maps Y \from X_∞$ to a $\K$-object there is $n ∈ ω$ and a $\K$-map $g\maps Y \from X_n$ such that $h = g ∘ f_{n, ∞}$.
		\item[(F2)] For every $\K$-sequence $f_*$, every its limit $\tuple{X_∞, f_{*, ∞}}$ in $\L$, every $n ∈ ω$, and every pair of $\K$-maps $g, g'\maps Y \from X_n$ to a $\K$-object such that $g ∘ f_{n, ∞} = g' ∘ f_{n, ∞}$ there is $n' ≥ n$ such that $g ∘ f_{n, n'} = g' ∘ f_{n, n'}$.
	\end{itemize}
	Since Theorem~\ref{thm:fraisse_limit} uses (C) only to prove large homogeneity, we still have the following characterization of Fraïssé limits in the discrete context.
	For a discrete free completion $\tuple{\K, \L}$ and an $\L$-object $U$, the following conditions are equivalent:
	\begin{enumerate}
		\item $U$ is an $\L$-limit of a Fraïssé sequence in $\K$,
		\item $U$ is cofinal and projective in $\tuple{\K, \L}$,
		\item $U$ is cofinal and homogeneous in $\tuple{\K, \L}$.
	\end{enumerate}
	Moreover, such object $U$ is unique up to isomorphism, is cofinal in $\L$, and every $\K$-sequence with limit $U$ is Fraïssé.
	
	It turns out that we may alter the MU-structure on $\L$ so that the discrete free completion $\tuple{\K, \L}$ becomes a free completion of MU-categories.
	For every $\L$-object $X$ we fix any $\K$-sequence $\tuple{X_*, f_*}$ with limit $\tuple{X, f_{*, ∞}}$ and for every pair of $\L$-maps $g, h\maps X \from Y$ we put 
	\[
		d(g, h) := \inf\set{1/(n + 1): f_{n, ∞} ∘ g = f_{n, ∞} ∘ h} ∈ [0, ∞],
	\]
	so $g ≈_{1/n} h$ if and only if $f_{n, ∞} ∘ g = f_{n, ∞} ∘ h$.
	This turns $\L$ into a locally complete MU-category.
	Up to MU-isomorphism, the resulting MU-category does not depend on the choice of $\K$-sequences and limit maps.
	$\tuple{\K, \L}$ with this induced MU-structure is a free completion.
	Every $\K$-object is still discrete in $\L$ with the induced MU-structure, so the commutativity of diagrams to $\K$-objects simplify as before, and hence the Fraïssé limit is the same as in the discrete setting, but additionally is homogeneous and projective in $\L$ (with respect to the induced MU-structure).
	This explains how our MU-setup can be viewed as a “conservative extension” of the discrete setup.
	
	\begin{proof}
		We show that $d$ is a well-defined complete (ultra)metric on every $\L(Y, X)$.
		We have $d(g, h) = 0 \iff g = h$ by the uniqueness of a limit factorizing map.
		The map $d$ satisfies the ultrametric triangle inequality since if $g ≈_{1/n} h$ and $h ≈_{1/m} k$ for $n ≤ m$, then $f_{n, ∞} ∘ g = f_{n, ∞} ∘ h = f_{n, ∞} ∘ k$, and so $g ≈_{1/n} k$.
		The metric $d$ is complete since for every Cauchy sequence $\tuple{g_k}_{g ∈ ω}$ in $\L(Y, X)$ and $n ∈ ω$ the sequence $\tuple{f_{n, ∞} ∘ g_k}_{k ∈ ω}$ is eventually constant with value $γ_n$, and $\tuple{Y, γ_*}$ is a cone for $f_*$.
		The limit factorizing map $γ_∞$ is the limit of the sequence $\tuple{g_k}_{g ∈ ω}$.
		
		$\L$ becomes an MU-category with the induced structure.
		Clearly, if $f_{n, ∞} ∘ g = f_{n, ∞} ∘ h$, then $f_{n, ∞} ∘ g ∘ k = g_{n, ∞} ∘ h ∘ k$, so $d(g ∘ k, h ∘ k) ≤ d(g, h)$.
		On the other hand, for every $f\maps X \to Z$, a chosen $\K$-sequence $\tuple{Z_*, z_*}$ with limit $\tuple{Z, z_{*, ∞}}$, and $m ∈ ω$ there is $n ∈ ω$ and a $\K$-map $f'\maps Z_m \from X_n$ such that $f' ∘ f_{n, ∞} = z_{m, ∞} ∘ f$ by (F1).
		Hence, if $f_{n, ∞} ∘ g = f_{n, ∞} ∘ h$, then $z_{m, ∞} ∘ f ∘ g = z_{m, ∞} ∘ f ∘ h$, and $f$ is $\tuple{1/m, 1/n}$-continuous.
		
		The induced MU-structure is independent on the choice of $\K$-sequences and limit maps up to MU-isomorphism.
		Let $\tuple{X'_*, f'_*}$ be another $\K$-sequence with limit $\tuple{X, f'_{*, ∞}}$.
		For every $m ∈ ω$ there is $n ∈ ω$ and a map $u\maps X'_m \from X_n$ such that $u ∘ f_{n, ∞} = f_{m, ∞}$ by (F1), and so for any $\L$-object $Y$ and any $\L$-maps $g, h\maps X \from Y$ if $g ≈_{1/n} h$, then $g ≈'_{1/m} h$, where $≈'$ comes from the alternative MU-structure.
		Hence, $\id_\L$ is MU-continuous, and a symmetric argument gives that $\id_\L$ is an MU-isomorphism.
		
		Every $\K$-object $X$ is still discrete in $\L$.
		We can choose constant identity sequence and the constant identity cone for definition of the metric.
		Then for every $ε > 0$ and all $\L$-maps $g, h\maps X \from Y$, if $g ≈_ε h$, then $g = h$.
		It follows that $\tuple{\K, \L}$ with the induced structure satisfies (L1), (L2), (F1), (F2) since the conditions with respect to the discrete MU-structure (with the exception of local completeness in (L1), which we have proved separately in the induced MU-structure).
		Condition (C) is true as well: the metric $d$ is defined exactly so that (C) becomes true.
	\end{proof}
\end{remark}

\begin{remark}[Classical Fraïssé theory] \label{rm:classical_fraisse}
	Let $L$ be a first-order language, let $\L$ be the opposite category of all countably generated $L$-structures and all embeddings (we formally take the opposite category so that the natively injective setting fits our projective setting), and let $\K$ be the full subcategory of all finitely generated $L$-structures.
	$\tuple{\K, \L}$ is a discrete free completion, and so a free completion with respect to the induced MU-structure on $\L$.
	(L1) and (L2) follow from the fact that countably generated structures are exactly limits of sequences of finitely generated structures, where the limit is essentially the union of an increasing countable chain.
	(F1) is true since every finitely generated substructure of the union of an increasing chain $⋃_{n ∈ ω} A_n$ is covered by some $A_n$.
	(F2) is true since all $\L$-maps are opposite monomorphisms.
	The fact that the conditions for being a free completion are always true in the classical setting seems to be the reason why these conditions are usually not spelled out explicitly.
	It follows that for every class $\F$ of finitely generated $L$-structures (viewed as a full subcategory of $\K$) we have that $\tuple{\F, σ\F}$ is a free completion and $σ\F ⊆ \L$ is full, and hence the characterization of Fraïssé limits applies.
	We also recover the correspondence between homogeneous countable $L$-structures and hereditary Fraïssé classes (Theorem~\ref{thm:age_limit}).
	
	The large homogeneity coming from the induced MU-structure is not interesting in the classical case: a countably generated structure $U$ is homogeneous in $σ\F$ if for every structure $X$ in $σ\F$, every pair of embeddings $f, g\maps X \to U$, any choice of increasing chain of finitely generated structures $⋃_{n ∈ ω} X_n = X$, and every $n ∈ ω$ there is an automorphism $h\maps U \to U$ such that $h ∘ g = f$ when restricted to $X_n$, which is the same as homogeneity in $\tuple{\F, σ\F}$.
	On the other hand, the topology on every $\L(X, Y)$ coming from the induced MU-structure is the topology of pointwise convergence, which is the correct topology when studying the automorphism group of the Fraïssé limit, e.g. in the context of the KPT correspondence~\cite{KPT05}.
	Moreover, the left uniformity of the completely metrizable topological group $\Aut(U)$ is induced by the metric coming from the induced MU-structure, while the two-sided uniformity is complete.
	For more details on abstractly inducing the topology on $\Aut(U)$ in the context of the KPT correspondence see \cite[Construction~2.6]{BBDK24}.
\end{remark}

\begin{remark}[Projective Fraïssé theory] \label{rm:projective_fraisse}
	Let us precisely describe how the projective Fraïssé theory of Irwin and Solecki~\cite{IS06} fits into our framework.
	Irwin and Solecki consider certain epimorphisms of \emph{topological $L$-structures}.
	For a first-order language $L$, a topological $L$-structure is an $L$-structure endowed with a zero-dimensional compact metrizable topology making the operations continuous and the relations closed (as subsets of the corresponding products).
	By a \emph{quotient map} of topological $L$-structures we mean a map that is both quotient of the corresponding $L$-structures and of the corresponding compact topological spaces, i.e. a continuous surjective $L$-homomorphism such that every relation holding in the codomain has a witness in the domain.
	Topological $L$-structures and quotient maps form an ambient category $\L$.
	
	Let $\K ⊆ \L$ denote the full subcategory of all finite $L$-structures.
	An $\L$-limit of a $\K$-sequence is the limit in $\MCptS$ with relations and operations defined coordinatewise.
	$\tuple{\K, \L}$ satisfies (L1), (F1), (F2).
	We have that $σ\K ⊆ \L$ is full and that $\tuple{\K, σ\K}$ is a discrete free completion and so a free completion with the induced MU-structure.
	(If $L$ is a relational language, then $σ\K = \L$, but this is not true in general.
	By \cite[Lemma~2.5]{IS06}, a topological $L$-structure $X$ is a $σ\K$-object if and only if every clopen partition of $X$ can be refined by partition induced by a congruence, but this is not the case e.g. for $2^ω$ with the shift map for $L$ consisting of one unary operation.)
	Since all $\L$-objects are metrizable compacta and all $\L$-maps are continuous, we may view $\L$ as an MU-category also as in Example~\ref{ex:cpt_mucat}.
	However, on $σ\K$ this “native” MU-structure is the same as the induced one.
	Note that Example~\ref{ex:Cantor} is a special case corresponding to the empty language.

	A class $\F$ of topological $L$-structures can be identified with a full subcategory $\F ⊆ \L$.
	Irwin and Solecki define a Fraïssé limit of $\F$ as (using our language) an $\L$-object $U$ that is cofinal and homogeneous in $\tuple{\F, \L}$ with respect to the discrete MU-structure and that for every $ε > 0$ admits an $ε$-monic $\L$-map onto a $\K$-object.
	However, if $\F ⊆ \K$, i.e. if all $\F$-objects are finite structures (and so $\F$ is a discrete MU-category), the last condition becomes equivalent to being a $σ\F$-object, and discrete homogeneity becomes equivalent to approximate homogeneity.
	In this case, $\tuple{\F, σ\F}$ is a free completion with $σ\F ⊆ \L$ being full, and the two definitions of a Fraïssé limit by Irwin–Solecki and ours coincide.
	
	More generally, classes $\F$ with special morphisms have been considered – first time in \cite{PS22} (we would denote their $\C^ω$ by $σ\C$).
	This corresponds to taking a non-full subcategory $\F ⊆ \K$.
	However for the theory to work, it is necessary for $\F$ to be $σ$-consistent in $\L$.
	This assumption is often neglected, but see \cite[after Lemma~2.5]{ChKR25}.
\end{remark}

\begin{example} \label{ex:IS}
	We consider the language with one binary relation.
	Let $\I_Δ$ be the category of all finite linear graphs $\II_n$, $n ∈ ω$, with quotient maps.
	By the general remarks above, $\tuple{\I_Δ, σ\I_Δ}$ is a free completion.
	It is easy to see that $\I_Δ$ is directed and countable.
	Irwin and Solecki~\cite{IS06} proved that $\I_Δ$ has the amalgamation property (for a simpler proof see \cite[Remark~3.11]{Kwiatkowska14}).
	So by the general theory we have the Fraïssé limit $ℙ_Δ$ of $\I_Δ$ in $σ\I_Δ$.
	Irwin and Solecki showed that $ℙ_Δ$ is the Cantor space with a closed equivalence relation (with only classes of size one or two) and that the corresponding topological quotient is the pseudo-arc $ℙ$.
	This way they characterized $ℙ$ as the unique homogeneous object in $σ\I$.
	By our theory we have the large homogeneity of $ℙ_Δ$ in $σ\I_Δ$, i.e. for all quotient maps $f, g\maps ℙ_Δ \to X$ onto a $σ\I_Δ$-object and every $ε > 0$ there is an automorphism $h\maps ℙ_Δ \to ℙ_Δ$ with $f ≈_ε g ∘ h$.
	We may consider the full subcategory $\Pσ\I ⊆ σ\I_Δ$ of \emph{arc-like pre-spaces}, i.e. of those $σ\I_Δ$-structures whose closed relation is an equivalence.
	The operation of taking the topological quotient induces an essentially surjective functor $\Pσ\I \to σ\I$ by \cite[Lemma~4.3, 4.5, and 4.7]{IS06}.
	Note that $ℙ_Δ$ is a Fraïssé object also in the category of arc-like pre-spaces $\Pσ\I$, which is disjoint from $\I_Δ$.
\end{example}

We have built our approximate Fraïssé theory on MU-categories along with the running example of the pseudo-arc and the arc-like continua, which culminated in Theorem~\ref{thm:pseudo-arc}.
Because of a result by Russo~\cite{Russo79}, this is essentially the only case of a class of $\P$-like continua for a family $\P$ of connected polyhedra with a Fraïssé limit.
However, there are Fraïssé limits of categories of circle-like continua with restricted classes of morphisms. This is the content of the next section.
We summarize the situation of full polyhedral categories in the following theorem.
Let $\SS$ denote a circle and $*$ a one-point space.

\begin{theorem} \label{thm:polyhedra}
	Let $\P$ be a non-empty family of non-empty connected polyhedra, identified with the corresponding full subcategory of $\CPolS$, so $σ\P$ is the category of all $\P$-like continua (we consider families $\P$ up to homeomorphic copies).
	\begin{enumerate}
		\item $σ\P$ has a Fraïssé object if and only if $\P$ has the amalgamation property, which happens if and only if $\P ⊆ \set{*, \II}$.
			The Fraïssé object is the pseudo-arc $\PP$, unless $\P = \set{*}$, in which case the Fraïssé object is $*$.
		\item $σ\P$ has a generic object if and only if $σ\P$ has a cofinal object, which happens if and only if $\P ⊆ \set{*, \II, \SS}$.
			The generic object is the \emph{universal pseudo-solenoid $\PP_Π$} (see Section~\ref{sec:pseudo-solenoid}), unless $\P ⊆ \set{*, \II}$ and we are in case~(i).
		\item There is always a generic object in $\tuple{\P, σ\P}$ and it is the pseudo-arc, unless $\P = \set{*}$.
	\end{enumerate}
\end{theorem}

Note there is a difference in being generic in $\tuple{\P, σ\P}$ and being generic in $σ\P$ – the Banach–Mazur game is played in $\P$ and $σ\P$, respectively.

	\begin{proof}
		We have that $\tuple{\P, σ\P}$ is a free completion by Theorem~\ref{thm:polyhedral_categories}, so Theorem~\ref{thm:fraisse_limit} applies.
		By Theorem~\ref{thm:fraisse_category}, we have a Fraïssé limit if and only if $\P$ is a Fraïssé MU-category, which reduces to having the amalgamation property by Observation~\ref{thm:locally_separable}.
		
		Claim~(iii) and the positive part of (i) follow from Theorem~\ref{thm:pseudo-arc}.
		By a result of Russo~\cite{Russo79}, there is no cofinal object in $σ\P$ unless $\P ⊆ \set{*, \II, \SS}$, hence we have the negative part of (ii).
		From this and from the fact that $\set{\SS}$ does not have the amalgamation property (\cite[Example~1]{Rogers70}, see also Proposition~\ref{thm:circle_no_amalgamation} below) we have the negative part of (i).
		The positive part of (ii) in the case $\P ⊆ \set{*, \I}$ follows from (i).
		If $\SS ∈ \P$, then the universal pseudo-solenoid $\PP_Π$ is cofinal in $σ\P$ by Corollary~\ref{thm:universal_pseudo-solenoid}.
		Since also every continuous surjection $\PP_Π \to \PP_Π$ is a near-homeomorphism (again by \ref{thm:universal_pseudo-solenoid}), $\PP_Π$ is generic in $σ\P$ by Observation~\ref{thm:fraisse_object_is_generic}.
	\end{proof}

\section{Circle-like continua and pseudo-solenoids}
\label{sec:circlelike}

The last section is devoted to the main application of our theory in this article.
Let $\SS$ denote the unit circle in the complex plane, and let $\S$ be the category of all continuous surjections of $\SS$.
By Theorem~\ref{thm:polyhedral_categories}, $σ\S$ is the category of all circle-like continua and all continuous surjections, and $\tuple{\S, σ\S}$ is a free completion.
Hence, by Theorem~\ref{thm:fraisse_limit} and \ref{thm:fraisse_category} there is a Fraïssé limit of $\S$ in $σ\S$ if and only if $\S$ is a Fraïssé MU-category, which reduces to the amalgamation property by Observation~\ref{thm:locally_separable}.
However, Rogers~\cite{Rogers70} have shown that $\S$ does not have the strict amalgamation property, and the same example shows that $\S$ does not have the amalgamation property (see Proposition~\ref{thm:circle_no_amalgamation} below).
Anyway, we would not obtain any new Fraïssé object since the only candidate for the limit is the pseudo-arc since it is generic over $\S$ by Theorem~\ref{thm:generic_Peano}. 
However, it turns out that the problem with amalgamation concerns \emph{degree zero maps}, and can be fixed by restricting the set of allowed \emph{degrees}.

Let $φ\maps \RR \to \SS$ be the canonical surjection $x \mapsto e^{ix}$.
We endow $\SS$ with the metric induced by $φ$, that is, the inner metric of the curve.
Recall that every continuous map $f\maps \SS \to \SS$ admits a continuous \emph{lifting} $\tilde{f}\maps ℝ \to ℝ$, i.e. a continuous map satisfying $φ ∘ \tilde{f} = f ∘ φ$.
The family of all such liftings is $\set{\tilde{f} + 2πk: k ∈ ℤ}$ where $\tilde{f}$ is any of them.

There is also a unique number $d ∈ ℤ$ such that $\tilde{f}(x + 2π) = \tilde{f}(x) + 2πd$ for every $x ∈ ℝ$.
This number is called the \emph{degree} or the \emph{winding number} of $f$ and is denoted by $\deg(f)$.
Every continuous map of nonzero degree is surjective.
We have $\deg(\id_\SS) = 1$ and $\deg(f ∘ g) = \deg(f) ⋅ \deg(g)$ for all continuous maps $f, g\maps \SS \to \SS$.

\begin{lemma} \label{thm:close_degrees}
	Let $f, g\maps \SS \to \SS$ be continuous.
	If $f ≈_π g$, then $\deg(f) = \deg(g)$.
	
	\begin{proof}
		Let $\tilde{f}, \tilde{g}\maps ℝ \to ℝ$ be the liftings of $f$ and $g$, respectively, such that $\tilde{f}(0) ≈_π \tilde{g}(0)$.
		Since the metric on $\SS$ is induced by the projection $φ$, we have $d(φ(x), φ(y)) = \abs{x - y}$ whenever $\abs{x - y} ≤ π$, i.e. $φ$ is an isometry on any interval of length $≤ π$.
		It follows that $\tilde{f} ≈_π \tilde{g}$ since otherwise there would be $x ∈ ℝ$ such that $\abs{\tilde{f}(x) - \tilde{g}(x)} = π = d(f(φ(x)), g(φ(x)))$.
		We have $2π \deg(f) = \tilde{f}(x + 2π) - \tilde{f}(x) ≈_{2π} \tilde{g}(x + 2π) - \tilde{g}(x) = 2π \deg(g)$ for every $x ∈ ℝ$, and hence $\deg(f) ≈_1 \deg(g)$.
		Since the degrees are integers, we are done.
	\end{proof}
\end{lemma}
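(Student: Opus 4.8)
The plan is to reduce everything to statements about continuous liftings along the canonical covering map $\phi\maps \RR \to \SS$. First I would fix liftings $\tilde f$ of $f$ and $\tilde g$ of $g$; since the liftings of a given map form the family $\{\tilde f + 2\pi k : k \in \ZZ\}$, I can normalize the choice so that $\abs{\tilde f(0) - \tilde g(0)} < \pi$. This normalization is available precisely because $d(f(\phi(0)), g(\phi(0))) < \pi$ and, as recalled when the metric on $\SS$ was introduced, $\phi$ restricted to any interval of length at most $\pi$ is an isometry onto its image, so for a suitable lift we get $\abs{\tilde f(0) - \tilde g(0)} = d(f(\phi(0)), g(\phi(0)))$.

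The crucial step is to propagate this pointwise bound from $0$ to all of $\RR$, i.e.\ to show $\abs{\tilde f(x) - \tilde g(x)} < \pi$ for every $x \in \RR$. I would argue by contradiction: the map $x \mapsto \tilde f(x) - \tilde g(x)$ is continuous with absolute value $< \pi$ at $0$, so if it ever attained absolute value $\ge \pi$ the intermediate value theorem would produce some $x_0$ with $\abs{\tilde f(x_0) - \tilde g(x_0)} = \pi$; applying $\phi$ (again a local isometry on intervals of length $\pi$) would give $d(f(\phi(x_0)), g(\phi(x_0))) = \pi$, contradicting the hypothesis $f \approx_\pi g$, which is the \emph{strict} inequality $d(f(y), g(y)) < \pi$ for all $y$.

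Finally I would invoke the defining relation $\tilde f(x + 2\pi) = \tilde f(x) + 2\pi\deg(f)$, and likewise for $g$. Subtracting the two translation identities and using the global bound just established,
\[
  2\pi\abs{\deg(f) - \deg(g)} = \abs{\bigl(\tilde f(x+2\pi) - \tilde g(x+2\pi)\bigr) - \bigl(\tilde f(x) - \tilde g(x)\bigr)} < 2\pi ,
\]
so $\abs{\deg(f) - \deg(g)} < 1$, and since both degrees are integers they must coincide. The only delicate point is keeping the inequalities strict throughout, so that the boundary case $\abs{\tilde f - \tilde g} = \pi$ is genuinely ruled out; this is the main (and essentially the only) obstacle, and it is exactly what the local-isometry property of $\phi$ supplies. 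An alternative route I would keep in reserve is to note that $f \approx_\pi g$ forces $f(y)$ and $g(y)$ to be non-antipodal for every $y$, hence $f$ and $g$ are homotopic via the shortest-geodesic homotopy, and then to quote homotopy invariance of the degree.
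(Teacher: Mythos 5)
Your proposal is correct and follows essentially the same route as the paper's proof: normalize the liftings so that $\abs{\tilde f(0)-\tilde g(0)}<\pi$, propagate the bound $\abs{\tilde f-\tilde g}<\pi$ to all of $\RR$ by continuity and the fact that $\phi$ is an isometry on intervals of length $\leq\pi$ (ruling out the boundary value $\pi$ via the strict hypothesis $f\approx_\pi g$), and then subtract the translation identities $\tilde f(x+2\pi)=\tilde f(x)+2\pi\deg(f)$ to conclude $\abs{\deg(f)-\deg(g)}<1$, hence equality of the integer degrees. The homotopy-invariance alternative you mention in reserve is also fine, but the main argument matches the paper's.
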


\begin{remark}
	We view the monoid $ℤ$ with the multiplication as a category with one object and endow it with the discrete metric.
	The facts above concerning the degree can be summarized by saying that $\deg\maps \S \to ℤ$ is an MU-functor.
\end{remark}

\begin{proposition}[{based on \cite[Example~1]{Rogers70}}] \label{thm:circle_no_amalgamation}
	Let $f\maps \SS \to \SS$ be the map $z \mapsto z^2$, i.e. the canonical map of degree $2$.
	Let $g\maps \SS \to \SS$ be the map $z \mapsto z^2$ for $\Im(z) ≥ 0$ and $z \mapsto z^{-2}$ for $\Im(z) ≤ 0$, i.e. the surjection of degree $0$ corresponding to the tent map.
	There are no continuous surjections $f', g'\maps X \to \SS$ from a connected space $X$ such that $f' ∘ f ≈_π g' ∘ g$.
	In particular, the category $\S$ does not have the amalgamation property.
	
	\begin{proof}
		Let us endow $\SS^2$ with the $\ell_1$ metric and let us put $G := \set{\tuple{x, y} ∈ \SS^2: f(x) = g(y)}$ and $G_ε := \set{\tuple{x, y} ∈ \SS^2: f(x) ≈_ε g(y)}$ for every $ε > 0$.
		Pairs of continuous maps $f', g'\maps \SS \to \SS$ are in one-to-one correspondence with continuous maps $h\maps \SS \to \SS^2$.
		Moreover, $f ∘ f' = g ∘ g'$ or $f ∘ f' ≈_ε g ∘ g'$ if and only if $\rng(h) ⊆ G$ or $\rng(h) ⊆ G_ε$, respectively.
		Also $f'$ or $g'$ is surjective if and only if $π_1\im{\rng(h)} = \SS$ or $π_2\im{\rng(h)} = \SS$, respectively.
		
		In our case, $G$ consists of two similar components separated by distance of $π$, and the second projection of each component is just a half of the circle.
		For our map $f$, it is easy to see that whenever we have $f(x) ≈_ε y$ there is $x' ≈_{ε/2} x$ such that $f(x') = y$.
		It follows that for every $\tuple{x, y} ∈ G_ε$ we have $d(\tuple{x, y}, G) < ε/2$.
		Hence, $G_π$ is contained in the $π/2$-neighborhood of $G$, which has two components, and the second projection of each component still does not cover the whole circle (by one point), so there is no suitable map $h$.
	\end{proof}
\end{proposition}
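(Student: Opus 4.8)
The plan is to reformulate the statement as a question about a subset of the torus $\SS × \SS$ and then settle it by an explicit geometric description of that subset. First I would set $G := \set{\tuple{x, y} ∈ \SS × \SS : f(x) = g(y)}$ and, for $ε > 0$, $G_ε := \set{\tuple{x, y} ∈ \SS × \SS : f(x) ≈_ε g(y)}$, with $\SS × \SS$ carrying the $\ell_1$-metric. A pair of continuous surjections $f', g'\maps X \to \SS$ is the same datum as a continuous map $h := \tuple{f', g'}\maps X \to \SS × \SS$; if moreover $f ∘ f' ≈_π g ∘ g'$ then $h\im{X} ⊆ G_π$, and surjectivity of $f'$ (resp. $g'$) means that the first (resp. second) coordinate projection of $h\im{X}$ is all of $\SS$. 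Since $X$ is connected, $h\im{X}$ is a connected subset of $G_π$, so it suffices to show that no connected subset of $G_π$ has first projection equal to $\SS$.

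Next I would record the elementary fact that $f\maps z \mapsto z^2$ \emph{locally halves distances}: whenever $d_\SS(f(x), w) < ε ≤ π$, one of the two square roots $x'$ of $w$ satisfies $d_\SS(x, x') < ε/2$; this is a one-line computation with liftings, in the spirit of Lemma~\ref{thm:close_degrees}. Taking $w := g(y)$ and $ε := π$, it follows that each point of $G_π$ lies within $\ell_1$-distance $π/2$ of a point of $G$, i.e. $G_π ⊆ N_{π/2}(G)$.

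The geometric heart of the proof is the structure of $G$ for the maps at hand. I would take the continuous degree-$0$ surjection $g(e^{it}) := e^{2it}$ for $t ∈ [0, π]$ and $g(e^{it}) := e^{-2it}$ for $t ∈ [π, 2π]$ (a continuous realization of the tent map). Because $\deg(g) = 0$ is even, $y \mapsto \sqrt{g(y)}$ has a global continuous branch $β\maps \SS \to \SS$ -- explicitly $β(e^{it}) = e^{it}$ on $[0, π]$ and $β(e^{it}) = e^{-it}$ on $[π, 2π]$ -- whose image is the closed upper semicircle $S$. Taking the two square-root branches, $G = C_1 \disunion C_2$ with $C_1 = \set{\tuple{β(y), y} : y ∈ \SS}$ and $C_2 = \set{\tuple{-β(y), y} : y ∈ \SS}$, so $G$ has exactly two components, each a topological circle, with first projections the proper semicircular arcs $π_1\im{C_1} = S$ and $π_1\im{C_2} = -S$. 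Moreover $β$ is $1$-Lipschitz, being piecewise an isometry of $\SS$, and this gives
\[
	d_\SS(β(y), -β(y')) + d_\SS(y, y') ≥ \bigl(π - d_\SS(β(y), β(y'))\bigr) + d_\SS(y, y') ≥ π,
\]
with equality when $y = y' = 1$; hence $d_{\ell_1}(C_1, C_2) = π$.

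To finish: since $d_{\ell_1}(C_1, C_2) = π$, the open sets $N_{π/2}(C_1)$ and $N_{π/2}(C_2)$ are disjoint, each is connected (being the union of the balls $B(p, π/2)$, $p ∈ C_i$, over the connected set $C_i$), and together they cover $N_{π/2}(G) ⊇ G_π$. Hence a connected subset of $G_π$ is contained in a single $N_{π/2}(C_i)$, so its first projection is contained in $N_{π/2}(S)$ (or $N_{π/2}(-S)$), which is $\SS$ minus one point -- a proper subset. Therefore $f'$ cannot be surjective, so no amalgamating pair exists; the concluding assertion that $\S$ has no amalgamation property is the case $X = \SS$, $ε = π$. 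I expect the only genuinely delicate step to be the bookkeeping in the third paragraph -- establishing that $G$ breaks into exactly two arcs with semicircular first projections and that the gap between them is \emph{exactly} $π$ rather than merely positive -- because this is precisely where the degrees $2$ and $0$ (and the particular bound $π$) are used; the rest is routine.
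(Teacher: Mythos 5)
Your proof is correct and takes essentially the same route as the paper's: the sets $G$ and $G_\varepsilon$ in $\SS^2$, the lifting argument showing $z \mapsto z^2$ halves distances so that $G_\pi \subseteq N_{\pi/2}(G)$, and the decomposition of $G$ into two components at distance $\pi$ whose projections onto the coordinate corresponding to $f'$ are semicircles, so that the connected image must lie in one $\pi/2$-neighborhood and hence miss a point of $\SS$. The only difference is that you spell out what the paper treats as evident (the global square-root branch $\beta$, its $1$-Lipschitz property, and the exact gap $\pi$ between the components), which is exactly the delicate bookkeeping you identified.
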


\begin{definition}
	Let $Π$ denote the set of all primes.
	For every $P ⊆ Π$ let $\S_P ⊆ \S$ be the MU-subcategory of all $\S$-maps whose degree uses only primes from $P$, i.e. $\S_P := \deg\preim{D_P}$ where $D_P$ is the set of all $0 ≠ k ∈ ℤ$ such that if a prime $p$ divides $k$, then $p ∈ P$.
	For example, we have $D_∅ = \set{-1, 1}$, $D_{\set{2}} = \set{±2^n: n ∈ ℕ_0}$, and $D_Π = ℤ \setminus \set{0}$.
	Also, let $σ\S_P$ be the $σ$-closure of $\S_P$ in $\MCpt$ or equivalently in $σ\S$ (see Remark~\ref{rmk:sigma_closed}).
\end{definition}

\begin{proposition}
	$\tuple{\S_P, σ\S_P}$ is a free completion for every $P ⊆ Π$.
	
	\begin{proof}
		By Theorem~\ref{thm:polyhedral_categories} it is enough to prove that $\S_P$ is $σ$-consistent.
		So let $\tuple{X_*, f_*}$ and $\tuple{Y_*, g_*}$ be sequences in $\S_P$ with limits $\tuple{X_∞, f_{*, ∞}}$ and $\tuple{Y_∞, g_{*, ∞}}$ such that $X_∞ = Y_∞$.
		Let $n ∈ ω$ and $0 < ε < π/2$.
		Since $\tuple{\S, σ\S}$ is a free completion, there is $m ∈ ω$ and an $\S$-map $f\maps X_m \to Y_n$ such that $f ∘ f_{m, ∞} ≈_ε g_{n, ∞}$.
		We show that $f$ is an $\S_P$-map, i.e. $\deg(f) ∈ D_P$.
		
		There is $δ > 0$ such that $f$ is $\tuple{ε, δ}$-continuous, $n' ≥ n$, and an $\S$-map $g\maps Y_{n'} \to X_m$ such that $g ∘ g_{n', ∞} ≈_δ f_{m, ∞}$.
		It follows that $f ∘ g ∘ g_{n', ∞} ≈_ε f ∘ f_{m, ∞} ≈_ε g_{n, ∞}$, and so $f ∘ g ≈_{2ε} g_{n, n'}$.
		Since $2ε < π$, by Lemma~\ref{thm:close_degrees} we have $\deg(f) ⋅ \deg(g) = \deg(f ∘ g) = \deg(g_{n, n'}) ∈ D_P$.
		Finally, $\deg(f), \deg(g) ∈ D_P$ since $D_P$ is closed under divisors.
	\end{proof}
\end{proposition}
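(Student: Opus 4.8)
The plan is to peel off every free-completion condition except (F1) using the compact machinery already in place, and then reduce (F1) to a single degree computation. Since $\S_P ⊆ \MCpt$, Corollary~\ref{thm:compact_conditions} gives (L1), (L2), (F2), and (C) for $\tuple{\S_P, σ\S_P}$ at no cost, so only (F1) remains. By Corollary~\ref{thm:F1} — using that (L1) already holds — it suffices to verify \emph{(F1) for limits}. Concretely: take $\S_P$-sequences $\tuple{X_*, f_*}$ and $\tuple{Y_*, g_*}$ with $σ\S_P$-limits $\tuple{X_∞, f_{*, ∞}}$ and $\tuple{Y_∞, g_{*, ∞}}$ such that $X_∞ = Y_∞ =: Z$, fix $n ∈ ω$ and $ε > 0$, and produce $m ∈ ω$ and an \emph{$\S_P$}-map $h\maps Y_n \from X_m$ with $h ∘ f_{m, ∞} ≈_ε g_{n, ∞}$.

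First I would shrink $ε$ so that $ε < π/2$; this is the one point where the geometry of the circle enters, via Lemma~\ref{thm:close_degrees}. Both given sequences are also $\S$-sequences, and by Corollary~\ref{thm:absolute} the space $Z$ is simultaneously their $σ\S$-limit object; since $\tuple{\S, σ\S}$ is a free completion (Theorem~\ref{thm:full_polyhedral_categories}), it satisfies (F1), hence (F1) for limits, which hands us a continuous surjection $h\maps Y_n \from X_m$ with $h ∘ f_{m, ∞} ≈_ε g_{n, ∞}$. All the work is then in showing $h ∈ \S_P$, i.e. $\deg(h) ∈ D_P$.

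This degree computation is the main obstacle, and I would resolve it by a second, reverse application of (F1) for limits in $\tuple{\S, σ\S}$. Pick $δ > 0$ witnessing $\tuple{ε, δ}$-continuity of $h$, and obtain $n' ≥ n$ and a continuous surjection $k\maps X_m \from Y_{n'}$ with $k ∘ g_{n', ∞} ≈_δ f_{m, ∞}$. Composing and using $\tuple{ε,δ}$-continuity of $h$ gives $h ∘ k ∘ g_{n', ∞} ≈_ε h ∘ f_{m, ∞} ≈_ε g_{n, ∞} = g_{n, n'} ∘ g_{n', ∞}$; cancelling the surjection $g_{n', ∞}$ yields $h ∘ k ≈_{2ε} g_{n, n'}$. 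Since $2ε < π$, Lemma~\ref{thm:close_degrees} forces $\deg(h) ⋅ \deg(k) = \deg(h ∘ k) = \deg(g_{n, n'})$, and $\deg(g_{n, n'}) ∈ D_P$ because $g_{n, n'}$ is a composite of $\S_P$-maps. As $D_P$ is closed under nonzero divisors by its definition, both $\deg(h)$ and $\deg(k)$ lie in $D_P$; in particular $h ∈ \S_P$, which is what we needed. Together with the conditions from Corollary~\ref{thm:compact_conditions} this shows $\tuple{\S_P, σ\S_P}$ is a free completion. I expect no difficulty beyond bookkeeping the $ε$'s and $δ$ in the back-and-forth step; everything else is a direct appeal to the already-established machinery for $\tuple{\S, σ\S}$ together with multiplicativity of $\deg$ and the divisor-closedness of $D_P$.
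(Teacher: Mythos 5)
Your proposal is correct and follows essentially the same route as the paper: reduce via Corollary~\ref{thm:compact_conditions} and Corollary~\ref{thm:F1} to (F1) for limits, obtain the factorization from the free completion $\tuple{\S, σ\S}$, then use a reverse factorization, cancellation of the surjective limit map, Lemma~\ref{thm:close_degrees}, and divisor-closedness of $D_P$ to see the factorizing map has degree in $D_P$. The only cosmetic differences are your explicit appeals to Corollary~\ref{thm:absolute} and Theorem~\ref{thm:full_polyhedral_categories}, which the paper leaves implicit.
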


\begin{proposition}[Rogers~{\cite[Uniformization Theorem]{Rogers70}}]
	Let $\S' ⊆ \S$ denote the subcategory of all piecewise linear maps (i.e. continuous surjections $f\maps \SS \to \SS$ such that the restriction of the lifting $\tilde{f}\maps [0, 2π] \to ℝ$ is piecewise linear).
	For all $\S'$-maps $f, g$ of non-zero degree there are $\S'$-maps $f', g'$ such that $f ∘ f' = g ∘ g'$ and $\deg(f ∘ f') = \deg(g ∘ g') = \lcm(\deg(f), \deg(g))$, so the degrees of $f'$ and $g'$ are non-zero and optimal.
\end{proposition}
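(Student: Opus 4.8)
The plan is to reduce the statement to the piecewise-linear mountain climbing theorem of Homma~\cite{Homma52} and Sikorski--Zarankiewicz~\cite{SZ55} (already used in Observation~\ref{thm:mountain_climbing}), in the form: \emph{for piecewise-linear $\tilde u, \tilde v\maps [0,1] \to \RR$ with $\tilde u(0) = \tilde v(0)$ and $\tilde u(1) = \tilde v(1)$ there exist nondecreasing surjective piecewise-linear $\phi_0, \psi_0\maps [0,1] \to [0,1]$ fixing $0$ and $1$ such that $\tilde u \circ \phi_0 = \tilde v \circ \psi_0$.} One may alternatively just cite Rogers~\cite{Rogers70}, whose original argument is of this type.

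First I would reduce to the case $\deg f, \deg g > 0$: if a degree is negative, precompose the offending map with the reflection $z \mapsto \bar z$, which is an $\S'$-map of degree $-1$, solve the problem for the modified maps, and absorb the reflection back into $f'$ or $g'$; this only affects $\lcm$ by a sign. Put $L := \lcm(\deg f, \deg g)$, $p := L/\deg f$, $q := L/\deg g$ (positive integers), and let $f_0, g_0 \in \S'$ be the canonical maps $z \mapsto z^p$ and $z \mapsto z^q$. Then $u := f \circ f_0$ and $v := g \circ g_0$ are $\S'$-maps of the same degree $L$. The core step will be a circular version of mountain climbing: \emph{if $u, v \in \S'$ have equal nonzero degree, there exist degree-one $\S'$-maps $\phi, \psi$ with $u \circ \phi = v \circ \psi$.} Granting this, set $f' := f_0 \circ \phi$ and $g' := g_0 \circ \psi$. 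These are piecewise linear, and surjective since their degrees $p$ and $q$ are nonzero, so they lie in $\S'$; they satisfy $f \circ f' = u \circ \phi = v \circ \psi = g \circ g'$; and $\deg(f \circ f') = (\deg f)\,p = L = q\,(\deg g) = \deg(g \circ g')$. Optimality of $L$ is immediate: any common value of $\deg(f \circ f')$ and $\deg(g \circ g')$ is simultaneously a multiple of $\deg f$ and of $\deg g$.

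To prove the circular step I would cut both circles at base-point preimages. Since $u$ has nonzero degree it is surjective, so it attains the base-point $1 \in \SS$; cutting the domain of $u$ at a point of $u\fiber{1}$ and passing to a lifting yields a piecewise-linear $\tilde u\maps [0,1] \to \RR$ with $\tilde u(0) = 0$ and $\tilde u(1) = \tilde v(1)$ (both equal to $2\pi\,\deg u$), and similarly $\tilde v\maps [0,1] \to \RR$ with $\tilde v(0) = 0$, using that $\deg v = \deg u$. The interval mountain climbing theorem then produces nondecreasing surjective piecewise-linear $\phi_0, \psi_0\maps [0,1] \to [0,1]$ fixing $0$ and $1$ with $\tilde u \circ \phi_0 = \tilde v \circ \psi_0$. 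Since $\phi_0$ and $\psi_0$ fix the endpoints, they descend to piecewise-linear maps $\phi, \psi\maps \SS \to \SS$; being nondecreasing and surjective these have degree one; and $u \circ \phi = v \circ \psi$, as both compositions lift to $\tilde u \circ \phi_0 = \tilde v \circ \psi_0$. There is a minor technical point here, handled by a routine collar argument: the interval theorem as usually stated assumes each function runs between its global minimum and maximum, whereas $\tilde u$ and $\tilde v$ may leave $[0, 2\pi\,\deg u]$; one first prepends and appends monotone linear collars on which $\tilde u$ and $\tilde v$ agree, so that the new endpoint values become the global extrema, applies the theorem, and restricts the resulting reparametrisations to a subinterval on which both are again valued in $[0,1]$.

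The only genuine obstacle is the mountain climbing theorem itself, which I would invoke rather than reprove; what is essential is its piecewise-linear form, guaranteeing that $\phi_0, \psi_0$ are piecewise linear so that $f' = f_0 \circ \phi$ and $g' = g_0 \circ \psi$ remain $\S'$-maps. The remaining ingredients --- the reduction to positive equal degrees, the cutting of the circles, and the collar adjustment --- are elementary.
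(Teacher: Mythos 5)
The paper gives no proof of this proposition—it is quoted from Rogers—so your argument stands or falls on its own. Your outer reductions are fine: absorbing a reflection to make the degrees positive, composing with $z \mapsto z^p$ and $z \mapsto z^q$ to equalize the degrees at $L = \lcm(\deg f, \deg g)$, and the remark that optimality of $L$ is automatic. This correctly reduces the theorem to its equal-degree core: for $u, v \in \S'$ of the same non-zero degree find degree-one $\phi, \psi \in \S'$ with $u \circ \phi = v \circ \psi$. But that core is exactly the content of Rogers' theorem, and your proof of it has a genuine gap.

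After cutting and lifting, you need PL maps $\phi_0, \psi_0\maps [0,1] \to [0,1]$ with $\phi_0(0) = \psi_0(0) = 0$, $\phi_0(1) = \psi_0(1) = 1$ and $\tilde u \circ \phi_0 = \tilde v \circ \psi_0$; this is what descending to degree-one circle maps requires. Such maps need not exist. Since $\phi_0$ and $\psi_0$ fix the endpoints, the intermediate value theorem forces each of them onto $[0,1]$, so the common composition attains every value of $\tilde u$ and every value of $\tilde v$; equal ranges of the two lifts over one period is therefore a necessary condition, and it typically fails no matter where you cut. Concretely, take degree $1$, $v = \id_\SS$ (so $\tilde v(t) = t$), and $u$ a PL degree-one map whose lift rises to $2$ and dips to $-1$ before ending at $1$: then $\tilde u \circ \phi_0$ attains the value $2$ while $\tilde v \circ \psi_0 = \psi_0$ is valued in $[0,1]$, a contradiction—even though the circular statement is trivially true here via $\phi = \id$, $\psi = u$. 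The collar trick cannot repair this: the collars are not lifts of $u$ and $v$, so you must eventually restrict to a subinterval on which both reparametrizations return to $[0,1]$ and hit $0$ and $1$ simultaneously at its two ends, and the range computation above shows this is impossible in general. (Separately, the mountain-climbing theorem never yields nondecreasing reparametrizations—if $\tilde v$ is strictly monotone and $\tilde u$ is not, $\psi_0$ cannot be monotone—though fixing the endpoints is all your degree computation actually uses.) What the equal-degree case genuinely needs is reparametrizations allowed to wrap back and forth across the cut, i.e. one must work with the full periodic lifts $\RR \to \RR$ and control the total winding of $\phi$ and $\psi$—equivalently, find in $\set{(x, y) : u(x) = v(y)}$ a loop whose two projections have the prescribed degrees. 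That wrap-around is the actual difficulty Rogers' Uniformization Theorem resolves, and it is not supplied by the interval mountain-climbing theorem plus collars.
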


It follows that $\S' ∩ \S_P$ has the strict amalgamation property for every $P ⊆ Π$.
Since piecewise linear surjections are dense in all continuous surjections, and since close maps have equal degrees, $\S' ∩ \S_P$ is dominating in $\S_P$, and so $\S_P$ has the amalgamation property by Observation~\ref{thm:wide_dominating}.
If we also use Observation~\ref{thm:locally_separable}, we obtain the following.

\begin{corollary}
	$\S_P$ is a Fraïssé MU-category for every $P ⊆ Π$.
\end{corollary}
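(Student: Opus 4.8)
The plan is to deduce this from Observation~\ref{thm:locally_separable}, which states that a non-empty locally separable MU-category possessing a countable directed cofinal subcategory (in particular a cofinal object) is Fra\"iss\'e precisely when it has the amalgamation property. So first I would dispose of the cheap hypotheses: $\S_P$ is non-empty, since it contains $\id_\SS$ (whose degree $1$ lies in $D_P$); it is a one-object category, its only object being $\SS$, hence trivially directed and equipped with a cofinal object; and, being a subcategory of $\MCpt$, it is locally separable. Thus the entire statement reduces to checking that $\S_P$ has the amalgamation property.

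For the amalgamation property I would pass to the wide subcategory $\S' \cap \S_P \subseteq \S_P$ of piecewise linear surjections of $\SS$ whose degree lies in $D_P$, and argue in two steps. First, $\S' \cap \S_P$ has the \emph{strict} amalgamation property: given two such maps $f, g$, Rogers' Uniformization Theorem produces piecewise linear $f', g'$ with $f \circ f' = g \circ g'$ and $\deg(f \circ f') = \deg(g \circ g') = \lcm(\deg f, \deg g)$; since $\deg f, \deg g \in D_P$ and $D_P$ is closed under taking least common multiples and divisors, all of $f', g', f \circ f', g \circ g'$ have degree in $D_P$ and hence lie in $\S' \cap \S_P$. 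Second, $\S' \cap \S_P$ is dominating in $\S_P$: piecewise linear surjections are dense among all continuous surjections of $\SS$, and by Lemma~\ref{thm:close_degrees} a map within distance less than $\pi$ of a fixed map has the same degree, so every $\S_P$-map is approximated arbitrarily well by a piecewise linear map of the same degree, i.e.\ by a member of $\S' \cap \S_P$; cofinality is automatic as there is only one object. Applying Observation~\ref{thm:wide_dominating} to this wide dominating subcategory then transports the amalgamation property from $\S' \cap \S_P$ up to $\S_P$.

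Putting the two parts together, $\S_P$ is a non-empty locally separable MU-category with a cofinal object that has the amalgamation property, so Observation~\ref{thm:locally_separable} gives that it is a Fra\"iss\'e MU-category. The step I expect to need the most care is the claim that $\S' \cap \S_P$ is \emph{dominating} in $\S_P$, as opposed to merely locally dense: one must verify that replacing an $\S_P$-map by a close piecewise linear approximation does not leave $\S_P$, and this is exactly what Lemma~\ref{thm:close_degrees} ensures, provided the approximation is taken within distance less than $\pi$ so that the degree is preserved. Everything else is bookkeeping with the divisor-closed set $D_P$ of admissible degrees together with appeals to already-established results.
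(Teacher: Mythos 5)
Your proposal is correct and follows the paper's own route almost verbatim: strict amalgamation of the piecewise linear subcategory $\S' \cap \S_P$ via Rogers' Uniformization Theorem (using that $D_P$ is closed under lcm and divisors), domination of this wide subcategory in $\S_P$ via density of piecewise linear maps and Lemma~\ref{thm:close_degrees}, then Observation~\ref{thm:wide_dominating} to transfer amalgamation and Observation~\ref{thm:locally_separable} to conclude Fraïssé. Your extra care about the $\pi$-threshold for degree preservation and the divisor/lcm closure of $D_P$ just makes explicit what the paper leaves implicit.
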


\begin{theorem} \label{thm:fraisse_circle-like}
	For every $P ⊆ Π$, there is a Fraïssé limit $ℙ_P$ of $\S_P$ in $σ\S_P$.
	The continuum $ℙ_P$ is characterized (up to a homeomorphism) by any of the following conditions.
	\begin{enumerate}
		\item $ℙ_P$ is a projective object in $\tuple{\S_P, σ\S_P}$.
		\item $ℙ_P$ is a homogeneous object in $\tuple{\S_P, σ\S_P}$.
		\item $ℙ_P$ is a generic object in $\tuple{\S_P, σ\S_P}$.
		\item $ℙ_P$ is a projective object in $σ\S_P$.
		\item $ℙ_P$ is a homogeneous object in $σ\S_P$.
		\item $ℙ_P$ is a generic object in $σ\S_P$.
	\end{enumerate}
	
	\begin{proof}
		Since $\S_P$ is a Fraïssé MU-category and $\tuple{\S_P, σ\S_P}$ is a free completion, we have the Fraïssé limit characterized by any of the conditions in Theorem~\ref{thm:fraisse_limit}.
		As with the pseudo-arc, the cofinality is trivial (see Remark~\ref{thm:cofinality_trivial}), and so $ℙ_P$ is characterized by any of (i), (ii), (iv), (v).
		From the general theory we have uniqueness up to a $σ\S_P$-isomorphism, but $σ\S_P$-isomorphisms are exactly homeomorphisms between $σ\S_P$-objects from the construction of $σ$-closure (see Remark~\ref{rmk:sigma_closure}).
		Also, the genericity conditions (iii), (vi) are weaker than being a Fraïssé limit (Proposition~\ref{thm:dominating_sequence}, Observation~\ref{thm:fraisse_object_is_generic}), and yet guarantee uniqueness.
	\end{proof}
\end{theorem}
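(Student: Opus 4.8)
The plan is to apply the general machinery developed in the preceding sections in essentially the same way it was applied to the pseudo-arc in Theorem~\ref{thm:pseudo-arc}. By the immediately preceding results I already have two facts in hand: that $\tuple{\S_P, σ\S_P}$ is a free completion, and that $\S_P$ is a Fraïssé MU-category. Therefore Theorem~\ref{thm:fraisse_category} guarantees the existence of a Fraïssé sequence in $\S_P$, and Theorem~\ref{thm:fraisse_limit} applied to the free completion $\tuple{\S_P, σ\S_P}$ yields an $σ\S_P$-limit $ℙ_P$ of such a sequence, which is the Fraïssé limit and is unique up to $σ\S_P$-isomorphism. So the first step is simply to invoke Theorem~\ref{thm:fraisse_limit}, obtaining at once the equivalence of being a Fraïssé limit with conditions (i), (ii), (iv), (v) — the projectivity and homogeneity conditions relative to $\tuple{\S_P, σ\S_P}$ and absolutely in $σ\S_P$.

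The second step is to deal with the cofinality clauses hidden in Theorem~\ref{thm:fraisse_limit}. That theorem characterizes the Fraïssé object by ``cofinal and projective'' and ``cofinal and homogeneous'', but $\S_P$ is connected (it has a single object, $\SS$), hence so is $σ\S_P$, and by Remark~\ref{thm:cofinality_trivial} (via Observation~\ref{thm:projective_cofinal}) cofinality is automatic for a projective object both in $\tuple{\S_P, σ\S_P}$ and in $σ\S_P$. This is exactly the reduction already used for the pseudo-arc. So conditions (i), (ii), (iv), (v) all single out $ℙ_P$ up to homeomorphism — and here I should note, as the proof text does, that $σ\S_P$-isomorphisms are precisely homeomorphisms between $σ\S_P$-objects, which is immediate from the construction of the $σ$-closure (objects are genuine metrizable compacta and the relevant isomorphisms are $\L$-isomorphisms between them).

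The third step handles the genericity conditions (iii) and (vi). For these I invoke the general hierarchy: a Fraïssé object is a generic object by Proposition~\ref{thm:dominating_sequence} (the Fraïssé sequence is dominating, and the limit of a dominating sequence is generic) together with Observation~\ref{thm:fraisse_object_is_generic}; and conversely a generic object is unique up to isomorphism (shown right after Definition~\ref{def:BM}), so the generic object in $σ\S_P$, respectively in $\tuple{\S_P, σ\S_P}$, must coincide with the Fraïssé limit. Thus (iii) and (vi) are weaker than being a Fraïssé limit but still pin down $ℙ_P$ uniquely, completing the list of equivalent characterizations.

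I do not expect a genuine obstacle here: all the heavy lifting — verifying the free-completion conditions (L1), (L2), (F1), (F2), (C), and establishing the amalgamation property of $\S_P$ via Rogers' uniformization theorem and the density of piecewise linear maps — has been carried out in the lemmas and propositions immediately preceding the theorem. The only point requiring mild care is keeping straight which characterizations need the connectedness-induced triviality of cofinality (namely the projectivity conditions (i) and (iv), where Remark~\ref{thm:cofinality_trivial} is used) versus the homogeneity conditions (ii) and (v), which come with their own cofinality in Theorem~\ref{thm:fraisse_limit}; the argument is parallel to the pseudo-arc case and is the closest thing to a ``hard part,'' though it is really just bookkeeping.
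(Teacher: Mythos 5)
Your proposal is correct and follows essentially the same route as the paper: use the already-established facts that $\tuple{\S_P, σ\S_P}$ is a free completion and $\S_P$ is a Fraïssé MU-category, apply Theorem~\ref{thm:fraisse_limit}, dispose of the cofinality clauses via Remark~\ref{thm:cofinality_trivial}, identify $σ\S_P$-isomorphisms with homeomorphisms, and settle (iii), (vi) by genericity of the Fraïssé limit plus uniqueness of generic objects. The one small imprecision is your phrase that the homogeneity conditions (ii), (v) ``come with their own cofinality in Theorem~\ref{thm:fraisse_limit}'': since the statement being proved omits cofinality there, what is actually needed (and what Remark~\ref{thm:cofinality_trivial} supplies, exactly as for the pseudo-arc) is that every $σ\S_P$-object is trivially cofinal in $\tuple{\S_P, σ\S_P}$, because each such object admits a continuous surjection onto $\SS$, e.g.\ a limit projection.
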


Recall that a hereditarily indecomposable circle-like continuum is called a \emph{pseudo-solenoid}.
We will observe that $\PP_P$ is the unique \emph{$P$-adic pseudo-solenoid}, but first we need to recall the notion of a \emph{type} of a circle-like continuum according to degrees of maps in the corresponding inverse sequence.
Moreover, the Fraïssé limit properties of $\PP_P$ are stated in terms of $σ\S_P$-maps, but which continuous surjections are $σ\S_P$-maps?
The statement that $σ\S_P$-maps are exactly those $σ\S$-maps that can be sufficiently approximated by an $\S_P$-map will be made more precise by considering a certain extension of the notion of the degree of a map.

\subsection{Types of circle-like continua and continuous surjections}

\cdef \SN {%
	% the set of all supernatural numbers
	% usage: \SN, \SN_+
	\clo{ℕ}%
}

\cdef \SQ {%
	% the base for the set of all non-negative superrational numbers
	% usage: \SQ_{≥0}, \SQ_+
	\clo{ℚ}%
}

\cdef \absdeg [1]{%
	% shortcut: absolute value of the degree
	% usage: \absdeg{f}, \absdeg{u_{n, ∞}}
	\abs{\deg(#1)}%
}

\cdef \SubQ {%
	% the category of additive subgroups of the rationals
	\operatorname{Sub}(ℚ)%
}

Let us recall the notion of \emph{supernatural numbers}.
By $ℕ$ we denote the set of all natural numbers including $0$, and by $ℕ_+$ we denote the set of all (strictly) positive natural numbers.
A positive supernatural number is a function $s\maps Π \to ℕ ∪ \set{∞}$ representing the formal product $∏_{p ∈ Π} p^{s(p)}$.
We denote the set of all positive supernatural numbers $(ℕ ∪ \set{∞})^Π$ by $\SN_+$.
The set $\SN_+$ carries a natural multiplication defined by $(s ⋅ s')(p) = s(p) + s'(p)$ for every $p ∈ Π$, and a natural order defined by $s ≤ s'$ if $s(p) ≤ s'(p)$ for every $p ∈ Π$.
In fact, the order is exactly the divisibility order induced by the multiplication, i.e. $s ≤ s'$ if and only if there is some $t$ with $t ⋅ s = s'$.
Since there is a natural infinitary addition on the set $ℕ ∪ \set{∞}$, we may define an infinitary multiplication on $\SN_+$ by $(∏_{i ∈ I} s_i)(p) = ∑_{i ∈ I} s_i(p)$.
We have the monoid and divisibility order embedding $ℕ_+ \to \SN_+$ mapping $n$ to the supernatural number $s$ corresponding to the prime decomposition of $n$, i.e. $n = ∏_{p ∈ Π} p^{s(p)}$.
Because of this, we view $ℕ_+ ⊆ \SN_+$.
Together, we have for example $2^∞ ∈ \SN_+$, $s = ∏_{p ∈ Π} p^{s(p)}$ for every $s ∈ \SN_+$, $1 ∈ \SN_+$ is the divisibility minimum, and $∏_{p ∈ Π} p^∞ ∈ \SN_+$ is the divisibility maximum.
Finally, we define the set of all supernatural numbers $\SN := \SN_+ ∪ \set{0}$, and we extend the multiplication by $s ⋅ 0 = 0$ and the order by $s ≤ 0$ for every $s ∈ \SN$.
We also have the extended embedding $ℕ \to \SN$.

We consider an equivalence $\sim$ on $\SN$ where $0$ is equivalent only to itself and where for $s, s' ∈ \SN_+$ we have $s \sim s'$ if and only if $s\fiber{∞} = (s')\fiber{∞}$ and $\set{p ∈ Π: s(p) ≠ s'(p)}$ is finite.
We denote the $\sim$-equivalence class of $s ∈ \SN$ by $[s]$, but we also identify an equivalence class with its representative when convenient, e.g. we may view $1 = [1] = ℕ_+ ∈ \SN/{\sim}$.
It is easy to see that $\sim$ is indeed an equivalence and that it is moreover a congruence: for every $s \sim s'$ and $t \sim t'$ we have $s ⋅ t \sim s' ⋅ t'$.
Hence, we have the induced multiplication on $\SN/{\sim}$, and the divisibility order induced by the multiplication is induced also by the original divisibility order, i.e. for $S, S' ∈ \SN/{\sim}$ there is some $T ∈ \SN/{\sim}$ such that $T ⋅ S = S'$ if and only if there are some $s ∈ S$ and $s' ∈ S'$ with $s ≤ s'$, in which case there is $t ∈ \SN$ with $t ⋅ s = s'$ and moreover $t ⋅ S ⊆ S'$.
The order is indeed antisymmetric since the equivalence is convex: if $s ≤ s'$ and $s \sim s'$, then also $s \sim t$ for every $t ∈ [s, s']$, and if $S ≤ S' ≤ S$, then there is $s ∈ S$ and $t, t' ∈ \SN$ such that $t ⋅ s ∈ S'$ and $t' ⋅ t ⋅ s ∈ S$, so $s ≤ t ⋅ s ≤ t' ⋅ t ⋅ s \sim s$ and $s \sim t ⋅ s$ and $S = S'$.

\begin{definition}
	For $S, S' ∈ \SN_+/{\sim}$ we say that a map $f\maps S \to S'$ is a \emph{positive multiplication} if for some $t ∈ \SN_+$ we have $f(s) = t ⋅ s$ for every $s ∈ S$.
	Similarly, for $S, S' ∈ \SN/{\sim}$ we say that a map $f\maps S ∪ \set{0} \to S' ∪ \set{0}$ is a \emph{multiplication} if for some $t ∈ \SN$ we have $f(s) = t ⋅ s$ for every $s ∈ S$.
	Note that a multiplication $S ∪ \set{0} \to S' ∪ \set{0}$ is either the extension of a positive multiplication $S \to S'$ by $0 \mapsto 0$ (in which case we also call it positive), or it is a constant zero map (i.e. the multiplication by $0$).
	
	%By $[t]\maps S \to S'$ (or $[t]\maps S ∪ \set{0} \to S' ∪ \set{0}$) we denote the (positive) multiplication by $t$ between the given classes, if it exists.
\end{definition}

\begin{observation} \label{thm:multiplication}
	A multiplication $f\maps S ∪ \set{0} \to S' ∪ \set{0}$ mapping $s ∈ S$ to $s' ∈ S' ∪ \set{0}$ exists if and only if $s ≤ s'$, and it is unique.
	We know that $s ≤ s'$ if and only if there is $t ∈ \SN$ such that $t ⋅ s = s'$, and every such $t$ induces a multiplication since $\sim$ is a congruence.
	If $s = 0$ (i.e. $S = \set{0}$) and $s' ≠ 0$, then there is no solution to the equation $t ⋅ s = s'$.
	If $s ≠ 0$ and $s' = 0$, then the unique solution is $t = 0$.
	If $s = s' = 0$, then every $t ∈ \SN$ represents $f$.
	Otherwise, the multiplication $f$ is positive and $t(p)$ for $p ∈ Π \setminus s\fiber{∞}$ is uniquely determined by $t ⋅ s = s'$, while $t(p)$ for $p ∈ s\fiber{∞}$ can be arbitrary.
	If $t ⋅ s = t' ⋅ s$, then $t(p) = t'(p)$ for every $p ∈ Π \setminus s\fiber{∞}$, and so $t ⋅ u = t' ⋅ u$ for every $u ∈ S$ since $s\fiber{∞} = u\fiber{∞}$.
\end{observation}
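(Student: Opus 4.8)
The plan is to reduce everything to three ingredients already in place: the divisibility characterization of $\leq$ on $\SN$ recalled above ($s \leq s'$ iff there is $t \in \SN$ with $t \cdot s = s'$), the fact that $\sim$ is a congruence, and a short case split on whether $S$ or $S'$ is the zero class $\set{0}$. For \emph{existence}, fix representatives $s \in S$ and $s' \in S' \cup \set{0}$. By definition a multiplication $f\maps S \cup \set{0} \to S' \cup \set{0}$ has the form $u \mapsto t \cdot u$ for some $t \in \SN$, so in particular $f(0) = t \cdot 0 = 0$; conversely, any $t \in \SN$ with $t \cdot s = s'$ does define such a map into $S' \cup \set{0}$, since $u \sim s$ forces $t \cdot u \sim t \cdot s = s'$ by the congruence property, hence $t \cdot u \in [s'] \subseteq S' \cup \set{0}$. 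Thus a multiplication with $f(s) = s'$ exists iff the equation $t \cdot s = s'$ is solvable in $\SN$, which is precisely $s \leq s'$; the degenerate cases ($s = 0$ with $s' \neq 0$ admitting no solution, $s \neq 0$ forcing $t = 0$ when $s' = 0$, and every $t$ working when $s = s' = 0$) are immediate and agree with the order relation, $0$ being the divisibility maximum.

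For \emph{uniqueness} I would show that any two solutions $t, t' \in \SN$ of $t \cdot s = s' = t' \cdot s$ induce the same map on $S \cup \set{0}$; on $0$ this is trivial, and on $u \in S$ one compares exponents primewise. At a prime $p$ with $s(p) < \infty$ we get $t(p) = s'(p) - s(p) = t'(p)$, so $t$ and $t'$ agree off $s\fiber{\infty}$; at $p \in s\fiber{\infty}$ they may disagree, but there $t(p) + s(p) = \infty = t'(p) + s(p)$. For $u \in S$ we have $u \sim s$, hence $u\fiber{\infty} = s\fiber{\infty}$, so at $p \in s\fiber{\infty}$ both $t(p) + u(p)$ and $t'(p) + u(p)$ equal $\infty$, while at $p \notin s\fiber{\infty}$ they agree because $t(p) = t'(p)$. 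Hence $t \cdot u = t' \cdot u$ for all $u \in S$, so the multiplication sending $s$ to $s'$ does not depend on the chosen solution $t$ and is unique.

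I do not anticipate a genuine obstacle: the statement is arranged so that its substance — solvability of $t \cdot s = s'$ and its equivalence with $s \leq s'$ — was already fixed when the divisibility order on $\SN/{\sim}$ was introduced. The only truly new observation is that the ambiguity in $t$ sits exactly over the primes of $s\fiber{\infty}$, and this ambiguity disappears after multiplying by any $u \sim s$ precisely because $\sim$-equivalent supernatural numbers share that set of primes; that is what makes the induced map well defined independently of the representative $t$, hence unique. The one point to handle carefully in the write-up is keeping $s$ and $s'$ consistently as chosen representatives, so that $f(s) = s'$ is an honest equality in $\SN$ rather than a congruence, and noting the reading in which $S \cup \set{0}$ collapses to $\set{0}$ when $S = \set{0}$.
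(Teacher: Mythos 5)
Your proposal is correct and follows essentially the same route as the paper's own justification: existence is reduced to solvability of $t \cdot s = s'$ (the divisibility characterization of $\leq$) together with the congruence property of $\sim$, and uniqueness is obtained by noting that two solutions $t, t'$ can differ only at primes in $s\fiber{∞}$, which is the same set for every $u ∈ S$, so $t \cdot u = t' \cdot u$ on all of $S$. The only additions you make are the explicit primewise bookkeeping ($t(p) = s'(p) - s(p)$ off $s\fiber{∞}$) and the remark about fixing representatives, both of which are consistent with the paper's argument.
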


\begin{definition}
	We define the category of \emph{positive types} $\T_+$, where the set of objects is $\SN_+/{\sim}$ and a $\T_+$-map $S \to S'$ is a positive multiplication.
	Composition is the actual composition and identities are the actual identities.
	We also define the category of \emph{types} $\T$ by freely adding a zero object to $\T_+$, i.e. there is one new object $0$, and between any two $\T$-objects there is one new map, a zero map, such that any composition with a zero map is a zero map.
	Equivalently, we may view $\T$-objects as $\SN/{\sim}$ and $\T$-maps $S \to S'$ as multiplications $S ∪ \set{0} \to S' ∪ \set{0}$.
	
	For every $P ⊆ Π$ let $P^∞$ denote the supernatural number $∏_{p ∈ P} p^∞$.
	The types of the form $[P^∞]$ together with $[0]$ are exactly the types with a $≤$-smallest representative.
	Sometimes we denote types by their representatives and call $[0]$ the “type $0$”, $[1] = S_∅$ the “type $1$”, and $[Π^∞]$ the “type $∞$”.
	Types are ordered as $\SN/{\sim}$.
	Clearly, $1$ is the minimum, $0$ the maximum, and $[Π^∞]$ the maximum among positive types.
	
	Note that for every positive type $S$ and $s ∈ S$, the $\T_+$-maps from $S$ are in a one-to-one correspondence with positive supernatural numbers $s' ≥ s$: by Observation~\ref{thm:multiplication} for every $s' ≥ s$ there is a unique positive multiplication $[s] \to [s']$ mapping $s$ to $s'$.
	(In particular, $\T_+$-maps from $1$ are in one-to-one correspondence with $\SN_+$: every $s ∈ \SN_+$ induces the multiplication by $s\maps [1] \to [s]$.)
	It follows that there is a $\T_+$-map $S \to S'$ if and only is $S ≤ S'$.
	It also follows that $∞$ is the terminal object in $\T_+$ since $Π^∞$ is the unique representative of $[Π^∞]$.
\end{definition}

\begin{notation}
	Let $\tuple{X_*, u_*}$ be an $\S$-sequence with a $σ\S$-limit $\tuple{X_∞, u_{*, ∞}}$.
	For every $n_0 ∈ ω$ we write $\absdeg{u_{n_0, ∞}} := ∏_{n ≥ n_0} \absdeg{u_n} ∈ \SN$.
	Similarly, for every $\S$-map $f\maps X_{n_0} \to Y$ we write $\absdeg{f ∘ u_{n_0, ∞}} := \absdeg{f} ⋅ \absdeg{u_{n_0, ∞}}$.
\end{notation}

\begin{lemma} \label{thm:supernatural_coherence}
	Let $\tuple{X_*, u_*}$, $\tuple{Y_*, v_*}$ be sequences in $\S$ and let $\tuple{X_∞, u_{*, ∞}}$, $\tuple{Y_∞, v_{*, ∞}}$ be their limits in $σ\S$ such that $X_∞ = Y_∞$.
	If $f\maps X_{m_0} \to Y_{n_0}$ is an $\S$-map such that $f ∘ u_{m_0, ∞} ≈_π v_{n_0, ∞}$, then $\absdeg{f ∘ u_{m_0, ∞}} = \absdeg{v_{n_0, ∞}}$.
	
	\begin{proof}
		First, for every $m_1 ≥ m_0$ there is arbitrarily large $n_1 ≥ n_0$ and an $\S$-map $g\maps Y_{n_1} \to X_{m_1}$ such that $f ∘ u_{m_0, m_1} ∘ g ≈_π v_{n_0, n_1}$, and so $\abs{\deg(f ∘ u_{m_0, m_1} ∘ g)} = \abs{\deg(v_{n_0, n_1})}$ by Lemma~\ref{thm:close_degrees}.
		To find such map $g$, we take $ε > 0$ such that $f ∘ u_{m_0, ∞} ≈_{π - ε} v_{n_0, ∞}$ and $δ > 0$ such that $f ∘ u_{m_0, m_1}$ is $\tuple{ε, δ}$-continuous.
		By (F1) there is $g\maps Y_{n_1} \to X_{m_1}$ with $n_1 ≥ n_0$ arbitrarily large such that $g ∘ v_{n_1, ∞} ≈_δ u_{m_1, ∞}$, and so
		\[
			f ∘ u_{m_0, m_1} ∘ g ∘ v_{n_1, ∞} ≈_ε f ∘ u_{m_0, ∞} ≈_{π - ε} v_{n_0, ∞}.
		\]
		We get $f ∘ u_{m_0, m_1} ∘ g ≈_π v_{n_0, n_1}$ since $v_{n_1, ∞}$ is a surjection.
		We have 
		\[
			\absdeg{f ∘ u_{m_0, m_1}} ≤ \absdeg{f ∘ u_{m_0, m_1} ∘ g} = \absdeg{v_{n_0, n_1}} ≤ \absdeg{v_{n_0, ∞}}
		\]
		for arbitrary $m_1 ≥ m_0$, and so $\absdeg{f ∘ u_{m_0, ∞}} ≤ \absdeg{v_{n_0, ∞}}$.
		
		Since we could have chosen $δ < π$ above, we can apply the already proved half of the lemma to $g\maps Y_{n_1} \to X_{m_1}$, i.e. we have 
		$\absdeg{g ∘ v_{n_1, ∞}} ≤ \absdeg{u_{m_1, ∞}}$.
		Hence we obtain the other inequality: 
		\[
			\absdeg{v_{n_0, ∞}} = \absdeg{f ∘ u_{m_0, m_1} ∘ g ∘ v_{n_1, ∞}} ≤ \absdeg{f ∘ u_{m_0, ∞}}.
			\qedhere
		\]
	\end{proof}
\end{lemma}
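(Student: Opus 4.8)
\emph{Proof plan.} The plan is to establish the equality by proving the two divisibility inequalities $\absdeg{f ∘ u_{m_0, ∞}} ≤ \absdeg{v_{n_0, ∞}}$ and $\absdeg{v_{n_0, ∞}} ≤ \absdeg{f ∘ u_{m_0, ∞}}$ among supernatural numbers, the second one bootstrapping on the first. The only inputs are the factorization condition (F1) for the free completion $\tuple{\S, σ\S}$, Lemma~\ref{thm:close_degrees} (maps within $π$ have equal degree), multiplicativity of $\deg$, and surjectivity of limit cone maps.

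First I would fix an arbitrary $m_1 ≥ m_0$ and apply (F1) to the $σ\S$-map $u_{m_1, ∞}\maps X_∞ \from Y_∞$ (legitimate since $X_∞ = Y_∞$) and the sequence $v_*$: for a prescribed $δ > 0$ this produces some $n_1 ≥ n_0$, which may be taken as large as we wish, and an $\S$-map $g\maps Y_{n_1} \to X_{m_1}$ with $g ∘ v_{n_1, ∞} ≈_δ u_{m_1, ∞}$. Choosing $δ$ small enough — using $\tuple{ε, δ}$-continuity of $f ∘ u_{m_0, m_1}$ and reserving slack from the hypothesis $f ∘ u_{m_0, ∞} ≈_π v_{n_0, ∞}$ — I get $f ∘ u_{m_0, m_1} ∘ g ∘ v_{n_1, ∞} ≈_π v_{n_0, ∞} = v_{n_0, n_1} ∘ v_{n_1, ∞}$, and then cancel the surjection $v_{n_1, ∞}$ on the right (precomposition with a surjection preserves the sup-metric on hom-sets) to obtain $f ∘ u_{m_0, m_1} ∘ g ≈_π v_{n_0, n_1}$. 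Lemma~\ref{thm:close_degrees} then gives $\absdeg{f ∘ u_{m_0, m_1} ∘ g} = \absdeg{v_{n_0, n_1}}$; since $\absdeg{f ∘ u_{m_0, m_1}}$ divides the left-hand side and $\absdeg{v_{n_0, n_1}}$ divides $\absdeg{v_{n_0, ∞}}$, this yields $\absdeg{f ∘ u_{m_0, m_1}} ≤ \absdeg{v_{n_0, ∞}}$ for every $m_1$. As the finite products $\absdeg{f} ⋅ \absdeg{u_{m_0, m_1}}$ increase coordinatewise to $\absdeg{f ∘ u_{m_0, ∞}}$, passing to the supremum gives the first inequality.

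For the reverse inequality I would reuse the same map $g$, now making sure $δ < π$, so that $g ∘ v_{n_1, ∞} ≈_δ u_{m_1, ∞}$ with $δ < π$; applying the already-proved first inequality with the roles of $u_*$ and $v_*$ swapped and $g$ in place of $f$ gives $\absdeg{g ∘ v_{n_1, ∞}} ≤ \absdeg{u_{m_1, ∞}}$. Multiplicativity of $\deg$ then yields $\absdeg{v_{n_0, ∞}} = \absdeg{v_{n_0, n_1}} ⋅ \absdeg{v_{n_1, ∞}} = \absdeg{f} ⋅ \absdeg{u_{m_0, m_1}} ⋅ \absdeg{g ∘ v_{n_1, ∞}} ≤ \absdeg{f} ⋅ \absdeg{u_{m_0, m_1}} ⋅ \absdeg{u_{m_1, ∞}} = \absdeg{f ∘ u_{m_0, ∞}}$, where the second equality uses $\abs{\deg(v_{n_0, n_1})} = \abs{\deg(f ∘ u_{m_0, m_1} ∘ g)}$ from the first part. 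Combining the two inequalities finishes the proof. The part needing the most care is the $ε$--$δ$ bookkeeping ensuring that all the approximate triangles compose to stay strictly within distance $π$ (so that Lemma~\ref{thm:close_degrees} applies), together with the observation that (F1) allows $n_1$ to be taken arbitrarily large, which is exactly what makes the swapped application in the second half available for every choice of $m_1$; the remaining steps are routine supernatural-number arithmetic in the divisibility order.
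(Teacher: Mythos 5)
Your proposal is correct and follows essentially the same route as the paper's proof: use (F1) with arbitrarily large $n_1$ to produce $g$ with $g ∘ v_{n_1, ∞} ≈_δ u_{m_1, ∞}$, cancel the surjection $v_{n_1, ∞}$ to get $f ∘ u_{m_0, m_1} ∘ g ≈_π v_{n_0, n_1}$, apply Lemma~\ref{thm:close_degrees}, take the supremum over $m_1$ for one inequality, and bootstrap the reverse inequality by applying the first half to $g$ with the roles of $u_*$ and $v_*$ swapped (using $δ < π$). The only difference is expository (you spell out the supernatural-number bookkeeping slightly more explicitly), not mathematical.
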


\begin{construction} \label{con:type_functor}
	We define a contravariant \emph{type functor} $T\maps σ\S \to \T$.
	For every circle-like continuum $X$ we define its type $T(X) ∈ \SN/{\sim}$ as follows.
	Pick any $\S$-sequence $\tuple{X_*, u_*}$ with limit $\tuple{X, u_{*, ∞}}$.
	If $\deg(u_n) = 0$ for infinitely many $n ∈ ω$, we put $T(X) := 0$.
	Otherwise, we take any $n_0 ∈ ω$ such that $\deg(u_n) ≠ 0$ for every $n ≥ n_0$ and put $T(X) := \absdeg{u_{n_0, ∞}}$ viewed as an element of $\SN/{\sim}$.
	We say that $T(X)$ is the \emph{type} of $X$ as well as the \emph{type} of $\tuple{X_*, u_*}$.
	
	For every continuous surjection $f\maps X \to Y$ between circle-like continua we take any $\S$-sequences $\tuple{X_*, u_*}$, $\tuple{Y_*, v_*}$ with limits $\tuple{X, u_{*, ∞}}$, $\tuple{Y, v_{*, ∞}}$, and any $\S$-map $f_0\maps X_{m_0} \to Y_{n_0}$ such that $v_{n_0, ∞} ∘ f ≈_π f_0 ∘ u_{m_0, ∞}$ and such that $s := \absdeg{v_{n_0, ∞}} ∈ T(Y)$ and $s' := \absdeg{u_{m_0, ∞}} ∈ T(X)$ (i.e. each of them is positive if possible),
	and we let $T(f)$ be the unique multiplication $T(Y) ∪ \set{0} \to T(X) ∪ \set{0}$ that maps $s$ to $\absdeg{f_0} ⋅ s'$.
	
	\begin{proof}
		We need to prove that everything is well-defined and that $T$ is indeed a functor.
		We prove a series of claims.
		
		\begin{enumerate}[label=(\arabic*), widest=10]
			\item $T(X)$ does not depend on the choice of the sequence $\tuple{X_*, u_*}$ and $n_0 ∈ ω$.
				Let $T(X, u_*)$ denote the type obtained from the sequence $\tuple{X_*, u_*}$.
				Clearly, $\absdeg{u_{n_0, ∞}} \sim \absdeg{u_{n_1, ∞}}$ for every $n_1 ≥ n_0$ if $\deg(u_{n_0, n_1}) ≠ 0$, so $T(X, u_*)$ is well-defined.
				Let $\tuple{X'_*, u'_*}$ be another $\S$-sequence with limit $\tuple{X, u'_{*, ∞}}$.
				We show that $T(X, u_*) ≤ T(X, u'_*)$, and so by a symmetrical argument they are equal.
				By (F1), for every $n'_0 ∈ ω$ there is $n_0 ∈ ω$ and an $\S$-map $g\maps X_{n_0} \to X'_{n'_0}$ such that $g ∘ u_{n_0, ∞} ≈_π u'_{n'_0, ∞}$ and so by Lemma~\ref{thm:supernatural_coherence} we have $\absdeg{g ∘ u_{n_0, ∞}} = \absdeg{u'_{n'_0, ∞}}$.
				If the right-hand side is non-zero for some $n'_0$, we have $T(X, u_*) ∋ \absdeg{u_{n_0, ∞}} ≤ \absdeg{u'_{n'_0, ∞}} ∈ T(X, u'_*)$.
				Otherwise, $T(X, u_*) ≤ 0 = T(X, u'_*)$.
			
			\item Let $T(f, u_*, v_*, f_0)$ denote the definition of $T(f)$ for fixed sequences $u_*$, $v_*$ and a fixed $\S$-map $f_0\maps X_{m_0} \to Y_{n_0}$.
				We claim this is well-defined.
				By Observation~\ref{thm:multiplication}, $T(f, u_*, v_*, f_0)$ is a well-defined multiplication $T(Y) ∪ \set{0} \to T(X) ∪ \set{0}$ if $\absdeg{v_{n_0, ∞}} ≤ \absdeg{f_0 ∘ u_{m_0, ∞}}$.
				We have $f_0 ∘ u_{m_0, ∞} ≈_{π - ε} v_{n_0, ∞} ∘ f$ for some $ε > 0$.
				By (F1), for every $n_1 ≥ n_0$ there is $m_1 ≥ m_0$ and $f_1\maps X_{m_1} \to Y_{n_1}$ with $f_1 ∘ u_{m_1, ∞} ≈_δ v_{n_1, ∞} ∘ f$ for $δ > 0$ such that $v_{n_0, n_1}$ is $\tuple{ε, δ}$-continuous.
				Hence, we have 
				\begin{gather*}
					v_{n_0, n_1} ∘ f_1 ∘ u_{m_1, ∞} ≈_ε v_{n_0, ∞} ∘ f ≈_{π - ε} f_0 ∘ u_{m_0, ∞}, \\
					\absdeg{v_{n_0, n_1}} ≤ \absdeg{v_{n_0, n_1} ∘ f_1 ∘ u_{m_1, ∞}} = \absdeg{f_0 ∘ u_{m_0, ∞}}
				\end{gather*}
				by Lemma~\ref{thm:supernatural_coherence}.
				Since $n_1$ was arbitrary, we are done.
				
				\item $T(f, u_*, v_*, f_0)$ does not depend on $f_0$, and so can be denoted by $T(f, u_*, v_*)$.
					Note that is this trivial if $T(X) = 0$ or $T(Y) = 0$.
					Otherwise, let $f_1\maps X_{m_1} \to Y_{n_1}$ be another suitable $\S$-map.
					Without loss of generality, $n_0 ≤ n_1$.
					We have $f_i ∘ u_{m_i, ∞} ≈_{π - ε_i} v_{n_i, ∞} ∘ f$ for some $ε_i > 0$ and $i ∈ \set{0, 1}$.
					Let $δ ∈ (0, ε_1)$ be such that $v_{n_0, n_1}$ is $\tuple{ε_0, δ}$-continuous.
					By (F1) there is $m ≥ m_0, m_1$ and an $\S$-map $g\maps X_m \to Y_{n_1}$ such that $g ∘ u_{m, ∞} ≈_δ v_{n_1, ∞} ∘ f$.
					
					Since $δ < ε_1$, we have $g ∘ u_{m, ∞} ≈_π f_1 ∘ u_{m_1, ∞}$.
					Hence, $\absdeg{g} = \absdeg{f_1 ∘ u_{m_1, m}}$ and $T(f, u_*, v_*, g) = T(f, u_*, v_*, f_1)$.
					Since $v_{n_0, n_1}$ is $\tuple{ε_0, δ}$-continuous, we have $f_0 ∘ u_{m_0, ∞} ≈_{π - ε_0} v_{n_0, ∞} ∘ f ≈_{ε_0} v_{n_0, n_1} ∘ g ∘ u_{m, ∞}$.
					Hence, $\absdeg{f_0 ∘ u_{m_0, m}} = \absdeg{v_{n_0, n_1} ∘ g}$ and
					\[
						T(f, u_*, v_*, f_0) = T(f, u_*, v_*, f_0 ∘ u_{m_0, m}) = T(f, u_*, v_*, v_{n_0, n_1} ∘ g)
					\]
					since all three multiplications map $s$ to $\abs{\deg(f_0)} ⋅ s'$.
					Also, $T(f, u_*, v_*, g)$ maps $\absdeg{v_{n_1, ∞}}$ to $\absdeg{g ∘ u_{m_1, ∞}}$, and so maps $\absdeg{v_{n_0, ∞}}$ to $\absdeg{v_{n_0, n_1} ∘ g ∘ u_{m_1, ∞}}$.
					Hence,
					\[
						T(f, u_*, v_*, v_{n_0, n_1} ∘ g) = T(f, u_*, v_*, g) = T(f, u_*, v_*, f_1),
					\]
					and altogether, we have $T(f, u_*, v_*, f_0) = T(f, u_*, v_*, f_1)$.
				
				\item $T(f, u_*, v_*)$ contravariantly preserves composition, i.e. if $g\maps Y \to Z$ is another $σ\S$-map and $\tuple{Z_*, w_*}$ is an $\S$-sequence with limit $\tuple{Z, w_{*, ∞}}$, we have 
					\[
						T(g ∘ f, u_*, w_*) = T(f, u_*, v_*) ∘ T(g, v_*, w_*).
					\]
					We pick $k_0 ∈ ω$ such that $t := \absdeg{w_{k_0, ∞}} ∈ T(Z)$.
					There is $n_0 ∈ ω$ and an $\S$-map $g_0\maps Y_{n_0} \to Z_{k_0}$ such that $g_0 ∘ v_{n_0, ∞} ≈_{π/2} w_{k_0, ∞} ∘ g$.
					By possibly replacing $n_0$ with $n'_0$ and $g_0$ with $g_0 ∘ v_{n_0, n'_0}$ we may arrange that $t' := \absdeg{v_{n_0, ∞}} ∈ T(Y)$.
					There is $m_0 ∈ ω$ such that $t'' := \absdeg{u_{m_0, ∞}} ∈ T(X)$ and an $\S$-map $f_0\maps X_{m_0} \to Y_{n_0}$ with $f_0 ∘ u_{m_0, ∞} ≈_δ v_{n_0, ∞} ∘ f$ where $δ ∈ (0, π)$ is such that $g_0$ is $\tuple{π/2, δ}$-continuous.
					We have
					\[
						g_0 ∘ f_0 ∘ u_{m_0, ∞} ≈_{π/2} g_0 ∘ v_{n_0, ∞} ∘ f ≈_{π/2} w_{k_0, ∞} ∘ g ∘ f.
					\]
					Hence, $T(g ∘ f, u_*, w_*)$ maps $t$ to $\absdeg{g_0 ∘ f_0} ⋅ t''$.
					If $T(g, v_*, w_*)$ is the multiplication by $r$ and $T(f, u_*, v_*)$ is the multiplication by $r'$, we have $r' ⋅ r ⋅ t = r' ⋅ \absdeg{g_0} ⋅ t' = \absdeg{g_0 ∘ f_0} ⋅ t''$, so $T(g ∘ f, u_*, w_*)$ is the multiplication by $r' ⋅ r$.
				
				\item $T(\id_X, u_*, u'_*) = \id_{T(X)}$ for every pair of sequences $u_*, u'_*$ with limit $X$.
					It will follow that $T(f, u_*, v_*)$ does not depend on the choice of $u_*$ and $v_*$ since for every alternative pair of sequences $u'_*$ and $v'_*$ we would have 
					\[
						T(f, u'_*, v'_*) = T(\id_X, u'_*, u_*) ∘ T(f, u_*, v_*) ∘ T(\id_Y, v_*, v'_*) = T(f, u_*, v_*).
					\]
					It will also follow that $T(\id_X) = \id_{T(X)}$.
					So let $\tuple{X_*, u_*}$ and $\tuple{X'_*, u'_*}$ be $\S$-sequences with limits $\tuple{X, u_{*, ∞}}$ and $\tuple{X, u'_{*, ∞}}$.
					There is an $\S$-map $f_0\maps X_{n_0} \to X'_{n'_0}$ such that $f_0 ∘ u_{n_0, ∞} ≈_π u'_{n'_0, ∞}$ and $\absdeg{u_{n_0, ∞}} ∈ T(X)$ and $\absdeg{u'_{n'_0, ∞}} ∈ T(Y)$.
					Hence, $T(\id_X, u_*, u'_*)$ maps $\absdeg{u'_{n'_0, ∞}}$ to $\absdeg{f_0 ∘ u_{n_0, ∞}}$.
					But by Lemma~\ref{thm:supernatural_coherence} we have $\absdeg{u'_{n'_0, ∞}} = \absdeg{f_0 ∘ u_{n_0, ∞}}$, and so $T(\id_X, u_*, u'_*) = \id_X$.
				\qedhere
		\end{enumerate}
	\end{proof}
\end{construction}

\begin{observation}
	Let $I\maps \S \to σ\S$ denote the inclusion functor, and let $J\maps ℤ \to \T$ be the functor mapping every $k ∈ ℤ$ to the multiplication by $\abs{k}\maps [1] \to [1]$.
	Then we have $T ∘ I = J ∘ \deg$, so $T$ may be viewed as an extension of the degree / winding number from $\S$ to $σ\S$.
	To see this, for an $\S$-map $f$, we just consider the constant $\S$-sequence consisting of $\id_\SS$, so we have $T(\SS) =  [1]$ and $T(f)(1) = \abs{\deg(f)}$.
	
	Note that it is natural to forget the sign of the degree of an $\S$-map.
	To consider the degree of a continuous surjection $f\maps X \to Y$ between \emph{some} circles (as opposed to \emph{the} circle $\SS$), we would need to choose homeomorphisms $g\maps \SS \to X$ and $h\maps Y \to \SS$ and to put $\deg(f) := \deg(h ∘ f ∘ g)$.
	However, different choices of homeomorphisms $g, h$ may change the sign of the degree.
	
	In fact, if we consider the MU-functor $\abs{\deg}\maps \S \to ℕ$, where $ℕ$ is viewed as a discrete MU-category with a single object, and if by $σℕ$ we denote the opposite category of $\T$, endowed with the discrete MU-structure, then it can be shown that $\tuple{ℕ, σℕ}$ is a free completion of MU-categories, and $T = σ\abs{\deg}\maps σ\S \to σℕ$ is the unique MU-continuous extension.
	This also gives an example of a free completion of MU-categories where the MU-structure is discrete (cf.~Remark~\ref{rm:discrete_fraisse}).
	We will prove the MU-continuity in Observation~\ref{thm:T_MU-continuous}.
\end{observation}

\begin{observation}
	The category $\T$ is rigid and skeletal, i.e. if $f\maps S \to S'$ is a $\T$-isomorphism, then $S = S'$ and $f = \id$.
	This follows from Observation~\ref{thm:multiplication} because for $s ∈ S$ we have $s ≤ f(s) ≤ f\inv(f(s)) = s$.
	It follows that for a homeomorphism $f\maps X \to Y$ of circle-like continua we have $T(X) = T(Y)$ and $T(f) = \id$.
\end{observation}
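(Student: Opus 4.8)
The plan is to derive the statement almost entirely from Observation~\ref{thm:multiplication}. First I would separate off the zero object. Recall that in $\T$ the zero maps are freshly adjoined and that any composite with a zero map is again a zero map; in particular $\id_0$ is the (unique) zero map $0 \to 0$, whereas for a positive type $S$ the identity $\id_S$ is the positive multiplication by $1$ and is \emph{not} a zero map. Hence if $f\maps S \to S'$ is a $\T$-isomorphism with $S' = 0$, then $f$ is a zero map, so $f\inv \circ f = \id_S$ is a zero map, which forces $S = 0$ and $f = \id_0$; the symmetric argument handles $S = 0$. So it suffices to treat the case where $S$ and $S'$ are both positive types.

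In that case $f$ and $g := f\inv$ are positive multiplications. Fix any representative $s \in S$. By Observation~\ref{thm:multiplication}, a positive multiplication sending $s$ to a supernatural number $s'$ exists only when $s \le s'$ in the divisibility order; applying this to $f$ gives $s \le f(s)$, and applying it to $g$ (whose source is $S'$, with $f(s)$ as representative) gives $f(s) \le g(f(s)) = s$, the last equality being $g \circ f = \id_S$. Since the divisibility order on genuine supernatural numbers is antisymmetric --- $s(p) \le s'(p) \le s(p)$ for every prime $p$ forces $s = s'$ --- we conclude $f(s) = s$. Now $s = f(s)$ lies in $S'$ as well as in $S$, and distinct $\sim$-classes are disjoint, so $S = S'$; and since $\id_S$ is also a positive multiplication sending $s$ to $s$, the uniqueness clause of Observation~\ref{thm:multiplication} yields $f = \id_S$. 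This establishes that $\T$ is rigid and skeletal.

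For the corollary about continua: a homeomorphism $f\maps X \to Y$ of circle-like continua is a $σ\S$-map whose inverse is also a $σ\S$-map, and $T$ is a contravariant functor by Construction~\ref{con:type_functor}; hence $T(f)\maps T(Y) \to T(X)$ has the two-sided inverse $T(f\inv)$ and is a $\T$-isomorphism, so by the rigidity just proved $T(X) = T(Y)$ and $T(f) = \id$.

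The argument is routine once Observation~\ref{thm:multiplication} is in hand; the only points demanding a little care are ruling out isomorphisms incident to the zero object (so that it is legitimate to regard $f$ and $f\inv$ as positive multiplications at all) and recording that the inequalities $s \le f(s) \le s$ are among honest supernatural numbers, where the divisibility order is antisymmetric --- one must not slip into working with the merely convex order on $\SN/{\sim}$.
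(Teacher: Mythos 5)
Your proof is correct and follows the paper's own argument: the key step is exactly the chain $s \le f(s) \le f\inv(f(s)) = s$ from Observation~\ref{thm:multiplication}, with antisymmetry of divisibility on $\SN$ and the uniqueness clause giving $S = S'$ and $f = \id$. Your additional care in ruling out isomorphisms involving the zero object and in deducing the corollary via contravariant functoriality of $T$ is just a more explicit write-up of what the paper leaves implicit.
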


\begin{remark} \label{rm:Cech_cohomology}
	Classification of solenoids and pseudo-solenoids is often stated in terms of the first \emph{Čech cohomology} $H^1$.
	Let us make a clear connection between $H^1$ and our type functor $T$.
	The functor $H^1\maps σ\S \to \Ab$ can be equivalently described as $[-, \SS]$, i.e. for every circle-like continuum $X$, $[X, \SS]$ denotes the set of all continuous surjections $X \to \SS$ up to homotopy, endowed with the group structure induced by the pointwise multiplication in $\SS ⊆ \CC$,
	see e.g. \cite[page~42]{Krasinkiewicz76}.
	
	Moreover, if $\tuple{X_*, u_*}$ is an $\S$-sequence with limit $\tuple{X, u_{*, ∞}}$, applying $[-, \SS]$ preserves the limit.
	To see this, every $σ\S$-map $X \to \SS$ approximately factorizes through $u_*$ by (F1), and any sufficiently close maps to $\SS$ are homotopic.
	On the other hand, for any $\S$-maps $f, g\maps X_n \to \SS$ such that $f ∘ u_{n, ∞}$ and $g ∘ u_{n, ∞}$ are homotopic, there are maps $f ∘ u_{n, ∞} = h_0 ≈_π h_1 ≈_π \cdots ≈_π h_k = g ∘ u_{n, ∞}\maps X \to \SS$, which can be approximately factorized through $u_*$, and so	there is $m ≥ n$ such that $f ∘ u_{n, m}$ and $g ∘ u_{n, m}$ are homotopic.
	
	Since the degree induces an isomorphism $[\SS, \SS] \to ℤ$, the abelian groups $H^1(X)$ are limits of sequences of group endomorphisms of the additive group $ℤ$, and so are isomorphic to subgroups of $ℚ$.
	Together, we may view $H^1\maps σ\S \to \SubQ$, where $\SubQ$ denotes the category of all subgroups of $ℚ$ and group homomorphisms.
	
	Now for every subgroup $\set{0} ≠ G ⊆ ℚ$ we may consider $n := \min(G ∩ ℤ_+)$, and for every $q ∈ G$ we put $s(q, G) := \sup_{\SN}\set{k ∈ ℕ_+: q/k ∈ G}$.
	Then we have 
	\[
		G = \set{n l / k: l ∈ ℤ, k ∈ ℕ_+, k ≤ s(n, G)}.
	\]
	Pick $q ∈ G \setminus \set{0}$ and put $Q(G) := [s(q, G)] ∈ \SN/{\sim}$.
	For every group homomorphism $f\maps G \to G'$ we have $s(q, G) ≤ s(f(q), G')$, and we may let $Q(f)\maps Q(G) \to Q(G')$ be the supernatural multiplication $s(q, G) \mapsto s(f(q), G')$.
	In fact, $Q(G)$ and $Q(f)$ do not depend on $q$, and the construction gives a functor $Q\maps \SubQ \to \T$.
	It turns out that $T = Q ∘ H^1$, and $Q$ is the “rigid skeletal modification” of $\SubQ$, i.e. $f\maps G \to G'$ is an isomorphism if and only if $Q(G) = Q(G')$ and $Q(f) = \id$.
\end{remark}

\begin{observation} \label{thm:T_MU-continuous}
	The type functor $T\maps σ\S \to \T$ is MU-continuous when $\T$ is endowed with the $0$-$1$ discrete MU-structure (and $T$ is viewed as a covariant functor from $σ\S$ to the opposite category of $\T$).
	That means, for every $σ\S$-object $Y$ there is $δ > 0$ such that for every $σ\S$-object $X$ and all $σ\S$-maps $f, g\maps X \to Y$ such that $f ≈_δ g$ we have $T(f) = T(g)$.
	Moreover, if $Y = \SS$, the choice $δ = π$ works.
	
	\begin{proof}
		This is trivial if $T(Y) = 0$.
		Otherwise, by (L2) there is an $\S$-sequence $\tuple{Y_*, v_*}$ with limit $\tuple{Y, v_{*, ∞}}$ such that $\absdeg{v_{0, ∞}} ∈ T(Y)$.
		We take $δ > 0$ such that $v_{0, ∞}$ is $\tuple{π, δ}$-continuous.
		If $Y = \SS$, we can have $v_{0, ∞} = \id_\SS$ and $δ = π$.
		Now let $f, g\maps X \to Y$ be $σ\S$-maps such that $f ≈_δ g$.
		We have $v_{0, ∞} ∘ f ≈_{π - ε} v_{0, ∞} ∘ g$ for some $ε ∈ (0, π)$.
		Let $\tuple{X_*, u_*}$ be an $\S$-sequence with limit $\tuple{X, u_{*, ∞}}$.
		By (F1) there is an $\S$-map $f_0\maps X_n \to Y_0$ such that $f_0 ∘ u_{n, ∞} ≈_ε v_{0, ∞} ∘ f ≈_{π - ε} v_{0, ∞} ∘ g$.
		Hence, $f_0$ is a witness for both $T(f)$ and $T(g)$.
	\end{proof}
\end{observation}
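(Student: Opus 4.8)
The plan is to fix once and for all a presentation of the codomain $Y$ as an $\S$-limit; then the value $T(h)$ of the type functor on any continuous surjection $h$ into $Y$ can be read off from a \emph{single} $\S$-map $h_0$ that approximates $h$ well enough through that presentation, so it suffices to produce one $h_0$ that works simultaneously for two $\delta$-close maps $f$ and $g$. The threshold $\delta$ will depend only on $Y$ (through the chosen presentation and one modulus of continuity), which is exactly the uniformity across the varying domain $X$ that MU-continuity demands.

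If $T(Y)=0$ there is nothing to prove, since then $T(f)$ and $T(g)$ are both the unique zero map $0\to T(X)$. So assume $T(Y)\neq 0$. Using (L2), I would fix an $\S$-sequence $\tuple{Y_*,v_*}$ with $\sigma\S$-limit $\tuple{Y,v_{*,\infty}}$ and, passing to a tail, arrange $\deg(v_n)\neq 0$ for every $n$, so that $s:=\absdeg{v_{0,\infty}}$ is a positive representative of $T(Y)$. Since $v_{0,\infty}$ is a $\sigma\S$-map, the MU-category axioms give $\delta>0$ such that $v_{0,\infty}$ is $\tuple{\pi,\delta}$-continuous. For the ``moreover'' clause, when $Y=\SS$ one takes instead the constant identity $\S$-sequence, so $v_{0,\infty}=\id_\SS$; being non-expansive it is $\tuple{\pi,\pi}$-continuous, and $\delta=\pi$ works.

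Now suppose $f,g\maps X\to Y$ are $\sigma\S$-maps with $f\approx_\delta g$. By the choice of $\delta$ we get $v_{0,\infty}\circ f\approx_{\pi-\varepsilon}v_{0,\infty}\circ g$ for some $\varepsilon\in(0,\pi)$. Fix an $\S$-sequence $\tuple{X_*,u_*}$ with $\sigma\S$-limit $X$, chosen (by passing to a tail) so that $\absdeg{u_{n,\infty}}$ is a positive representative of $T(X)$ for every $n$ whenever $T(X)\neq 0$. Since $v_{0,\infty}\circ f$ is a $\sigma\S$-map into the $\S$-object $Y_0$, condition (F1) produces an index $n$ and an $\S$-map $f_0\maps X_n\to Y_0$ with $f_0\circ u_{n,\infty}\approx_\varepsilon v_{0,\infty}\circ f$; by the triangle inequality also $f_0\circ u_{n,\infty}\approx_\pi v_{0,\infty}\circ g$. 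Hence $f_0$ is a legitimate witness in Construction~\ref{con:type_functor} both for computing $T(f)$ from the data $\tuple{u_*,v_*,f_0}$ and for computing $T(g)$ from the data $\tuple{u_*,v_*,f_0}$, so each of $T(f)$ and $T(g)$ equals the unique multiplication sending $s$ to $\absdeg{f_0}\cdot\absdeg{u_{n,\infty}}$. Therefore $T(f)=T(g)$.

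The one genuinely delicate point is that a single $f_0$ must witness both values at once; this is secured precisely because $\delta$ was chosen so that $v_{0,\infty}$ carries $\delta$-close maps to $\pi$-close maps, so that approximating $v_{0,\infty}\circ f$ to within $\varepsilon$ by $f_0\circ u_{n,\infty}$ automatically keeps $f_0\circ u_{n,\infty}$ within $\pi$ of $v_{0,\infty}\circ g$. The remaining items — selecting positive representatives of the types, and invoking the independence of $T$ of the choices $\tuple{u_*,v_*,f_0}$ already established in Construction~\ref{con:type_functor} — are routine bookkeeping.
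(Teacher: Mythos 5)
Your proof is correct and follows essentially the same route as the paper's: choose $δ$ from the $\tuple{π, δ}$-continuity of a limit cone map $v_{0,∞}$ with positive degree representative, then use (F1) to produce a single $\S$-map $f_0$ within $ε$ of $v_{0,∞}∘f$ and hence within $π$ of $v_{0,∞}∘g$, so it witnesses both $T(f)$ and $T(g)$. Your extra care about positive representatives for $T(X)$ is harmless bookkeeping already covered by the well-definedness in Construction~\ref{con:type_functor}.
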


\begin{remark} \label{thm:degen_types}
	The formalism of types allows to comfortably express several known results.
	For example,
	\begin{enumerate}
		\item a circle-like continuum $X$ is also arc-like if and only if $T(X) = 0$,
		\item a circle-like continuum $X$ can be embedded into the plane if and only if $T(X) ∈ \set{0, 1}$.
	\end{enumerate}
	The fact that $T(X) = 0$ implies arc-like is proved for example in \cite[Theorem~2]{Ingram67}.
	We can also view this directly by observing that a continuous surjection $f\maps \SS \to \SS$ has degree $0$ if and only if it factorizes through $\II$.
	This is easily seen by considering a lifting $\tilde{f}\maps ℝ \to ℝ$, which is periodic if and only if $\deg(f) = 0$.
	Similarly, if $\tuple{X_*, u_*}$ is an $\S$-sequence with limit $\tuple{X, u_{*, ∞}}$, and $\tuple{Y_*, v_*}$ is an $\I$-sequence with limit $\tuple{X, v_{*, ∞}}$, by (F1) and (F2) of $\tuple{\CPolS, \MCptS}$, for every $m ∈ ω$ there are continuous surjections $g\maps Y_n \to X_m$ and $h\maps X_{m'} \to Y_n$ such that $g ∘ h ≈_π u_{m, m'}$, so $\deg(u_{m, m'}) = \deg(f ∘ g) = 0$.
	
	Claim~(ii) follows from the results of Bing~\cite{Bing62}, who however uses the formalism of refining covers instead of inverse limits.
	Every arc-like continuum is embeddable into the plane by \cite[Theorem~5]{Bing62}, see also \cite[Theorem~12.20]{Nadler92}.
	Every circle-like continuum of type $1$ is embeddable into the plane by \cite[Theorem~4]{Bing62}.
	It follows from \cite[Theorem~3]{Bing62} that a circle-like continuum of non-zero type in the plane has type $1$.
	See also \cite[4.1]{Krasinkiewicz76}.
\end{remark}

Now we can characterize $σ\S_P$ in terms of $T$.

\begin{proposition} \label{thm:sigmaSP_by_T}
	Let $P ⊆ Π$.
	\begin{enumerate}
		\item A circle-like continuum $X$ is a $σ\S_P$-object if and only if $T(X) ≤ [P^∞]$.
		\item A continuous surjection $f\maps X \to Y$ from a $σ\S$-object to a $σ\S_P$-object is a $σ\S_P$-map if and only if $T(f)$ is the multiplication by $t ≤ P^∞$, i.e. $T(f)$ is allowed to increase only coordinates from $P$.
	\end{enumerate}
	Together, $σ\S_P = T\preim{\T_P}$ where $\T_P ⊆ \T$ is the subcategory consisting of types $≤ [P^∞]$ and multiplications by numbers $≤ P^∞$.
	
	\begin{proof}
		(i) is clear since $T(X) ≤ S_P$ if and only if $X$ is a limit of some $\S$-sequence $\tuple{X_*, u_*}$ such that $\absdeg{u_{n_0, ∞}} ≤ P^∞$ for some $n_0 ∈ ω$.
		
		For (ii) let $\tuple{X_*, u_*}$ and $\tuple{Y_*, v_*}$ be $\S$-sequences with $σ\S$-limits $\tuple{X, u_{*, ∞}}$ and $\tuple{Y, v_{*, ∞}}$, and let $n ∈ ω$.
		Since $Y$ is an $σ\S_P$-object, we may assume that $v_*$ is an $\S_P$-sequence, and so $v_{n, ∞}$ is a $σ\S_P$-map and $\absdeg{v_{n, ∞}} ≤ P^∞$.
		For every $ε ∈ (0, π)$ there is an $\S$-map $f_0\maps X_m \to Y_n$ such that $f_0 ∘ u_{m, ∞} ≈_ε v_{n, ∞} ∘ f$.
		Let $t ∈ \SN$ be such $T(f)$ is the multiplication by $t$.
		We have $t ⋅ \absdeg{v_{n_0, ∞}} = \absdeg{f_0 ∘ u_{m_0, ∞}}$.
		
		Now if $f ∈ σ\S_P$, we may have chosen $u_*$ that is an $\S_P$-sequence and $f_0$ that is an $\S_P$-map, so $t ≤ \absdeg{f_0 ∘ u_{m, ∞}} ≤ P^∞$.
		On the other hand, if $t ≤ P^∞$, then $\absdeg{f_0 ∘ u_{m, ∞}} = t ⋅ \absdeg{v_{n, ∞}} ≤ P^∞$, and so $f_0$ is an $\S_P$-map and $u_*$ is an $\S_P$-sequence after $m$.
		Hence, $v_{n, ∞} ∘ f ≈_ε f_0 ∘ u_{m, ∞} ∈ σ\S_P$.
		Since $ε > 0$ was arbitrarily small, $v_{n, ∞} ∘ f ∈ σ\S_P$ by the definition of $σ$-closure.
		Similarly, since $n$ was arbitrary, $f ∈ σ\S_P$.
	\end{proof}
\end{proposition}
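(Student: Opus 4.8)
The plan is to translate each of the three notions involved — being a $σ\S_P$-object, being a $σ\S_P$-map, and the value of $T$ — into a statement about the supernatural numbers $\absdeg{u_{n,∞}}$ attached to approximating $\S$-sequences, and then to settle everything by divisibility arithmetic in $\SN$. I will use that both $\tuple{\S,σ\S}$ (Theorem~\ref{thm:full_polyhedral_categories}) and $\tuple{\S_P,σ\S_P}$ (shown above) are free completions, so that Proposition~\ref{thm:F1_limits} applies to $\S_P⊆σ\S_P$: with $\B_0,\B_1$ as there, for a $σ\S_P$-object $X$ equipped with a chosen $\S_P$-sequence $\tuple{X_*,u_*}$ with limit $X$, a continuous surjection $h\maps \SS\from X$ lies in $\B_0$ (equivalently, is a $σ\S_P$-map) iff for every $ε>0$ it lies within $ε$ of $g∘u_{m,∞}$ for some $m∈ω$ and some $\S_P$-map $g$, while $σ\S_P=\B_1$, so a $σ\S$-map $f\maps X\to Y$ belongs to $σ\S_P$ as soon as $v_{n,∞}∘f∈\B_0$ for every term $v_{n,∞}$ of some $\S_P$-sequence for $Y$.

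For~(i), recall that the $σ\S_P$-objects are exactly the $\MCpt$-limits of $\S_P$-sequences (by the construction of the $σ$-closure). If $X=\lim\tuple{X_*,u_*}$ with all $\deg(u_n)∈D_P$, then $\absdeg{u_{0,∞}}=∏_n\absdeg{u_n}≤P^∞$, hence $T(X)≤[P^∞]$. Conversely, if $T(X)≤[P^∞]$ then $T(X)≠0$ (since $[P^∞]≠[0]$), so any $\S$-sequence $\tuple{X_*,u_*}$ with limit $X$ has, after discarding finitely many terms, all $\deg(u_n)≠0$ and $T(X)=[∏_n\absdeg{u_n}]$. The hypothesis says $∏_n\absdeg{u_n}$ is $\sim$-equivalent to some $s≤P^∞$, which forces it to involve only finitely many primes outside $P$, each with finite exponent; hence for some $n_0$ no $\deg(u_n)$ with $n≥n_0$ has a prime factor outside $P$, so $\tuple{u_n}_{n≥n_0}$ is an $\S_P$-sequence with limit $X$ and $X$ is a $σ\S_P$-object.

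For~(ii), fix an $\S_P$-sequence $\tuple{Y_*,v_*}$ with limit $Y$ (legitimate since $Y$ is a $σ\S_P$-object). The common device is: whenever $n,m∈ω$, $0<ε<π$, $\tuple{X_*,u_*}$ is an $\S_P$-sequence with limit $X$, and an $\S$-map $f_0\maps X_m\to Y_n$ satisfies $v_{n,∞}∘f≈_ε f_0∘u_{m,∞}$, the well-definedness of $T(f)$ established in Construction~\ref{con:type_functor} (independence of the chosen approximant) gives
\[
	T(f)\bigl(\absdeg{v_{n,∞}}\bigr)=\absdeg{f_0}⋅\absdeg{u_{m,∞}},
\]
the arguments being the positive representatives of $T(Y)$ and $T(X)$ since $u_*,v_*$ have no zero degrees. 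Now suppose $f∈σ\S_P$. Then $X=\dom(f)∈\Ob(σ\S_P)$, so we may take an $\S_P$-sequence $u_*$ for $X$; since $v_{n,∞}∈\B_0$ and $f∈\B_1$ we get $v_{n,∞}∘f∈\B_0$, hence for some $m$ there is an $\S_P$-map $f_0$ with $v_{n,∞}∘f≈_ε f_0∘u_{m,∞}$ for some $ε<π$. In the displayed equality the right-hand side now divides $P^∞$ ($f_0$ being $\S_P$, $u_*$ an $\S_P$-sequence), and since $\absdeg{v_{n,∞}}$ also has zero exponent at each prime outside $P$ ($v_*$ being $\S_P$), comparing those exponents shows the multiplier of $T(f)$ vanishes outside $P$; thus $T(f)$ is the multiplication by some $t≤P^∞$. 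Conversely suppose $T(f)$ is the multiplication by some $t≤P^∞$. Taking a representative $s≤P^∞$ of $T(Y)$, its image $t⋅s$ represents $T(X)$ and divides $P^∞⋅P^∞=P^∞$, so $T(X)≤[P^∞]$ and $X$ is a $σ\S_P$-object by~(i); fix an $\S_P$-sequence $u_*$ for $X$. Given $n$ and $ε∈(0,π)$, condition~(F1) of the free completion $\tuple{\S,σ\S}$ supplies $m$ and an $\S$-map $f_0\maps X_m\to Y_n$ with $f_0∘u_{m,∞}≈_ε v_{n,∞}∘f$, and the displayed equality gives $\absdeg{f_0}⋅\absdeg{u_{m,∞}}=t⋅\absdeg{v_{n,∞}}≤P^∞$, whence $\absdeg{f_0}≤P^∞$, i.e. $\deg(f_0)∈D_P$ and $f_0$ is an $\S_P$-map. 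As $ε>0$ was arbitrary, $v_{n,∞}∘f∈\B_0$; as $n$ was arbitrary, $f∈σ\S_P$. The equality $σ\S_P=T\preim{\T_P}$ then follows by combining~(i) and~(ii).

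The main obstacle will be the supernatural-number bookkeeping: one must be precise that an $\S_P$-sequence is exactly one all of whose degrees divide $P^∞$ — i.e. have zero exponent at each prime outside $P$, not merely a finite or eventually-zero exponent — and one must check that the multiplier $t$ of $T(f)$, which is well-defined only up to its values on the $∞$-coordinates of $\absdeg{v_{n,∞}}$ (all of which lie in $P$), can genuinely be chosen $≤P^∞$. The only structural point is recognizing that the type relation displayed above holds for \emph{every} admissible approximant $f_0$, in particular the one produced by condition~(F1) of the larger completion $\tuple{\S,σ\S}$; this is precisely what converts approximate factorization of $f$ through an $\S$-map $f_0$ into the constraint that $f_0$ already have degree in $D_P$.
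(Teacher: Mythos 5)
Your proposal is correct and takes essentially the same route as the paper's proof: approximate $v_{n,∞} ∘ f$ by $f_0 ∘ u_{m,∞}$ through $\S_P$-sequences, use the well-definedness of $T(f)$ from Construction~\ref{con:type_functor} to get $T(f)(\absdeg{v_{n,∞}}) = \absdeg{f_0 ∘ u_{m,∞}}$, and finish by supernatural-number divisibility together with the closure properties of the $σ$-closure (which you phrase via $\B_0$, $\B_1$ and Proposition~\ref{thm:F1_limits}, whereas the paper appeals directly to (F1) of the free completion $\tuple{\S_P, σ\S_P}$ and the definition of the $σ$-closure). The only cosmetic difference is in the converse of (ii): you first deduce $T(X) ≤ [P^∞]$ and fix an $\S_P$-sequence for $X$, while the paper starts from an arbitrary $\S$-sequence and observes that it is eventually an $\S_P$-sequence.
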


Let us summarize properties of $σ\S_P$-maps in the following corollary.

\begin{corollary}
	Let $P ⊆ Π$.
	\begin{enumerate}
		\item Every $σ\S_P(X, Y) ⊆ σ\S(X, Y)$ is clopen. More precisely, for every $σ\S$-object $Y$ there is $δ > 0$ such that for every $σ\S$-object $X$, $σ\S_P(X, Y)$ is $δ$-separated from its complement.
		\item We have $g ∘ f ∈ σ\S_P$ if and only if $g ∈ σ\S_P$ and $f ∈ σ\S_P$ for every pair of $σ\S$-maps $f\maps X \to Y$ and $g\maps Y \to Z$ between $σ\S$-objects.
		\item A $σ\S$-map $f\maps X \to Y$ between $σ\S_P$-objects is a $σ\S_P$-map if and only if $g ∘ f$ is a $σ\S_P$-map for some $σ\S_P$-map $g\maps Y \to \SS$.
		\item For every pair of $σ\S$-maps $f, g\maps X \to \SS$ such that $f ≈_π g$ we have $f ∈ σ\S_P$ if and only if $g ∈ σ\S_P$.
		\item For every $σ\S$-map $f\maps X \to Y$ between $σ\S_P$-objects, every pair of $\S_P$-sequences $\tuple{X_*, u_*}$, $\tuple{Y_*, v_*}$ with limits $\tuple{X, u_{*, ∞}}$, $\tuple{Y, v_{*, ∞}}$, and every $\S$-map $f_0\maps X_{m_0} \to Y_{n_0}$ such that $f_0 ∘ u_{m_0, ∞} ≈_π v_{n_0, ∞} ∘ f$ we have $f ∈ σ\S_P$ if and only if $f_0 ∈ \S_P$.
		\item For every $σ\S_P$-object $X$ there is $δ > 0$ such that every $δ$-monic $σ\S$-map $f\maps X \to \SS$ is a $σ\S_P$-map.
	\end{enumerate}
	
	\begin{proof}
		We will use Proposition~\ref{thm:sigmaSP_by_T} without noting.
		Claims~(i) and (iv) follow from the continuity of $T$ (Observation~\ref{thm:T_MU-continuous}).
		Claim~(ii) follows since if $T(g)$ and $T(f)$ are the multiplications by $s$ and $t$, we have $s ⋅ t ≤ P^∞$ if and only if both $s, t ≤ P^∞$.
		Claim~(iii) follows from (ii).
		Claim~(v) follows since $T(f)$ maps $\absdeg{v_{n_0, ∞}} ∈ T(Y)$ to $\absdeg{f_0 ∘ u_{m_0, ∞}}$ and $\absdeg{u_{m_0, ∞}} ≤ P^∞$.
		To prove (vi) let $\tuple{X_*, u_*}$ be a $\S_P$-sequence with a $σ\S_P$-limit $\tuple{X, u_{*, ∞}}$.
		By Proposition~\ref{thm:polyhedral_factorization} there is $δ > 0$ such that for every $δ$-monic $σ\S$-map $f\maps X \to \SS$ there is an $\S$-map $g$ such that $u_{0, ∞} ≈_π g ∘ f$.
		Since $u_{0, ∞} ∈ σ\S_P$, we have $g ∘ f ∈ σ\S_P$ by (iv) and $f ∈ σ\S_P$ by (ii).
	\end{proof}
\end{corollary}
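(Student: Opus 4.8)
The plan is to push everything through Proposition~\ref{thm:sigmaSP_by_T}, which identifies $σ\S_P$ with $T\preim{\T_P}$: a $σ\S$-map $f\maps X \to Y$ lies in $σ\S_P$ exactly when $Y$ is a $σ\S_P$-object (equivalently $T(Y) ≤ [P^∞]$) and $T(f)$ is the multiplication by some supernatural number $t ≤ P^∞$. The two arithmetic facts about $\T_P$ that I would invoke throughout are that $P^∞$ is closed under divisors — so $s ⋅ t ≤ P^∞$ iff $s ≤ P^∞$ and $t ≤ P^∞$ — and that multiplying a type $≤ [P^∞]$ by a number $≤ P^∞$ stays $≤ [P^∞]$. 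With this dictionary each clause reduces to a statement about the contravariant multiplicative functor $T$ together with its MU-continuity (Observation~\ref{thm:T_MU-continuous}).

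Clauses (i) and (iv) would come from continuity of $T$. For (iv), if $f, g\maps X \to \SS$ satisfy $f ≈_π g$, then the $δ = π$ case of Observation~\ref{thm:T_MU-continuous} for codomain $\SS$ gives $T(f) = T(g)$; since $T(X)$ and $T(\SS) = [1]$ are common to both, membership in $σ\S_P$ — which depends only on these data — agrees. For (i), fix $Y$; Observation~\ref{thm:T_MU-continuous} supplies $δ > 0$, uniform over $X$, with $f ≈_δ g \implies T(f) = T(g)$ for $f, g ∈ σ\S(X, Y)$. If $T(Y) \not≤ [P^∞]$ then $σ\S_P(X, Y) = ∅$ and there is nothing to prove; otherwise, whenever $f ∈ σ\S_P(X, Y)$ and $g ≈_δ f$ we get $T(g) = T(f)$, hence $g ∈ σ\S_P(X, Y)$, so the whole $δ$-ball around $f$ lies in $σ\S_P(X, Y)$, and the same argument applied to the complement shows the two are $δ$-separated, in particular both open, i.e.\ $σ\S_P(X, Y)$ is clopen.

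Clauses (ii), (iii), (v) are functoriality. For (ii), write $T(g)$ as multiplication by $s$ and $T(f)$ by $t$; by contravariance and multiplicativity of $T$ (Construction~\ref{con:type_functor}), $T(g ∘ f) = T(f) ∘ T(g)$ is multiplication by $s ⋅ t$, with source $T(Z)$. Then $g ∘ f ∈ σ\S_P$ iff $T(Z) ≤ [P^∞]$ and $s ⋅ t ≤ P^∞$; divisor-closure of $P^∞$ splits this as $\bigl(T(Z) ≤ [P^∞],\ s ≤ P^∞\bigr)$ together with $\bigl(T(Y) = s ⋅ T(Z) ≤ [P^∞],\ t ≤ P^∞\bigr)$, i.e.\ exactly $g ∈ σ\S_P$ and $f ∈ σ\S_P$. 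Clause (iii) is then immediate from (ii). For (v), the point is that the hypothesized $f_0$ is a legitimate witness for $T(f)$ in the sense of Construction~\ref{con:type_functor}: since $\tuple{X_*, u_*}$ and $\tuple{Y_*, v_*}$ are $\S_P$-sequences, $\absdeg{u_{m_0, ∞}} ∈ T(X)$ and $\absdeg{v_{n_0, ∞}} ∈ T(Y)$ are both $≤ P^∞$, and $T(f)$ is the multiplication sending $\absdeg{v_{n_0, ∞}}$ to $\absdeg{f_0} ⋅ \absdeg{u_{m_0, ∞}} = \absdeg{f_0 ∘ u_{m_0, ∞}}$; hence its multiplier $t$ satisfies $t ≤ P^∞$ iff $\absdeg{f_0} ≤ P^∞$ iff $f_0 ∈ \S_P$, and as $Y ∈ σ\S_P$ this is exactly $f ∈ σ\S_P$.

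Finally, for (vi) I would choose an $\S_P$-sequence $\tuple{X_*, u_*}$ with $σ\S_P$-limit $\tuple{X, u_{*, ∞}}$ (possible since $X$ is a $σ\S_P$-object), with $X_0 = \SS$ a connected polyhedron, and apply Proposition~\ref{thm:polyhedral_factorization} to $u_{0, ∞}\maps X \to \SS$ with $ε = π$ to get $δ > 0$ such that every $δ$-monic surjection $f\maps X \to \SS$ admits a surjection $g\maps \SS \to \SS$ with $u_{0, ∞} ≈_π g ∘ f$; since $u_{0, ∞} ∈ σ\S_P$, clause (iv) gives $g ∘ f ∈ σ\S_P$, and clause (ii) gives $f ∈ σ\S_P$. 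I do not expect a genuine obstacle here; the only mildly delicate points are bookkeeping the object-level conditions $T(-) ≤ [P^∞]$ alongside the morphism conditions in (ii), and, in (v), appealing to the well-definedness of $T$ established in Construction~\ref{con:type_functor} so that the given $f_0$ may be used to compute $T(f)$.
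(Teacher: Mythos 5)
Your proposal is correct and follows essentially the same route as the paper: it reduces everything to the dictionary of Proposition~\ref{thm:sigmaSP_by_T}, derives (i) and (iv) from the MU-continuity of $T$ (Observation~\ref{thm:T_MU-continuous}, with $δ = π$ for codomain $\SS$), gets (ii) from multiplicativity of $T$ together with divisor-closedness of $P^∞$, deduces (iii) from (ii), computes (v) from the witness $f_0$ in Construction~\ref{con:type_functor}, and proves (vi) via Proposition~\ref{thm:polyhedral_factorization} combined with (iv) and (ii). The only difference is that you spell out the object-level bookkeeping ($T(-) ≤ [P^∞]$) more explicitly than the paper, which is harmless.
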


So to show that a $σ\S$-map $f\maps X \to Y$ between $σ\S_P$-objects is a $σ\S_P$-map, we may either use (v) directly, or use (iii) to reduce the problem to the case $Y = \SS$.
A $σ\S$-map $f\maps X \to \SS$ is a $σ\S_P$-map if it is $π$-close to a map of the form $u_{n_0, ∞}$ for a sequence $u_*$ with limit $X$ such that $\absdeg{u_{n_0, ∞}} ≤ P^∞$ or $π$-close to a map of the form $h ∘ g$ where $g\maps X \to \SS$ is a $δ$-monic $σ\S$-map for sufficiently small $δ > 0$ and $h\maps \SS \to \SS$ is an $\S_P$-map.

This description is somewhat similar to the definition of maps of \emph{positive rank} in the discrete setting of Irwin~\cite[Definition~4.5]{Irwin07}.
See also the related definition of \emph{positive rank} of a continuous surjection $f\maps X \to Y$ between circle-like continua~\cite[Definition~4.8]{Irwin07}.

\subsection{Pseudo-solenoids as Fraïssé limits} \label{sec:pseudo-solenoid}

For every $\T$-object $S$ there is a unique (up to a homeomorphism) hereditarily indecomposable circle-like continuum $X$ of type $S$.
We shall call it the \emph{$S$-adic pseudo-solenoid}.
For $P ⊆ Π$ the $[P^∞]$-adic pseudo-solenoid is called just \emph{$P$-adic pseudo-solenoid}, and for $P = \set{p}$ just \emph{$p$-adic pseudo-solenoid}.
The $[1]$-adic pseudo-solenoid is also called the \emph{pseudo-circle}.
(But note that Rogers~\cite{Rogers70} calls every pseudo-solenoid of non-zero type a pseudo-circle.)
The $Π$-adic pseudo-solenoid is also called the \emph{universal pseudo-solenoid}.

For $S = 0$, the $S$-adic pseudo-solenoid is the pseudo-arc since by Remark~\ref{thm:degen_types} it is also arc-like, and Bing's theorem~\cite{Bing51} and Theorem~\ref{thm:pseudo-arc} apply.
The pseudo-circle was constructed by Bing~\cite{Bing51} and its uniqueness was proved by Fearnley~\cite[Theorem~6.3]{Fearnley70}.
Later, Fearnley~\cite[Theorem~3.2]{Fearnley72} extended the previous results by showing that two pseudo-solenoids are homeomorphic if and only if they have the same type.
As Irwin~\cite[page~9]{Irwin07} notes, Fearnley's definition of equivalent fundamental sequences needs to be fixed by allowing to cut their finite initial segments.
The same classification based on sequences of primes up to an equivalence applies to \emph{solenoids} as well as proved earlier by McCord~\cite[page~198]{McCord65}.

Note that the existence of the $S$-adic pseudo-solenoid for $S ∈ \SN/{\sim}$ follows from Theorem~\ref{thm:crooked_limit} since it is enough to build a crooked sequence of type $S$.
There exists an $ε$-crooked $\S$-map of degree $1$ for every $ε > 0$ by a construction similar to Construction~\ref{con:crooked}.
By composing with the map $z \mapsto z^n$ (or a tent map for $n = 0$) we get an $ε$-crooked $\S$-map of degree $n$.
Then we proceed as in Construction~\ref{thm:crooked_sequence_exists} to obtain a crooked sequence.

We finally show the following.

\begin{theorem} \label{thm:fraisse_pseudo-solenoid}
	The Fraïssé limit $ℙ_P$ of $\S_P$ in $σ\S_P$ is the $P$-adic pseudo-solenoid for every $P ⊆ Π$.
	
	\begin{proof}
		Since $ℙ_P$ is a $σ\S_P$-object, we have $T(ℙ_P) ≤ [P^∞]$ by Proposition~\ref{thm:sigmaSP_by_T}.
		But since every $\S_P$-sequence $\tuple{X_*, u_*}$ with limit $\tuple{ℙ_P, u_{*, ∞}}$ is Fraïssé, for every $p ∈ P$ and $n ∈ ω$ there are $\S_P$-maps $f\maps X_n \from \SS$ and $g\maps \SS \from X_m$ such that $\deg(f) = p$ and $f ∘ g ≈_π u_{n, m}$.
		Hence, $p$ divides $u_{n, m}$ and $T(ℙ_P) = [P^∞]$.
		Similarly, for every $n ∈ ω$ and $ε > 0$ there is an $ε$-crooked $\S_P$-map $f\maps X_n \from \SS$ and an $\S_P$-map $g\maps \SS \from X_m$ such that $f ∘ g ≈_ε u_{n, m}$, and so $u_{n, m}$ is $3ε$-crooked.
		Hence $\tuple{X_*, u_*}$ is a crooked sequence, $ℙ_P$ is a hereditarily indecomposable circle-like continuum of type $[P^∞]$, and the classification applies.
	\end{proof}
\end{theorem}

\begin{remark}
	From the previous theorem it follows that the pseudo-circle is homogeneous in $\tuple{\S_∅, σ\S_∅}$, i.e. for all continuous surjections $f, g\maps ℙ_∅ \to \SS$ of degree $1$ and for every $ε > 0$ there is a homeomorphism $h\maps ℙ_∅ \to ℙ_∅$ such that $f ≈_ε g ∘ h$.
	This is in contrast with the results of Boroński and Smith~\cite[Theorems~1.7 and 1.9]{BS18} showing that the pseudo-circle is not homogeneous in $\tuple{\S, σ\S}$ in a strong sense.
	The reason are the degrees.
	Also existence of a homogeneous object in $\tuple{\S, σ\S}$ would imply the amalgamation property of $\S$, contradicting Proposition~\ref{thm:circle_no_amalgamation}.
	
	Similarly, it follows that for every $P ⊆ Π$, every $σ\S_P$-map $f\maps ℙ_P \to ℙ_P$ is a near-homeomorphism, and so $T(f) = \id_{[P^∞]}$ (in fact this is true for every $σ\S_P$-map $f\maps X \to Y$ with $T(X) = T(Y) = [P^∞]$ by Proposition~\ref{thm:sigmaSP_by_T}), but for $d ∈ Π \setminus P$ there is a $σ\S$-map $g\maps ℙ_P \to ℙ_P$ that is $d$-covering~\cite{BS14}, and so $T(g)$ is the multiplication by $d$, and $g$ is not a near-homeomorphism.
\end{remark}

As with the pseudo-arc and Bing's theorem (Remark~\ref{thm:Bing_reproved}), it is possible to view Fearnley's classification of pseudo-solenoids as coming from uniqueness of a generic object.
In place of the crookedness factorization theorem (\ref{thm:crooked_factorization}) we use the following version of a theorem by Kawamura~\cite[Theorem~7]{Kawamura89}.

\begin{theorem} \label{thm:circular_crookedness_factorization}
	For an $\S_Π$-sequence $\tuple{X_*, f_*}$, the following conditions are equivalent.
	\begin{enumerate}
		\item $\tuple{X_*, f_*}$ is a crooked sequence.
		\item For every $ε > 0$ and $\S_Π$-map $g\maps X_n \from \SS$ such that $\deg(g)$ divides $\deg(f_{n, n'})$ for some $n' ≥ n$ (equivalently, $\absdeg{g} ≤ \absdeg{f_{n, ∞}}$) there is an $\S_Π$-map $h\maps \SS \from X_{n''}$ for some $n'' ≥ n'$ such that $g ∘ h ≈_ε f_{n, n'}$.
			In other words, $\tuple{X_*, f_*}$ is absorbing in $\S_Π$, but only for maps of relatively bounded degree.
	\end{enumerate}
	
	\begin{proof}
		As we have already seen, in order for the sequence to be crooked, it is enough to absorb all $\S$-maps of degree $1$.
		The other implication follows from \cite[Theorem~7]{Kawamura89}, where we replace the limit being hereditarily indecomposable by the sequence being crooked (via Theorem~\ref{thm:crooked_limit}) and we replace the condition AEOP (which corresponds to projectivity) by its weaker version corresponding to the absorption property.
	\end{proof}
\end{theorem}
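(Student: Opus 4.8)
The plan is to prove the two implications separately: (ii)$\Rightarrow$(i) is a short application of the crookedness calculus, while (i)$\Rightarrow$(ii) rests on the circular analogue of the crookedness factorization theorem, which I would take from Kawamura. For (ii)$\Rightarrow$(i) it is enough to absorb continuous surjections of degree~$1$. Fix $n \in \omega$ and $\varepsilon > 0$. By the circular modification of Construction~\ref{con:crooked} described in the text preceding Proposition~\ref{thm:fraisse_pseudo-solenoid} there is an $(\varepsilon/3)$-crooked continuous surjection $g \maps X_n \from \SS$ of degree~$1$. Since $\absdeg{g} = 1$ divides $\deg(f_{n, n}) = 1$, hypothesis~(ii) (applied with $\varepsilon/3$ in place of $\varepsilon$ and $n' := n$) yields some $n'' \geq n$ and an $\S_Π$-map $h \maps \SS \from X_{n''}$ with $g \circ h \approx_{\varepsilon/3} f_{n, n''}$. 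By Proposition~\ref{thm:crooked_calculus}(i) the composite $g \circ h$ is $(\varepsilon/3)$-crooked, hence by Proposition~\ref{thm:crooked_calculus}(iii) the map $f_{n, n''}$ is $\varepsilon$-crooked. As $n$ and $\varepsilon$ were arbitrary, $f_*$ satisfies condition~(v) of Theorem~\ref{thm:crooked_limit}, so it is a crooked sequence.

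For (i)$\Rightarrow$(ii) I would invoke the appropriate direction of Kawamura's Theorem~7 in~\cite{Kawamura89}. That theorem characterizes, for an inverse sequence $\tuple{X_*, f_*}$ in $\S_Π$ --- so all $X_n$ equal $\SS$ --- when its limit is hereditarily indecomposable, in terms of an approximate extension property of the sequence, there called AEOP; by Theorem~\ref{thm:crooked_limit} this hereditary indecomposability is exactly our hypothesis~(i), and inspecting the proof one sees that it enters only through the crookedness of the bonding maps $f_{n, m}$ for large~$m$. Now AEOP is a projectivity-type condition, whereas~(ii) is the corresponding absorption-type condition, in which one only factors the single bonding map $f_{n, n''}$ through the given~$g$ rather than an arbitrary map into $X_n$; since projectivity of a sequence implies absorption --- take $Z := X_n$ and the first map to be $\id_{X_n}$ in the definition of a projective sequence (Section~\ref{sec:fraisse}) --- Kawamura's conclusion already contains~(ii). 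The degree hypothesis $\absdeg{g} \mid \absdeg{f_{n, n'}}$ finally ensures that $\absdeg{f_{n, n''}}/\absdeg{g}$ is a nonzero integer whenever $n'' \geq n'$, so the factor $h$ produced by the theorem has nonzero degree and hence lies in $\S_Π$, as required.

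The main obstacle, in any self-contained treatment, is the content of Kawamura's theorem itself: a \emph{circular crookedness factorization theorem}, the analogue of Theorem~\ref{thm:crooked_factorization} for $\SS$, asserting that a sufficiently crooked self-surjection $f$ of $\SS$ of degree~$D$ approximately factors as $g \circ h$ through any self-surjection $g$ of degree $d \mid D$, with $\deg(h) = D/d$; one then applies it with $f := f_{n, n''}$, where $n'' \geq n'$ is chosen, using Theorem~\ref{thm:crooked_limit} and Proposition~\ref{thm:crooked_calculus}, so that $f_{n, n''}$ is as crooked as the factorization theorem requires. I would prove such a factorization along the simplicial lines of Section~\ref{sec:pseudo}: approximate $f$ and $g$ by crooked simplicial surjections between finite cycles (a circular version of Lemma~\ref{thm:simplicial_approximation}) and reduce to a combinatorial factorization, the circular analogue of Lemmas~\ref{thm:simplicial_crooked_factorization} and~\ref{thm:simplest_crooked}. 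In the interval case one cuts the domain arc at the points of global extrema and recurses; circularly one must, in addition, keep track --- around the $D$-fold wrap of $f$ and the $d$-fold wrap of $g$ --- of how the sub-arcs of $f$ are routed, and choose the endpoint identifications so that the pieces re-assemble into a single simplicial map of degree exactly $D/d$. This degree-bookkeeping across a cut of the circle, with no counterpart in the interval argument, is the delicate point, so in practice I would cite~\cite[Theorem~7]{Kawamura89} rather than reconstruct it.
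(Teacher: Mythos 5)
Your proposal is correct and takes essentially the same route as the paper's own proof: for (ii)$\Rightarrow$(i) you absorb arbitrarily crooked degree-one circle maps and conclude crookedness of the bonding maps via Proposition~\ref{thm:crooked_calculus} and Theorem~\ref{thm:crooked_limit}, exactly as the paper intends, and for (i)$\Rightarrow$(ii) you, like the paper, invoke Kawamura's Theorem~7 \cite{Kawamura89}, translating hereditary indecomposability of the limit into crookedness of the sequence and weakening the projectivity-type condition AEOP to the absorption-type condition of (ii). The only differences are presentational: you make explicit the $\varepsilon/3$-bookkeeping and the check that the produced factor has nonzero degree (hence lies in $\S_\Pi$), which the paper leaves implicit.
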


\begin{corollary} \label{thm:unique_pseudosolenoid}
	Any two crooked $\S$-sequences $\tuple{X_*, f_*}$, $\tuple{Y_*, g_*}$ of the same type $S ∈ \SN/{\sim}$ have homeomorphic limits, and hence there is a unique $S$-adic pseudo-solenoid.
	
	\begin{proof}
		For $S = 0$, the limit is arc-like and hence the pseudo-arc.
		Otherwise, we can cut initial parts of the sequences and assume that $\absdeg{f_{0, ∞}} = \absdeg{g_{0, ∞}} ≠ 0$.
		With the use of the previous theorem we construct a back and forth sequence as in the proof of Proposition~\ref{thm:dominating_sequence}.
		Let $0 < ε_0 ≤ π$, let $n_0 = m_0 = 0$, and let $h_0\maps Y_{n_0} \from X_{m_0}$ be the identity.
		Suppose we have defined $0 < ε_k ≤ π$ and $h_{2k}\maps Y_{n_k} \from X_{m_k}$ such that $\absdeg{h_{2k} ∘ f_{m_k, ∞}} = \absdeg{g_{n_k, ∞}}$.
		Since $\absdeg{h_{2k}} ≤ \absdeg{g_{n_k, ∞}}$, by the previous theorem there is $n_{k + 1} > n_k$ and an $\S$-map $h_{2k + 1}\maps X_{m_k} \from Y_{n_{k + 1}}$ such that $h_{2k} ∘ h_{2k + 1} ≈_{ε_k/2} g_{n_k, n_{k + 1}}$.
		Since $ε_k ≤ π$, we have $\deg(h_{2k} ∘ h_{2k + 1}) = \deg(g_{n_k, n_{k + 1}})$, and so $\absdeg{h_{2k} ∘ h_{2k + 1} ∘ g_{n_{k + 1}, ∞}} = \absdeg{g_{n_k, ∞}} = \absdeg{h_{2k} ∘ f_{m_k, ∞}}$ and $\absdeg{h_{2k + 1} ∘ g_{n_{k + 1}, ∞}} = \absdeg{f_{m_k, ∞}}$.
		There is $0 < δ_k ≤ π$ such that $f_{n_j, n_k}$ is $\tuple{δ_j/2^{k - j}, δ_k}$-continuous for every $j < k$ and $h_{2k}$ is $\tuple{ε_k/2, δ_k}$-continuous.
		This finishes a half-step of the induction.
		The other half-step is analogous: there is an $\S$-map $h_{2k + 2}\maps Y_{n_{k + 1}} \from X_{m_{k + 1}}$ such that $h_{2k + 1} ∘ h_{2k + 2} ≈_{δ_k/2} f_{m_k, m_{k + 1}}$ and so $\absdeg{h_{2k + 2} ∘ f_{m_{k + 1}, ∞}} = \absdeg{g_{n_{k + 1}, ∞}}$, and there is a suitable $ε_{k + 1} > 0$.
		In the end we use Corollary~\ref{thm:back_and_forth}.
	\end{proof}
\end{corollary}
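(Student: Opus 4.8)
\emph{Proof proposal.}
The plan is to reduce the statement to an abstract back-and-forth argument in the style of the proof of Proposition~\ref{thm:dominating_sequence}, driven by Kawamura's factorization theorem (Theorem~\ref{thm:circular_crookedness_factorization})—a crooked $\S$-sequence absorbs $\S$-maps of relatively bounded degree—together with the multiplicativity of $\deg$ and the close-degrees lemma (Lemma~\ref{thm:close_degrees}) for the degree bookkeeping. First I would dispose of the degenerate case $S = 0$: then by Remark~\ref{thm:degen_types} both limits are arc-like, and since the sequences are crooked both are hereditarily indecomposable by Theorem~\ref{thm:crooked_limit}, hence both are the pseudo-arc by Theorem~\ref{thm:pseudo-arc}.

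Assume now $S ≠ 0$, so $\absdeg{f_{0, ∞}}$ and $\absdeg{g_{0, ∞}}$ are nonzero and $\sim$-equivalent. The first step is a harmless truncation: passing to a sufficiently far tail of $f_*$ (and, if needed, of $g_*$) we may arrange $\absdeg{f_{0, ∞}} = \absdeg{g_{0, ∞}}$. This is possible because for a prime $p$ of finite exponent in $S$ only finitely many $f_k$ contribute a positive power of $p$, so the exponent of $p$ in $\absdeg{f_{m, ∞}}$ drops to its eventual value as $m$ grows, while on the finitely many finite-exponent primes and on the $∞$-coordinates the two products can be matched up; the $∞$-coordinates agree automatically. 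With this arranged, set $n_0 = m_0 = 0$, $h_0 := \id_{\SS}\maps Y_0 \from X_0$, and $0 < ε_0 ≤ π$, so that the invariant
\[
  \absdeg{h_{2k} ∘ f_{m_k, ∞}} = \absdeg{g_{n_k, ∞}}
  \quad\text{and}\quad
  \absdeg{h_{2k+1} ∘ g_{n_{k+1}, ∞}} = \absdeg{f_{m_k, ∞}}
\]
holds at $k = 0$. Then I would build by simultaneous induction the subsequences $f_{m_*}$, $g_{n_*}$, the zig-zag maps $h_*$, and epsilon sequences $\tuple{ε_k}$, $\tuple{δ_k}$ with all $ε_k, δ_k ≤ π$, satisfying the hypotheses of Corollary~\ref{thm:back_and_forth}, maintaining the invariant. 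At the $(2k{+}1)$-st step the invariant gives $\absdeg{h_{2k}} ≤ \absdeg{g_{n_k, ∞}}$, so Theorem~\ref{thm:circular_crookedness_factorization} applies to $h_{2k}$ against the crooked sequence $g_*$ and yields $n_{k+1} > n_k$ and an $\S$-map $h_{2k+1}$ with $h_{2k} ∘ h_{2k+1} ≈_{ε_k/2} g_{n_k, n_{k+1}}$; since $ε_k ≤ π$, Lemma~\ref{thm:close_degrees} forces $\deg(h_{2k}) ⋅ \deg(h_{2k+1}) = \deg(g_{n_k, n_{k+1}})$, and cancelling the \emph{natural} number $\absdeg{h_{2k}}$ in $\SN$ propagates the invariant. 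The $(2k{+}2)$-nd step is symmetric, absorbing $h_{2k+1}$ against $f_*$. Finally Corollary~\ref{thm:back_and_forth} produces a homeomorphism between the two limits.

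For the ``hence'': any hereditarily indecomposable circle-like continuum of type $S$ is the limit of some $\S$-sequence, which is crooked by Theorem~\ref{thm:crooked_limit} and of type $S$ by definition of the type functor, so all such continua are homeomorphic by the above; the existence of at least one was already noted by constructing a crooked $\S$-sequence of type $S$ directly. The step I expect to be the main obstacle is the degree bookkeeping—keeping the supernatural-number invariant alive at every stage so that the divisibility hypothesis of Theorem~\ref{thm:circular_crookedness_factorization} stays valid—and, relatedly, the initial truncation that makes the invariant hold at $k = 0$; once the right maps are in hand, the rest is a routine application of the abstract back-and-forth construction.
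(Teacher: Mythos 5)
Your proposal follows the paper's proof essentially verbatim: you dispose of $S = 0$ via arc-likeness and Bing's theorem, truncate the sequences to equalize the tail degrees, run the back-and-forth driven by Theorem~\ref{thm:circular_crookedness_factorization} while maintaining the supernatural-degree invariant via Lemma~\ref{thm:close_degrees} (using $ε_k ≤ π$), and conclude with Corollary~\ref{thm:back_and_forth}, exactly as the paper does. The only place you add detail is the truncation step, which the paper also asserts without proof; note that, strictly speaking, cutting initial segments alone need not make $\absdeg{f_{0,∞}}$ and $\absdeg{g_{0,∞}}$ literally equal, so one should either argue this more carefully or simply start the zig-zag with an $h_0$ of a suitable finite degree $d$ (with $d ⋅ \absdeg{f_{m_0,∞}} = \absdeg{g_{n_0,∞}}$) instead of the identity — a cosmetic fix that affects your write-up and the paper's equally.
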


The $S$-adic pseudo-solenoid for $S = [0]$ or $[P^∞]$ for $P ⊆ Π$ is a generic object over $\S$ or $\S_P$, respectively.
For $S = [0]$ this is the pseudo-arc; for $S = [P^∞]$ the $P$-adic pseudo-solenoid is even a Fraïssé limit.
For the other types $[s] ∈ \SN/{\sim}$ (where $0 < s(p) < ∞$ for infinitely many $p ∈ Π$) there is no corresponding subcategory of $\S$.
However, a certain modification of the Banach–Mazur game still allows us to view every $S$-adic pseudo-solenoid as a generic object.

\begin{definition} \label{def:modified_Banach--Mazur}
	For a type $S ∈ \SN/{\sim}$ we define the abstract \emph{Banach–Mazur game in $\S$ below $S$}.
	$\BM(\tuple{\S, S})$ denotes the play scheme $\BM(\S)$ with the following extra rule.
	Every play $f_*$ of the game is supposed to be a sequence of type $≤ S$.
	If it is not the case, the player responsible the failure loses.
	More precisely, for $S = [0]$ the extra rule is vacuous, while for a positive type $S$ we take $s ∈ S$ and for every $p ∈ Π$ we look at the maximal $n ∈ ω$ (if it exists) such that $\absdeg{f_{0, n}}(p) ≤ s(p)$.
	We blame Eve or Odd, respectively, for the failure at $p$ depending on whether $n$ is even or odd.
	The play sequence $f_*$ fails to have type $≤ S$ if and only if there have been failures at infinitely many primes $p$.
	In that case, if Eve is responsible for infinitely many of them, she loses. Otherwise, Odd loses.
	Note that the responsibility for a failure at particular $p$ depends on $s ∈ S$, but the total responsibility does not.
	This specifies the game scheme.
	A fully specified game $\BM(\tuple{\S, S}, \G)$ also adds the goal class $\G$ for Odd as in Definition~\ref{def:BM}, which determines who wins in the case that the type of the play sequence stays below $S$.
\end{definition}

\begin{observation}
	The game $\BM(\tuple{\S, [0]})$ is just $\BM(\S)$, while for $P ⊆ Π$ the games $\BM(\tuple{\S, [P^∞]})$ and $\BM(\S_P)$ are equivalent in the sense that there is a translation preserving winning strategies for both players both ways.
	The translation is as follows.
	We build a single $\S$-sequence interpreted both as a play in $\BM(\tuple{\S, [P^∞]})$ and $\BM(\S_P)$.
	As long as the moves are in $\S_P$ there is no difference, but when a player playing $\BM(\tuple{\S, [P^∞]})$ plays a map not in $\S_P$, this cannot be interpreted as a move in $\BM(\S_P)$.
	In this case we “reset” the corresponding play of $\BM(\S_P)$ – we pretend that the play of $\BM(\S_P)$ is just about to start, i.e. we interpret only the tail of the sequence $f_*$ as the play (in the case that the next player is Odd, we pretend Eve started with $\id_\SS$).
	If this reset occurs only finitely many times, we have a tail of the sequence corresponding to a play in both games and the full sequence corresponding to a play in $\BM(\tuple{\S, [P^∞]})$, all three with the same outcome.
	If the reset occurs infinitely many times, there is no corresponding play of $\BM(\S_P)$ and the type of $f_*$ is not below $[P^∞]$ so one of the players is to blame for the failure.
	But neither a winning strategy in $\BM(\tuple{\S, [P^∞]})$ nor any strategy in $\BM(\S_P)$ can be responsible for such failure, and translated winning strategies remain winning in the other game.
\end{observation}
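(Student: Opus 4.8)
The first assertion is immediate from the definitions: since $[0]$ is the $\leq$-maximum of $\SN/{\sim}$, every $\S$-sequence automatically has type $\leq [0]$, so the extra losing rule of $\BM(\tuple{\S, [0]})$ can never be triggered and the game coincides, move for move and outcome for outcome, with $\BM(\S)$. For the main claim, fix $P \subseteq \Pi$. The plan is to run a single $\S$-sequence $f_*$ simultaneously as a play of $\BM(\tuple{\S, [P^\infty]})$ and as (the tail of) a play of $\BM(\S_P)$, transporting strategies across by a \emph{reset} bookkeeping. While the moves produced lie in $\S_P$ there is nothing to do, and a given strategy in one game is simply consulted on the corresponding play of the other. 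When a move \emph{not} in $\S_P$ appears -- necessarily of degree $0$ or of degree divisible by a prime outside $P$ -- we declare the $\BM(\S_P)$-shadow ``reset'': we discard its history and regard a fresh $\BM(\S_P)$-play as beginning at the current circle-like (in fact circle) object, inserting a dummy opening move $\id_\SS$ for Eve when the next player is Odd. In this way a strategy for either player in $\BM(\S_P)$ is transported to one in $\BM(\tuple{\S, [P^\infty]})$ and conversely. A preliminary observation I would record first is trivial but essential: a strategy in $\BM(\S_P)$, by legality of that game, only ever outputs $\S_P$-maps, so after transport the player following it never causes a reset; every reset in the produced play is the work of the opponent.

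Next comes the outcome dichotomy. If only finitely many resets occur in the produced play $f_*$, then past the last reset the $\BM(\S_P)$-shadow is an honest tail of $f_*$, consisting of $\S_P$-maps and faithful to the transported strategy; since a tail of an inverse sequence has the same limit as the whole sequence (and Corollary~\ref{thm:Brown} lets one compare limits along a common subsequence if desired), the $\sigma\S_P$-limit of the shadow equals the $\sigma\S$-limit of $f_*$; and since only finitely many of the $f_n$ fell outside $\S_P$, the sequence $f_*$ has type $\leq [P^\infty]$. Hence the two games see the same outcome, so a winning strategy remains winning, the generic objects of $\BM(\S_P)$ and of $\BM(\tuple{\S,[P^\infty]})$ being the one continuum $\PP_P$ (cf.\ Theorem~\ref{thm:fraisse_circle-like} and Proposition~\ref{thm:fraisse_pseudo-solenoid}). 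If infinitely many resets occur, then $f_*$ has infinitely many moves outside $\S_P$: either infinitely many bonding maps have degree $0$, forcing type $0$, or infinitely many carry primes outside $P$ with positive total multiplicity; in either case $f_*$ fails to have type $\leq [P^\infty]$, so in $\BM(\tuple{\S,[P^\infty]})$ the player responsible for infinitely many failures loses. By the preliminary observation those offending moves are all the opponent's, so the opponent is to blame and loses -- again consistent with the transported strategy being winning.

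The step I expect to be the genuine obstacle is the careful matching of the ``blame at a prime $p$'' accounting of $\BM(\tuple{\S,[P^\infty]})$ with the reset mechanism, in particular around degree-zero moves: one must check that a move failing to lie in $\S_P$ always shows up as a failure (and is harmless exactly when $P = \Pi$), that for each prime the parity tag of the ``first offending move'' coincides with the parity of the player who made it, and -- the delicate part -- that the equivalence \emph{finitely many resets $\iff$ the play has type $\leq [P^\infty]$} is truly exhaustive, covering the mixed case in which finitely many degree-zero maps appear while infinitely many outside-$P$ primes are introduced. Once this bookkeeping is nailed down, the transport of strategies and the two-case outcome analysis above are routine, and nothing beyond the reset construction itself is required.
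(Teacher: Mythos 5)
Your proposal follows the same route as the paper's own justification: the reset translation on a single $\S$-sequence, the dichotomy according to whether resets occur finitely or infinitely often, and in the first case the observation that the post-reset tail is a common play with the same limit, so the outcomes agree. Your treatment of the second case is actually slightly more explicit than the paper's in one respect — your dichotomy (infinitely many degree-zero moves force type $0$; otherwise infinitely many non-$\S_P$ moves force either a prime outside $P$ of infinite total multiplicity or infinitely many distinct primes outside $P$) does establish that infinitely many resets force the type of $f_*$ to fail $\leq [P^\infty]$, so the "delicate" equivalence you flag at the end is in fact already settled by your own argument. The one place where your write-up is weaker than the paper's is the blame analysis in the converse direction: your preliminary observation (a transported $\BM(\S_P)$-strategy never outputs a map outside $\S_P$, so every reset is the opponent's doing) only secures the transport from $\BM(\S_P)$ into $\BM(\tuple{\S, [P^\infty]})$. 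When transporting a winning strategy of $\BM(\tuple{\S, [P^\infty]})$ into $\BM(\S_P)$, the opponent can never cause a reset (all of his moves are legal $\S_P$-maps), so infinitely many resets could only be produced by the transported strategy itself; the missing point, which is the paper's closing remark, is that a \emph{winning} strategy in $\BM(\tuple{\S, [P^\infty]})$ cannot be responsible for such a failure, since a play in which it is blamed is a play it loses. With that added, your argument matches the paper's at the same (admittedly informal) level of detail, including the degree-zero corner cases, which the paper also leaves unelaborated.
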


\begin{proposition} \label{thm:pseudo-solenoids_generic}
	For every type $S ∈ \SN/{\sim}$, Odd has a winning strategy in $\BM(\tuple{\S, S})$ for the $S$-adic pseudo-solenoid.
	
	\begin{proof}
		By Corollary~\ref{thm:unique_pseudosolenoid} it is enough for Odd to have a winning strategy in $\BM(\tuple{\S, S}, \G)$ where $\G$ is the set of all crooked $\S$-sequences of type $S$.
		But it is easy for Odd to force the crookedness and the right type (unless Eve is to blame for the failure of the type condition).
		The case $S = 0$ corresponds to $\BM(\S)$, so suppose $S ≠ 0$.
		Let $s ∈ S$ and let $\tuple{p_k}_{k ∈ ω}$ be an enumeration of all primes such that each appears infinitely many times.
		At every step $2k + 1 = n ∈ ω$ Odd plays an $ε_n$-crooked $\S$-map of degree $1$ for suitable $ε_n > 0$ as in Proposition~\ref{thm:crooked_sequence_generic}, composed with the map $z \mapsto z^{p_k}$ if $\absdeg{f_{0, n}}(p_k) < s(p_k)$.
	\end{proof}
\end{proposition}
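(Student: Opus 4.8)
The plan is to reduce, via Corollary~\ref{thm:unique_pseudosolenoid}, to exhibiting a winning strategy for Odd in the more specific game $\BM(\tuple{\S, S}, \G)$ where $\G$ is the class of all \emph{crooked} $\S$-sequences whose type is exactly $S$. This suffices: if the type of a play exceeds $S$ the outcome is decided purely by the responsibility rule, while on a play of type $\le S$ lying in $\G$ the limit is hereditarily indecomposable (Theorem~\ref{thm:crooked_limit}) and of type $S$, hence is the $S$-adic pseudo-solenoid, which is unique by Corollary~\ref{thm:unique_pseudosolenoid}. The degenerate case $S = [0]$ is immediate, since then the extra rule of $\BM(\tuple{\S, [0]})$ is vacuous, so $\BM(\tuple{\S, [0]}) = \BM(\S)$, the $[0]$-adic pseudo-solenoid is the pseudo-arc (Remark~\ref{thm:degen_types}), and $\PP$ is generic over $\S$ by Theorem~\ref{thm:generic_Peano}, as $\S \subseteq \PeanoS$ is the full subcategory on the single non-degenerate object $\SS$. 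So from now on fix $S \ne [0]$ and a representative $s \in S$.

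For the main case I would adapt the strategy of Proposition~\ref{thm:crooked_sequence_generic}. Fix an enumeration $\langle p_k \rangle_{k \in \omega}$ of the primes in which every prime occurs infinitely often. After each move of the play Odd fixes a real $\varepsilon_n > 0$ with the usual continuity bookkeeping ($f_{j,n}$ is $\tuple{\varepsilon_j/2^{\,n-j}, \varepsilon_n}$-continuous for $j < n$), exactly as in Construction~\ref{thm:crooked_sequence_exists} and Proposition~\ref{thm:crooked_sequence_generic}. On his move at the odd step $n = 2k+1$, Odd plays an $\varepsilon_n$-crooked $\S$-map of degree $1$ (such maps exist for every $\varepsilon$ by the $\S$-analogue of Construction~\ref{con:crooked} recalled before Proposition~\ref{thm:fraisse_pseudo-solenoid}), precomposed with $z \mapsto z^{p_k}$ exactly when $\absdeg{f_{0,n}}(p_k) < s(p_k)$. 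By Proposition~\ref{thm:crooked_calculus}~(i) the resulting map is still $\varepsilon_n$-crooked, so the argument of Proposition~\ref{thm:crooked_sequence_generic} shows every resulting play is a crooked sequence. Moreover, since each prime $p$ recurs infinitely often among the $p_k$, Odd keeps multiplying in factors of $p$ until the running $p$-exponent reaches $s(p)$ (and forever when $s(p) = \infty$); hence in any play of type $\le S$ the accumulated $p$-exponent past the last degree-zero map equals $s(p)$ for every $p$, so the play has type $[s] = S$ and therefore lies in $\G$.

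The point requiring genuine care — and the one I expect to be the main obstacle — is the responsibility bookkeeping for the type clause: one must check that Odd's strategy is never \emph{blamed} for a type overflow. Since Odd multiplies in at most one factor of $p$ per move, and only when $\absdeg{f_{0,n}}(p) < s(p)$, after each of Odd's moves the running exponent of every prime is still $\le s(p)$; consequently, for every prime $p$ the last index $n$ with $\absdeg{f_{0,n}}(p) \le s(p)$ is never one produced by an overflowing move of Odd, hence is always even, i.e.\ every prime-failure is charged to Eve. Thus a play of type $> S$ means Eve caused failures at infinitely many primes, and Odd wins by the rule. The remaining subtlety inside this argument is Eve playing a degree-zero map (which momentarily collapses $\absdeg{f_{0,n}}$): I would handle this by the ``reset'' device of the Observation preceding the proposition — Odd reinterprets the tail after such a move as a fresh play and restarts the prime-cycling there — and note that such a move by Eve already forces a failure at every not-yet-saturated prime, all charged to her. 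Combining the three items (crookedness, correct type on valid plays, immunity from blame on invalid plays) gives a winning strategy for Odd in $\BM(\tuple{\S, S}, \G)$, which by the first paragraph is exactly what is required.
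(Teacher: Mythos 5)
Your proposal is correct and takes essentially the same route as the paper's proof: reduce via Corollary~\ref{thm:unique_pseudosolenoid} to Odd forcing a crooked sequence of type $S$, dispose of $S=[0]$ as the game $\BM(\S)$ where the pseudo-arc is generic, and for $S\neq[0]$ let Odd play $ε_n$-crooked degree-one maps (with the usual epsilon bookkeeping) composed with $z\mapsto z^{p_k}$ exactly when the running exponent at $p_k$ is still below $s(p_k)$. The extra details you supply — that Odd's moves never produce an overflow, so every prime-failure is charged to Eve, and the treatment of Eve's degree-zero moves — only spell out what the paper's proof leaves implicit in the phrase ``unless Eve is to blame for the failure of the type condition.''
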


In the final paragraphs let us summarize known results regarding existence of continuous surjections between circle-like continua and in particular pseudo-solenoids, using types and Fraïssé theory.

\begin{proposition}
	Let $f\maps X \to Y$ be a continuous surjection between circle-like continua and let $P, Q ⊆ Π$.
	\begin{enumerate}
		\item If $T(f) ≠ 0$, then $T(X) ≥ T(Y)$, and $T(f)$ is the multiplication by $t$ with $[t] ≤ T(X)$.
		\item There is a notion of a \emph{self-entwined} circle-like continuum, introduced by Rogers~\cite{Rogers70_entwined}.
			If $Y$ is self-entwined, then $T(f) ≠ 0$.
			If $T(Y) ∉ \set{0, 1}$, then $Y$ is self-entwined.
			The pseudo-circle $ℙ_∅$ is self-entwined.
		\item There is a continuous surjection $f\maps ℙ_P \to ℙ_Q$ if and only if $P ⊇ Q$.
			In this case, $T(f) ≠ 0$.
		\item There is a continuous surjection $f\maps ℙ_P \to ℙ$ (necessarily with $T(f) = 0$), but no continuous surjection $ℙ \to ℙ_P$ for every $P ⊆ Π$.
	\end{enumerate}
	
	\begin{proof}
		Claim~(i) holds since for $s ∈ T(Y)$ we have $T(f)(s) ∈ T(X)$ and $T(f)(s) ≥ s$.
		Claim~(ii) follows from the results by Rogers~\cite[Theorem~3, 4, 5]{Rogers70_entwined}, and from the observation that Roger's \emph{limit degree zero} corresponds to our $T(f) = 0$.
		
		If $f\maps ℙ_P \to ℙ_Q$ is a continuous surjection, then $T(f) ≠ 0$ by (ii), and $[P^∞] = T(ℙ_P) ≥ T(ℙ_Q) = [Q^∞]$ by (i).
		On the other hand, if $P ⊇ Q$, then $ℙ_Q$ is a $σ\S_P$-object and $ℙ_P$ is cofinal in $σ\S_P$.
		There is no continuous surjection $f\maps ℙ \to ℙ_P$ since by (ii) we would have $T(f) ≠ 0$, which is impossible since $T(ℙ) = 0$.
		
		As in \cite[Theorem~12]{Rogers70} we may consider two copies of an $\I$-sequence $\tuple{X_*, f_*}$ with limit $ℙ$ such that $f_n(i) = i$ for $i ∈ \set{0, 1}$ and every $n ∈ ω$, and glue them at the end-points $0$, $1$.
		We obtain an $\S$-sequence $\tuple{Y_*, g_*}$ of maps of degree $1$ with a limit $Y_∞$, and a sequence of continuous surjections $φ_n\maps Y_n \to X_n$, $n ∈ ω$, commuting with $f_*$ and $g_*$ such that every $φ_n$ is the quotient $\SS \to \II$ gluing two halves of the circle together.
		In the and we obtain a continuous surjection $φ_∞\maps Y_∞ \to ℙ$, where $T(Y_∞) = 1$, so $Y_∞$ is a continuous image of every $ℙ_P$.
	\end{proof}
\end{proposition}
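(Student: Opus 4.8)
The plan is to obtain all four claims from the type functor $T\maps σ\S → \T$ of Construction~\ref{con:type_functor}, combined with Rogers' theory of self-entwined continua~\cite{Rogers70_entwined}. For claim~(i), recall that $T$ is contravariant, so $T(f)$ is a multiplication $T(Y) ∪ \set{0} → T(X) ∪ \set{0}$; if $T(f) ≠ 0$ it is a positive multiplication by some $t ∈ \SN_+$. Then for any $s ∈ T(Y)$ we have $T(f)(s) = t ⋅ s ∈ T(X)$, and since $s ≤ t ⋅ s$ and $t ≤ t ⋅ s$ (coordinatewise), we get $T(Y) = [s] ≤ [t ⋅ s] = T(X)$ and $[t] ≤ [t ⋅ s] = T(X)$, which is exactly the assertion.

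For claim~(ii), the work is to set up the dictionary between Rogers' notion of \emph{limit degree zero} of a continuous surjection and the condition $T(f) = 0$: unravelling the definition of $T$ via $\S$-sequences and using multiplicativity of the degree, $T(f) = 0$ holds precisely when the degrees along any commuting approximating diagram vanish cofinally, i.e. when $f$ has limit degree zero in Rogers' sense. Granting this, ``$Y$ self-entwined $⟹ T(f) ≠ 0$'' and ``$T(Y) ∉ \set{0, 1} ⟹ Y$ self-entwined'' are then direct rereadings of the corresponding theorems in~\cite{Rogers70_entwined}, and self-entwinedness of the pseudo-circle $ℙ_∅$ is also taken from there.

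For claim~(iii): if $f\maps ℙ_P → ℙ_Q$ is a continuous surjection, then $ℙ_Q$ is self-entwined (it is the pseudo-circle if $Q = ∅$, and has $T(ℙ_Q) = [Q^∞] ∉ \set{0,1}$ if $Q ≠ ∅$), so $T(f) ≠ 0$ by~(ii), whence $[P^∞] = T(ℙ_P) ≥ T(ℙ_Q) = [Q^∞]$ by~(i), i.e. $P ⊇ Q$. Conversely, if $P ⊇ Q$ then $T(ℙ_Q) = [Q^∞] ≤ [P^∞]$, so $ℙ_Q$ is a $σ\S_P$-object by Proposition~\ref{thm:sigmaSP_by_T}; since $ℙ_P$ is the Fraïssé limit of $\S_P$ in $σ\S_P$ (Theorem~\ref{thm:fraisse_circle-like}, Proposition~\ref{thm:fraisse_pseudo-solenoid}), it is cofinal in $σ\S_P$, giving a continuous surjection $ℙ_P → ℙ_Q$, which again has $T ≠ 0$. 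For claim~(iv), the positive part follows Rogers~\cite[Theorem~12]{Rogers70}: take an $\I$-sequence $\tuple{X_*, f_*}$ with limit $ℙ$ whose bonding maps fix $0, 1$, glue two copies along the end-points to form an $\S$-sequence $\tuple{Y_*, g_*}$ of degree-$1$ maps with limit $Y_∞$, together with the end-point-compatible quotient maps $φ_n\maps Y_n → X_n$ collapsing the two halves of the circle; passing to the limit gives a continuous surjection $φ_∞\maps Y_∞ → ℙ$. As the bondings have degree $1$, $T(Y_∞) = [1] ≤ [P^∞]$, so $Y_∞$ is a $σ\S_P$-object, hence a continuous image of the cofinal object $ℙ_P$, and composing yields $ℙ_P → ℙ$. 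The negative part: a continuous surjection $ℙ → ℙ_P$ would have $T ≠ 0$ since $ℙ_P$ is self-entwined (as in~(iii)), but $T(f)$ is a multiplication into $T(ℙ) = 0$, which forces $T(f) = 0$ by Observation~\ref{thm:multiplication} — a contradiction.

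The main obstacle is the first part of claim~(ii): pinning down precisely that Rogers' ``limit degree zero'' coincides with $T(f) = 0$ and quoting the right statements from~\cite{Rogers70_entwined}; everything else is bookkeeping with $T$ and with the cofinality of $ℙ_P$ in $σ\S_P$, except for the gluing construction in~(iv), where one must check that the glued inverse system really has limit $Y_∞$ with $T(Y_∞) = [1]$ and that the collapsing maps form a morphism of inverse systems whose limit is onto $ℙ$.
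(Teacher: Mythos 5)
Your proposal is correct and follows essentially the same route as the paper: (i) from the multiplication property of $T(f)$, (ii) by translating Rogers' limit degree zero into $T(f)=0$ and citing his theorems, (iii) by combining (i)–(ii) with the cofinality of the Fraïssé limit $ℙ_P$ in $σ\S_P$, and (iv) by the same endpoint-gluing of two copies of an $\I$-sequence for $ℙ$ into a degree-one $\S$-sequence with limit $Y_∞$ of type $[1]$. The only differences are cosmetic — you spell out the case split on $Q=∅$ versus $T(ℙ_Q)∉\set{0,1}$ and the coordinatewise inequalities $s ≤ t⋅s$, $t ≤ t⋅s$, which the paper leaves implicit.
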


\begin{corollary} \label{thm:universal_pseudo-solenoid}
	The universal pseudo-solenoid $ℙ_Π$ admits a continuous surjection onto every circle-like or arc-like continuum.
	Every continuous surjection $ℙ_Π \to ℙ_Π$ is a near-homeomorphism.
	Also, for every pair of continuous surjections $f, g\maps ℙ_Π \to Y$ onto a non-planar circle-like continuum $Y$ and every $ε > 0$ there is a homeomorphism $h\maps ℙ_Π \to ℙ_Π$ such that $f ≈_ε g ∘ h$.
	
	\begin{proof}
		We combine the fact that $ℙ_Π$ is the Fraïssé limit of $\S_Π$ in $σ\S_Π$ with the previous proposition.
		$ℙ_Π$ continuously maps onto every circle-like continuum of positive type, but by (iv) also onto the pseudo-arc, which maps onto every arc-like continuum.
		Every continuous surjection $ℙ_Π \to ℙ_Π$ has non-zero type by (iii), and so is a near-homeomorphism by Observation~\ref{thm:near_automorphism}.
		Similarly, for the continuous surjection $f, g\maps ℙ_Π \to Y$, if $Y$ is non-planar, i.e. $T(Y) ∉ \set{0, 1}$ (Remark~\ref{thm:degen_types}), we know by (ii) that $f$ and $g$ are $σ\S_Π$-maps, and we may use the homogeneity.
	\end{proof}
\end{corollary}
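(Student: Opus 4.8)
The plan is to combine the Fraïssé-limit properties of $ℙ_Π$ with the structural facts of the preceding proposition on continuous surjections between circle-like continua. By Proposition~\ref{thm:fraisse_pseudo-solenoid} the continuum $ℙ_Π$ is the Fraïssé limit of $\S_Π$ in $σ\S_Π$, so Theorem~\ref{thm:fraisse_circle-like} applies with $P = Π$: in particular $ℙ_Π$ is cofinal and homogeneous in $σ\S_Π$. First I would deduce the surjectivity statement. Since $[Π^∞]$ is the divisibility-maximum among positive types, every circle-like continuum of positive type is a $σ\S_Π$-object by Proposition~\ref{thm:sigmaSP_by_T}~(i), so cofinality of $ℙ_Π$ in $σ\S_Π$ yields a continuous surjection of $ℙ_Π$ onto each such continuum. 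The remaining circle-like continua are those of type $0$, which are exactly the arc-like ones by Remark~\ref{thm:degen_types}~(i); to reach them I would invoke part~(iv) of the previous proposition to obtain a continuous surjection $ℙ_Π \to ℙ$ and then compose with the surjections of the pseudo-arc onto every arc-like continuum (cofinality of $ℙ$ in $σ\I$, Theorem~\ref{thm:pseudo-arc}). This covers all circle-like and arc-like targets.

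Next I would treat self-maps. Given a continuous surjection $f\maps ℙ_Π \to ℙ_Π$, part~(iii) of the previous proposition (with $P = Q = Π$) gives $T(f) ≠ 0$, say $T(f)$ is the multiplication by some $t ∈ \SN_+$. As $t ≤ Π^∞$ automatically, Proposition~\ref{thm:sigmaSP_by_T}~(ii) shows $f$ is a $σ\S_Π$-map. Since $ℙ_Π$ is a homogeneous object of $σ\S_Π$, every $σ\S_Π$-endomorphism of $ℙ_Π$ is a near-automorphism by Observation~\ref{thm:near_automorphism}, and $σ\S_Π$-isomorphisms are exactly homeomorphisms (from the construction of the $σ$-closure), so $f$ is a near-homeomorphism.

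For the last assertion, let $f, g\maps ℙ_Π \to Y$ be continuous surjections onto a circle-like continuum $Y$ with $T(Y) ∉ \set{0, 1}$, and let $ε > 0$. By part~(ii) of the previous proposition $Y$ is self-entwined, hence $T(f) ≠ 0 ≠ T(g)$; moreover $T(Y) ≠ 0$ forces $T(Y) ≤ [Π^∞]$, so $Y$ is a $σ\S_Π$-object, and, arguing exactly as in the previous paragraph, both $f$ and $g$ are $σ\S_Π$-maps. Applying the homogeneity of the Fraïssé limit $ℙ_Π$ in $σ\S_Π$ to the pair $f, g$ produces an automorphism $h\maps ℙ_Π \to ℙ_Π$, i.e.\ a homeomorphism, with $f ≈_ε g ∘ h$, which is what we want.

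I do not expect a serious obstacle here, since this is essentially a bookkeeping corollary. The points that need care are the type-theoretic verifications via Proposition~\ref{thm:sigmaSP_by_T} — checking that the relevant surjections are genuinely $σ\S_Π$-maps, so that relative homogeneity in $\tuple{\S_Π, σ\S_Π}$ (equivalently in $σ\S_Π$) is applicable — together with the subtlety that arc-like continua lie outside $σ\S_Π$ and therefore must be reached by factoring through the pseudo-arc using part~(iv) of the preceding proposition.
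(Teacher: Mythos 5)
Your proposal is correct and follows essentially the same route as the paper: cofinality of the Fraïssé limit $ℙ_Π$ in $σ\S_Π$ for positive-type targets, part~(iv) of the preceding proposition plus the pseudo-arc for the arc-like ones, parts~(iii) and (ii) together with Proposition~\ref{thm:sigmaSP_by_T} to see that the relevant surjections are $σ\S_Π$-maps, and then Observation~\ref{thm:near_automorphism} and homogeneity. You merely spell out explicitly (via Proposition~\ref{thm:sigmaSP_by_T}~(ii)) the step that non-zero type makes a map a $σ\S_Π$-map, which the paper leaves implicit.
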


Note that (using our language) the homogeneity in $\tuple{\S_Π, σ\S_Π}$ and the cofinality in $σ\S_Π$ of the universal pseudo-solenoid was already proved by Irwin~\cite[Theorem~4.20 and 4.22]{Irwin07}.

	\appendix
	
\section{Appendix: More on \texorpdfstring{$σ$}{σ}-closure and \texorpdfstring{$σ$}{σ}-consistency}
\label{sec:appendix}

In the appendix we give a general construction of the $σ$-closure (Definition~\ref{def:sigma_closure}) in detail, we relate it to some other natural definitions (versions of which appear in literature), and show that if our category of small objects is $σ$-consistent (Definition~\ref{def:sigma_consistent}), all the definitions agree.
Furthermore, we give a concrete example of failure of Fraïssé theory when we do not have $σ$-consistency.

\medskip

	Let us fix MU-categories $\K ⊆ \L$.
	The $σ$-closure $σ\K$ is supposed to be the “relevant part” of $\L$ containing $\K$ such that $\tuple{\K, σ\K}$ has the best chance to be a free completion.
	Condition~(L2) says that every $\L$-object should be a limit of a $\K$-sequence, which may easily not be the case – some $\L$-objects may be simply “too far” from $\K$.
	A natural thing to do is to define $σ\K$ to be the category of all $\L$-limits of $\K$-sequences.
	This is clear with respect to the objects of $σ\K$, but not so with respect to the morphisms.
	Taking $σ\K$ to be a full subcategory of $\L$ may not be appropriate.
	Another issue is that the notion of a limit is not absolute in the following sense.
	
	Let us consider a category $\L'$ such that $\K ⊆ \L' ⊆ \L$.
	The ideal situation is that $\L'$-limits of $\K$-sequences are exactly their $\L$-limits.
	This really consists of three conditions:
	\begin{enumerate}[label={(\alph*)}]
		\item every $\L$-limit cone for a $\K$-sequence lies in $\L'$,
		\item every $\L$-limit cone for a $\K$-sequence $f_*$ that lies in $\L'$ is also an $\L'$-limit cone for $f_*$,
		\item every $\L'$-limit cone for a $\K$-sequence $f_*$ is also an $\L$-limit cone for $f_*$.
	\end{enumerate}
	Condition~(a) is easy to achieve – just add all the $\L$-limit cones to $\L'$.
	Condition~(b) says that the inclusion functor $\L' ⊆ \L$ reflects limits of $\K$-sequences, and will be important later – we shall say that $\L'$ is \emph{$σ$-reflecting} in $\L$ for $\K$ if it satisfies (b).
	Condition~(c), saying that the inclusion functor $\L' ⊆ \L$ preserves limits of $\K$-sequences, may not be achievable – a $\K$-sequence with a limit in $\L'$ may simply not have a limit in $\L$ (which is fixed).
	But if $\tuple{\K, \L}$ satisfies (L1) and we have (a) and (b), then we have also (c) – an $\L$-limit cone $f_{*, ∞}$ for a $\K$-sequence $f_*$ exists by (L1), lies in $\L'$ by (a), and is an $\L'$-limit of $f_*$ by (b), so the fixed $\L'$-limit cone for $f_*$ is $\L'$-isomorphic (and so $\L$-isomorphic) to $f_{*, ∞}$.

\begin{construction} \label{con:sigma_closure}
	To clearly describe the construction of $σ\K$ let us introduce several closure-like operators $\clx$ on families of maps $\F ⊆ \L$.
	The operators satisfy $\A ⊆ \clx(\A) ⊆ \clx(\B)$ for all families $\A ⊆ \B$.
	\begin{itemize}
		\item $\clcat(\F)$ is the subcategory of $\L$ generated by $\F$, i.e. we close the family $\F$ under identities and composition.
		Some of the other closures do not necessarily produce a subcategory, but we definitely want $σ\K$ to be a subcategory.
		This is why we explicitly consider this closure.
		
		\item $\clloc(\F)$ or simply $\clo{\F}$ denotes the local closure (see Definition~\ref{def:sigma_closure}), i.e. $\clo{\F} ∩ \L(X, Y)$ is the closure of $\F ∩ \L(X, Y)$ in the $∞$-metric space $\L(X, Y)$ for all $\L$-objects $X, Y$.
		In order to satisfy (L1), $σ\K$ has to be locally complete and so necessarily locally closed in $\L$.
		Since composition is continuous in any MU-category, we have $\clcat(\clloc(\F)) ⊆ \clloc(\clcat(\F))$, and so if $\F$ is a subcategory, so is its local closure.
		
		\item $\cllim(\F)$ consists of $\F$ together with all $\L$-maps $f_{n, ∞}$, $n ∈ ω$, where $f_{*, ∞}$ is an $\L$-limit of a $(\K ∩ \F)$-sequence $f_*$ (meaning $f_{n', n''} ∈ \K ∩ \F$ for every $n' ≤ n'' ∈ ω$).
		That means, $\L'$ satisfies (a) if and only if $\cllim(\L') ⊆ \L'$ if and only if $\cllim(\K) ⊆ \L'$.
		The last part is true since for every family $\F ⊇ \K$ we have $\cllim(\F) = \F ∪ \cllim(\K)$.
		
		\item $\clabs(\F)$ consists of $\F$ together with all $\L$-maps $h$ such that there is a $(\K ∩ \F)$-sequence $f_*$ and an $\L$-limit $f_{*, ∞}$ of $f_*$ lying in $\F$ such that $f_{n, ∞} ∘ h ∈ \F$ for every $n ∈ ω$.
		This closure addresses condition~(b) – it may happen that an $\L$-limit cone $\tuple{X_∞, f_{*, ∞}}$ for a $\K$-sequence $f_*$ lies in $\L'$, but is not an $\L'$-limit, which is witnessed by an $\L'$-cone $\tuple{Y, γ_*}$ for $f_*$ such that the unique factorizing $\L$-map $γ_∞\maps Y \to X_∞$ is not in $\L'$.
		The closure $\clabs(\F)$ collects all limit factorizing maps relevant to $\F$.
		So $\L'$ is $σ$-reflecting in $\L$ for $\K$ if and only if $\clabs(\L') ⊆ \L'$.
	\end{itemize}
	
	It seems natural to define $σ\K$ simply as $\A_0 := \clloc(\clcat(\cllim(\K)))$, i.e. the smallest locally closed subcategory of $\L$ containing $\K$ and all $\L$-limits of $\K$-sequences.
	But $\A_0$ may not be $σ$-reflecting in $\L$ for $\K$, i.e. even though we have added all $\L$-limits of $\K$-sequences, they may not be limits from the point of view of $\A_0$.
	We should really define $σ\K$ as the smallest locally closed and $σ$-reflecting for $\K$ subcategory of $\L$ containing $\K$ and all $\L$-limits of $\K$-sequences.
	Is this well-defined?
	If the category $\L$ is small, then yes – we can simply take the intersection of all such subcategories, $\L$ being one of them.
	But if $\L$ is a large category, we should be more careful.
	
	Note that it is not enough in general to consider just $\clabs(\A_0)$.
	The closure $\clabs$ may not be idempotent since adding the limit factorizing maps for the relevant cones may introduce new relevant cones.
	Also, $\clabs(\A_0)$ may not be locally closed or even a subcategory.
	We perform the following closing-off construction.
	For every ordinal $α$ let us put $\A_{α + 1} := \clloc(\clcat(\clabs(\A_α)))$ and for every limit ordinal $α$ let $\A_α := ⋃_{β < α} \A_β$.
	Note that all families $\A_α$ are subcategories of $\L$ with the same class of objects, and that $\A_1$ contains all $\L$-isomorphisms between its objects.
	We show that the increasing sequence stabilize after at most $ω_1$ many steps, and so $σ\K := \A_{ω_1}$ is the smallest category between $\K$ and $\L$ closed under all four closures $\clx$.
	The reason is that for every $f ∈ \clx(\A_{ω_1})$ we have $f ∈ \clx(\F)$ for a countable family $\F ⊆ \A_{ω_1}$.
	Hence, $\F ⊆ \A_α$ for some $α < ω_1$, and so $f ∈ \A_{α + 1} ⊆ \A_{ω_1}$.
	
	Note that formally there may be a problem defining a transfinite sequence of proper classes, but we may instead define a single class relation $R(f, α)$ by “$f ∈ \A_0$” for $α = 0$, by “$∃β < α, R(f, β)$” for $α$ limit, and by “there exists a countable family $\F$ such that $f ∈ \clloc(\clcat(\clabs(\F)))$ and $∀g ∈ \F, R(g, β)$” if $α = β + 1$.
	
	This concludes the construction of the $σ$-closure $σ\K$.
	If the supercategory $\L$ is not clear from the context, we write $σ_\L \K$.
	Note that since $σ\K$ is $σ$-reflecting in $\L$ for $\K$ and closed with respect to the closures $\clx$, the corresponding closures $\clx$ in $σ\K$ are equal to the original ones, and we have $σ_{σ\K} \K = σ\K$.
	Also note that $σ\K$-isomorphisms are exactly $\L$-isomorphisms between $σ\K$-objects.
\end{construction}

\begin{corollary} \label{thm:absolute}
	If $\K ⊆ \L$ are MU-categories satisfying (L1), then the limits of $\K$-sequences are the same from the point of view of $\L$ and $σ\K$, i.e. we have conditions (a), (b), (c) for $\L' = σ\K$.
	Moreover, we have $σ_{\L'} \K' = σ_\L \K'$ for every $\K' ⊆ \K$.
	
	\begin{proof}
		The $σ$-closure was constructed so that it satisfies (a) and (b).
		Since we also assume (L1), we have also (c) as shown above.
		The last part follows from the fact that conditions (a), (b), (c) are clearly true also for every $\K' ⊆ \K$.
	\end{proof}
\end{corollary}

We shall discuss several other possible definitions of the $σ$-closure which nevertheless work only if our category of small objects is $σ$-consistent.
First, we define several notions formalizing the concept of an $\L$-map being approximated from $\K$.
We already used these notions implicitly in Proposition~\ref{thm:cone_transfer}.

\begin{definition}
	Let $\K$ be an MU-category and let $\tuple{X_*, f_*}$ and $\tuple{Y_*, g_*}$ be $\K$-sequences.
	\begin{itemize}
		\item A \emph{pre-transformation} $φ_*\maps f_* \to g_*$ is any family $\tuple{φ_n\maps X_{φ(n)} \to Y_n}_{n ∈ ω}$ of $\K$-maps such that $\tuple{φ(n)}_{n ∈ ω}$ is an increasing sequence in $ω$.
		\item A \emph{transformation} is a pre-transformation $φ_*\maps f_* \to g_*$ such that for every $n ∈ ω$ and $ε > 0$ there is $n_0 ≥ n$ such that for every $n'' ≥ n' ≥ n_0$ we have $g_{n, n'} ∘ φ_{n'} ∘ f_{φ(n'), φ(n'')} ≈_ε g_{n, n''} ∘ φ_{n''}$.
		\item A pre-transformation $φ_*\maps f_* \to g_*$ is \emph{normalized} by an epsilon sequence $\tuple{ε_n}_{n ∈ ω}$ for $g_*$ if for every $n ∈ ω$ we have $φ_n ∘ f_{φ(n), φ(n + 1)} ≈_{ε_n} g_{n, n + 1} ∘ φ_{n + 1}$.
	\end{itemize}
	Let $\L ⊇ \K$ be an MU-category and let $\tuple{X_∞, f_{*, ∞}}$ and $\tuple{Y_∞, g_{*, ∞}}$ be $\L$-limits of $f_*$ and $g_*$, respectively.
	\begin{itemize}
		\item An $\L$-map $φ_∞\maps X_∞ \to Y_∞$ is the \emph{limit} of a pre-transformation $φ_*\maps f_* \to g_*$ if for every $n ∈ ω$ and $ε > 0$ there is $n_0 ≥ n$ such that for every $n' ≥ n_0$ we have $g_{n, n'} ∘ φ_{n'} ∘ f_{φ(n'), ∞} ≈_ε g_{n, ∞} ∘ φ_∞$, i.e. if $g_{n, ∞} ∘ φ_∞ = \lim_{n' ≥ n}(g_{n, n'} ∘ φ_{n'} ∘ f_{φ(n'), ∞})$ for every $n$.
	\end{itemize}
\end{definition}

\begin{remark}
	Proposition~\ref{thm:cone_transfer} shows that every normalized pre-transformation is a transformation and that every transformation has a limit as long as $\L$ is locally complete and the sequences $f_*$ and $g_*$ have limits.
	Also note that the limit of a pre-transformation is unique.
\end{remark}

\begin{proposition} \label{thm:transformations}
	Let $\K ⊆ \L$ be MU-categories and let $\tuple{X_*, f_*}$ and $\tuple{Y_*, g_*}$ be $\K$-sequences with $\L$-limits $\tuple{X_∞, f_{*, ∞}}$ and $\tuple{Y_∞, g_{*, ∞}}$, respectively.
	\begin{enumerate}
		\item An $\L$-map $h\maps X_∞ \to Y_∞$ is the limit of some pre-transformation $φ_*\maps f_* \to g_*$ if and only if for every $n ∈ ω$ and $ε > 0$ there exists $m ∈ ω$ and a $\K$-map $h'\maps X_m \to Y_n$ such that $h' ∘ f_{m, ∞} ≈_ε g_{n, ∞} ∘ h$.
		\item If $\tuple{\K, \L}$ satisfies (F2), then for every pre-transformation $φ_*\maps f_* \to g_*$ with a limit $φ_∞$ there is an increasing sequence $\tuple{ψ(n)}_{n ∈ ω}$ of natural numbers $ψ(n) ≥ φ(n)$ such that $ψ_* = \tuple{φ_n ∘ f_{φ(n), ψ(n)}}_{n ∈ ω}$ is a transformation.
		\item For every transformation $φ_*\maps f_* \to g_*$ and an epsilon sequence $\tuple{ε_n}_{n ∈ ω}$ for $g_*$ there is an increasing sequence $\tuple{α(n)}_{n ∈ ω}$ of natural numbers $α(n) ≥ n$ such that the transformation $ψ_* = \tuple{g_{n, α(n)} ∘ φ_{α(n)}}_{n ∈ ω}$ is normalized by $\tuple{ε_n}_{n ∈ ω}$.
	\end{enumerate}
	
	\begin{proof}
		To simplify the following formulas, let us denote the composition $g_{n, n'} ∘ φ_{n'} ∘ f_{φ(n'), m}$ for $n ≤ n'$ and $φ(n') ≤ m ≤ ∞$ simply by $φ_{n, n'}{}^m$.
		
		Claim~(i):
		If $h\maps X_∞ \to Y_∞$ is the limit of a pre-transformation $φ_*\maps f_* \to g_*$ and $n ∈ ω$ and $ε > 0$ are given, there is $n_0 ≥ n$ such that $φ_{n, n_0}{}^∞ ≈_ε g_{n, ∞} ∘ h$, so it is enough to put $h' = φ_{n, n_0}{}^{φ(n_0)}$.
		
		For the other implication we fix an epsilon sequence $\tuple{ε_n}_{n ∈ ω}$ for $g_*$ and for every $n ∈ ω$ we pick a $\K$-map $φ_n\maps X_{φ(n)} \to Y_n$ such that $φ_n ∘ f_{φ(n), ∞} ≈_{ε_n} g_{n, ∞} ∘ h$ and such that the sequence $\tuple{φ(n)}_{n ∈ ω}$ is increasing.
		Then for every $n ≤ n' ≤ n''$ we have 
		\[
			φ_{n, n'}{}^∞ ≈_{ε_n/2^{n' - n}} g_{n, ∞} ∘ h ≈_{ε_n/2^{n'' - n}} φ_{n, n''}{}^∞,
		\]
		which is enough.
		
		Claim~(iii):
		By the assumption, for every $n ∈ ω$ there is $α(n) ≥ n$ such that for every $n' ≥ α(n)$ we have $φ_{n, α(n)}{}^{φ(n')} ≈_{ε_n} φ_{n, n'}{}^{φ(n')}$.
		We pick the numbers $α(n)$ inductively so that the sequence $\tuple{α(n)}_{n ∈ ω}$ is increasing, and we put $ψ_n = g_{n, α(n)} ∘ φ_{α(n)}$.
		Then for every $n ∈ ω$ we have
		\[
			ψ_n ∘ f_{ψ(n), ψ(n + 1)} = φ_{n, α(n)}{}^{φ(α(n + 1))} ≈_{ε_n} φ_{n, α(n + 1)}{}^{φ(α(n + 1))} = g_{n, n + 1} ∘ ψ_{n + 1}.
		\]
		
		Claim~(ii):
		First we fix an epsilon sequence $\tuple{ε_n}_{n ∈ ω}$ for $g_*$ and a sequence $\tuple{δ_n}_{n ∈ ω}$ such that $δ_n$ is an (F2) witness for $Y_n$ and $ε_n$.
		Similarly to the proof of (iii) we define an increasing sequence $\tuple{α(n)}_{n ∈ ω}$ such that $φ_{n, α(n)}{}^∞ ≈_{δ_n/2} g_{n, ∞} ∘ φ_∞ ≈_{δ_n/2} φ_{n, n'}{}^∞$ for every $n' ≥ α(n)$.
		Then for every $n$ we pick $β(n) ≥ φ(α(n + 1))$ such that for every $α(n) ≤ n' ≤ α(n + 1)$ we have $φ_{n, α(n)}{}^{β(n)} ≈_{ε_n} φ_{n, n'}{}^{β(n)}$.
		Here we use (F2) and $δ_n$ for finitely many $\K$-maps.
		Finally, we fix any increasing sequence $\tuple{ψ(n)}_{n ∈ ω}$ such that $ψ(n') ≥ β(n)$ whenever $α(n) ≤ n' ≤ α(n + 1)$.
		
		Then, given $n ∈ ω$ and $ε > 0$, for every $n ≤ n_0 ≤ α(n_0) ≤ α(n_1) ≤ n' ≤ α(n_1 + 1)$ we have
		\[
			φ_{n, α(n_0)}{}^{ψ(n')} ≈_{2ε_n/2^{n_0 - n}} φ_{n, α(n_1)}{}^{ψ(n')} ≈_{ε_n/2^{n_1 - n}} φ_{n, n'}{}^{ψ(n')},
		\]
		so choosing $n_0 ≥ n$ large enough works.
		The first estimate follows as in the proof of Proposition~\ref{thm:cone_transfer}~(i), and the second one follows from our choice of $β(n)$.
	\end{proof}
\end{proposition}

\begin{remark} \label{rem:sigma_consistency_back_forth}
	The previous proposition gives a converse to Proposition~\ref{thm:cone_transfer} in the sense that under (F1), every $\L$-map between limits of $\K$-sequences is the limit of a pre-transformation from $\K$, and if we additionally have (F2), the pre-transformation can be normalized.
	Similarly, one can show that $σ$-consistency is equivalent to every isomorphism being the limit of a back and forth sequence, which under (F2) can be normalized as in Corollary~\ref{thm:back_and_forth}.
\end{remark}

\begin{definition}
	Let $\K ⊆ \L$ be MU-categories.
	By $σ^∃\K$ and $σ^∀\K$ we denote the families of all $\L$-maps $h\maps X \to Y$ between $σ\K$-objects such that for \emph{some} or \emph{all} (respectively) $\K$-sequences $f_*$ and $g_*$ with $\L$-limits $X$ and $Y$ there is a pre-transformation $φ_*\maps f_* \to g_*$ with limit $h$.
\end{definition}

\begin{remark} \label{rem:concrete_limits}
	In the context of projective Fraïssé theory, Solecki~\cite[Appendix~A.2]{Solecki25} defines the $σ$-closure (there denoted by $\C^ω$) essentially as the family of all limits of pre-transformations without distinguishing between $σ^∀\K$ and $σ^∃\K$.
	This is done by working with concrete representatives of limits – the spaces of threads $X_∞ ⊆ ∏_{n ∈ ω} X_n$ with the projections.
	Formally this means that $\C^ω$ may not contain all isomorphisms between its objects.
\end{remark}

\begin{notation}
	Let $\L(\K, \K)$ and $\L(σ\K, \K)$ denote the families of all $\L$-maps between $\K$-objects and from $σ\K$-objects to $\K$-objects, respectively.
	In addition, recall the families $\A_0$ and $\A_1$ defined in Construction~\ref{con:sigma_closure}.
	Furthermore, for every category $\C$ let $\id(\C)$ denote the family of all identity morphisms.
\end{notation}

\begin{lemma} \label{thm:AE_sigma}
	In general, we have the following.
	\begin{enumerate}
		\item $σ^∀\K ∩ \L(\K, \K) ⊆ \clo{\K}$,
		\item $σ^∀\K ∩ \L(σ\K, \K) ⊆ \clo{\cllim(\K)} ⊆ \A_0$,
		\item $σ^∀\K$ is closed under composition and is locally closed,
		\item $\bigl(\clo{\K} ∪ \cllim(\K) ∪ \id(σ\K) ∪ \clabs(σ^∀\K)\bigr) ⊆ σ^∃\K ⊆ \clabs(\A_0) ⊆ \A_1 ⊆ σ\K$.
	\end{enumerate}
	
	\begin{proof}
		All the claims are straightforward.
		We will just sketch the proof.
		We will use the names for $\K$-sequences $\tuple{X_*, u_*}$, $\tuple{Y_*, v_*}$, $\tuple{Z_*, w_*}$ with $\L$-limits $\tuple{X, u_{*, ∞}}$, $\tuple{Y, v_{*, ∞}}$, $\tuple{Z, w_{*, ∞}}$ as needed,
		and we will use Proposition~\ref{thm:transformations}~(i) to characterize $σ^∀\K$-maps.
		
		To show (i) and (ii) we use the fact that if $X$ or $Y$ is a $\K$-object, then we can take the constant identity sequences $u_*$ or $v_*$, respectively.
		For a $σ^∀\K$-map $f\maps X \to Y$, $n ∈ ω$, and $ε > 0$ there is a $\K$-map $f'\maps X_m \to Y_n$ such that $v_{n, ∞} ∘ f ≈_ε f' ∘ u_{m, ∞}$.
		In (i) we obtain $f ≈_ε f' ∈ \K$ and so $f ∈ \clo{\K}$.
		In (ii) we obtain $f ≈_ε f' ∘ u_{m, ∞} ∈ \K ∘ \cllim(\K) ⊆ \cllim(\K)$ and so $f ∈ \clo{\cllim(\K)}$.
		
		For the first part of (iii) suppose that $f\maps X \to Y$ and $g\maps Y \to Z$ are $σ^∀\K$-maps, $n ∈ ω$, and $ε > 0$.
		There is a $\K$-map $g'\maps Y_m \to Z_n$ such that $g' ∘ v_{m, ∞} ≈_ε w_{n, ∞} ∘ g$ and $δ > 0$ such that $g'$ is $\tuple{ε, δ}$-continuous.
		Then there is a $\K$-map $f'\maps X_k \to Y_m$ such that $f' ∘ u_{k, ∞} ≈_δ v_{m, ∞} ∘ f$.
		Hence, $g' ∘ f' ∘ u_{k, ∞} ≈_ε g' ∘ v_{m, ∞} ∘ f ≈_ε w_{n, ∞} ∘ g ∘ f$.
		
		For the second part of (iii) suppose that $f\maps X \to Y$ is a $\clo{σ^∀\K}$-map, $n ∈ ω$, and $ε > 0$.
		There is $δ > 0$ such that $v_{n, ∞}$ is $\tuple{ε, δ}$-continuous, a $σ^∀\K$-map $g\maps X \to Y$ with $g ≈_δ f$, and a $\K$-map $g'\maps X_m \to Y_n$ with $g' ∘ u_{m, ∞} ≈_ε v_{n, ∞} ∘ g ≈_ε v_{n, ∞} ∘ f$.
		
		The inclusion $\id(σ\K) ⊆ σ^∃\K$ is clear as we can take $u_* = v_*$.
		The inclusion $\clo{\K} ⊆ σ^∃\K$ again follows from the fact that we can take the constant identity sequences for $u_*$ and $v_*$.
		Similarly we obtain $\cllim(\K) ⊆ σ^∃\K$: for $f\maps X \to Y$ being equal to $u_{m, ∞} ∈ \cllim(\K)$, we have $Y = X_m$, and we take the constant identity sequence for $v_*$.
		For any $n ∈ ω$ and $ε > 0$ we put $f' = u_{n, \max(m, n)}$.
		
		To obtain $\clabs(σ^∀\K) ⊆ σ^∃\K$ we first note that clearly $σ^∀\K ⊆ σ^∃\K$.
		Then we consider an $\L$-map $f\maps X \to Y$ such that $v_{n, ∞} ∘ f ∈ σ^∀\K$ for every $n$.
		Let $n ∈ ω$ and $ε > 0$ be given.
		Since $v_{n, ∞} ∘ f ∈ σ^∀\K$ and since we can take the constant identity sequence for $Y_n$, there is a $\K$-map $f'\maps X_m \to Y_n$ such that $f' ∘ u_{m, ∞} ≈_ε v_{n, ∞} ∘ f$, which witnesses that $f ∈ σ^∃\K$.
		
		To obtain $σ^∃\K ⊆ \clabs(\A_0)$ suppose that $f\maps X \to Y$ is a $σ^∃\K$-map as witnessed by the sequences $u_*$ and $v_*$.
		For every $n ∈ ω$ and $ε > 0$ there is a $\K$-map $f'\maps X_m \to Y_n$ with $v_{n, ∞} ∘ f ≈_ε f' ∘ u_{m, ∞} ∈ \cllim(\K)$.
		Hence, $v_{n, ∞} ∘ f ∈ \A_0$ for every $n$, and so $f ∈ \clabs(\A_0)$.
	\end{proof}
\end{lemma}

Note that it is not guaranteed in general that $σ^∀\K$ contains identities or that $σ^∃\K$ is closed under composition or is locally closed (see Example~\ref{ex:non_sigma_consistent}).
The key relevant property is $σ$-consistency of $\K$.

\begin{proposition} \label{thm:AE_sigma_consistent}
	For MU-categories $\K ⊆ \L$ we have that $\K$ is $σ$-consistent if and only if $σ^∀\K = σ^∃\K$, and in that case we have
	\begin{enumerate}
		\item $σ\K ∩ \L(\K, \K) = \clo{\K}$, i.e. $\K$ is locally dense in $σ\K$,
		\item $σ\K ∩ \L(σ\K, \K) = \clo{\cllim(\K)} = \A_0 ∩ \L(σ\K, \K)$,
		\item $σ\K = σ^∀\K = σ^∃\K = \clabs(\A_0) = \A_1$.
	\end{enumerate}
	In particular, all considered definitions of $σ\K$ agree, and the iterative construction of $σ\K$ stops after the first step.
	
	\begin{proof}
		Note that by the characterization from Proposition~\ref{thm:transformations}~(i), $\K ⊆ \L$ is $σ$-consistent if and only if $\id(σ\K) ⊆ σ^∀\K$, and that by Lemma~\ref{thm:AE_sigma} we have $\id(σ\K) ⊆ σ^∃\K$.
		Hence, if $σ^∀\K = σ^∃\K$, then $\K$ is $σ$-consistent.
		
		On the other hand, suppose that $\K$ is $σ$-consistent and that $f\maps X \to Y$ is a $σ^∃\K$-map.
		This is witnessed by some $\K$-sequences $\tuple{X_*, u_*}$ and $\tuple{Y_*, v_*}$ with limits $\tuple{X, u_{*, ∞}}$ and $\tuple{Y, v_{*, ∞}}$.
		Let $\tuple{X'_*, u'_*}$ and $\tuple{Y'_*, v'_*}$ be some other $\K$-sequences with limits $\tuple{X, u'_{*, ∞}}$ and $\tuple{Y, v'_{*, ∞}}$ and let $n ∈ ω$ and $ε > 0$.
		Since $\K$ is $σ$-consistent, there is a $\K$-map $f'\maps Y_m \to Y'_n$ with $v'_{n, ∞} ≈_ε f' ∘ v_{m, ∞}$, and there is $δ > 0$ such that $f'$ is $\tuple{ε, δ}$-continuous.
		Then there is a $\K$-map $f''\maps X_l \to Y_m$ with $v_{m, ∞} ∘ f ≈_δ f'' ∘ u_{l, ∞}$, and there is $γ > 0$ such that $f''$ is $\tuple{δ, γ}$-continuous.
		Finally, there is a $\K$-map $f'''\maps X'_k \to X_l$ with $u_{l, ∞} ≈_γ f''' ∘ u_{k, ∞}$.
		Altogether,
		\[
			v'_{n, ∞} ≈_ε f' ∘ v_{m, ∞} ≈_ε f' ∘ f'' ∘ u_{l, ∞} ≈_ε f' ∘ f'' ∘ f''' ∘ u'_{k, ∞},
		\]
		which witnesses that $f ∈ σ^∀\K$.
		
		Next we show that if $σ^∀\K = σ^∃\K$, then it is also equal to $σ\K$.
		This will give (iii).
		By Lemma~\ref{thm:AE_sigma}, we have $\K ⊆ σ^∃\K ⊆ σ\K$ so it is enough to show that $σ^∀\K = σ^∃\K$ is closed under all the relevant closures.
		Again by Lemma~\ref{thm:AE_sigma}, $\id(σ\K) ∪ \cllim(\K) ⊆ σ^∃\K$, while $σ^∀\K$ is closed under composition and is locally closed.
		Finally, $\clabs(σ^∀\K) ⊆ σ^∃\K$.
		
		Claims (i) and (ii) follow from Lemma~\ref{thm:AE_sigma}~(i) and (ii) since we have $σ\K = σ^∀\K$ and since the inclusions $\clo{\K} ⊆ \clo{\cllim(\K)} ⊆ \A_0 ⊆ σ\K$ are clear.
	\end{proof}
\end{proposition}

\begin{corollary} \label{thm:F1}
	Let $\K ⊆ \L$ be MU-categories.
	\begin{enumerate}
		\item If $\tuple{\K, σ\K}$ satisfies (F1), then $\K ⊆ \L$ is $σ$-consistent.
			The other implication holds as well if $\tuple{\K, \L}$ satisfies (L1).
		\item $\tuple{\K, \L}$ satisfies (F1) if and only if $\K ⊆ \L$ is $σ$-consistent and $σ\K ⊆ \L$ is full.
	\end{enumerate}
	
	\begin{proof}
		If $\tuple{\K, σ\K}$ or $\tuple{\K, \L}$ satisfies (F1), then $\K$ is $σ$-consistent as observed in Observation~\ref{thm:sigma_consistent_necessary}.
		On the other hand, if $\tuple{\K, \L}$ satisfies (L1) and $\K$ is $σ$-consistent, then by Corollary~\ref{thm:absolute} every $σ\K$-limit $\tuple{X_∞, f_{*, ∞}}$ of a $\K$-sequence is also an $\L$-limit,
		and by Proposition~\ref{thm:AE_sigma_consistent} we have $σ\K = σ^∀\K$, and so every $σ\K$-map $h\maps X_∞ \to Y$ to a $\K$-object admits $ε$-factorizations as we can use the constant identity sequence for $Y$.
		
		In general we have $σ^∀\K ⊆ σ\K ⊆ \L(σ\K, σ\K)$.
		But $\K$ is $σ$-consistent if and only if $σ^∀\K = σ\K$ by Proposition~\ref{thm:AE_sigma_consistent}, $σ\K = \L(σ\K, σ\K)$ if and only if $σ\K ⊆ \L$ is full, and we can observe that $σ^∀\K = \L(σ\K, σ\L)$ if and only if $\tuple{\K, \L}$ satisfies (F1).
	\end{proof}
\end{corollary}

We have seen that $σ^∀\K$ is a category only if $\K$ is $σ$-consistent.
There is another construction, based on the factorization property, such that we always obtain a category.
It is a two-step construction that was used as the definition of the $σ$-closure in the projective Fraïssé theory by Panagiotopoulos and Solecki~\cite{PS22} (there the $σ$-closure is denoted by $\C^ω$ and is used for the category $\C$ of all finite connected graphs and connected epimorphisms, and formally only particular representatives of the limits are considered, so the definition is equivalent to the definition mentioned in Remark~\ref{rem:concrete_limits}).
The definition is also related to the notion of \emph{admissible extension} of a category of finite topological first-order structures by Irwin~\cite[Definition~2.7]{Irwin07}.

Again, we will demonstrate that with $σ$-consistency the definitions agree, while without $σ$-consistency we do not have a well-behaving notion of a Fraïssé limit (see Example~\ref{ex:non_sigma_consistent}).

\begin{definition}
Let $\K ⊆ \L$ be MU-categories.
	\begin{itemize}
		\item Let $\B_0$ be the family of all maps $h ∈ \L(σ\K, \K)$ for which (F1) holds,
			i.e. such that for every $\K$-sequence $f_*$ with an $\L$-limit $\tuple{\dom(h), f_{*, ∞}}$ and every $ε > 0$ there is $g ∈ \K$ such that $h ≈_ε g ∘ f_{n, ∞}$.
		\item Let $\B_1$ be the family of all $\L$-maps $h$ between $σ\K$-objects such that $\B_0 ∘ h ⊆ \B_0$.
	\end{itemize}
\end{definition}

\begin{lemma} \label{thm:B}
	In general we have the following.
	\begin{enumerate}
		\item $\B_0 ∩ \L(\K, \K) ⊆ \clo{\K}$,
		\item $σ^∀\K ∩ \L(σ\K, \K) ⊆ \B_0 ⊆ \clo{\cllim(\K)} ⊆ \A_0$,
		\item $\B_0 ∘ \B_0 ⊆ \B_0 = \clo{\B_0}$,
		\item $\B_1$ is a locally closed subcategory of $\L$ containing $\B_0$ and $σ^∀\K$.
	\end{enumerate}
	
	\begin{proof}
		For an $\L(σ\K, \K)$-map $h\maps X \to Y$, the only difference between $h ∈ \B_0$ and $h ∈ σ^∀\K$ is that in the former only the constant identity sequence is considered as an admissible representation of $Y$.
		Hence, $σ^∀\K ∩ \L(σ\K, \K) ⊆ \B_0$, and the rest of claims (i) and (ii) follows as in the proof of Lemma~\ref{thm:AE_sigma}.
		
		The fact that $\B_0$ is locally closed is clear from the definition.
		Next, we have 
		\[
			\B_0 ∘ \B_0 = (\B_0 ∩ \L(\K, \K)) ∘ \B_0 ⊆ \clo{\K} ∘ \B_0 ⊆ \clo{\K ∘ \B_0} ⊆ \clo{\B_0} = \B_0.
		\]
		We use (i), the continuity of composition, the clear fact that $\K ∘ \B_0 ⊆ \B_0$, and local closedness of $\B_0$.
		This concludes the proof of (iii).
		
		Claim~(iv):
		The fact that $\B_1$ contains identities, is closed under composition, and is a locally closed is easy to check from the definition.
		For the local closedness we use $\B_0 ∘ \clo{\B_1} ⊆ \clo{\B_0 ∘ \B_1}$.
		The inclusion $\B_0 ⊆ \B_1$ is by the definition equivalent to $\B_0 ∘ \B_0 ⊆ \B_0$.
		The inclusion $σ^∀\K ⊆ \B_1$, which is equivalent to $\B_0 ∘ σ^∀\K ⊆ \B_0$, follows analogously to Lemma~\ref{thm:AE_sigma}~(iii).
	\end{proof}
\end{lemma}

Note that $\B_0$ may not contain $\K$ or even identities of $\K$-objects.
However, $\cllim(\K) ⊆ \B_0$ is equivalent to $\K$ being $σ$-consistent.

\begin{proposition} \label{thm:B_ids}
	If $\id(\K) ⊆ \B_0$, then $\B_1$ is a locally closed subcategory of $\L$ containing $\K$ such that
	\begin{enumerate}
		\item $\B_1 ∩ \L(\K, \K) = \B_0 ∩ \L(\K, \K) = \clo{\K}$, so $\K$ is locally dense in $\B_1$,
		\item $\B_1 ∩ \L(σ\K, \K) = \B_0$.
	\end{enumerate}
	If we have even $\cllim(\K) ∩ \L(\K, \K) ⊆ \B_0$ (in particular, if $\K$ contains all $\L$-isomorphisms between its objects and if every $\K$-sequence with an $\L$-limit object in $\K$ eventually consists of isomorphisms), then additionally $\B_0 ⊆ σ^∀\K$, and so we have
	\begin{enumerate}[resume]
		\item $\clo{\K} ⊆ \B_0 ⊆ σ^∀\K ⊆ \B_1$,
		\item $\B_1 ∩ \L(σ\K, \K) = σ^∀\K ∩ \L(σ\K, \K) = \B_0$.
	\end{enumerate}
	
	\begin{proof}
		If $\id(\K) ⊆ \B_0$, then $\K ⊆ \K ∘ \B_0 ⊆ \B_0$.
		By Lemma~\ref{thm:B}, we have $\clo{\K} ⊆ \B_0$ since $\B_0$ is locally closed, and $\B_0 ∩ \L(\K, \K) = \clo{\K}$.
		Next we have 
		\[
			\B_1 ∩ \L(σ\K, \K) = \id(\K) ∘ \B_1 ⊆ \B_0 ∘ \B_1 ⊆ \B_0,
		\]
		and so $\B_1 ∩ \L(σ\K, \K) ⊆ \B_0$.
		The rest of (i) and (ii) follows easily.
		
		For the next part note that if a $\K$-sequence $f_*$ whose $\L$-limit is a $\K$-object $K$ eventually consists of isomorphisms, we have that some $f_{n, ∞}$ is an isomorphism, and if it is further in $\K$, we have $f_{n', ∞} = f_{n', n} ∘ f_{n, ∞} ∈ \K ⊆ \B_0$ for every $n' ≤ n$.
		So we indeed have $\cllim(\K) ∩ \L(\K, \K) ⊆ \B_0$ under the particular extra assumptions.
		
		Finally we show $\B_0 ⊆ σ^∀\K ⊆ \B_1$ – the rest of (iii) follows from (i) and Lemma~\ref{thm:B}, and (iv) follows directly from (ii) and (iii).
		For a $\B_0$-map $h\maps X_∞ \to Y_∞$ with $\tuple{X_∞, f_{*, ∞}}$ and $\tuple{Y_∞, g_{*, ∞}}$ being $\L$-limits of $\K$-sequences $f_*$ and $g_*$, and for $n ∈ ω$ and $ε > 0$ we have $g_{n, ∞} ∈ \cllim(\K) ∩ \L(\K, \K) ⊆ \B_0$, and so $g_{n, ∞} ∘ h ∈ \B_0$ and there is a $\K$-map $h'\maps X_m \to Y_n$ with $h' ∘ f_{m, ∞} ≈_ε g_{n, ∞} ∘ h$, which by Proposition~\ref{thm:transformations}~(i) witnesses that $h ∈ σ^∀\K$.
	\end{proof}
\end{proposition}

\begin{corollary} \label{thm:B_sigma}
	If $\K ⊆ \L$ is $σ$-consistent, then $\B_0 = \clo{\cllim(\K)}$ and $\B_1 = σ\K$.
	
	\begin{proof}
		From the $σ$-consistency we have $σ\K = σ^∀\K = σ^∃\K$ by Proposition~\ref{thm:AE_sigma_consistent}, and we have $\cllim(\K) ⊆ \B_0$, and so $\B_0 ⊆ σ\K ⊆ \B_1$ by Proposition~\ref{thm:B_ids}.
		
		We show that $\B_1 ⊆ \clabs(\B_0)$, and so $\B_1 ⊆ σ\K$.
		For any $\B_1$-map $h\maps X \to Y$ we can pick a $\K$-sequence $g_*$ with $\L$-limit $\tuple{Y, g_{*, ∞}}$.
		Since $\cllim(\K) ⊆ \B_0$, for every $n ∈ ω$ we have $g_{n, ∞} ∈ \B_0$ and so $g_{n, ∞} ∘ h ∈ \B_0$.
		Hence, $h ∈ \clabs(\B_0)$ and we are done.
		
		Finally we have $\B_0 = \B_1 ∩ \L(σ\K, \K) = σ\K ∩ \L(σ\K, \K) = \clo{\cllim(\K)}$.
	\end{proof}
\end{corollary}

Without $σ$-consistency we may obtain various pathologies.
We give examples even of ordinary categories (i.e. having the discrete MU-structure).

\begin{example}
	Let $\K = \set{f_{i, j}: i ≤ j \in \omega}$ be a category consisting of a single inverse sequence $\tuple{K_*, f_*}$.
	Let $\L ⊇ \K$ be the category that adds a cone $\tuple{K_0, g_*}$ for $\tuple{K_*, f_*}$, i.e. $g_0 = \id_{K_0}$ and $g_i = f_{i, j} ∘ g_j$ for every $i ≤ j$, and $\L = \K \cup \set{e_{i, j} = g_i ∘ f_{0, j}: i, j ∈ ω}$.
	
	Then $\tuple{K_0, g_*}$ becomes the limit of $\tuple{K_*, f_*}$, and we have $\id(\K) ⊆ \B_0$, but not $\cllim(\K) ∩ \L(\K, \K) ⊆ \B_0$.
	Moreover, we have
	\[
		σ^∀\K ⊊ \B_0 = \B_1 = \K ⊊ \cllim(\K) = σ^∃\K ⊊ \clcat(σ^∃\K) = σ\K = \L,
	\]
	so the extra assumption in Proposition~\ref{thm:B_ids} is needed.
	
		It is easy to check that we have
		\[
			e_{i, j} ∘ e_{j, k} = f_{i, j} ∘ e_{j, k} = e_{i, j} ∘ f_{j, k} = e_{i, k}.
		\]
		for every $i, j, k ∈ ω$ (with $i ≤ j$ and $j ≤ k$ for the equalities involving $f_{i, j}$ and $f_{j, k}$, respectively).
		Moreover, we have 
		\[
			\L(K_j, K_i) = \begin{cases}
				\set{f_{0, j} = e_{0, j}} & \text{if } i = 0, \\
				\set{f_{i, j} ≠ e_{i, j}} & \text{if } 0 < i ≤ j, \\
				\set{e_{i, j}} & \text{if } i > j.
			\end{cases}
		\]
		It follows that $\L$-cones for $f_*$ are exactly $\tuple{K_n, e_{*, n}} = \tuple{K_0, g_*} ∘ f_{0, n}$ for $n ∈ ω$.
		Since $f_{0, n}$ is the unique $\L$-map $K_n \to K_0$, we have that $\tuple{K_0, g_*}$ is the limit of $f_*$.
		
		To show $\id(\K) ⊆ \B_0$, then only nontrivial case to check is whether $\id_{K_0}$ factorizes through $g_*$, but for every $n$ we have $f_{0, n} ∘ g_n = g_0 = \id_{K_0}$, so we are done.
		On the other hand, $g_n ∈ \cllim(\K) ∩ \L(\K, \K) \setminus \B_0$ for $n > 0$ since it does not factorize through the constant sequence $\tuple{\id_{K_n}}_*$ by a $\K$-map.
		
		Since $\L(\K, \K) = \L$, by Proposition~\ref{thm:B_ids}~(i), Lemma~\ref{thm:AE_sigma}, Lemma~\ref{thm:B}, and by the previous paragraph we have $σ^∀\K ⊆ \B_0 = \B_1 = \K ⊊ \cllim(\K) ⊆ σ^∃\K$.
		Since $\cllim(\K) ⊇ \K ∪ \set{g_n: n ∈ ω}$, we have $\cllim(\K) ⊊ \clcat(\cllim(\K)) = \L$.
		It remains to show that $\K ⊈ σ^∀\K$ and that $σ^∃\K ⊆ \cllim(\K)$.
		
		To compute $σ^∀\K$ and $σ^∃\K$ we analyze sequences and transformations in $\K$.
		Since $\K$-sequence is eventually an identity, or is a subsequence of $f_*$, it is enough to consider the trivial sequences $\tuple{\id_{K_n}}_*$, $n ∈ ω$, and $f_*$ with their unique limit cones $\tuple{\id_{K_n}}_*$ and $g_*$, respectively.
		There is no transformation $\tuple{\id_{K_n}}_* \to f_*$ as for $i > n$ there is no $\K$-map $K_n \to K_i$, but $f_{0, n}$ is a $\K$-map between the limits, so $f_{0, n} ∈ \K \setminus σ^∀\K$.
		Clearly, transformations $\tuple{\id_{K_n}}_* \to \tuple{\id_{K_m}}_*$ produces only $\K$-maps as limits; transformations $f_* \to f_*$ produce only $\id_{K_0}$ as this is the unique $\L$-map $K_0 \to K_0$, and the unique $f_* \to \tuple{\id_{K_n}}_*$ produces $g_n$ as the limit.
		Altogether, $σ^∃\K = \K ∪ \set{g_n: n ∈ ω} = \cllim(\K)$.
\end{example}

\cdef \odd {_\text{odd}}
\cdef \even {_\text{even}}

\begin{example} \label{ex:non_sigma_consistent}
	We give an example of a classical Fraïssé class and its discrete free completion $\tuple{\K, \L}$ (see Remark~\ref{rm:classical_fraisse}) and of a Fraïssé category $\F ⊆ \K$ such that $\F$ is not $σ$-consistent and such that the limit of a Fraïssé sequence is not homogeneous and does not have the extension property.
	Furthermore, $σ^∃\F$ is not closed under composition and so is not equal to $σ\F$.
	In fact, we have the following sequence of inclusions of closures:
	\begin{gather*}
		σ^∀\F = \B_0 = \F ⊊ \cllim[\F](\F) ⊊ \cllim[\F](\F) ∪ \id(σ\F) = \A_0, \\
			\A_0 ⊊ \clabs[\F](\A_0) = σ^∃\F ⊊ \clcat(σ^∃\F) = \A_1 = σ\F ⊉ \B_1.
	\end{gather*}
	
	We let $\tuple{\K, \L}$ be the classes of all finite and countable linear orders, respectively, with all embeddings.
	(We are now in the injective setting so all morphisms have orientation opposite to the convention used in MU-categories.)
	$\K$ is one of the most classical examples of a Fraïssé class and its limit is $ℚ$.
	We let $\E ⊆ \K$ be the wide subcategory of all end-extensions, i.e., an embedding $e\maps K \to L$ of finite linear orders is in $\E$ if and only if $e\im{K}$ is an initial segment of $L$.
	It is easy to see that $\tuple{\E, σ\E}$ is a free completion with $σ\E$ adding one new isomorphism type – $ω$ – and that $σ\E$-maps are exactly end-extensions.
	A skeleton of $\E$ (a full subcategory with one representative of each isomorphism type) is $\set{e_i^j: i ≤ j < ω}$ where every $e_i^j$ denotes the inclusion $i ⊆ j$; a skeleton of $σ\E$ is then $\set{e_i^j: i ≤ j ≤ ω}$.
	Clearly, $\E$ is Fraïssé with $ω$ being the limit.
	
	We define a wide subcategory $\F ⊆ \K$ as a certain modification of $\E$.
	Let $\E\odd$, $\E\even$, $\F\odd$, $\F\even$ denote the full subcategories of $\E$ and $\F$ consisting of finite linear orders of odd and even lengths, respectively.
	We put $\F\odd = \E\odd$ and $\F\even = \E\even$, but we allow no $\F$-map from an $\F\even$-object to an $\F\odd$-object, and for the only allowed $\F$-map $f\maps K \to L$ from an $\F\odd$-object to an $\F\even$-object there is exactly one point $y ∈ L$ with $y < f\im{K}$ and $f\im{K}$ is an interval, i.e. $f$ is an “end-extension shifted by one”.
	Describing the skeleton, $\F$ consists of two sequences $\set{e_{2i}^{2j}: i ≤ j < ω}$ and $\set{e_{2i + 1}^{2j + 1}: i ≤ j < ω}$ of inclusions and of a “transformation” $\set{f_{2i + 1}^{2j}: i < j < ω}$ from the odd sequence to the even sequence consisting of the shifted end-extensions.
	
	Now we prove the stated properties of the example.
	As $\F$ is essentially a countable directed poset, it is a Fraïssé category.
	The even sequence is Fraïssé, but the odd sequence is not as the outgoing maps $f_{2i + 1}^{2j}$ are not absorbed.
	Nevertheless, the limit of any non-trivial $\F$-sequence is $ω$, which is rigid, but is not homogeneous and does not even have the extension property – the maps $e_{2i + 1}^ω$ cannot be extended along any $f_{2i + 1}^{2j}$.
	
	Let us more carefully describe how (the skeleton of) $σ\F$ is generated.
	The limit closure $\cllim[\F](\F)$ adds the inclusions $e_i^ω$ for $i ≤ ω$ as well as the maps $f_{2i + 1}^ω = e_{2j}^ω ∘ f_{2i + 1}^{2j}$ for $i < j < ω$ (note that $f_{2i + 1}^ω$ is a limit cone map for the sequence $\tuple{f_{2i + 1}^{2i + 2}, e_{2i + 2}^{2i + 4}, e_{2i + 4}^{2i + 6},…}$).
	Since $\cllim[\F](\F)$ is already closed under composition and since the local closure is trivial in the discrete setting, we have $\A_0 = \cllim[\F](\F) ∪ \id(σ\F)$.
	Essentially, $\A_0$ adds a maximum to the skeleton of $\F$, which is a poset.
	
	Let $s\maps ω \to ω$ be the successor map.
	We have $s ∘ e^ω_{2i + 1} = e^ω_{2i + 2} ∘ f^{2i + 2}_{2i + 1} ∈ \A_0$ for every $i < ω$.
	Hence, $\clabs[\F](\A_0)$ adds the map $s$.
	On the other hand, this is the only map added: for a map $g\maps ω \to ω$ to be added, the restriction $g ∘ e^ω_i$ to the initial segment would be prescribed for infinitely many numbers $i < ω$ and would have to be equal to $e^ω_i$ or $f^ω_i$, forcing $g$ to be $\id_ω$ or $s$, respectively.
	Since clearly $s ∈ σ^∃\F$, we have $\clabs[\F](\A_0) = σ^∃\F$ as $\A_0 = \cllim[\F](\F) ∪ \id(σ\F) ⊆ σ^∃\F ⊆ \clabs[\F](\A_0)$ by Lemma~\ref{thm:AE_sigma}.
	
	We see that $σ^∃\F$ is not closed under composition, so $σ^∃\F ⊊ \clcat(σ^∃\F)$, which is equal to $\clcat(\clabs[\F](\A_0)) = \A_1$.
	In fact, we need to add the compositions $s^k$ and $s^k ∘ e^ω_i$ for $i, k < ω$, i.e. the skeleton of $\A_1$ with objects $\set{0, 1, …, ω}$ is $\set{e_i^j: i ≤ j < ω} ∪ \set{s^k ∘ e^ω_i: k < ω, i ≤ ω}$.
	Moreover, $\clabs[\F](\A_1) = \A_1$ and so $σ\F = \A_1$.
	To see that $\clabs[\F](\A_1)$ does not add any new maps, note that if for some $g\maps ω \to ω$ we have $g ∘ e^ω_i ∈ \A_1$ for every $i ∈ I$ and an infinite set $I ⊆ ω$, then we have $g ∘ e^ω_i = s^{k_i} ∘ e^ω_i$ for every $i ∈ I$.
	But for $i < j ∈ I$ we have $s^{k_j} ∘ e^ω_i = s^{k_j} ∘ e^ω_j ∘ e^j_i = g ∘ e^ω_j ∘ e^j_i = g ∘ e^ω_i = s^{k_i} ∘ e^ω_i$, and so $k_j = k_i =: k$ and $g = s^k$.
	
	Finally we show that $σ^∀\F = \B_0 = \F$ and so $\B_1$ contains all self-embeddings of $ω$ as there is no $\B_0$-map to $ω$.
	Since clearly every $\F$-sequence with $\L$-limit object in $\F$ eventually consists of isomorphisms, by Proposition~\ref{thm:B_ids} we have $\F = \B_0 ∩ \L(\F, \F)$ and $\B_0 = σ^∀\F ∩ \L(\F, σ\F)$.
	Every $\B_0$-map $g\maps n \to ω$ is of the form $e^ω_{2i + 1} ∘ g'$ with $g' ∈ \F$ since $\tuple{ω, e^ω_{2* + 1}}$ is a limit cone.
	Hence, $g' = e^{2i + 1}_n$, $n$ is odd, and $g = e^ω_n$.
	At the same time, $g$ is of the form $e^ω_{2j} ∘ g''$ for some $g'' ∈ \F$ since $\tuple{ω, e^ω_{2*}}$ is a limit cone.
	But since $n$ is odd, we have $g'' = f^{2j}_n$ and $g = f^ω_n ≠ e^ω_n$, so there is no such map $g$ to start with, i.e. we have $\B_0 ⊆ \L(\F, \F)$.
	Similarly, for a $σ^∀\F$-map $h\maps ω \to ω$ we have $h ∘ e^ω_0 ∈ \B_0$, and so no such map exists and $σ^∀\F ⊆ \L(\F, σ\F)$.
	Altogether, we have $σ^∀\F = \B_0 = \F$.
\end{example}

	\subsection*{Acknowledgements}
	We would like to thank the anonymous referee for suggestions that improved the overall presentation.
	We would also like to thank Kateřina Fuková for help with proofreading the new version of the paper.
	
	This version of the article has been accepted for publication, after peer review, but is not the Version of Record and does not reflect post-acceptance improvements, or any corrections.
	The Version of Record is available online at: \url{https://doi.org/10.1007/s00029-025-01119-5}.

	\bibliographystyle{mysiam}
	\bibliography{references}

\begin{thebibliography}{10}

\bibitem{AR94}
{\sc J.~Ad\'{a}mek and J.~Rosick\'{y}}, {\em Locally presentable and accessible
  categories}, vol.~189 of London Mathematical Society Lecture Note Series,
  Cambridge University Press, Cambridge, 1994.

\bibitem{BBDK24}
{\sc A.~Bartoš, T.~Bice, K.~Dasilva~Barbosa, and W.~Kubiś}, {\em The weak
  {R}amsey property and extreme amenability}, Forum Math. Sigma, 12 (2024),
  pp.~Paper No. e96, 42.

\bibitem{BBVtwo}
{\sc A.~Bartoš, T.~Bice, and A.~Vignati}, {\em Generic compacta from relations
  between finite graphs: Theory building and examples}.
\newblock \arxivlink{2408.15228}, 2024.

\bibitem{BBV25}
{\sc A.~Bartoš, T.~Bice, and A.~Vignati}, {\em Constructing compacta from
  posets}, Publ. Mat., 69 (2025), pp.~217--265.

\bibitem{BK15}
{\sc D.~Bartošová and A.~Kwiatkowska}, {\em Lelek fan from a projective
  {F}ra\"{\i}ss\'{e} limit}, Fund. Math., 231 (2015), pp.~57--79.

\bibitem{BC21}
{\sc G.~Basso and R.~Camerlo}, {\em Fences, their endpoints, and projective
  {F}ra\"{\i}ss\'{e} theory}, Trans. Amer. Math. Soc., 374 (2021),
  pp.~4501--4535.

\bibitem{BenYaacov15}
{\sc I.~Ben~Yaacov}, {\em Fra\"iss\'e{} limits of metric structures}, J. Symb.
  Log., 80 (2015), pp.~100--115.

\bibitem{Bing51}
{\sc R.~H. Bing}, {\em Concerning hereditarily indecomposable continua},
  Pacific J. Math., 1 (1951), pp.~43--51.

\bibitem{Bing51_crooked}
{\sc R.~H. Bing}, {\em Higher-dimensional hereditarily indecomposable
  continua}, Trans. Amer. Math. Soc., 71 (1951), pp.~267--273.

\bibitem{Bing62}
{\sc R.~H. Bing}, {\em Embedding circle-like continua in the plane}, Canadian
  J. Math., 14 (1962), pp.~113--128.

\bibitem{BS18}
{\sc J.~P. Boro\'nski and M.~Smith}, {\em On the conjecture of {W}ood and
  projective homogeneity}, J. Math. Anal. Appl., 461 (2018), pp.~1733--1747.

\bibitem{BS14}
{\sc J.~P. Boro\'nski and F.~Sturm}, {\em Finite-sheeted covering spaces and a
  near local homeomorphism property for pseudosolenoids}, Topology Appl., 161
  (2014), pp.~235--242.

\bibitem{Brown60_crooked}
{\sc M.~Brown}, {\em On the inverse limit of {E}uclidean {$N$}-spheres}, Trans.
  Amer. Math. Soc., 96 (1960), pp.~129--134.

\bibitem{Brown60}
{\sc M.~Brown}, {\em Some applications of an approximation theorem for inverse
  limits}, Proc. Amer. Math. Soc., 11 (1960), pp.~478--483.

\bibitem{CV24}
{\sc L.~Cantier and E.~Vilalta}, {\em Fra\"iss\'e{} theory for {C}untz
  semigroups}, J. Algebra, 658 (2024), pp.~319--364.

\bibitem{ChKR25}
{\sc W.~J. Charatonik, A.~Kwiatkowska, and R.~P. Roe}, {\em The projective
  {F}ra\"iss\'e{} limit of the family of all connected finite graphs with
  confluent epimorphisms}, Trans. Amer. Math. Soc., 378 (2025), pp.~1081--1126.

\bibitem{ChKRY23}
{\sc W.~J. Charatonik, A.~Kwiatkowska, R.~P. Roe, and S.~Yang}, {\em Projective
  {F}raïssé limits of trees with confluent epimorphisms}.
\newblock \arxivlink{2312.16915}, 2023.

\bibitem{DG93}
{\sc M.~Droste and R.~G\"{o}bel}, {\em Universal domains and the amalgamation
  property}, Math. Structures Comput. Sci., 3 (1993), pp.~137--159.

\bibitem{Fearnley70}
{\sc L.~Fearnley}, {\em The pseudo-circle is unique}, Trans. Amer. Math. Soc.,
  149 (1970), pp.~45--64.

\bibitem{Fearnley72}
{\sc L.~Fearnley}, {\em Classification of all hereditarily indecomposable
  circularly chainable continua}, Trans. Amer. Math. Soc., 168 (1972),
  pp.~387--401.

\bibitem{Fraisse54}
{\sc R.~Fra\"iss\'e}, {\em Sur l'extension aux relations de quelques
  propri\'et\'es des ordres}, Ann. Sci. \'Ecole Norm. Sup. (3), 71 (1954),
  pp.~363--388.

\bibitem{Hodges93}
{\sc W.~Hodges}, {\em Model theory}, vol.~42 of Encyclopedia of Mathematics and
  its Applications, Cambridge University Press, Cambridge, 1993.

\bibitem{HO16}
{\sc L.~C. Hoehn and L.~G. Oversteegen}, {\em A complete classification of
  homogeneous plane continua}, Acta Math., 216 (2016), pp.~177--216.

\bibitem{Homma52}
{\sc T.~Homma}, {\em A theorem on continuous functions}, Kōdai Math. Sem.
  Rep., 4 (1952), pp.~13--16.

\bibitem{Ingram67}
{\sc W.~T. Ingram}, {\em Concerning non-planar circle-like continua}, Canadian
  J. Math., 19 (1967), pp.~242--250.

\bibitem{IM12}
{\sc W.~T. Ingram and W.~S. Mahavier}, {\em Inverse limits}, vol.~25 of
  Developments in Mathematics, Springer, New York, 2012.
\newblock From continua to chaos.

\bibitem{IS06}
{\sc T.~Irwin and S.~Solecki}, {\em Projective {F}ra\"{i}ss\'{e} limits and the
  pseudo-arc}, Trans. Amer. Math. Soc., 358 (2006), pp.~3077--3096.

\bibitem{Irwin07}
{\sc T.~L. Irwin}, {\em Fraïssé limits and colimits with applications to
  continua}, ProQuest LLC, Ann Arbor, MI, 2007.
\newblock Thesis (Ph.D.)--Indiana University.

\bibitem{JV22}
{\sc B.~Jacelon and A.~Vignati}, {\em Stably projectionless {F}ra\"iss\'e{}
  limits}, Studia Math., 267 (2022), pp.~161--199.

\bibitem{Kawamura89}
{\sc K.~Kawamura}, {\em Near-homeomorphisms on hereditarily indecomposable
  circle-like continua}, Tsukuba J. Math., 13 (1989), pp.~165--173.

\bibitem{Kechris95}
{\sc A.~S. Kechris}, {\em Classical descriptive set theory}, vol.~156 of
  Graduate Texts in Mathematics, Springer-Verlag, New York, 1995.

\bibitem{KPT05}
{\sc A.~S. Kechris, V.~G. Pestov, and S.~Todorcevic}, {\em Fra\"{\i}ss\'{e}
  limits, {R}amsey theory, and topological dynamics of automorphism groups},
  Geom. Funct. Anal., 15 (2005), pp.~106--189.

\bibitem{Krasinkiewicz76}
{\sc J.~Krasinkiewicz}, {\em Mappings onto circle-like continua}, Fund. Math.,
  91 (1976), pp.~39--49.

\bibitem{KM76}
{\sc J.~Krasinkiewicz and P.~Minc}, {\em Nonexistence of universal continua for
  certain classes of curves}, Bull. Acad. Polon. Sci. S\'{e}r. Sci. Math.
  Astronom. Phys., 24 (1976), pp.~733--741.

\bibitem{KM77}
{\sc J.~Krasinkiewicz and P.~Minc}, {\em Mappings onto indecomposable
  continua}, Bull. Acad. Polon. Sci. S\'er. Sci. Math. Astronom. Phys., 25
  (1977), pp.~675--680.

\bibitem{KK21}
{\sc A.~Krawczyk and W.~Kubiś}, {\em Games with finitely generated
  structures}, Ann. Pure Appl. Logic, 172 (2021), pp.~Paper No. 103016, 13.

\bibitem{Kubis13}
{\sc W.~Kubiś}, {\em Metric-enriched categories and approximate {F}raïssé
  limits}.
\newblock \arxivlink{1210.6506v3}, 2013.

\bibitem{Kubis14}
{\sc W.~Kubiś}, {\em Fra\"{\i}ss\'{e} sequences: category-theoretic approach
  to universal homogeneous structures}, Ann. Pure Appl. Logic, 165 (2014),
  pp.~1755--1811.

\bibitem{Kubis22}
{\sc W.~Kubiś}, {\em Weak {F}raïssé categories}, Theory Appl. Categ., 38
  (2022), pp.~27--63.
\newblock \link{http://www.tac.mta.ca/tac/volumes/38/2/38-02.pdf}.

\bibitem{KK17}
{\sc W.~Kubiś and A.~Kwiatkowska}, {\em The {L}elek fan and the {P}oulsen
  simplex as {F}raïssé limits}, Rev. R. Acad. Cienc. Exactas F\'{\i}s. Nat.
  Ser. A Mat. RACSAM, 111 (2017), pp.~967--981.

\bibitem{Kwiatkowska14}
{\sc A.~Kwiatkowska}, {\em Large conjugacy classes, projective
  {F}ra\"{\i}ss\'{e} limits and the pseudo-arc}, Israel J. Math., 201 (2014),
  pp.~75--97.

\bibitem{LM10}
{\sc W.~Lewis and P.~Minc}, {\em Drawing the pseudo-arc}, Houston J. Math., 36
  (2010), pp.~905--934.

\bibitem{Mackowiak85}
{\sc T.~Ma\'{c}kowiak}, {\em A universal hereditarily indecomposable
  continuum}, Proc. Amer. Math. Soc., 94 (1985), pp.~167--172.

\bibitem{Mardesic00}
{\sc S.~Marde\v{s}i\'{c}}, {\em Strong shape and homology}, Springer Monographs
  in Mathematics, Springer-Verlag, Berlin, 2000.

\bibitem{MS63}
{\sc S.~Marde\v{s}i\'{c} and J.~Segal}, {\em {$\varepsilon $}-mappings onto
  polyhedra}, Trans. Amer. Math. Soc., 109 (1963), pp.~146--164.

\bibitem{Masumoto20}
{\sc S.~Masumoto}, {\em On a generalized {F}ra\"iss\'e{} limit construction and
  its application to the {J}iang-{S}u algebra}, J. Symb. Log., 85 (2020),
  pp.~1186--1223.

\bibitem{McCord65}
{\sc M.~C. McCord}, {\em Inverse limit sequences with covering maps}, Trans.
  Amer. Math. Soc., 114 (1965), pp.~197--209.

\bibitem{Mioduszewski62}
{\sc J.~Mioduszewski}, {\em A functional conception of snake-like continua},
  Fund. Math., 51 (1962/63), pp.~179--189.

\bibitem{Nadler92}
{\sc S.~B. Nadler, Jr.}, {\em Continuum theory}, vol.~158 of Monographs and
  Textbooks in Pure and Applied Mathematics, Marcel Dekker, Inc., New York,
  1992.
\newblock An introduction.

\bibitem{PS22}
{\sc A.~Panagiotopoulos and S.~Solecki}, {\em A combinatorial model for the
  {M}enger curve}, J. Topol. Anal., 14 (2022), pp.~203--229.

\bibitem{Rogers70_entwined}
{\sc J.~T. Rogers, Jr.}, {\em Mapping the pseudo-arc circle-like, self-entwined
  continua}, Michigan Math. J., 17 (1970), pp.~91--96.

\bibitem{Rogers70}
{\sc J.~T. Rogers, Jr.}, {\em Pseudo-circles and universal circularly chainable
  continua}, Illinois J. Math., 14 (1970), pp.~222--237.

\bibitem{Russo79}
{\sc R.~L. Russo}, {\em Universal continua}, Fund. Math., 105 (1979/80),
  pp.~41--60.

\bibitem{Schoretsanitis07}
{\sc K.~Schoretsanitis}, {\em Fraisse theory for metric structures}, ProQuest
  LLC, Ann Arbor, MI, 2007.
\newblock Thesis (Ph.D.)--University of Illinois at Urbana-Champaign.

\bibitem{SZ55}
{\sc R.~Sikorski and K.~Zarankiewicz}, {\em On uniformization of functions.
  {I}}, Fund. Math., 41 (1955), pp.~339--344.

\bibitem{Solecki25}
{\sc S.~Solecki}, {\em Simplicial complexes, stellar moves, and projective
  amalgamation}.
\newblock \arxivlink{2503.14825v1}, 2025.

\end{thebibliography}

\end{document}